\documentclass{article}

\usepackage{mathscinet}
\usepackage{array}
\usepackage{booktabs}
\usepackage{numprint}

\usepackage{caption}
\usepackage{subcaption}
\usepackage{combelow}
\usepackage{amsthm,amsfonts,amssymb,amsmath} 
\usepackage{aliascnt} 
\usepackage{mathrsfs} 
\usepackage{xargs,ifthen} 
\usepackage{enumitem}

\usepackage{bigints}
\usepackage{comment}
\usepackage{color}
\definecolor{lightgrey}{gray}{0.8}


\usepackage{xypic} 
\usepackage{tikz}
\usetikzlibrary{calc,arrows,decorations.markings}


\definecolor{diskpurple}{rgb}{.5,0.,.5}

\tikzstyle inbdry=[thick,blue,postaction={decorate,decoration={markings,
    mark=at position .25 with {\arrow[arrowstyle]{<}}}}]
\tikzstyle outseg=[thick,red,postaction={decorate,decoration={markings,
    mark=at position .5 with {\arrow[arrowstyle]{>}}}}]
\tikzstyle outarc=[thick,diskpurple,postaction={decorate,decoration={markings,
    mark=at position 0.5 with {\arrow[arrowstyle]{<}}}}]
    
\newcommand{\placeinterior}[2]{
	\node (p#1) at (#2) {};
	\draw[inbdry,fill=white] (p#1) circle (\rad);
	\draw[blue,shift=(p#1)] (0,1.5*\rad) node {$\arc{p_#1}$};
}
    
\newcommand{\placeoutarc}[3]{
	\draw[outarc,rotate=#2] (#1) circle (\rad);
	\draw[diskpurple] (#2:0.7*\Rad) node {#3};
}    

\newcommand{\placeoutseg}[4]{
	\draw[outseg] (#1) arc (#2:#3:\Rad);
	\draw[red] ({(#2+#3)/2}:{(1.1+0.15*abs(cos((#2+#3)/2)))*\Rad}) node {#4};
}    

\newcommand{\placeoutpt}[2]{
	\draw[fill,white] (#1) circle (\rad);
}


\tikzstyle dot=[shape=circle,draw,fill,inner sep=1pt]
\tikzstyle vertex=[shape=circle,draw,fill,inner sep=1pt]
\tikzstyle arrowstyle=[scale=1.5]
\tikzstyle edge=[postaction={decorate,decoration={markings,
    mark=at position .55 with {\arrow[arrowstyle]{stealth}}}}]
\newcommand{\drawvertex}[3][]{	\node[vertex] (#2) at (#3) {};
								\node[#1] at (#2) {$#2$}; }
\newcommand{\drawsilentvertex}[3][]{	\node[vertex] (#2) at (#3) {};
								\node[#1] at (#2) {}; }

\newcommand{\drawgraph}[2][1]
{
	\begin{tikzpicture}[scale=#1,baseline={([yshift=0ex]current bounding box.center)}]
		#2
	\end{tikzpicture}
}

\newcommand{\intrograph}[1][1]{
	\drawgraph[#1]{
		\draw[dashed,blue] (-2,0)--(4.5,0);
		\drawsilentvertex{L}{0,0};
		\drawsilentvertex{R}{1,0};
		\drawsilentvertex{1}{.5,.5};
		\drawsilentvertex{2}{1,1};
		\drawsilentvertex{3}{0,1.5};
		\drawsilentvertex{4}{2,2};
		\drawsilentvertex{5}{-.5,2.5};
		\drawsilentvertex{6}{3,3};
		\drawsilentvertex{7}{-1.5,3.5};
		\drawsilentvertex{8}{4,3.75};
		\draw[edge] (1) to (L);
		\draw[edge] (1) to (R);
		\draw[edge] (2) to (1);
		\draw[edge] (2) to (R);
		\draw[edge] (3) to (2);
		\draw[edge] (3) to (L);
		\draw[edge] (4) to (3);
		\draw[edge] (4) to (R);
		\draw[edge] (5) to (4);
		\draw[edge] (5) to (L);
		\draw[edge] (6) to (5);
		\draw[edge] (6) to (R);
		\draw[edge] (7) to (6);
		\draw[edge] (7) to (L);
		\draw[edge] (8) to (7);
		\draw[edge] (8) to (R);
	}
}

\newcommand{\freewedgegraph}[1][1]{
	\drawgraph[#1]{
		\drawvertex[above]{z}{0,1};
		\drawvertex[above]{y}{1,0.5};
		\drawvertex[above]{x}{-1,0};
		\draw[edge] (z) to (x);
		\draw[edge] (z) to (y);
	}
}

\newcommand{\wedgegraph}[1][1]{
	\drawgraph[#1]{
		\draw[dashed,blue] (0.5,0)--(2.5,0);
		\drawvertex[above]{z}{2,1};
		\drawvertex[above]{x}{1,1};
		\drawvertex[below]{q}{1,0};
		\draw[edge] (z) to (q);
		\draw[edge] (z) to (x);
	}
}
\newcommand{\wedgegraphR}[1][1]{
	\drawgraph[#1]{
		\draw[dashed,blue] (0.5,0)--(2.5,0);
		\drawvertex[above]{z}{2,1};
		\drawvertex[above]{x}{1,1};
		\drawvertex[below]{1}{1,0};
		\draw[edge] (z) to (1);
		\draw[edge] (z) to (x);
	}
}


\newcommand{\pqwedgegraph}[1][1]{
	\drawgraph[#1]{
		\draw[dashed,blue] (-0.5,0)--(1.5,0);
		\drawvertex[above]{z}{0.5,1};
		\drawvertex[below]{p}{0,0};
		\drawvertex[below]{q}{1,0};
		\draw[edge] (z) to (p);
		\draw[edge] (z) to (q);
	}
}

\newcommand{\loopgraph}[1][1]{
	\drawgraph[#1]{
		\draw[blue, dashed] (-0.5,0) -- (1.5,0);
		\drawvertex[below]{0}{0,0}
		\drawvertex[above]{y}{0,1}
		\drawvertex[above]{x}{1,1}
		\draw[edge, bend right] (y) to (x);
		\draw[edge] (y) to (0);
		\draw[edge, bend right] (x) to (y);
	}	
}	

\newcommand{\zetagraph}[1][1]{
	\drawgraph[#1]{
		\draw[blue, dashed] (-0.5,0) -- (2.5,0);
		\drawvertex[below]{0}{0,0}
		\drawvertex[below]{1}{1,0}
		\drawvertex[above]{y}{0,1}
		\drawvertex[above]{x}{1,1}
		\drawvertex[above]{z}{2,1}
		\draw[edge] (y) to (0);
		\draw[edge,bend right] (y) to (x);
		\draw[edge,bend right] (x) to (y);
		\draw[edge] (x) to (1);
		\draw[edge] (z) to (x);
		\draw[edge] (z) to (1);
	}
}

\newcommand{\singleedgeL}[1][1]{
	\drawgraph[#1]{
		\draw[blue, dashed] (-0.5,0) -- (0.5,0);
		\drawvertex[below]{0}{0,0}
		\drawvertex[above]{x}{0,1}
		\draw[edge] (x) to (0);
	}
}

\newcommand{\singleedgeR}[1][1]{
	\drawgraph[#1]{
		\draw[blue, dashed] (-0.5,0) -- (0.5,0);
		\drawvertex[below]{1}{0,0}
		\drawvertex[above]{x}{0,1}
		\draw[edge] (x) to (1);
	}
}

\newcommand{\inwheelgraph}[1]{
	\begin{tikzpicture}[baseline=-0.5ex]
		\node[vertex] () at (0,0)  {};
		\foreach \i in {1,...,#1}
		{
			\node[vertex] () at (\i*360/#1:1) {};
			\draw[edge] (\i*360/#1:1) -- (0,0);
			\draw[edge] (\i*360/#1:1) arc (\i*360/#1:{\i*360/#1-360/#1}:1);
		}
	\end{tikzpicture}
}

\newcommand{\outwheelgraph}[1]{
	\begin{tikzpicture}[baseline=-0.5ex]
		\node[vertex] () at (0,0)  {};
		\foreach \i in {1,...,#1}
		{
			\node[vertex] () at (\i*360/#1:1) {};
			\draw[edge] (0,0) --  (\i*360/#1:1);
			\draw[edge] (\i*360/#1:1) arc (\i*360/#1:{\i*360/#1-360/#1}:1);
		}
	\end{tikzpicture}
}

\newcommand{\wheeldiffgraph}[1]{
	\begin{tikzpicture}[scale=0.85,baseline=-0.5ex]
	    \draw[dashed,blue] (-1.5,-1.5) -- (1.5,-1.5);
		\node[vertex] (sink) at (1,-1.5)  {};
		\node[vertex] (extra) at (-1,-1.5)  {};
		\foreach \i in {1,...,#1}
		{
			\node[vertex] () at ({(\i-1)*360/#1+180}:1) {};
			\draw[edge] ({(\i-1)*360/#1+180}:1) arc ({(\i-1)*360/#1+180}:{(\i-1)*360/#1-360/#1+180}:1);
		}
		\foreach \i in {2,...,#1}
		{
			\draw[edge] ({(\i-1)*360/#1+180}:1) -- (sink) ;
		}
		\draw[edge] (-1,0) -- (extra);
	\end{tikzpicture}
}

\newcommand{\HKRgraph}[1]{
	\begin{tikzpicture}
		\node[vertex] (int) at ({#1/2+1/2},1)  {};
		\draw[blue,dashed] (0.5,0) -- ({#1+0.5},0);
		\foreach \i in {1,...,#1}
		{
			\node[vertex] () at (\i,0) {};
			\draw[edge] (int) --  (\i,0);
		}
	\end{tikzpicture}
}

\usepackage{titling}
\setlength{\droptitle}{-10em}   

\usepackage{shuffle}




\usepackage{hyperref}
\definecolor{sxdarkblue}{RGB}{150,180,200} 
\definecolor{sxlightblue}{RGB}{186,215,230} 
\definecolor{sxred}{RGB}{153,0,0} 
\definecolor{tocolor}{rgb}{.1,.1,.1}
\definecolor{urlcolor}{rgb}{.2,.2,.6}
\definecolor{linkcolor}{rgb}{.1,.1,.5}
\definecolor{citecolor}{rgb}{.4,.2,.1}
\definecolor{lightgrey}{rgb}{.9,.9,.9}
\hypersetup{backref=true, colorlinks=true, urlcolor=urlcolor, linkcolor=linkcolor, citecolor=citecolor}

\newcommand{\dch}{\mathsf{dch}}
\newcommand{\PathConc}{\star}
\newcommand{\tp}{\otimes}
\DeclareMathOperator*{\Rlim}{Rlim}
\newcommand{\Reglim}[2]{\Rlim_{#1 \rightarrow #2}}
\newcommand{\dlog}[1]{\omega_{#1}}
\newcommand{\bdlog}[1]{\overline{\omega}_{#1}}

\newcommand{\Tree}[1]{T_{#1}}
\newcommand{\BerG}[1]{\Gamma_{#1}}
\newcommand{\WheelOut}[1]{W^{\uparrow}_{#1}}
\newcommand{\WheelIn}[1]{W^{\downarrow}_{#1}}
\newcommand{\Amar}[2]{\Gamma_{#1,#2}}

\newcommand{\pInf}[1]{#1_\infty}


\newcommandx{\thdef}[2]{
	\newaliascnt{#1}{theorem}  
	\newtheorem{#1}[#1]{#2}
	\aliascntresetthe{#1}  
	\newtheorem*{#1*}{#2}
	\expandafter\newcommand\expandafter{\csname #1autorefname\endcsname}{#2}
}

\newtheorem{theorem}{Theorem}[section]

\thdef{lemma}{Lemma}
\thdef{corollary}{Corollary}
\thdef{conjecture}{Conjecture}
\thdef{proposition}{Proposition}
\newtheorem*{theorem*}{Theorem}

\theoremstyle{definition}
\thdef{definition}{Definition}

\theoremstyle{remark}
\thdef{remark}{Remark}

\theoremstyle{remark}
\thdef{example}{Example}


\newcommand{\defn}[1]{\textbf{\textit{#1}}} 

\newcommand{\spc}[1]{\mathsf{#1}} 
\newcommand{\shf}[1]{\mathcal{#1}} 


\newcommand{\RR}{\mathbb{R}}
\newcommand{\CC}{\mathbb{C}}
\newcommand{\ZZ}{\mathbb{Z}}
\newcommand{\cZZ}{\underline{\mathbb{Z}}} 

\newcommand{\QQ}{\mathbb{Q}}
\newcommand{\PP}[1][]{\mathbb{P}^{#1}}

\newcommand{\Pp}{P\cup\sset{z}}
\newcommand{\Qq}{Q\cup\sset{q}}
\newcommand{\Spp}{S\cup\sset{z,\zb}}
\newcommand{\Sp}{S\cup\sset{z}}
\newcommand{\Sq}{S\cup\sset{q}}
\newcommand{\Qinf}{Q\cup\sset{\infty}}


\newcommand{\sset}[1]{\left\{#1\right\}}
\newcommand{\set}[2]{\left\{#1\,\middle |\ #2\right\}}
\newcommand{\rbrac}[1]{\left(#1\right)} 
\newcommand{\abrac}[1]{\left\langle#1\right\rangle} 
\newcommand{\snorm}[1]{\left|#1\right|} 
\newcommand{\abs}[1]{\snorm{#1}}

\newcommand{\asyO}[1]{O\left( #1 \right)}


\newcommand{\mapdef}[5]{
	\begin{array}{ccccc}
	#1 &:& #2 &\to& #3 \\
		&&  #4 &\mapsto& #5
	\end{array}
}
\newcommandx{\fn}[2][2=]{#1\ifthenelse{\equal{#2}{}}{}{\!\rbrac{{#2}}}} 


\newcommand{\colim}[1]{%
  \mathbin{\mathop{\mathrm{colim}}\displaylimits_{#1}\,}%
}

\newcommandx{\Aut}[2][1=]{\fn{\spc{Aut}_{#1}}[#2]} 

\newcommand{\gr}[1][]{\mathsf{gr}^{#1}}

\newcommand{\Perms}[1]{\mathfrak{S}_{#1}}


\newcommandx{\hlgy}[3][1=\bullet,3=]{\spc{H}_{#1}^{#3}\!\rbrac{{#2}}} 
\newcommandx{\cohlgy}[3][1=\bullet,3=]{\spc{H}^{#1}_{#3}\!\rbrac{{#2}}} 
\newcommandx{\scohlgy}[3][1=\bullet,3=]{\mathcal{H}^{#1}_{#3}\!\rbrac{{#2}}} 


\newcommand{\sO}[1]{\shf{O}_{#1}}


\newcommandx{\tb}[2][1=]{\spc{T}_{#1}{#2}} 
\newcommand{\cvf}[1]{\partial_{{#1}}} 


\newcommand{\injection}{\hookrightarrow}
\newcommand{\defas}{\mathrel{\mathop:}=}
\newcommand{\iu}{\mathrm{i}}
\newcommand{\ipi}{\iu \pi}
\newcommand{\invtate}{\frac{1}{2 \iu \pi}}
\newcommand{\td}[1][]{\mathrm{d}^{#1}}
\DeclareMathOperator{\tdlog}{dlog}

\DeclareMathOperator*{\Res}{Res}

\newcommand{\MZV}[1][]{\mathcal{Z}^{#1}}
\newcommand{\MZVipi}[1][]{\widetilde{\mathcal{Z}}^{#1}}

\newcommand{\mzv}[2][]{\zeta^{#1}(#2)}

\newcommand{\nauty}{\href{http://pallini.di.uniroma1.it/}{\texttt{\textup{nauty}}}}
\newcommand{\HyperInt}{\href{http://bitbucket.org/PanzerErik/hyperint/}{\texttt{\textup{HyperInt}}}}
\newcommand{\DQcode}{\url{http://bitbucket.org/bpym/starproducts/}}
\newcommand{\Kontsevint}{\url{http://bitbucket.org/PanzerErik/kontsevInt}}
\newcommand{\Maple}{\texttt{Maple}}
\newcommand{\Sage}{\texttt{SageMath}}

\newcommand{\crossrat}[4]{\left(#1,#2;#3,#4\right)}

\newcommand{\sCinf}[1]{C^\infty_{#1}}

\newcommand{\KW}[1]{c_{#1}}

\newcommand{\sv}{\mathrm{sv}}

\newcommand{\T}[2][]{\ifthenelse{\equal{#1}{}}{\ZZ\abrac{#2}}{\ZZ{#2}^{#1}}}

\newcommandx{\PerT}[3][1=,3=]{
\ifthenelse{\equal{#3}{}}{\ifthenelse{\equal{#1}{}}{}{W_{#1}}\mathscr{P}\langle#2\rangle^{#3}}{\mathscr{P}{#2}^{#3}}
}

\newcommand{\Per}[2][]{	\ifthenelse{\equal{#1}{}}{}{W_{#1}} \mathscr{P}\ifthenelse{\equal{#2}{}}{}{( #2 )} }

\newcommandx{\HL}[3][1=,3=]{ \ifthenelse{\equal{#1}{} \AND \equal{#3}{}}{}{W_{#1}^{#3}} \mathcal{L}_{#2}} 

\newcommandx{\V}[2][2=]{\ifthenelse{\equal{#2}{}}{}{W_{#2}}\mathcal{V}(#1)}

\newcommandx{\VA}[3][1=\bullet,3=]{ \ifthenelse{\equal{#3}{}}{}{W_{#3}}\mathcal{U}^{#1}(#2)}
\newcommandx{\Vr}[3][1=,3=]{\ifthenelse{\equal{#1}{} \AND \equal{#3}{}}{}{W_{#1}^{#3}} \mathcal{V}_{{#2}}}
\newcommandx{\VAr}[3][1=\bullet,3=]{	\ifthenelse{\equal{#3}{}}{}{W_{#3}} \mathcal{U}^{#1}_{#2}}

\newcommandx{\A}[3][1=\bullet,2=]{\mathcal{A}^{#1}_{#2}(#3)}

\newcommand{\chn}[2][\bullet]{\mathsf{C}^{#1}\ifthenelse{\equal{#2}{}}{}{(#2)}}

\newcommand{\xb}{\overline{x}}
\newcommand{\yb}{\overline{y}}
\newcommand{\zb}{\overline{z}}
\newcommand{\pb}{\overline{p}}

\newcommand{\pt}{\vec{p}}
\newcommand{\qt}{\vec{q}}

\newcommand{\st}{\vec{s}}
\newcommand{\ttt}{\vec{t}}
\newcommand{\tinf}{\vec{\infty}}

\newcommand{\Pb}{\overline{P}}

\newcommand{\talpha}{\widetilde{\alpha}}

\newcommand{\U}{\mathsf{U}}

\newcommand{\X}{\mathsf{X}}

\newcommand{\Xo}{\mathsf{X}^\circ}
\newcommand{\cyc}{\mathsf{\Sigma}}
\newcommand{\W}{\mathsf{W}}

\newcommand{\D}{\mathsf{D}}
\newcommand{\bD}{\overline{\mathsf{D}}}
\newcommand{\Do}{\mathsf{D}^\circ}

\newcommand{\Spec}{\mathsf{Spec}}

\newcommand{\sect}[1]{\mathsf{\Gamma}\rbrac{#1}} 

\newcommand{\HH}{\mathbb{H}}
\newcommand{\bHH}{\overline{\HH}}

\newcommand{\hlog}[1]{L_{#1}}
\newcommand{\bhlog}[1]{\overline{L}_{#1}}

\newcommand{\forms}[2][\bullet]{\Omega^{#1}(#2)}
\newcommand{\sforms}[2][\bullet]{\Omega^{#1}_{#2}}

\newcommand{\M}[1]{\mathfrak{M}_{#1}}
\newcommand{\uX}[1]{\mathfrak{X}_{#1}} 
\newcommand{\uXo}[1]{\mathfrak{X}^\circ_{#1}} 
\newcommand{\uD}[2][]{\mathfrak{D}_{#2}^{#1}} 
\newcommand{\uDo}[1]{\mathfrak{D}_{#1}^{\circ}} 
\newcommand{\arc}[1]{\sigma_{#1}^\epsilon}
\newcommand{\intvl}[1]{(#1,#1')^\epsilon}
\newcommandx{\buD}[3][2=\epsilon,1=]{\partial_{#1}\mathfrak{D}^{#2}_{#3}} 
\newcommandx{\outD}[1]{\partial_{\mathrm{out}}\mathfrak{D}^{\epsilon}_{#1}} 

\newcommand{\bM}[1]{\overline{\mathfrak{M}}_{#1}}
\newcommand{\bbM}[2][]{\partial_{#1}\overline{\mathfrak{M}}_{#2}}
\newcommand{\Conf}{\mathsf{Conf}}
\newcommand{\C}[1]{\mathfrak{C}_{#1}}

\usepackage{thmtools}

\newcommand{\fg}[1]{\pi_1(#1)}
\newcommand{\FGpd}[1]{\Pi_1(#1)}

\newcommand{\tmt}{\tau} 
\renewcommand{\emptyset}{\varnothing}

\begin{document}
\vspace{-15cm}
\title{Multiple zeta values in deformation quantization}
\date{}
\author{
Peter Banks
	\thanks{University of Oxford, \url{peter.banks@maths.ox.ac.uk}}
\and Erik Panzer
	\thanks{University of Oxford,  \url{erik.panzer@all-souls.ox.ac.uk}}
\and Brent Pym
	\thanks{University of Edinburgh, \url{brent.pym@mcgill.ca}}}
\maketitle
\vspace{-1.5cm}
\begin{abstract}
Kontsevich's 1997 formula for the deformation quantization of Poisson brackets is a Feynman expansion involving volume integrals over moduli spaces of marked disks.  We develop a systematic theory of integration on these moduli spaces via suitable algebras of polylogarithms, and use it to prove that Kontsevich's integrals can be expressed as integer-linear combinations of multiple zeta values.  Our proof gives a concrete algorithm for calculating the integrals, which we have used to produce the first software package for the symbolic calculation of Kontsevich's formula.
\end{abstract}

\setcounter{tocdepth}{2}
\begin{small}
\tableofcontents
\end{small}


\section{Introduction}
\subsection{Motivation and overview}

In 1997, Kontsevich solved a long-standing problem in mathematical physics, by showing that every Poisson manifold can be quantized to obtain a noncommutative algebra~\cite{Kontsevich2003}.  He gave an explicit formula that takes a Poisson bracket on $\RR^k$ as input and produces a ``star product'', i.e.~a noncommutative deformation of the product on $C^{\infty}(\RR^k)$ as a formal power series in the deformation parameter $\hbar$.  He did so by constructing his ``formality morphism''---an explicit homotopy equivalence between the differential graded Lie algebras of polyvector fields and Hochschild cochains on the affine space $\RR^k$.

The star product (and more generally, the formality morphism) is expressed as a sum over a suitable collection of  graphs.  Each graph $\Gamma$ contributes a term involving a differential operator defined by an elementary combinatorial procedure.  This operator is then weighted by a constant
\begin{equation}
\KW{\Gamma} \defas \int_{\C{n,m}}\omega_{\Gamma} \in \RR \label{eq:KW}
\end{equation}
defined by integrating an explicit volume form $\omega_{\Gamma}$ over a suitable moduli space $\C{n,m}$ of marked holomorphic disks.  As explained by Cattaneo--Felder~\cite{Cattaneo2000}, Kontsevich's formula can be interpreted as a perturbative expansion in an appropriate topological string theory, which has the expressions~\eqref{eq:KW} as its Feynman integrals.

The coefficients \eqref{eq:KW} are universal, in the sense that they are independent of the Poisson bracket one seeks to quantize; hence one only needs to compute each integral once and remember its value in perpetuity.  However, the integrals are notoriously difficult to evaluate, and their precise values remain unknown.  Thus, even with the aid of a computer, it has so far been impossible to calculate the terms in the quantization formula explicitly beyond $\hbar^3$ for many of the most basic examples of Poisson brackets, such as the simple ``log canonical'' bracket
\begin{align}
\{x,y\} = xy \label{eq:log-canonical}
\end{align}
on the $(x,y)$-plane (so named because the logarithms $u = \log x$ and $v = \log y$ are canonical variables, i.e.~$\{u,v\} = 1$).

Nevertheless, there are strong expectations about the arithmetic properties of the coefficients.  In his 1999 paper \cite{Kontsevich1999}, Kontsevich made a series of inspiring conjectures, based on connections with the theories of motives, and informed by Tamarkin's independent construction of quantizations~\cite{Tamarkin1999} via formality of the little disks operad.  One of the important consequences of those conjectures is that the integrals \eqref{eq:KW} should be expressible in terms of a much smaller collection of constants, which are better understood (but still quite mysterious):
\begin{conjecture}\label{conj:kontsevich}
The coefficients $\KW{\Gamma}$ can be expressed as $\QQ[(2 \pi \iu)^{- 1}]$-linear combinations of multiple zeta values.
\end{conjecture}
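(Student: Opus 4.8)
The plan is to evaluate the fibre integral defining $\KW{\Gamma}$ one marked point at a time, keeping the integrand inside an algebra of polylogarithm functions on the relevant moduli spaces throughout. First I would fix the geometry: realise $\C{n,m}$ as an open semialgebraic cell inside a compactification $\overline{\C{n,m}}$ whose boundary is a normal-crossings divisor, and record a tower of forgetful fibrations, each step of which erases one marked point, either an interior one or a boundary one. After using the $\RR$-automorphisms of the disk to rigidify coordinates, every fibre is then either an interval on the real axis (a boundary point, one real coordinate) or a punctured $2$-disk (an interior point, two real coordinates). Kontsevich's form $\dlog{\Gamma}$ is a wedge of the closed $1$-forms $\td\phi_e$, one per edge; writing the harmonic propagator $\phi$ so that each $\td\phi_e$ is a $\QQ[(2\iu\pi)^{-1}]$-multiple of $\tdlog(r_e)-\tdlog(\overline{r_e})$ with $r_e$ algebraic in the marked points, $\dlog{\Gamma}$ becomes a $\QQ[(2\iu\pi)^{-1}]$-combination of wedges of algebraic $\tdlog$-forms on the space carrying both the holomorphic and the antiholomorphic coordinates of the configuration.

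Second I would construct, for each moduli space in the tower, a $\QQ$-algebra $\mathcal{L}$ of hyperlogarithm functions on a neighbourhood of the closure of the integration cell, graded by a weight, such that: (i) the entries of $\dlog{\Gamma}$ and of their primitives lie in $\mathcal{L}$; (ii) $\mathcal{L}$ is stable under partial differentiation, and the primitive along any fibration coordinate of a weight-$w$ element has weight at most $w+1$; and (iii) for each fibration coordinate $t$, every element of $\mathcal{L}$ admits, as $t$ tends to either endpoint of its range, an asymptotic expansion in nonnegative powers of $t$ and $\log t$ whose coefficients lie in the algebra $\mathcal{L}$ of the base. Property (iii) --- that taking a regularised limit at a fibrewise boundary divisor does not leave the algebra --- is the delicate one, and it is precisely the analogue, for these real moduli of marked disks and for the angle forms $\set{\td\phi_e}{e\in\Gamma}$, of Brown's notion of a \emph{linearly reducible}, fibration-compatible coordinate system on $\overline{\mathcal{M}}_{0,n}$.

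Third I would run an induction on $\dim_{\RR}\C{n,m} = 2n+m-2$, which is also the number of edges of $\Gamma$. Given the top-degree form $\dlog{\Gamma}$, pick the outermost fibration coordinate $t$, use (ii) to replace the $t$-integral by evaluation of a primitive in $\mathcal{L}$, and use (iii) to evaluate that primitive at the two endpoints of the $t$-range, at the cost of multiplying by the regularised constants appearing in the two expansions; by construction these constants are again hyperlogarithm values on the base. Iterating down the tower, the last space is a point, and since the regularised iterated integrals of $\mathrm{d}\log t$ and $\mathrm{d}\log(1-t)$ along the unit interval are exactly the multiple zeta values and the only further constants introduced are the $(2\iu\pi)^{-1}$ coming from the propagator, the accumulated constant is a $\QQ[(2\iu\pi)^{-1}]$-linear combination of multiple zeta values. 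Each of the $2n+m-2$ integrations raises the weight by at most one, so only multiple zeta values of weight $\le 2n+m-2$ occur; as the $\QQ[(2\iu\pi)^{-1}]$-span of multiple zeta values is finite-dimensional in each weight and closed under the products and regularisations used, we conclude that $\KW{\Gamma}$ lies in it. (Tracking the primes that may enter the denominators at each step and clearing them upgrades this to the integer-linear statement announced in the abstract, and turns the whole procedure into an explicit algorithm.)

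The hard part will be securing property (iii) uniformly across the tower. Because $\dlog{\Gamma}$ is not holomorphic, the naive doubling into holomorphic and antiholomorphic coordinates must be reconciled with the genuine real structure of $\C{n,m}$, and with the fact that erasing an interior point and erasing a boundary point yield structurally different fibrations that both have to be brought into reducible form; moreover one must show that after each adapted change of coordinates the remaining integrations only ever meet singularities supported on divisors of $\sset{0,1,\infty}$-type, never on a genuinely transcendental locus where the primitive would cease to be a polylogarithm. Establishing this reducibility of Kontsevich's moduli spaces --- essentially, that the collection of angle forms integrates polylogarithmically in one adapted coordinate system at a time --- is the crux of the argument; once it is in hand, the weight bound and the identification of the boundary constants with multiple zeta values are comparatively formal.
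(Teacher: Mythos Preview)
Your overall strategy---iterate fibre integration along the forgetful tower, keep the integrand inside a polylogarithm algebra, and recognise the terminal constants as MZVs---is the paper's approach, and your identification of property~(iii) as the crux is accurate. But two specific obstacles are not addressed in your plan, and each requires substantial machinery.

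First, the monodromy problem. When you forget an interior marked point the fibre is a punctured disk, which has nontrivial fundamental group once $|P|\ge 1$. A primitive of a polylogarithmic two-form along such a fibre exists locally by the Poincar\'e lemma for hyperlogarithms, but there is no a~priori reason for it to be single-valued, and Stokes' theorem needs a globally defined primitive. Your property~(ii) asserts that primitives stay in $\mathcal{L}$ but is silent on single-valuedness. The paper devotes \autoref{sec:monodromy} to this: one computes the sheaf cohomology $\cohlgy{\Do,\HL{}}$ of hyperlogarithms on a punctured disk (\autoref{thm:cohomology-hyperlogs}), shows the higher cohomology vanishes, and then constructs a single-valued $(1,0)$-primitive by cancelling the monodromy with lower-weight holomorphic corrections (\autoref{lem:1-0-primitive}). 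Without this, the induction step for interior points does not go through.

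Second, your fibrewise picture conflates the two cases. You speak of evaluating a primitive ``at the two endpoints of the $t$-range'', which is correct for a boundary-point fibre (an interval) but meaningless for an interior-point fibre (a two-dimensional punctured disk). There the boundary contribution after Stokes decomposes into residues at interior punctures, integrals along real-axis segments, and integrals over half-circles around boundary marked points, each requiring its own regularisation argument (see \autoref{prop:half-residue} for the most delicate). Your property~(iii), phrased as an expansion ``as $t$ tends to either endpoint'', does not capture the corner asymptotics where $z\to q\in Q$ and $z\to\zb$ simultaneously (\autoref{prop:disk-log-expansion}\ref{prop:disk-log-expansion:Q}). Relatedly, invoking Brown's linear reducibility is suggestive but not directly applicable: that framework is for iterated one-dimensional integrals in the standard real locus of $\M{0,N}$, whereas $\C{n,m}$ for $n>0$ sits in a twisted real structure that mixes $z$ and $\zb$, and the paper handles this via the doubling embedding $\C{n,m}\hookrightarrow\M{S}$ rather than by exhibiting a reducible coordinate order.
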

We recall that a \defn{multiple zeta value (MZV)} of weight $n$ is a real number defined by a convergent sum of the form
\begin{equation}
	\mzv{n_1,\ldots,n_d} 
	= \sum_{0 < k_1 < \cdots < k_d} \frac{1}{k_1^{n_1} \cdots k_d^{n_d}}
	\in \RR
	,
	\label{eq:mzv}%
\end{equation}
where $n_1,\ldots,n_d$ are positive integers such that $\sum_j n_j =n$ and $n_d \ge 2$.  Thus MZVs generalize the special values $\zeta(n)$ of the Riemann zeta function.

In this paper, we develop a systematic theory of integration on the moduli spaces of marked disks via suitable algebras of polylogarithms, and prove \autoref{conj:kontsevich} as an application.  In fact, we prove a  stronger statement, which gives precise control over the weights of the MZVs that appear at a given order in $\hbar$, and which bounds the denominators of the rational coefficients.  In other words, after accounting for natural normalization factors, we show that the coefficients of the linear combinations are actually integers.  See \autoref{thm:formality-MZV} below for the precise statement.

Our approach applies not only to Kontsevich's original ``harmonic'' formality morphism, but also to the other known explicit formality morphisms, namely the ``logarithmic'' formality morphism of Kontsevich/Alekseev--Rossi--Torossian--Willwacher~\cite{Alekseev2016,Kontsevich1999} and the ``interpolating'' formality morphism of Rossi--Willwacher~\cite{Rossi2014}.  We treat them uniformly in this paper.  Our approach also applies with minimal modification to the integrals appearing in various extensions of the formality morphism~(e.g.~\cite{Shoikhet2003,Willwacher2016}), to the calculation of the coefficients of the Alekseev--Torossian connection/associator~\cite{Alekseev2010a,Furusho:AT}, and presumably also to the calculation of correlation functions in other two-dimensional field theories, although  we have not made an effort to explicitly pursue these further applications here.

Perhaps most importantly, our results are constructive. The proof gives an effective algorithm to calculate the integrals in terms of MZVs, which we have implemented to obtain the first software for the symbolic calculation of the terms in the quantization formula.  For instance, our software gives the following terms in the ``harmonic'' star product associated to the Poisson bracket \eqref{eq:log-canonical}, obtained by summing contributions from thousands of individual graphs:
\begin{equation*}
	x \star y = g( \hbar) xy \qquad\text{and}\qquad
	y \star x = g(-\hbar) xy,
\end{equation*}
where
\begin{align*}
g(\hbar) &= 1 + \frac{\hbar}{2}  + \frac{\hbar^2}{24} - \frac{\hbar^3}{48}  - \frac{\hbar^4}{1440}  + \frac{\hbar^5}{480} + \rbrac{\frac{251  \zeta(3)^{2}}{2048 \pi^{6}} -  \frac{17}{184320}  }\hbar^6 + \cdots 
\end{align*}
Despite the presence of the conjecturally irrational number $\mzv{3}^2/\pi^6$, this expression is consistent with the expected relation \[
x\star y = e^\hbar\, y \star x,
\]
obtained by formally imposing the canonical commutation rule $[u,v]=\hbar$ on the logarithms $u=\log x$ and $v=\log y$. A similar result holds for the other formality morphisms mentioned above, although the exact expressions for $g(\hbar)$ are different.

Our software is open source and is freely available at the following URL:
\begin{quote}
\DQcode
\end{quote} 
It includes a complete database of coefficients for the $50821$ graphs that appear up to $\hbar^6$ in the star product.  The software can easily compute individual coefficients at higher order in $\hbar$, but since the number of graphs grows factorially, the computation of the full star product becomes quite resource-intensive; already at $\hbar^7$ there are more than a million graphs to consider.

We refer the reader to \autoref{sec:software} for more information about this package, including a summary of numerous previously known properties of the coefficients $\KW{\Gamma}$ that we used to test our software implementation, and by extension our main theorems.    For the rest of the introduction, and indeed most of the paper, we focus on the mathematics.

\subsection{Multiple zeta values}

To give a precise statement of our results, we must first set some notation and recall some standard facts about MZVs; see e.g.~the surveys~\cite{Waldschmidt:LecturesMZV,Zudilin:AlgebraicRelationsMZV}.  Let $\MZV[0] = \ZZ$ be the ring of integers, let $\MZV[1] = \{0\}$, and for $n\ge 2$, let $\MZV[n] \subset \RR$ be the $\ZZ$-submodule generated by the MZVs of weight $n$, as defined in \eqref{eq:mzv}.  We shall work throughout with the $\ZZ$-submodule
\begin{equation}
\MZVipi[n] \defas \frac{\MZV[n]+\ipi\MZV[n-1]}{(2\ipi)^n} \subset \CC. \label{eq:normalized-mzv}
\end{equation}
of \defn{normalized MZVs of weight $n$}.  We write $\Re\MZVipi[n]$ and $\Im\MZVipi[n]$ for the real and imaginary parts of $\MZVipi[n]$, so that
\[
\MZVipi[n] = \Re\MZVipi[n] \oplus \iu \Im \MZVipi[n]
\]
as $\ZZ$-submodules of $\CC$.  Note the parity in \eqref{eq:normalized-mzv}: only even weight MZVs appear in $\Re\MZVipi$, while only odd weight MZVs appear in $\Im\MZVipi$.

Thanks to the shuffle product on MZVs and Euler's identity $(i\pi)^2 = -6\mzv{2}$, we have $\MZVipi[m]\MZVipi[n]\subset\MZVipi[m+n]$.  Meanwhile $1 \in \MZVipi[1]=\tfrac{1}{2} \ZZ$ and hence $\MZVipi[n]\subset\MZVipi[n+1]$, so that the union of these modules gives a filtered subring
\[
	\MZVipi \defas \bigcup_{n \ge 0} \MZVipi[n] \subset \CC.
\]

There is an established upper bound on the rank of the $\ZZ$-modules $\MZV[n]$, which is conjectured to be tight~\cite{Terasoma:MixedTateMotivesAndMZV}. Using proven reduction formulae for MZVs from \cite{Datamine}, one obtains a list of $\ZZ$-module generators for $\MZVipi[n]$ for small $n$; see \autoref{tab:period-gens} below.  For example, $\MZVipi[2]$ is generated by the number $-\mzv{2}/(2\ipi)^2 = 1/24$.  We remark that these generators are conjectured, but not known, to be $\QQ$-linearly independent (unless $n\leq 4$).  In particular, the known irrationality results for odd Riemann zeta values (e.g.\ \cite{Apery:3,BallRivoal:Infinite}) do not address relations with powers of $\pi$, and to date the possibility $\MZVipi \subseteq \QQ[\iu]$ has not been excluded.  However, such transcendence issues will not play any role in this paper.

\begin{table}[h]
\caption{Generators for the $\ZZ$-modules $\MZVipi[n] = \Re\MZVipi \oplus \iu \Im\MZVipi \subset \CC$ for $n \le 6$.}%
\begin{tabular}{rccccccc}
\toprule
$n$ & 0 & 1 & 2 & 3 & 4 & 5 & 6 \\
\midrule
generators for $\Re \MZVipi$ &
	$1$ &
	$\frac{1}{2}$ &
	$\frac{1}{24}$ &
	$\frac{1}{48}$ &
	$\frac{1}{5760}$ &
	$\frac{1}{11520}$ & 
	$\frac{1}{2903040}, \frac{\mzv{3}^2}{128\pi^6}$ \\[2mm]
generators for $\iu\Im\MZVipi$ &
	&
	&
	&
	$\frac{\iu\mzv{3}}{8\pi^3}$ & 
	$\frac{\iu\mzv{3}}{16\pi^3}$ &
	$\frac{\iu \mzv{3}}{192\pi^3},\frac{\iu \mzv{5}}{64\pi^5}$ &
	$\frac{\iu \mzv{3}}{384\pi^3},\frac{\iu \mzv{5}}{128\pi^5}$ \\
\bottomrule
\end{tabular}%
\label{tab:period-gens}%
\end{table}

Note that the denominators in \autoref{tab:period-gens} grow rapidly with $n$.  Indeed, using known identities for MZVs, we show in \autoref{sec:arithmetic-appendix} that $\tfrac{1}{(n+1)!} \in \MZVipi[n]$.  This arithmetic fact allows us to constrain the weight of certain period integrals (\autoref{lem:loop-weight}), which in turn plays a key role in establishing the main result.

\subsection{Statement of the main theorem}
For $n,m \in \ZZ_{\ge 0}$ with $2n+m \ge 2$, let $\C{n,m}$ denote the moduli space parametrizing isomorphism classes of compact complex disks with $n$ marked points in the interior and $m+1$ marked points on the boundary.  It is a real manifold of dimension $2n+m$ that is a classifying space for the pure braid group on $n$ strands; see \autoref{sec:cycles} for details.  Any cross ratio of marked points and their complex conjugates gives a function $f \colon \C{n,m}\to\CC$, and we consider the subring of differential forms
\[
	\A{\C{n,m}} \defas \ZZ\abrac{ \frac{\td f}{2\ipi f} \middle| f \textrm{ is a cross ratio} } \subset \forms{\C{n,m}}
\]
generated by the logarithmic differentials of cross ratios. Our main result is
\begin{theorem}\label{thm:disk-period}
Suppose $\omega \in \A{\C{n,m}}$ is a volume form such that the integral
\[
I \defas \int_{\C{n,m}}\omega
\]
converges absolutely.  Then we have
\[
I \in \begin{cases}
\MZVipi[n-1] & \text{if $m = 0$ and} \\
\MZVipi[n+m-2] & \text{for $m > 0$.}
\end{cases}
\]
\end{theorem}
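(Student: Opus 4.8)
The plan is to prove \autoref{thm:disk-period} by induction on the number of marked points, integrating them out one at a time along the forgetful fibrations of the moduli spaces described in \autoref{sec:cycles}. For $m\ge 1$ there is a fibration $\pi\colon\C{n,m}\to\C{n,m-1}$ forgetting the last boundary marked point; its fibre over a configuration is a connected component of $\RR$ with the remaining boundary points removed, in effect an open interval. For the interior points one uses instead the fibration $\C{n,m}\to\C{n-1,m}$, whose fibre is a copy of $\HH$ with the other interior points removed. Iterating, $\C{n,m}$ sits atop a tower of such fibrations --- with one-dimensional real fibres coming from boundary points and two-dimensional fibres coming from interior points --- terminating at a base that is either a point or one of the lowest-dimensional cases. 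Given a volume form $\omega\in\A{\C{n,m}}$ with $I=\int_{\C{n,m}}\omega$ absolutely convergent, Fubini's theorem lets us integrate over the fibres first, so the whole computation reduces to a single fibre integration, iterated along the tower. Accordingly, the statement one actually proves by induction is stronger than \autoref{thm:disk-period}: the fibrewise integral $\pi_*\omega$ of a form assembled from the generators $\td f/(2\ipi f)$ is again a $\MZVipi$-linear combination of such forms on the base, with an explicit bound on the weights of the coefficients; \autoref{thm:disk-period} is the case in which the tower has been descended completely, so that the ``form'' on the base is a constant in $\MZVipi$.

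The heart of the argument is the single fibre integration. Restricting a generator $\td f/(2\ipi f)$ to a fibre of $\pi$ gives a logarithmic one-form in the fibre coordinate whose poles sit at the remaining marked points (and, on an interior fibre, at their complex conjugates); a primitive is therefore a hyperlogarithm, and --- because interior fibres are two-real-dimensional --- the primitives occurring there are \emph{single-valued} hyperlogarithms, built from the single-valued polylogarithms developed in the earlier sections. Carrying out the fibre integration --- by direct integration over the interval in the boundary case, and by Stokes' theorem applied to a single-valued primitive in the interior case --- expresses $\pi_*\omega$ as a combination of logarithmic forms in cross ratios on the base, with coefficients that are regularized limits (in the sense of the operator $\Rlim$) of single-valued hyperlogarithms along the boundary divisors of the compactified moduli space. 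One then verifies that each such regularized limit is a normalized MZV of the asserted weight. Single-valuedness is essential here: it is exactly what prevents the appearance of more general polylogarithmic constants and pins the limits down to elements of $\MZVipi$. The weight estimate at each step is governed by the polylogarithm machinery of the earlier sections, together with the arithmetic fact $\tfrac1{(n+1)!}\in\MZVipi[n]$ and \autoref{lem:loop-weight}, which controls the weight produced when the moving point winds around a puncture; a careful accounting of these contributions, and of the contribution of the disk's automorphism group, produces exactly the exponents $n-1$ (for $m=0$) and $n+m-2$ (for $m>0$).

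The parity refinement --- only even-weight MZVs in $\Re\MZVipi$, only odd-weight ones in $\Im\MZVipi$ --- comes from the orientation-reversing involution of $\C{n,m}$ induced by complex conjugation of the disk. It preserves $\A{\C{n,m}}$, since the complex conjugate of a cross ratio is again a cross ratio of marked points and their conjugates, and it relates $I$ to $\overline I$; a short computation with it shows that $I$ is either real or purely imaginary. Since $\Re\MZVipi[w]$ consists of even-weight MZVs and $\iu\Im\MZVipi[w]$ of odd-weight ones --- immediately from the definition \eqref{eq:normalized-mzv}, because $\iu^w$ is real exactly when $w$ is even --- the main part of the theorem then determines the parity. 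The base cases of the induction are the first few low-dimensional moduli spaces --- points, intervals, discs and products of these --- where $I$ is computed directly and is seen to be a rational multiple of $1$, of $\mzv{2}/(2\ipi)^2=-\tfrac1{24}$, of $\mzv{3}/(2\ipi)^3$, and so on, consistently with \autoref{tab:period-gens}.

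The step I expect to be the principal obstacle is the closure statement in the second paragraph: that the fibrewise integral of a volume form built from the generators $\td f/(2\ipi f)$ is again, after extension of scalars to $\MZVipi$, a combination of such generators on the base --- with no worse singularities along the boundary divisors than are already permitted, with coefficients of the predicted weight, and (crucially, so that the induction can continue) with an absolutely convergent integral. Making this precise requires a robust dictionary relating the algebra $\A{\C{n,m}}$, the fibration structure, and the boundary stratification of the compactified moduli space --- in particular, uniform control of the regularized limits of single-valued hyperlogarithms along every boundary stratum --- which is the technical core of the ``systematic theory of integration on moduli spaces of marked disks'' announced in the introduction. By comparison, the Fubini reduction, the weight count, the parity argument and the explicit base cases should be routine once that dictionary is in hand.
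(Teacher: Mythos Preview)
Your overall strategy---iterated fibre integration along the forgetful maps, with intervals for boundary points and punctured disks for interior points, using single-valued primitives and Stokes' theorem on the disk fibres, and tracking the weight via \autoref{lem:loop-weight}---is exactly the paper's approach, culminating in \autoref{thm:disk-moduli-pushforward} and \autoref{cor:arnold-integral}. You have also correctly identified the closure statement (that fibre integration stays within the polylogarithmic class with the right weight bound) as the technical heart of the matter.

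One paragraph, however, should be removed. Your parity argument is both unnecessary and incorrect. \autoref{thm:disk-period} makes no parity claim: the conclusion $I\in\MZVipi[w]$ already allows arbitrary real and imaginary parts, and the decomposition $\MZVipi[w]=\Re\MZVipi[w]\oplus\iu\Im\MZVipi[w]$ is part of the definition of the module, not an additional assertion about $I$. More seriously, your claim that complex conjugation forces $I$ to be either real or purely imaginary is false for a general $\omega\in\A{\C{n,m}}$: the involution sends a form $\omega$ to a possibly \emph{different} form $\overline{\omega}$ (built from conjugate cross ratios), so one only learns $\overline{I(\omega)}=\pm I(\overline{\omega})$, not $\overline{I}=\pm I$. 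The parity statements in \autoref{thm:formality-MZV} and \autoref{thm:formality-weights} apply to the specific Kontsevich integrands, which enjoy an extra symmetry under $t\mapsto 1-t$; they are not consequences of \autoref{thm:disk-period}.

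A smaller point of emphasis: the role of single-valuedness is not, as you put it, to ``pin the limits down to elements of $\MZVipi$''---that comes from the identification of periods of $\PP[1]\setminus\{0,1,\infty\}$ with MZVs (\autoref{thm:M03-periods}) and the structure of $\Vr{P,Q}$. Rather, single-valuedness is what allows you to invoke Stokes' theorem on the disk fibre at all: a multivalued primitive is not a differential form, and the paper's cohomological vanishing results (\autoref{thm:families-push}, \autoref{lem:disk-gr}) are precisely what guarantee that a fibrewise single-valued primitive exists.
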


Up to some conventional coefficients, the integrals appearing in the harmonic, logarithmic, and interpolating formality morphisms are all of this type; see \autoref{sec:formality}.  In particular, the integrals appearing at order $\hbar^n$ in the star product are integrals over $\C{n,2}$, which immediately implies the following:

\begin{corollary}\label{thm:formality-MZV}
Suppose that $\Gamma$ is a graph appearing at order $\hbar^n$ in the star product, for some $n\ge 1$.  Then its coefficient $c_{\Gamma}$ with respect to the known explicit formality morphisms has the following form:
\begin{itemize}
\item For the harmonic formality morphism,
\[
\KW{\Gamma} \in 2^{-2n}\Re\MZVipi \subset \RR.
\]
\item For the logarithmic formality morphism,
\[
\KW{\Gamma} \in \MZVipi[n] \subset \CC.
\]
\item For the interpolating formality morphism (depending on a parameter $t$),
\[
\KW{\Gamma} \in \MZVipi[][t] \subset \CC[t]
\]
is a polynomial in $t$ of degree at most $2n-2$ whose coefficients are normalized MZVs of weight at most $n$.
\end{itemize}
\end{corollary}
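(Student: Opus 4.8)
The plan is to deduce the corollary from \autoref{thm:disk-period} by identifying, for each of the three formality morphisms, the precise moduli space over which the relevant integral is taken and verifying that its integrand lies in the algebra $\A{\C{n,m}}$ of logarithmic forms in cross ratios. First I would recall the combinatorial setup: a graph $\Gamma$ contributing at order $\hbar^n$ has exactly $n$ internal (aerial) vertices and $2$ boundary (terrestrial) vertices, and each edge is decorated with a one-form (the ``propagator'') pulled back from the configuration of its two endpoints on the closed upper half plane. The coefficient $\KW{\Gamma}$ is the integral of $\omega_\Gamma = \bigwedge_e (\text{propagator of }e)$ over the compactified configuration space, which is precisely $\C{n,2}$ (or its appropriate connected piece). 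Since the form has top degree $2n$ and the integral converges by standard results on these integrals, \autoref{thm:disk-period} with $m = 2$ will give $\KW{\Gamma} \in \MZVipi[n]$ once we check $\omega_\Gamma \in \A{\C{n,2}}$.

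The core of the argument is therefore a propagator-by-propagator check. For the \emph{logarithmic} morphism the propagator attached to an edge from $z$ to $w$ is (a constant times) $\td \log\!\big((z-w)/(z-\wb)\big) = \tfrac{\td f}{f}$ with $f = (z-w)/(z-\wb)$ a cross ratio of the four points $\{z, w, \wb, \zb\}$ (two of which may be boundary points, where $z = \zb$); hence $\omega_\Gamma$ is, up to a power of $2\ipi$ absorbed into the normalization of $\A{\C{n,2}}$, literally a generator-wise product of the defining forms of $\A{\C{n,2}}$, so $\KW{\Gamma}\in\MZVipi[n]$ directly. For the \emph{harmonic} morphism the propagator is $\tfrac{1}{2\pi}\td\arg\!\big(\tfrac{(z-w)(z-\wb)}{(\zb-w)(\zb-\wb)}\big)$, i.e.\ the imaginary part of a $\td\log$ of a cross ratio; writing $\td\arg g = \tfrac{1}{2\iu}(\td\log g - \td\log\bar g)$ and noting the complex conjugate of a cross ratio is again a cross ratio, each harmonic propagator is an $\tfrac12\ZZ$-combination of generators of $\A{\C{n,2}}$, so $\omega_\Gamma$ expands as a $\ZZ$-combination (after clearing the $2^{-1}$ per edge, $2n$ edges, whence the $2^{-2n}$) of elements of $\A{\C{n,2}}$; applying \autoref{thm:disk-period} to each term and using that the harmonic form is real and invariant under the complex-conjugation involution of $\C{n,2}$ (which fixes the orientation appropriately) shows the result lies in the real part, giving $\KW{\Gamma}\in 2^{-2n}\Re\MZVipi$. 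For the \emph{interpolating} morphism one writes the propagator as the $t$-linear interpolation between the harmonic and logarithmic ones; expanding the wedge of $2n$ such propagators produces a polynomial in $t$ of degree $\le 2n$, but the two extreme powers $t^0, t^{2n}$ (or rather the contributions not mixing the two families in a way that survives) are shown to reduce the degree bound to $2n-2$ by a parity/top-form argument, and each coefficient is a $\ZZ$-combination of integrals over $\C{n,2}$ of elements of $\A{\C{n,2}}$, hence in $\MZVipi[n]$.

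The main obstacle I anticipate is not the arithmetic but the bookkeeping around \emph{which} moduli space and \emph{which} orientation one is integrating over: Kontsevich's configuration spaces fix one boundary point at $0$ and (in the $m=1$-type reductions) need not be connected, so one must be careful that $\C{n,2}$ as defined in \autoref{sec:cycles} really is the space appearing in the formality integrals, possibly up to a finite cover or a quotient by a symmetry that does not affect membership in $\MZVipi$. A secondary subtlety is justifying absolute convergence of $\int \omega_\Gamma$ so that \autoref{thm:disk-period} applies verbatim; this is classically known for the harmonic case, and for the logarithmic and interpolating cases it follows from the boundary analysis of the propagators near the strata of $\bM{}$, but it must be invoked (or proven in \autoref{sec:formality}) rather than taken for granted. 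Once these identifications are pinned down, the corollary is a formal consequence of \autoref{thm:disk-period} together with the elementary observation that complex conjugation sends cross ratios to cross ratios and that $\td\arg$ and $\td\log\abs{\cdot}$ are the imaginary and real parts of $\td\log$.
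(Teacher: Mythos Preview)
Your proposal is essentially correct and follows the paper's approach: verify that the propagators \eqref{eq:angle-t} lie in $\A[1]{\C{n,2}}[t]$ (they are visibly $\ZZ[t]$-combinations of $\tfrac{1}{2\ipi}\tdlog$ of cross ratios), invoke the absolute convergence established in \cite{Rossi2014}, and apply \autoref{thm:disk-period} with $m=2$.

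The paper organizes the argument slightly differently: it treats the interpolating case first (as \autoref{thm:formality-weights}) and then specializes to $t=0$ and $t=\tfrac12$, rather than handling the three cases separately as you do. In particular, the real-part statement for the harmonic case comes from the symmetry $\overline{\alpha_e^t}=\alpha_e^{1-t}$, which forces $c_\Gamma^t = a(\tau)+\iu(1-2t)b(\tau)$ with $\tau=t(1-t)$; at $t=\tfrac12$ the imaginary part drops out automatically. Your route via ``$\td\arg$ is a half-integer combination of $\tdlog$'s and the result is manifestly real'' is equally valid and arguably more direct for the $2^{-2n}$ factor.

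One point where your sketch is genuinely weaker than the paper: your justification for the degree bound $\deg_t c_\Gamma^t\le 2n-2$ (``a parity/top-form argument'') is not the right mechanism. The paper's argument is that the propagator $\alpha^t_{p\to s}$ is independent of $t$ whenever the target $s$ is a boundary vertex (since then $s=\overline{s}$), and any graph with $\omega_\Gamma\ne 0$ must have at least one edge landing at each of the two boundary vertices; hence at least two of the $2n$ factors carry no $t$, giving degree $\le 2n-2$. You should replace your vague sentence with this observation.
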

The interpolating formality morphism reduces to the harmonic and logarithmic ones when $t=\tfrac{1}{2}$ and $t=0$, respectively.  Hence the first two statements in \autoref{thm:formality-MZV} follow from the third.

In particular, for the original harmonic formality morphism of Kontsevich, we see that only even weight MZVs can occur, and moreover that the coefficients appearing at order $\hbar^n$ for $n\le 5$ are all rational.  However, as we saw in the example \eqref{eq:log-canonical} above, one can find conjecturally irrational coefficients in the star product at order $\hbar^6$ and beyond, even after summing up the contributions from all of the individual graphs.

\begin{figure}[t]
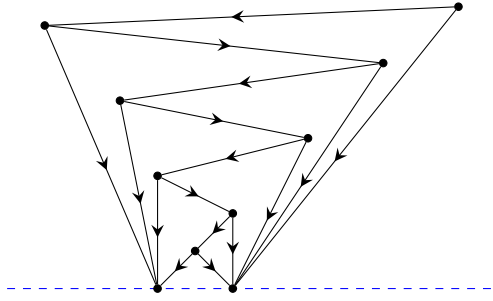

\centering \intrograph%
\caption{A graph appearing at order $\hbar^8$ in the star product.}\label{fig:h8-graph}%
\end{figure}

Let us remark that it is not clear from our theorem which MZVs will appear in the coefficient of a given graph.  For example, consider the graph shown in \autoref{fig:h8-graph}. Our software gives the following value of its coefficient $\KW{\Gamma} \in {\MZVipi}[t]$ in the interpolating formality morphism:
\begin{align*}
\KW{\Gamma}
&=\frac{5040+25059\tmt-23887\tmt^2}{870912000} 
+ \tmt(51+316 \tmt) \frac{\mzv{3}\mzv{5}}{512 {\pi}^{8}}
+ 3\tmt(81+232\tmt) \frac{\mzv{3,5}}{2560{\pi}^{8}} \\[2pt]
&+\iu(1-2t)\tmt\Big(10240(1+ 168 \tmt)\frac{\mzv{3}}{\pi^3}
 - \frac{( 59 +687 \tmt )\mzv{5}}{3072\pi^5}
 +3(247+495 \tmt)\frac{\mzv{7}}{\pi^7} \Big)
\end{align*}
where $\tmt=t(1-t)$.  Note that no rational multiples of $\frac{\mzv{3}^2}{\pi^6}$ appear, although they are allowed by our theorem and conjecturally independent from the other MZVs in this expression.  When $t \in \QQ$ is generic (including the harmonic case $t=\tfrac{1}{2}$) the expression above is conjecturally irrational, but for the logarithmic formality morphism ($t=0$), all of the MZVs drop out and we are left with a rational number:
\begin{align*}
\KW{\Gamma}|_{t=0} =  \frac{1}{172800}.
\end{align*}
On the other hand, there are graphs where the opposite behaviour occurs, e.g.~the harmonic formality coefficient is zero while the logarithmic one involves nontrivial MZVs.  Indeed, rationality is far from the expected generic behaviour for the coefficients of the logarithmic formality morphism.  In fact, we conjecture that \autoref{thm:formality-MZV} is sharp in that case:
\begin{conjecture}
The integrals appearing at order $\hbar^n$ in the logarithmic star product generate $\MZVipi[n]$ as a $\ZZ$-module.
\end{conjecture}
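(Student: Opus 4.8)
Since \autoref{thm:formality-MZV} already gives the inclusion $\subseteq$, the content of the conjecture is surjectivity: every $\ZZ$-module generator of $\MZVipi[n]$ --- for instance each entry of the $n$-th column of \autoref{tab:period-gens}, and more generally each known generator of $\MZV[n]$ and of $\ipi\MZV[n-1]$ after normalization --- should arise as an integer-linear combination of coefficients $\KW{\Gamma}$ with $\Gamma$ of order $\hbar^n$ in the logarithmic star product. The plan is to proceed in two stages: a finite verification at low order, then a constructive argument in general.

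\emph{Base cases.} The accompanying software already tabulates every logarithmic coefficient through $\hbar^6$, so for $n \le 6$ one computes the Smith normal form over $\ZZ$ of the list of values $\KW{\Gamma}$, written in the generating set of \autoref{tab:period-gens}, and checks that the $\ZZ$-span is the whole module; at $n = 6$ this means in particular producing $\tfrac{1}{2903040}$ and integer combinations realizing $\tfrac{\mzv{3}^2}{128\pi^6}$, $\tfrac{\iu\mzv{3}}{384\pi^3}$ and $\tfrac{\iu\mzv{5}}{128\pi^5}$. Beyond settling small cases, the solutions found here point to the families of graphs to use inductively.

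\emph{Constructive step.} For the real part, note that the even-weight MZVs in $\Re\MZVipi[n]$ are Euler's rational multiples of powers of $\pi$, with $\mzv{2k}/(2\ipi)^{2k} = -\tfrac{B_{2k}}{2(2k)!}$, while the sharper generators (like $\tfrac{1}{5760}$, which comes from $\mzv{1,3}=\tfrac14\mzv{4}$, or $\tfrac{1}{(n+1)!} \in \MZVipi[n]$ from \autoref{sec:arithmetic-appendix}) come from depth-reducible MZVs; I would realize these using the ``wheel'' graphs $\WheelIn{k}, \WheelOut{k}$ and their relatives, whose logarithmic coefficients are computable in closed form via the Bernoulli generating function (the mechanism behind the $\hat\Gamma$-genus in the Duflo isomorphism), and then extract the precise denominators using the arithmetic of \autoref{sec:arithmetic-appendix}. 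For the imaginary part and higher depth --- the generators $\tfrac{\iu\mzv{2k+1}}{c\pi^{2k+1}}$, $\tfrac{\mzv{3,5}}{c\pi^8}$, and genuine multiple zeta values at larger $n$ --- I would exploit that the coefficients of the Alekseev--Torossian and Knizhnik--Zamolodchikov associators are period integrals of exactly the type treated here over the same moduli spaces, and are known (Le--Murakami, Furusho) to exhaust $\MZV[n]$ in weight $n$ over $\QQ$; the ``iterated-bracket'' graphs and graphs of the type in \autoref{fig:h8-graph} should realize these. One might also hope for a multiplicativity lemma, $M_{n_1} M_{n_2} \subseteq M_{n_1 + n_2}$ where $M_n$ denotes the $\ZZ$-span of the order-$\hbar^n$ coefficients, which together with $\tfrac12 \in M_1$ would let one bootstrap from small $n$.

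\emph{Main obstacle.} Upper bounds are automatic from \autoref{thm:disk-period}; the real difficulty is the lower bound, i.e.\ proving genuine surjectivity. The family of graph-weight integrals is combinatorially very constrained, and one must rule out an accidental degeneration at some order; moreover one needs surjectivity over $\ZZ$, not merely over $\QQ$, which would require re-running the integration algorithm underlying \autoref{thm:disk-period} with the same denominator bookkeeping that yields \autoref{thm:formality-MZV}, so that the integral structure forcing $\KW{\Gamma} \in \MZVipi[n]$ also permits hitting the $\ZZ$-generators. A tempting shortcut is to show the period map $\Gamma \mapsto \KW{\Gamma}$ respects the coproduct/motivic structure on MZVs and bootstrap from low weight, but this risks becoming entangled with the transcendence conjectures the paper deliberately sidesteps; the pedestrian route --- for each $n$, exhibiting an explicit finite family of graphs and verifying via the algorithm that their coefficients span --- is probably how a first complete proof would go, at the price of a construction whose complexity grows with $n$.
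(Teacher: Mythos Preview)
This statement is a \emph{conjecture} in the paper, not a theorem: the authors do not prove it. They report a computational verification for $n \le 9$ using their software, and offer as heuristic evidence the fact that the logarithmic formality morphism corresponds to the Knizhnik--Zamolodchikov associator, which generates all MZVs. There is therefore no proof in the paper to compare your proposal against.

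Your proposal is a reasonable outline of how one might \emph{attack} the conjecture, and you correctly identify the direction of the difficulty (surjectivity over $\ZZ$, not the containment $\subseteq$). But you should recognize that it is not a proof, and several of your suggested steps are themselves open or imprecise. The ``multiplicativity lemma'' $M_{n_1}M_{n_2}\subseteq M_{n_1+n_2}$ is not obvious and may well be false: the product of two graph coefficients need not be a $\ZZ$-linear combination of graph coefficients of the appropriate order. The appeal to the KZ associator gives only a $\QQ$-linear statement about a homotopy class, which, as the paper itself stresses, says nothing about the arithmetic of the actual coefficients $\KW{\Gamma}$. And your ``pedestrian route'' of exhibiting explicit graphs for each $n$ is exactly what the authors did up to $n=9$; turning this into a proof for all $n$ would require a uniform construction with a closed-form evaluation, which is not currently available beyond the special families (Bernoulli graphs, wheels) that only hit a small part of $\MZVipi[n]$.
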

We have verified this conjecture using our software for $n \le 9$. As further evidence for the conjecture, let us recall that  there is a canonical equivalence between the set of homotopy classes of formality morphisms and the set of Drinfeld associators~\cite{Dolgushev2011,Willwacher2014,Willwacher2015}, as torsors over the Grothendieck--Teichm\"{u}ller group.  Under this equivalence, the logarithmic formality morphism corresponds to the Knizhnik--Zamolodchikov associator, which is the generating series for MZVs.   Note, however, that for a given formality morphism, knowledge of the coefficients of the corresponding associator does not imply any arithmetic information about the formality morphism itself or its associated star product---only the homotopy class. 

\subsection{Outline of the proof and layout of the paper}

The proof of \autoref{thm:disk-period} rests on the development of a suitable function theory on the moduli space of marked disks that facilitates the integration of differential forms.  To develop this theory we relate the problem to a more familiar setting in the theory of period integrals: polylogarithms on the moduli space of genus zero curves, as studied by Brown~\cite{Brown2009} and Goncharov~\cite{Goncharov2004}.

The basic observation is that a marked disk $\D$ can be ``doubled'' by gluing it to its complex conjugate along the boundary, yielding a genus zero curve $\X = \D \cup_{\partial\D} \bD \cong \PP[1]$.  In the process, each marked point in $\partial\D$ maps to a unique marked point in $\X$ while the interior marked points in $\D$ produce pairs of marked points in $\X$ that are interchanged by complex conjugation.  In this way we obtain an embedding
\[
\C{n,m} \hookrightarrow \M{N}
\]
where $\M{N} = \M{0,N+1}(\CC)$ denotes the moduli space of smooth complex curves of genus zero with $N +1$ marked points, where $N = 2n+m$.   The forms on $\C{n,m}$ we wish to integrate are then given by the  restriction of holomorphic forms on $\M{N}$.  We can view $\C{n,m}$ as a connected component of the fixed point set of an antiholomorphic involution on $\M{N}$, as studied by Ceyhan~\cite{Ceyhan2007a,Ceyhan2007}, i.e.~$\C{n,m}$ is a component of the real locus for a real structure on $\M{N}$.  For $n > 0$, this real structure is different from the standard one obtained by viewing $\M{N}$ as a scheme over $\Spec(\ZZ)$.

In the case $n=0$, our theorem reduces to the study of integrals over a cell $\C{0,m} \subset \M{N}$ in the standard real locus, whose relation with MZVs was treated in the seminal work~\cite{Brown2009} of Brown (with $\QQ$-coefficients); see also \cite{Bogner2015,Brown2013,Brown:ModuliSpacesFeynmanIntegrals}.  Brown constructed a natural sheaf of differential forms on $\M{N}$ whose coefficient functions are certain multivalued transcendental functions---the ``multiple polylogarithms''.  The closure of $\C{0,m}$ in the Deligne--Knudsen--Mumford (DKM) compactification $\bM{N}$ is a manifold with corners (a copy of the Stasheff associahedron), and Brown describes an inductive algorithm in which Stokes' theorem is applied in the algebra of polylogarithmic forms to reduce the problem to integration over boundary strata of successively smaller dimension, eventually reducing the integral to special values of polylogarithms at points, which give MZVs.  A key achievement in Brown's work is to explain how certain natural coordinates on the standard real locus of the moduli space can be used to regulate the divergences of polylogarithms along the boundary strata, so that Stokes' theorem can be applied in spite of the apparent singularities.

Our proof of \autoref{thm:disk-period} uses Brown's work in an essential way, but we also introduce several new techniques in order to handle various complications in the geometry that have a nontrivial effect on the arithmetic properties of the integrals.  In particular, our result shows that when $n > 0$, the MZVs that appear have a weight that is always strictly less than the dimension of the integration cycle.  Establishing this weight drop is one of the most subtle aspects of our argument; it involves a careful analysis of the asymptotic behaviour of polylogarithms on our integration cycles. We hope to give a motivic explanation for this weight drop in future work.

The first difficulty we encounter in the proof of \autoref{thm:disk-period} is that the natural dihedral coordinates used in Brown's logarithmic regularization scheme are ill-adapted to our integration cycles. To overcome this, we describe an alternative construction of the sheaf of polylogarithms in \autoref{sec:polylogs}, which emphasizes their interpretation as periods of the fundamental groupoid of the universal curve (the numerical shadow of Goncharov's motivic polylogarithms~\cite{Goncharov2004}).  This allows us to give an exposition that reduces the explicit dependence on coordinates and base points and is therefore more readily applied to our cycles.  The construction also allows us to work systematically with coefficients in $\ZZ$, giving an integral lattice in Brown's $\QQ$-linear sheaf of polylogarithms.

The second challenge is that when $n > 1$, the manifold $\C{n,m}$ is not simply connected, and hence polylogarithms on $\C{n,m}$ may have nontrivial monodromy. This causes problems when we try to apply Stokes' theorem, which can only be applied to differential forms that are globally defined (i.e.~single-valued).  To overcome this, we establish in \autoref{sec:monodromy} some cohomological properties of the sheaf of polylogarithms.  For any cycle $\cyc \subset \M{N}$ that is homotopy equivalent to an iterated fibration by punctured disks in universal curves, we give a constructive algebraic model for the polylogarithms that are single-valued on $\cyc \subset \M{N}$, and we establish the vanishing of the higher sheaf cohomology along the fibres; see \autoref{thm:families-push}.

In \autoref{sec:cycles}, we apply these developments to the case $\cyc = \C{n,m}$.  Our main technical result (\autoref{thm:disk-moduli-pushforward}) describes the functorial behaviour of polylogarithms along the natural projections
\[
f \colon \C{n',m'} \to \C{n,m}
\]
that forget a collection of marked points, which we summarize as follows. 

The differential forms with  polylogarithmic coefficients give a complex of locally constant subsheaves $(\VAr{n,m},\td) \subset (\sforms{\C{n,m}},\td)$ in the de Rham complex.
This complex comes equipped with a canonical increasing weight filtration $\VAr{n,m}[\bullet]$. The constants in $\VAr[0]{n,m}$ are exactly the normalized MZVs and a version of the Poincar\'e lemma states that the inclusion of the constant sheaf $\underline{\MZVipi} \hookrightarrow \VAr{n,m}$ is a quasi-isomorphism of filtered sheaves.   Using the results of \autoref{sec:monodromy}, we show that these sheaves are acyclic along the fibres of any projection $f$, and hence by taking hypercohomology of $(\VAr{n,m},\td)$ we obtain an arithmetic refinement of the classical de Rham isomorphism. 

Since the top-degree cohomology of the fibres vanishes, this refined de Rham isomorphism implies, in particular, that fibrewise polylogarithmic volume forms always have polylogarithmic primitives that are single-valued along the fibres. Hence for a global polylogarithmic form 
\[
\omega \in \VA{\C{n',m'}} \defas \sect{\C{n',m'},\VAr{n',m'}}
\]
we may apply Stokes' theorem to compute the pushforward $f_*\omega \in \forms{\C{n,m}}$, defined by integrating $\omega$ over the fibres (provided that these integrals converge).  However, the primitive may have singularities on the boundary that require a more general regularization scheme than the ones developed in \cite{Alekseev2016,Brown2009}.  This could in principle be done globally on the fibres for any projection $f$, but since the pushforward is functorial, it is actually sufficient to treat the simplest cases, in which a single marked point is forgotten.  A careful analysis of the asymptotics in this case allows us to calculate the integrals using contour deformation and a suitable residue theorem, giving the following:

\begin{theorem*}(see \autoref{thm:disk-moduli-pushforward})
If $\omega \in \VA{\C{n',m'}}[j]$ is such that the fibre integrals defining the pushforward $f_*\omega$ converge absolutely, then
\[
f_*\omega \in \VA{\C{n,m}}[j-k],
\]
where the weight of the form has dropped by
\[
k = \begin{cases}
n'-n+1  & m=0 \textrm{ and }m' > 0 \\
n' - n & \textrm{otherwise}.
\end{cases}
\]
\end{theorem*}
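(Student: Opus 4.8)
The plan is to reduce to the case in which $f$ forgets a single marked point, to evaluate the resulting one- or two-dimensional fibre integral by Stokes' theorem using the fibrewise primitives supplied by \autoref{thm:families-push}, and to track the weight of the boundary contributions.

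First I would set up the reduction. Any forgetful projection $\C{n',m'}\to\C{n,m}$ is a composite of maps each forgetting a single marked point, and since the integral defining $f_*\omega$ converges absolutely, Fubini computes $f_*$ as the corresponding composite of one-point pushforwards; by \autoref{thm:families-push} each intermediate pushforward is again polylogarithmic, so it suffices to prove each single step and add the weight drops. A step forgetting an interior point leaves $m$ unchanged, and I claim it contributes a drop of exactly $1$; a step $\C{a,b+1}\to\C{a,b}$ forgetting a boundary point contributes a drop of $1$ if $b=0$ and $0$ otherwise. Summing over any factorization of $f$ then reproduces the piecewise value of $k$: the interior steps contribute $n'-n$ in total, and among the boundary steps at most one has the form $\C{\cdot,1}\to\C{\cdot,0}$, occurring precisely when $m=0<m'$, which accounts for the extra $+1$ in that case and nothing otherwise.

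Next I would analyze the three single-step cases, in each of which \autoref{thm:families-push} provides a primitive $\eta\in\VA{\C{n',m'}}[j]$ of $\omega$ that is single-valued along the fibres of $f$, so that Stokes' theorem expresses $f_*\omega$ as the integral of a regularization of $\eta$ over the boundary of the compactified fibre. If $f$ forgets an interior point, the fibre is a disk with $n$ punctures, a compact $2$-manifold with corners whose outer boundary is the disk boundary (decomposed into arcs with corners at the boundary marked points) and whose inner boundary is a family of small loops around the remaining interior points; Stokes then returns a sum of residues of $\eta$ around the small loops together with the jumps of $\eta$ across the strata where the forgotten point crosses a boundary marked point (into which the outer-arc contributions telescope), and both are residue-type quantities that lower the weight by one, giving $f_*\omega\in\VA{\C{n,m}}[j-1]$. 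If $f$ forgets a boundary point and $m>0$, the fibre is an arc whose two endpoints lie on two \emph{distinct} collision strata, so $f_*\omega$ is a difference of two regularized limits of a weight-$\le j$ form and no drop is expected. If $f$ forgets a boundary point and $m=0$, the arc runs between the two sides of the \emph{same} collision stratum, so $f_*\omega$ is a single such jump, whose maximal-weight part cancels, leaving $f_*\omega\in\VA{\C{n,m}}[j-1]$.

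The hard part will be making the regularization precise and establishing the sharp weight bound. The primitive $\eta$ is in general singular along the boundary strata of $\bM{N}$ meeting the closure of the fibre --- exactly the loci where some cross ratio degenerates --- so the boundary values above must be defined by a regularization scheme more flexible than those of \cite{Brown2009,Alekseev2016}, and I would have to show both that the regularized limits exist and that they again lie in the sheaf of polylogarithmic forms on the base. The key input is a careful asymptotic expansion of the fibrewise polylogarithms along each degenerating stratum, organized by contour deformation and a residue theorem in the polylogarithm algebra so that only one cross ratio degenerates at a time. The most delicate point --- and the real obstacle --- is to read off the sharp weight drop in the interior and $m=0$ cases from this expansion, i.e.\ to verify that the term carrying a polylogarithm of the maximal weight $j$ contributes nothing after integration around the closed contour, so that only the residue (monodromy) part, of weight $\le j-1$, survives. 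Finally, closedness of the weight filtration on $\VA{\C{n,m}}$ under $\td$ and the inductive iterated-integral structure of the polylogarithm sheaf guarantee that the output is a globally defined polylogarithmic form of weight $\le j-k$ on $\C{n,m}$.
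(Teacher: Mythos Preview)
Your plan is essentially the paper's proof: factor $f$ into single-point forgetful maps, treat the boundary-point case as a period integral along an interval (with \autoref{lem:loop-weight} supplying the extra drop when the target has $|Q|=0$, since the interval then closes up into a loop at $\infty$), and treat the interior-point case by Stokes with a fibrewise single-valued primitive furnished by the acyclicity in \autoref{thm:families-push}.

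Two refinements in the paper sharpen your sketch. First, the primitive is chosen of Dolbeault type $(1,0)$ along the disk fibres (\autoref{lem:1-0-primitive}); this is what makes the inner-circle contributions honest residues of weight $\le j-1$ and, crucially, makes the restriction of the primitive to each outer boundary interval equal to a \emph{holomorphic} hyperlogarithmic form $\beta_q$. Second, the outer-boundary weight drop does not arise from a ``telescope into jumps'' as you describe: rather, the forms $\beta_q$ on adjacent intervals are different analytic branches of one holomorphic form and hence differ only by monodromy terms of weight $\le j-1$, so modulo such terms the entire outer boundary integral collapses to a single closed-loop integral $\int_\gamma \beta_{q_0}$, and \autoref{lem:loop-weight} then gives the drop. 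Your final paragraph correctly names this closed-contour mechanism as the crux, but reaching it requires the $(1,0)$ choice and the branch comparison, together with the asymptotic analysis you anticipate (\autoref{prop:disk-log-expansion} and especially \autoref{prop:half-residue}), which shows that the regularized half-circle contributions at boundary marked points agree with those of the holomorphic $\beta_q$.
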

\autoref{thm:disk-period} above then follows easily by taking $f$ to be the projection to a point; see \autoref{cor:arnold-integral}.  With the general theory in place, we  explain in \autoref{sec:formality} how to apply the theory to the Feynman integrals~\eqref{eq:KW} appearing in the formality morphism.  (This is simply a matter of recalling the definitions.)  We also give a detailed example that illustrates how our algorithm works in practice.  We close the main body of the paper in \autoref{sec:software} with an overview of our software package.  Finally, the appendices contain proofs of some elementary facts about polylogarithms and MZVs that are needed to establish our result with $\ZZ$-coefficients but which we felt might distract from the core narrative of the paper.

In closing, let us remark that our approach to the fibre integration is very much inspired by Schnetz' method of ``single-valued integration'', introduced in his study of graphical functions in $\phi^4$ theory~\cite{Schnetz2014}.  There he gives a residue theorem for calculating  volume integrals of two-forms on $\PP[1]$ whose coefficients lie in Brown's algebra~\cite{Brown2004,Brown2004a} of univariate single-valued polylogarithms.  The works \cite{Brown2018,DDDDDDMPV:MR,SchlottererSchnetz:ClosedStringGenus0,StiebergerTaylor:ClosedSV} build on this idea to study volume integrals over $\M{N}$ itself, with applications to superstring amplitudes.  Note however, that such integrals give rise to a restricted class of MZVs (the ``single-valued MZVs'', see \cite{Brown:SingleValuedMZV}) which exclude, for example, the generator $\mzv{3,5} \in \MZV[8]$ that appears in our work.  This difference stems from the subtle contributions made by the boundaries of disks in our setting.  Our cohomological viewpoint on the cancellation of monodromy in the single-valued integration process seems to be new, and it is straightforward to apply it in other settings, including the volume integrals on $\M{N}$ studied in \emph{op.~cit.} (see, e.g.~\autoref{ex:MS-sv} below).

\paragraph{Acknowledgements:} We thank Anton Alekseev, Francis Brown, Ricardo Buring, Damien Ca\-laque, Ricardo Campos, Pierre Cartier, Cl\'ement Dupont, Lionel Mason, Sergei Merkulov, Maxim Kontsevich, Oliver Schnetz, Pavol \v{S}evera, Thomas Willwacher and Federico Zerbini for their interest in our work, and for helpful conversations and correspondence.
P.~B.\ was supported by a Vacation Bursary from the United Kingdom Engineering and Physical Sciences Research Council (EPSRC). 
E.~P.\ was supported by All Souls College (Oxford) and thanks the Hausdorff Research Institute for Mathematics for hospitality during the program on Periods in Number Theory, Algebraic Geometry and Physics.
B.~P.\ was supported at various stages by EPSRC Grant EP/K033654/1, by Jesus College (Oxford), and by the Seggie Brown Bequest at the University of Edinburgh.  Finally, we thank the Mathematical Institute at the University of Oxford and its IT staff for support and the provision of computing facilities.


\section{Universal polylogarithms}
\label{sec:polylogs}
\subsection{Regularized integrals on genus zero curves}

We begin by recalling some basic constructions involving iterated integrals of one-forms on marked genus zero curves \cite{Brown2013,Hain:LecturesHodgeDeRham,Brown2009,Goncharov:MplMixedTateMotives,LappoDanilevsky,Panzer:PhD}.

\subsubsection{Marked curves}

Throughout this paper, $S$ will be a finite set of size $\abs{S} \geq 2$, and we will denote by $\pInf{S} \defas S \sqcup \sset{\infty}$ its extension by an additional, distinguished element $\infty$.

By an \defn{$S$-marked curve} $\X_S$ we shall mean a smooth complex projective curve $\X$ of genus zero equipped with an embedding $\pInf{S} \injection \X$. The elements of $\pInf{S}$ are called \defn{marked points} and we will typically not distinguish them from their image in $\X$.  We denote by 
\[
	\Xo_S \defas \X \setminus \pInf{S}
\]
the punctured curve obtained by removing all of the marked points.

For every pair $q,r \in S$ of distinct marked points there is a unique isomorphism from $\X$ to the complex projective line $\PP[1]$ that sends $(r,q,\infty)$ to $(0,1,\infty)$. It maps a point $p \in \X$ to the cross ratio $\crossrat{p}{q}{r}{\infty}$ defined by
\begin{equation}
	\crossrat{p}{q}{r}{s}
	\defas
	\frac{(p-r)(q-s)}{(p-s)(q-r)}
	\in \PP[1] 
	,
	\label{eq:cross-ratio}
\end{equation}
and this identifies $S$ with a subset of $\CC$ containing $\sset{0,1}$, so that $\Xo_S \cong \CC\!\setminus\! S$.  In this way we obtain a global holomorphic coordinate $z$ on $\Xo_S$.  We call  such a coordinate chart \defn{admissible}.

\subsubsection{Tangential base points and the fundamental groupoid}
\label{sec:tangents}
If we fix an admissible identification $z\colon \X \rightarrow \PP[1]$, we may equip an $S$-marked curve $\X_S$ with a collection of non-zero tangent vectors at marked points, called its \defn{tangential base points}, as follows.  At every finite marked point $s\in S$, we take the two unit vectors $\pm \cvf{z}$.  At the infinite marked point $\infty$, we take the unit tangent vectors $\pm \cvf{w}$ in the coordinate $w = z^{-1}$.  We will often denote a tangential base point at some $s \in \pInf{S}$ by the symbol $\st$.

\begin{example}
	Suppose $\abs{S}=2$.  Then $\Xo_S$ can be identified with $\PP[1]\setminus\sset{0,1,\infty}$. We get six tangential base points overall, and they form a torsor for the symmetric group $\Aut{\Xo_S} \cong \Perms{3}$ which permutes the marked points $\{0,1,\infty\}$. \qed
\end{example}

Suppose that $\st$ is a tangential base point on $\X_S$.  A \defn{path starting from $\st$} is a smooth path $\gamma\colon [0,1] \to \X$ such that $\gamma'(0) = \st$ and $\gamma(t)$ lies in the punctured curve $\Xo_S$ when $t > 0$. Similarly, we may speak of paths that end at $\st$, where they have final velocity $\gamma'(1) = - \st$, or paths which start and end at tangential base points (but otherwise lie in $\Xo_S$).

There is an obvious notion of homotopy between such paths, and the homotopy classes have a natural composition. The resulting fundamental groupoid
\[
	\FGpd{\X_S} = \frac{\sset{\textrm{paths between tangential base points}}}{\sset{\textrm{homotopies that fix tangential base points}}}
\]
is finitely generated, e.g.\ by a collection of loops around each marked point $s\in S$ together with a collection of paths that connects all of the tangential base points.
The group of loops based at any fixed tangential base point $\st$ is isomorphic to the fundamental group of the punctured curve $\Xo_S$.
\begin{remark}
	Our definition of $\FGpd{\X_S}$ depends on the choice of an admissible coordinate $z \colon \X \cong \PP[1]$, but below we will use $\FGpd{\X_S}$ to define algebras of periods and hyperlogarithms that will turn out to be independent of this choice. \qed
\end{remark}

\subsubsection{Regularized limits}

Let $\X_S$ be an $S$-marked curve with a tangential base point $\st$.  A \defn{neighbourhood of $\st$} is an open set in the punctured curve $\Xo_S$ that contains a sector $\set{z}{0<\abs{z}<\varepsilon\ \text{and}\ \arg z\in (-\varepsilon,\varepsilon)}$ for some $\varepsilon>0$, and some holomorphic coordinate $z$ that identifies $\st$ with $\partial_z|_{z=0}$. 

Suppose that $f$ is a holomorphic function defined in a contractible neighbourhood $\W \subset \Xo_S$ of $\st$.  We say that $f$ has \defn{logarithmic singularities at $s$} if in some (and hence any) holomorphic coordinate $z$ such that $\partial_z|_{z=0} = \st$, the function $f$ admits an expansion of the form
\begin{equation}
	f = \sum_{j=0}^n f_j \cdot (\log z)^j
	\label{eq:log-singularities}
\end{equation}
where the functions $f_j$ are holomorphic at $s$ and $\log z$ denotes the principal branch of the logarithm on $\W$. Such an expansion is necessarily unique. 

If $f$ has logarithmic singularities, we may define its \defn{regularized limit} at the tangential base point $\st$ by formally setting all positive powers of the logarithm to zero, and evaluating at $s$:
\[
	\Reglim{z}{0}  f \defas f_0(s) \in \CC.
\]
The result is independent of the coordinate used in its definition, i.e.~it depends only on the tangential base point.

The operation of taking regularized limits at a tangential base point commutes with sums and products (i.e.~it is an algebra homomorphism from the expressions of the form \eqref{eq:log-singularities} to the complex numbers).  Moreover, it coincides with the actual limit $f$ at $s$ whenever the latter exists; this occurs precisely when $f_j(0)=0$ for all $j>0$. In particular, such convergent limits do not depend on the tangent vector $\st$ and are invariant under switching the branch of $f$ by analytic continuation around a small loop at $s$.

\subsubsection{Differential forms}

The homology classes of simple counter-clockwise loops around the points of $S$ give a canonical basis for the first homology of the punctured curve:
\[
	\hlgy[1]{\Xo_S;\ZZ} \cong \ZZ^S.
\]
Via the residue theorem, the first cohomology is isomorphic to the $\ZZ$-module
\begin{equation}
	\A[1]{\X_S}  \subset \forms[1]{\Xo_S}
	\label{eq:XS-A1}%
\end{equation}
consisting of holomorphic forms on $\Xo_S$ with at most simple poles at the marked points, whose residues are integer multiples of $\tfrac{1}{2\ipi}$. Under this isomorphism, the basis element $\dlog{s} \in \A[1]{\X_S}$ dual to the loop around $s \in S$ is given by the unique form with residue $\tfrac{1}{2\ipi}$ at $s$, residue $\tfrac{-1}{2\ipi}$ at $\infty$ and zero residue at all other marked points. In any admissible coordinate $z$, we have
\begin{equation*}
	\dlog{s} = \invtate\tdlog(z-s) = \invtate\frac{\td z}{z-s}
	.
\end{equation*}
for $s \in S$.  It will also be convenient to adopt the convention that
\[
\dlog{\infty} \defas 0.
\]

\subsubsection{Regularized iterated integrals}

For a ring $R$ and a finite set $S$, we denote by $R S^n$ the free $R$-module generated by the set $S^n$ of words of length $n$ in the alphabet $S$.  We denote by 
\[
R\abrac{S} \defas \bigoplus_{n\ge 0} RS^n
\]
the free $R$-algebra generated by $S$.
In light of the isomorphism $\A[1]{\X_S} \cong \ZZ^S$, the tensor algebra of the $\ZZ$-module $\A[1]{\X_S}$ is canonically identified with the free algebra $\T{S}$.  We use bar notations for tensors:
\[
	[\alpha_1|\cdots|\alpha_n] \defas \alpha_1\otimes \cdots\otimes \alpha_n \in \A[1]{\X_S}^{\tp n}
\]
for $\alpha_1,\ldots,\alpha_n \in \A[1]{\X_S}$, so that the isomorphism $\T[n]{S} \to \A[1]{\X_S}^{\otimes n}$ is given concretely by
\[
	s_1\!\cdots s_n \mapsto [\dlog{s_1}|\cdots|\dlog{s_n}].
\]
If $\gamma$ is a smooth path in the punctured curve $\Xo_S$, recall that Chen's \cite{Chen:IteratedPathIntegrals} \defn{iterated integral} of $\alpha_1,\ldots,\alpha_n \in \A[1]{\X_S}$ along $\gamma$ is defined by
\begin{equation}
	\int_\gamma [\alpha_1|\cdots|\alpha_n] 
	= \int_{0}^1 (\gamma^*\alpha_1)(t_1) \int_0^{t_1} (\gamma^\ast \alpha_2) (t_2)\ \cdots \int_0^{t_{n-1}} (\gamma^*\alpha_n)(t_n).
	\label{eq:II}%
\end{equation}
It depends only on the homotopy class of $\gamma$ and may thus be viewed as a multivalued holomorphic function of the endpoints $\gamma(0)$ and $\gamma(1) \in \Xo_S$.

This function has logarithmic singularities at the marked points \cite[Proposition~2.14]{Goncharov:MplMixedTateMotives}, so for a path $\gamma$ that starts and/or ends at a tangential base point, we may define the regularized integral
\[
	\int_\gamma [\alpha_1|\cdots|\alpha_n] 
	\defas \Rlim_{\epsilon \to 0} \int_{\gamma|_{[\epsilon,1-\epsilon]}} [\alpha_1|\cdots|\alpha_n].
\]
It depends only on the homotopy class of $\gamma$, considered as a path between fixed (tangential) base points. If $\alpha_1$ has no pole at $\gamma(1)$ and $\alpha_n$ has no pole at $\gamma(0)$, then this reduces to the ordinary iterated integral, which converges absolutely in this case and is independent of the choice of tangent vectors at $\gamma(0)$ and $\gamma(1)$. This follows from \cite[Proposition~2.14]{Goncharov:MplMixedTateMotives} or \cite[Lemma~3.3.16]{Panzer:PhD}.

Let us give some examples of regularization in the simplest case $n=1$, where iterated integrals are just ordinary integrals:
\begin{example}\label{ex:convergent-wt1}
	Suppose that $p,q,r,s \in \pInf{S}$ are four distinct marked points and $\gamma$ denotes any path from $p$ to $q$. In any admissible coordinate $z$, the integral
	\begin{equation}
		\int_\gamma (s-r) = \int_{\gamma} [\dlog{s} - \dlog{r}] = \left.\frac{\log(z-s)-\log(z-r)}{2\ipi}\right|^{z=q}_{z=p} = 
		 \frac{\log \crossrat{p}{q}{r}{s}}{2\ipi}
		\label{eq:log-cross-ratio}%
	\end{equation}
	is absolutely convergent and determines a branch of the logarithm of the cross ratio.  The branch depends on the homotopy class of $\gamma$, and a change in homotopy class shifts the expression \eqref{eq:log-cross-ratio} by an integer. \qed
\end{example}

\begin{example}\label{ex:reg-one-end}
	The differential form $\dlog{s}$ has a pole at $s$, and hence its integral along a path $\gamma$ from $\st$ to $q \in S \setminus \sset{s}$ requires regularization.  Let us identify $\X$ with $\PP[1]$ by choosing an admissible chart $z$ such that $\st = \cvf{z}$.  We then calculate
	\begin{equation}
		\int_{\gamma} [\dlog{s}]
		= \Big(\Reglim{z}{q} - \Reglim{z}{s}\Big) \frac{\log(z-s)}{2\ipi}
		= \frac{\log(q-s)}{2\ipi} 
		\label{eq:log-reg0}%
	\end{equation}
where once again the branch of the logarithm is determined by the homotopy class of $\gamma$. Observe that since
	\[
	\log(q-s) = \log (q,1;s,\infty) + \log (1,\infty;s,0),
	\]
	the integral \eqref{eq:log-reg0} can also be expressed in terms of cross ratios. \qed 
\end{example}

\begin{example}\label{ex:reg-half-loop}
In a similar vein, consider a path $\gamma$ that starts at a tangential base point $\st$ and wraps around $s$, traversing a total angle $n\pi$ before returning to $s$ along the tangent vector $(-1)^n \st$. The regularized integral
\[
	\int_\gamma [\dlog{s}] = \frac{n\ipi}{2\ipi} = \frac{n}{2} \in \tfrac{1}{2}\ZZ
\]
measures the change in  $\log(z-s)$ along $\gamma$, which is $\ipi$ for each half-loop. \qed
\end{example}
Because regularized iterated integration along a path $\gamma$ is defined for an arbitrary word in the forms $\A[1]{\X_S}$, we can view it as a $\ZZ$-linear map
\[
	\int_\gamma \colon \T{S} \longrightarrow \CC.
\]
It has the following important properties \cite{LappoDanilevsky,Goncharov:MplMixedTateMotives}:
\begin{enumerate}
\item 	For any two words $u,v \in \T{S}$, we have
\begin{equation}
	\rbrac{\int_{\gamma} u} \cdot \rbrac{\int_\gamma v}  = \int_{\gamma}(u\shuffle v), 
\label{eq:shuffle}%
\end{equation}
	where $\shuffle \colon \T{S} \otimes \T{S} \to \T{S}$ is the commutative shuffle product.

\item 	If $\gamma = \gamma_1\PathConc \gamma_2$ is the concatenation of two paths with $\gamma_1'(0)=\gamma_2'(1)$, then 
\begin{equation}
	\int_\gamma s_1\cdots s_k
	= \sum_{j=0}^k
	\rbrac{ \int_{\gamma_1}s_1 \cdots s_j }
	\rbrac{ \int_{\gamma_2}s_{j+1}\cdots s_k }.
	\label{eq:path-concat}%
\end{equation}

\item The integral along the reversed path is given by
\begin{equation}
	\int_{\gamma^{-1}} s_1\cdots s_n
	= (-1)^n \int_{\gamma} s_n\cdots s_1.
	\label{eq:antipode}%
\end{equation}

\item If $\gamma$ is an identity element of $\FGpd{\X_S}$, i.e.~a contractible loop based at some tangential base point, then 
	\begin{equation}
		\int_{\gamma} \T[>0]{S} = 0.
		\label{eq:II-identity}%
	\end{equation}
\end{enumerate}

If one or both ends of a path $\gamma$ is at a tangential base point, the regularized iterated integral along $\gamma$ can be computed algorithmically using the principle of \defn{shuffle regularization}, as follows.   We explain the case of a path $\gamma \in \FGpd{\X_S}$ from a tangential base point $\pt$ to another tangential base point $\qt$; the case in which one end lies in $\Xo_S$ is treated similarly.

 Let us choose forms $\alpha_p,\alpha_q \in \A[1]{\X_S}$ that are dual to simple loops around $p$ and $q$, i.e.
\[
\begin{pmatrix}
\Res_q \alpha_q &\Res_p \alpha_q \\
\Res_q \alpha_p & \Res_p \alpha_p
\end{pmatrix} = \frac{1}{2\ipi}\begin{pmatrix}
1 & 0 \\ 0 & 1
\end{pmatrix}.
\]
Thus if $p,q \in S$ we may simply take $\alpha_q = \dlog{q}$ and $\alpha_p=\dlog{p}$,  whereas if $q = \infty$, we may choose an additional marked point $r \ne p,q$ arbitrarily and take $\alpha_q = -\dlog{r}$ and $\alpha_p = \dlog{p}-\dlog{r}$.  Then every element $v\in \T[n]{S}$ can be written uniquely in the form
\begin{equation}
	v = \sum_{j+k\leq n} \alpha_q^j \shuffle v_{jk} \shuffle \alpha_p^k
	\label{eq:shreg-expansion}%
\end{equation}
where $v_{jk} \in \T[n-j-k]{S}$ is a linear combination of tensor monomials that start with a form that is smooth at $q$ and end with a form that is smooth $p$.  From the shuffle product formula \eqref{eq:shuffle}, we have
\begin{equation}
	\int_{\gamma} v 
	= \sum_{j+k \leq n}
	\frac{1}{j!} \left( \int_{\gamma} \alpha_q \right)^j
	\frac{1}{k!} \left( \int_{\gamma} \alpha_p \right)^k
	\int_{\gamma} v_{jk}.
	\label{eq:shreg-expansion-int}%
\end{equation}
Note that the last factor is a convergent iterated integral and thus independent of the tangential base points at $p$ and $q$.  In this way we reduce the calculation to convergent iterated integrals, and regularized logarithms.  The latter can then be evaluated directly as in \autoref{ex:reg-one-end} and \autoref{ex:reg-half-loop} above.

\subsection{Periods of marked curves}
\begin{definition}
	Let $\X_S$ be an $S$-marked curve.  The \defn{ring of periods of $\X_S$} is the subring
	\begin{equation}
	\Per{} = \Per{\X_S} \defas \abrac{ \int_{\FGpd{\X_S}} \T{S} } \subset \CC	\label{eq:periods}%
	\end{equation}
	generated by iterated integrals between pairs of tangential base points.
\end{definition}
For every $n\geq 0$, there is a $\ZZ$-submodule $\Per[n]{} \subset \Per{}$ of \defn{periods of weight $n$}, spanned by monomials $\prod_i \int_{\gamma_i} u_i$ where the elements $u_i \in \T[n_i]{S}$ have total weight $\sum_i n_i = n$. By definition, we have $\Per[0]{} = \ZZ$ and $\Per[m]{} \cdot \Per[n]{} \subset \Per[n+m]{}$.  By \autoref{ex:reg-half-loop}, we have that $1 \in \Per[1]{}$, and therefore $\Per[n]{} \subset \Per[n+1]{}$ for all $n$.  Thus the ring $\Per{}$ is filtered by the weight. It receives a homomorphism
\[
	\int_\gamma \colon \T{S} \longrightarrow \Per{}
\]
for any path $\gamma \in \FGpd{\X_S}$, where as above $\T{S}$ is viewed as a commutative algebra equipped with the shuffle product.
By the path concatenation and inversion formulae \eqref{eq:path-concat} and \eqref{eq:antipode}, the ring $\Per{}$ is generated by periods $\int_{\gamma} u$ where $\gamma$ is drawn from a finite collection of paths that generate $\FGpd{\X_S}$. It follows that each $\Per[n]{}$ is a $\ZZ$-module of finite rank.

All periods of weight one can be computed along the lines of Examples \ref{ex:convergent-wt1} through \ref{ex:reg-half-loop} above, giving the following elementary description:
\begin{lemma}
The $\ZZ$-submodule $\Per[1]{} \subset \CC$ is generated by $\tfrac{1}{2}$ and the numbers $\frac{\log(c)}{2 \ipi}$, where $c$ runs over all possible cross ratios of marked points on $\X_S$.
\label{lem:periods-weight-one}%
\end{lemma}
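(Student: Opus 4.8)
The plan is to establish the two inclusions separately. Write $M \subseteq \CC$ for the $\ZZ$-submodule generated by $\tfrac12$ and the numbers $\tfrac{\log c}{2\ipi}$, where $c$ ranges over cross ratios of marked points of $\X_S$ (necessarily of four distinct points, so that $c \in \CC \setminus \sset{0,1}$). The inclusion $M \subseteq \Per[1]{}$ is already contained in the worked examples above: \autoref{ex:reg-half-loop} exhibits $\tfrac12$ as a weight-one period, and for distinct $p,q,r,s \in \pInf S$ and any path $\gamma$ between tangential base points over $p$ and $q$, \autoref{ex:convergent-wt1} gives $\int_\gamma [\dlog s - \dlog r] = \tfrac{\log \crossrat pqrs}{2\ipi} \in \Per[1]{}$ (with $\dlog\infty = 0$ if one of the four points is $\infty$), so every cross ratio contributes a generator.

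For the reverse inclusion $\Per[1]{} \subseteq M$, I would first observe that, since $u \mapsto \int_\gamma u$ is $\ZZ$-linear and $\T[1]{S} = \ZZ S$, it suffices to prove $\int_\gamma \dlog s \in M$ for every $s \in S$ and every $\gamma \in \FGpd{\X_S}$ running between a tangential base point over $p \in \pInf S$ and one over $q \in \pInf S$. Since $\tfrac{\log(z-s)}{2\ipi}$ is a primitive of $\dlog s$ and the regularized limit is an algebra homomorphism, the definition of the regularized iterated integral yields
\[
\int_\gamma \dlog s \;=\; L_q - L_p,
\]
where $L_a$ denotes $\tfrac{1}{2\ipi}$ times the regularized limit of $\log(z-s)$ at the endpoint $a$, taken along $\gamma$ in the branch determined by $\gamma$. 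It therefore suffices to check $L_a \in M$ for $a \in \sset{p,q}$, which I would do by the three-case analysis illustrated by the Examples. If $a \notin \sset{s, \infty}$, then $\log(z-s)$ is regular at $a$, so $2\ipi L_a = \log(a-s)$ in a definite branch; working in an admissible coordinate (so that $0,1$ are marked points), the elementary identity of \autoref{ex:reg-one-end}, together with its minor variants when $s$ is one of the distinguished points $0$ or $1$, writes $\log(a-s)$ as a $\ZZ$-combination of logarithms of cross ratios modulo $\ipi\ZZ$, whence $L_a \in M$. If $a = s$, the regularized limit annihilates the logarithm appearing in the local coordinate and the only residual contribution is the half-turn count of \autoref{ex:reg-half-loop}, so $2\ipi L_a \in \ipi\ZZ$ and $L_a \in \tfrac12\ZZ \subseteq M$. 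If $a = \infty$, then in the coordinate $w = z^{-1}$ one has $\log(z-s) = \log(1 - sw) - \log w$, whose regularized limit at $w = 0$ is $\log 1 = 0$ modulo $\ipi\ZZ$, so again $L_a \in M$. Adding the two endpoint contributions gives $\int_\gamma \dlog s \in M$, which completes the argument.

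The only place I expect genuine work is the endpoint analysis in the second paragraph: making the three cases precise, and in particular treating the marked point $\infty$ uniformly (there $\dlog\infty = 0$, yet $\dlog s$ itself has a pole at $\infty$). The one real subtlety---that the quantity $\log(a-s)$ is only projectively a cross-ratio quantity---is dissolved by working in a chart in which $0$ and $1$ are marked points. Everything else, including the claim that all coefficients land in $\ZZ$ and not merely $\QQ$, is routine bookkeeping that follows from the integrality of residues in $\A[1]{\X_S}$, exactly as in \autoref{ex:convergent-wt1}--\autoref{ex:reg-half-loop}.
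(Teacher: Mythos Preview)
Your argument is correct and follows essentially the same approach the paper indicates: the paper does not give a formal proof but simply states that ``all periods of weight one can be computed along the lines of Examples~\ref{ex:convergent-wt1} through~\ref{ex:reg-half-loop}'', and your two-inclusion argument with the three-case endpoint analysis is exactly the computation those examples encode. Your write-up is in fact more detailed than what the paper provides.
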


In particular, we see that the $\ZZ$-module $\Per[1]{}$ depends only on the isomorphism class of $\X_S$, and not on the admissible chart used to define the tangential base points and the fundamental groupoid. It is also insensitive to the distinction of the label $\infty$, and thus completely determined by the isomorphism class of the complex manifold $\Xo_S$ without any reference to labels.

By shuffle regularization~\eqref{eq:shreg-expansion-int}, every period of higher weight is a polynomial in weight one periods and convergent iterated integrals. Since the latter are also insensitive to the tangential base points, we have the following:
\begin{lemma}
	The weight-filtered ring $\Per{\X_S}$ of periods depends only on the isomorphism class of the punctured curve $\Xo_S$.
	\label{lem:V-XS-canonical}%
\end{lemma}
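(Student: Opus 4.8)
The plan is to show that $\Per{\X_S}$ is generated, as a weight-filtered ring, by two species of numbers that are manifestly invariants of the bare complex manifold $\Xo_S$: the weight-one periods, and the absolutely convergent iterated integrals. Granting this, the two pieces of auxiliary data entering the definition of $\Per{\X_S}$ — the admissible chart $z\colon\X\cong\PP[1]$ used to pin down the tangential base points and hence $\FGpd{\X_S}$, and the singling out of the point $\infty$ among the marked points — both become irrelevant. Two preliminary observations make the ``intrinsic'' claims precise. First, the set of marked points of $\X_S$ is precisely the set of punctures of $\Xo_S$, so the lattice $\A[1]{\X_S}\subset\forms[1]{\Xo_S}$ of \eqref{eq:XS-A1} — holomorphic one-forms on $\Xo_S$ with at most simple poles at the punctures and residues in $\invtate\ZZ$ — makes no reference to the chart or to the distinguished point $\infty$, and therefore depends only on the isomorphism class of $\Xo_S$; the same then holds for the tensor algebra $\bigoplus_{n\ge0}\A[1]{\X_S}^{\tp n}$. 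Second, cross ratios of marked points are cross ratios of punctures of $\Xo_S$, so by \autoref{lem:periods-weight-one} the weight-one module $\Per[1]{}$, generated by $\tfrac12$ and the numbers $\invtate\log c$, depends only on the isomorphism class of $\Xo_S$.

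The crux is to establish the same for convergent iterated integrals, and this is the only substantive point — the rest is bookkeeping. Recall from the discussion following \eqref{eq:II} that if $\alpha_1,\ldots,\alpha_n\in\A[1]{\X_S}$ with $\alpha_1$ regular at $\gamma(1)$ and $\alpha_n$ regular at $\gamma(0)$, then $\int_\gamma[\alpha_1|\cdots|\alpha_n]$ converges absolutely, is independent of the tangent vectors chosen at the endpoints of $\gamma$, and is unaffected when $\gamma$ is modified by a small loop at either endpoint. Consequently such an integral depends only on the forms $\alpha_i$ — elements of the intrinsic lattice $\A[1]{\X_S}$ — and on the homotopy class of $\gamma$ inside $\Xo_S$, with endpoints allowed to range over the punctures. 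All of this is intrinsic to $\Xo_S$, so every convergent iterated integral is an invariant of its isomorphism class. (This is the step where one must be careful that the convergent integrals cannot secretly feel the chart-dependent tangential refinement of the endpoints; it rests entirely on the cited convergence results.)

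It remains to assemble the pieces. By the concatenation and inversion formulae \eqref{eq:path-concat} and \eqref{eq:antipode}, the ring $\Per{\X_S}$ is generated by the periods $\int_\gamma u$ with $u\in\T{S}$ and $\gamma$ running over a finite set of generators of $\FGpd{\X_S}$, which we may take to consist of paths between tangential base points. For each such $\gamma$, shuffle regularization \eqref{eq:shreg-expansion-int} expresses $\int_\gamma u$ as a $\ZZ$-linear combination of products $\tfrac{1}{j!}(\int_\gamma\alpha_q)^j\,\tfrac{1}{k!}(\int_\gamma\alpha_p)^k\int_\gamma v_{jk}$, in which $\int_\gamma\alpha_p,\int_\gamma\alpha_q\in\Per[1]{}$ and $\int_\gamma v_{jk}$ is a convergent iterated integral of a word of length $\deg u - j - k$; moreover each summand has combinatorial weight $j+k+(\deg u - j - k)=\deg u$, matching the weight of $u$. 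Hence $\Per[n]{\X_S}$ is precisely the $\ZZ$-span of all products, of total weight at most $n$, of weight-one periods and of convergent iterated integrals — a filtered subring of $\CC$ built entirely from the data of $\Xo_S$. Since $\Xo_S$ is determined up to isomorphism by its isomorphism class, so is the weight-filtered ring $\Per{\X_S}$, as claimed.
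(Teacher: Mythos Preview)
Your proof is correct and follows essentially the same approach as the paper: reduce via shuffle regularization \eqref{eq:shreg-expansion-int} to weight-one periods (intrinsic by \autoref{lem:periods-weight-one}) and convergent iterated integrals (intrinsic because they are insensitive to the tangential base points). One small wording caveat: your final sentence asserts that $\Per[n]{}$ is the $\ZZ$-span of products of weight-one periods and convergent integrals, but the shuffle-regularization formula involves the factors $\tfrac{1}{j!}(\int_\gamma\alpha_q)^j$, which are not $\ZZ$-polynomials in weight-one periods; this does not affect your argument, since each such factor equals $\int_\gamma\alpha_q^{\otimes j}$ and is therefore both a period and intrinsic (being determined by $\int_\gamma\alpha_q\in\Per[1]{}$), but the ``$\ZZ$-span'' characterization should be softened accordingly.
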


If $\X_S$ is a marked curve and $S' \subset S$, let $\X_{S'}$ be the $S'$-marked curve obtained from $\X_S$ by forgetting all marked points in $S \setminus S'$.  Then we have an obvious inclusion $\Per{\X_{S'}} \subset \Per{\X_{S}}$.  In particular, taking $\abs{S'} = 2$, we see that the periods of any marked curve include the periods of the projective line $\PP[1]$ equipped with the marked points $S' = \sset{0,1}$ and $\infty$.  It is well known that such periods can be written in terms of MZVs, see \cite{LeMurakami:KontsevichKauffman}.  More precisely, we have the following
\begin{theorem}\label{thm:M03-periods}
If $|S| = 2$, then the periods of $\X_S$ are exactly the normalized multiple zeta values, that is, we have an equality of filtered rings
\[
\Per{\X_S} = \MZVipi \subset \CC.
\]
\end{theorem}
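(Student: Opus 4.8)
The plan is to invoke \autoref{lem:V-XS-canonical} to replace $\X_S$ by the projective line marked by $\sset{0,1,\infty}$, so that $\A[1]{\X_S}=\ZZ\dlog{0}\oplus\ZZ\dlog{1}$ and $\Xo_S\cong\CC\setminus\sset{0,1}$, and then to compare the two weight-filtered rings degree by degree. The decisive structural input is that $\Aut{\Xo_S}\cong\Perms{3}$ acts on $\A[1]{\X_S}$ by $\ZZ$-linear automorphisms, permutes the six standard tangential base points simply transitively, and is compatible with composition and reversal of paths; hence it carries $\Per{\X_S}$ to itself while preserving the weight. Combining this with the concatenation and inversion formulae \eqref{eq:path-concat}--\eqref{eq:antipode}, one sees that $\Per[n]{\X_S}$ is spanned, as a $\ZZ$-module, by products of regularized iterated integrals $\int_g u$ of total weight $n$, where $g$ ranges over a fixed finite generating set of $\FGpd{\X_S}$; and up to the $\Perms{3}$-action and forming products it suffices to treat three paths: the straight path $\dch$ along the interval $(0,1)$ (from the tangential base point $\partial_z$ at $0$ to $-\partial_z$ at $1$), a simple positively oriented loop $\sigma$ around $0$ based at $\partial_z$, and the half-loop $\delta$ around $0$ from $\partial_z$ to $-\partial_z$. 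So the inclusion $\Per{\X_S}\subseteq\MZVipi$, with weights not increasing, reduces to showing $\int_g u\in\MZVipi[\snorm u]$ for $g\in\sset{\dch,\sigma,\delta}$ and every word $u$.

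For $\sigma$ and $\delta$ this is a local monodromy computation: since $\dlog{1}$ is regular near $0$, the regularized holonomy of $\sigma$ (resp.\ $\delta$) at the radial tangential base point $\partial_z$ sends every word to $0$ except a power $0^k$ of the letter $0\in S$, with $\int_\sigma 0^k=\tfrac1{k!}$ and $\int_\delta 0^k=\tfrac1{2^k k!}$ (using \autoref{ex:reg-half-loop} and the shuffle relation \eqref{eq:shuffle}). The needed statement $\tfrac1{k!},\tfrac1{2^k k!}\in\MZVipi[k]$ is a consequence of the arithmetic recorded in \autoref{sec:arithmetic-appendix}: it follows from $\tfrac12\in\MZVipi[1]$, the product $\MZVipi[a]\MZVipi[b]\subseteq\MZVipi[a+b]$, and Euler's identity $\mzv{2,\ldots,2}=\pi^{2j}/(2j+1)!$ (with $j$ twos), which places $\tfrac1{2^{2j}(2j+1)!}$ in $\MZVipi[2j]$. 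For $\dch$ I would apply shuffle regularization \eqref{eq:shreg-expansion}--\eqref{eq:shreg-expansion-int} with $p=0$ and $q=1$. A direct computation (as in \autoref{ex:reg-one-end}) shows that $\int_\dch\dlog{0}$ and $\int_\dch\dlog{1}$ lie in $\tfrac12\ZZ$, so $\int_\dch u$ becomes a $\tfrac{1}{2^\bullet\bullet!}\,\ZZ$-linear combination of convergent iterated integrals of lower weight along $\dch$ (those beginning with $\dlog{0}$ and ending with $\dlog{1}$). By Kontsevich's iterated-integral representation of multiple zeta values---equivalently, the classical theory of the Knizhnik--Zamolodchikov equation; see \cite{LeMurakami:KontsevichKauffman}---each such convergent integral is $\pm(2\ipi)^{-m}$ times an MZV of weight $m$, hence lies in $(2\ipi)^{-m}\MZV[m]\subseteq\MZVipi[m]$. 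Multiplying by the coefficients $\tfrac1{2^j j!}\in\MZVipi[j]$ from the previous sentence and using closure of $\MZVipi$ under products yields $\int_\dch u\in\MZVipi[\snorm u]$, completing the proof that $\Per{\X_S}\subseteq\MZVipi$.

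For the reverse inclusion I would read the same computation backwards. Every admissible multi-index $(n_1,\ldots,n_d)$ with $\sum_i n_i=n$ comes from a convergent word $v$ with $\int_\dch v=\pm(2\ipi)^{-n}\mzv{n_1,\ldots,n_d}$, so $(2\ipi)^{-n}\MZV[n]\subseteq\Per[n]{\X_S}$. Moreover $\tfrac12=\int_\delta\dlog{0}\in\Per[1]{\X_S}$, and since $\ipi(2\ipi)^{-n}=\tfrac12\cdot(2\ipi)^{-(n-1)}$ we also get $\ipi(2\ipi)^{-n}\MZV[n-1]\subseteq\Per[1]{\X_S}\cdot\Per[n-1]{\X_S}\subseteq\Per[n]{\X_S}$. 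Adding these gives $\MZVipi[n]\subseteq\Per[n]{\X_S}$ for every $n$, and hence $\Per{\X_S}=\MZVipi$ as filtered rings.

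The conceptual content is classical---over $\QQ$ this is essentially contained in \cite{LeMurakami:KontsevichKauffman}---so I expect the real difficulty to be the $\ZZ$-coefficient bookkeeping: checking that the half-loop values $\tfrac1{2^k k!}$ (and the coefficients produced by shuffle regularization along $\dch$) land in the \emph{exact} lattice $\MZVipi[k]$ rather than merely in $\tfrac1N\MZVipi[k]$, which is precisely where the identities of \autoref{sec:arithmetic-appendix} are needed, together with the fact that $\Per{\X_S}$ and $\MZVipi$ must be matched as filtered---not just plain---rings. A secondary, purely routine point is the reduction of a generating set of $\FGpd{\X_S}$ to the three paths $\dch,\sigma,\delta$ via the $\Perms{3}$-symmetry.
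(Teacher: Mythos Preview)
Your proposal is correct and follows essentially the same route as the paper's proof: reduce via the $\Perms{3}$-symmetry and path concatenation/inversion to integrals along $\dch$ and a half-loop at $0$, use shuffle regularization to separate the convergent integrals along $\dch$ (which give the normalized MZVs via Le--Murakami) from powers of weight-one periods, and then invoke Hoffman's identity $\mzv{2,\ldots,2}=\pi^{2m}/(2m+1)!$ to place $\tfrac{1}{2^k k!}$ in $\MZVipi[k]$. The paper's version is slightly more streamlined---it handles only two generating paths (your $\sigma$ is redundant once $\delta$ is treated) and observes directly from \autoref{lem:periods-weight-one} that $\Per[1]{}=\tfrac{1}{2}\ZZ$ since there are no cross ratios of three points, which immediately gives $\Per[n]{}=\sum_{k}\tfrac{1}{2^k k!}\,(2\ipi)^{-(n-k)}\MZV[n-k]$---but the substance is identical.
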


\begin{proof}
The result is standard when the coefficients are taken in $\QQ$, but a comment is required because we work with integer coefficients.

By path concatenation, path inversion, and automorphisms of $\PP[1]\setminus\{0,1,\infty\}$, we can reduce every period to a polynomial in iterated integrals along two paths: the straight line path $\dch$ from $0$ to $1$, and a half-loop at zero as in \autoref{ex:reg-half-loop}.

The convergent integrals along $\dch$ give the MZVs \cite[Proposition~A.4]{LeMurakami:KontsevichKauffman}:
\[
	\frac{\mzv{n_1,\ldots,n_d}}{(2\ipi)^{n_1+\cdots+n_d}} 
	= \int_{\dch} [
		\underbrace{\dlog{0}|\cdots|\dlog{0}|{\scriptstyle-}\dlog{1}}_{n_d}\!|
		\cdots
		|\!\underbrace{\dlog{0}|\cdots|\dlog{0}|{\scriptstyle-}\dlog{1}}_{n_1}]
	\in \MZVipi[n_1+\cdots+n_d].
\]
Hence by shuffle regularization, it is enough to show that $(\Per[1]{})^n/n! \subset \Per[n]{}$.  But since there are only three marked points, there are no possible cross ratios, and hence $\Per[1]{} = \tfrac{1}{2}\ZZ$ by \autoref{lem:periods-weight-one}.  Therefore
\begin{equation*}
	\Per[n]{} = \sum_{k=0}^n \frac{1}{2^k k!} \frac{\MZV[n-k]}{(2\ipi)^{n-k}}.
\end{equation*}
It remains to observe that $1/(2^{k}k!) \in (\MZV[k]+\ipi\MZV[k-1])/(2\ipi)^k$, so that the sum on the right collapses to the definition \eqref{eq:normalized-mzv} of $\MZVipi$.  This follows from the identity
\begin{equation}
	\frac{\pi^{2m}}{(2m+1)!}
	= \mzv{\underbrace{2,\ldots,2}_{m \textrm{ times}}},
	\label{eq:Hoffman-formula}%
\end{equation}
which is due to Hoffman~\cite[Corollary~2.3]{Hoffman:MultipleHarmonicSeries} and valid for all $m\geq 1$.
\end{proof}
\begin{example}\label{ex:reg-MZV}
	The iterated integral of $w=[\dlog{1}|\dlog{0}|\dlog{0}|\dlog{1}]$ along $\dch$ is divergent, due to the singularity $\int_{0}^{1-\epsilon} [\dlog{1}] = \log\epsilon$ produced by the first letter. But since
	\begin{equation*}
		w = [\dlog{1}] \shuffle [\dlog{0}|\dlog{0}|\dlog{1}]
		- [\dlog{0}|\dlog{1}|\dlog{0}|\dlog{1}]
		-2[\dlog{0}|\dlog{0}|\dlog{1}|\dlog{1}]
	\end{equation*}
	and the iterated integration respects the shuffle product, we conclude that
	\begin{align*}
		\int_{\dch} w
		&= \left(\Reglim{\epsilon}{0} \log \epsilon\right) \int_{\dch} [\dlog{0}|\dlog{0}|\dlog{1}]
		- \int_{\dch}[\dlog{0}|\dlog{1}|\dlog{0}|\dlog{1}]
		- 2\int_{\dch}[\dlog{0}|\dlog{0}|\dlog{1}|\dlog{1}]
		\\
		&= 0\cdot \frac{-\mzv{3}}{(2\ipi)^3}- \frac{\mzv{2,2}}{(2\ipi)^4} - 2\frac{\mzv{1,3}}{(2\ipi)^4}
		 = -\frac{1}{1152}
		 \in \MZVipi[4]
	\end{align*}
	using the identities $\mzv{1,3}=\frac{1}{4}\mzv{4}$, $\mzv{2,2}=\frac{3}{4} \mzv{4}$ and $\mzv{4}=\frac{\pi^4}{90}$ as in \cite{Waldschmidt:LecturesMZV}. \qed
\end{example}
Later on, we will use the fact that periods of closed loops have lower weight:
\begin{lemma}\label{lem:loop-weight}
Let $\X_S$ be any $S$-marked curve, and suppose that $\gamma \in \FGpd{\X_S}$ is a closed loop at some tangential base point of $\X_S$. Then for any word $s_1\cdots s_n \in \T{S}$ of length $n \geq 1$, we have
\[
	\int_\gamma s_1\cdots s_n \in \Per[n-1]{\X_S}.
\]
\end{lemma}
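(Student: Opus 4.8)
\emph{Proof proposal.} The plan is to reduce the lemma to the single case of the elementary loop around one marked point, using the multiplicativity of regularized iterated integrals under path composition, and then to settle that base case by combining the classical description of the local monodromy of iterated integrals at a tangential base point with the arithmetic fact (established in \autoref{sec:arithmetic-appendix}) that $\tfrac{1}{(m+1)!}\in\MZVipi[m]$.

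Call a closed loop $\gamma$ of $\FGpd{\X_S}$ \emph{good} if $\int_\gamma s_1\cdots s_n\in\Per[n-1]{\X_S}$ for every word of length $n\ge 1$. First I would record three stability properties. If $\gamma=\gamma_1\star\gamma_2$ is a composite of good loops sharing a base point, then the concatenation formula \eqref{eq:path-concat} gives $\int_\gamma s_1\cdots s_n=\sum_{j=0}^n\bigl(\int_{\gamma_1}s_1\cdots s_j\bigr)\bigl(\int_{\gamma_2}s_{j+1}\cdots s_n\bigr)$; the extreme terms $j=0$ and $j=n$ lie in $\Per[n-1]{\X_S}$ since $\gamma_1,\gamma_2$ are good, and for $1\le j\le n-1$ the product lies in $\Per[j-1]{\X_S}\cdot\Per[n-j-1]{\X_S}\subseteq\Per[n-2]{\X_S}$, so $\gamma$ is good. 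By the reversal formula \eqref{eq:antipode}, the reverse of a good loop is good, and by \eqref{eq:II-identity} every contractible loop is good. Finally, if $\gamma$ is a good loop at a tangential base point $\tt$ and $\delta$ is any path of $\FGpd{\X_S}$ from $\st$ to $\tt$, then the conjugate $\delta^{-1}\star\gamma\star\delta$, a loop at $\st$, is good: expanding it with \eqref{eq:path-concat}, the terms that integrate a subword of positive length along $\gamma$ land in $\Per[n-1]{\X_S}$ because $\gamma$ is good, while the terms that integrate the empty word along $\gamma$ add up to $\int_{\delta^{-1}\star\delta}s_1\cdots s_n$, which vanishes by \eqref{eq:II-identity}.

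Next I would invoke the structure of $\FGpd{\X_S}$. Since $\Xo_S$ is a projective line with $\abs S+1$ punctures, the loops based at a fixed tangential base point $\st$ form a free group generated by simple loops $\mu_s$ $(s\in S)$ encircling the finite marked points, and each $\mu_s$ may be taken of the form $\delta_s^{-1}\star\lambda_s\star\delta_s$, where $\lambda_s$ is the elementary counterclockwise loop around $s$ based at a tangential base point $\st_s$ at $s$ and $\delta_s$ is a connecting path $\st\to\st_s$. By the stability properties above it is therefore enough to prove that each $\lambda_s$ is good: then each $\mu_s$ is good by conjugation, and hence so is every word in the $\mu_s^{\pm1}$, i.e.\ every closed loop of $\FGpd{\X_S}$.

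To treat $\lambda_s$, choose an admissible coordinate $z$ with $z(s)=0$, so that $\dlog s=\invtate\tfrac{\td z}{z}$ while $\dlog{s'}$ is holomorphic near $s$ for $s'\neq s$. The values $\int_{\lambda_s}s_1\cdots s_n$ are then governed by the local monodromy of iterated integration at the puncture $s$ with respect to $\st_s$, which is well known (see \cite{Goncharov:MplMixedTateMotives,Panzer:PhD}) to be the exponential of the residue; concretely this means $\int_{\lambda_s}s_1\cdots s_n$ equals $\tfrac1{n!}$ when $s_1=\cdots=s_n=s$ and vanishes for every other word. (One checks, for instance, that $\int_{\lambda_s}[\dlog s]=1$ by \autoref{ex:reg-half-loop}, that any word built from the forms $\dlog{s'}$ with $s'\neq s$ integrates to $0$ over $\lambda_s$ since $\lambda_s$ becomes null-homotopic once the puncture at $s$ is filled in, and that the shuffle identity \eqref{eq:shuffle} then forces $\int_{\lambda_s}[\dlog s|\cdots|\dlog s]=\tfrac1{n!}$.) Now $\tfrac1{n!}\in\MZVipi[n-1]$ by the arithmetic fact $\tfrac1{(m+1)!}\in\MZVipi[m]$ of \autoref{sec:arithmetic-appendix}, and $\MZVipi[n-1]=\Per[n-1]{\X_{S'}}\subseteq\Per[n-1]{\X_S}$ for any two-element subset $S'\subseteq S$ by \autoref{thm:M03-periods}; hence $\int_{\lambda_s}s_1\cdots s_n\in\Per[n-1]{\X_S}$, so $\lambda_s$ is good, which finishes the proof.

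I expect the base case to be the only real difficulty: one must pin down the shape of the local monodromy of $\lambda_s$ (that the surviving integrals are exactly the diagonal values $\int_{\lambda_s}[\dlog s|\cdots|\dlog s]=\tfrac1{n!}$) and, more importantly, one must have the nonobvious arithmetic input that $\tfrac1{n!}$ already lies in $\MZVipi[n-1]$ rather than merely in $\MZVipi[n]$ -- it is precisely this that produces the drop in weight. The remainder is formal bookkeeping with the standard properties \eqref{eq:shuffle}--\eqref{eq:II-identity} of regularized iterated integrals.
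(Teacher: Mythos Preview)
Your proposal is correct and follows essentially the same route as the paper: reduce to conjugates of small loops around individual marked points via path concatenation, use that the $j=k$ terms sum to the contractible integral $\int_{\eta^{-1}\star\eta}s_1\cdots s_n=0$, evaluate the small-loop integral as $1/(k-j)!$ on the diagonal and zero otherwise, and invoke the arithmetic fact $\tfrac{1}{m!}\in\MZVipi[m-1]$ from \autoref{sec:arithmetic-appendix}. Your packaging via the notion of ``good'' loops and explicit closure under composition, inversion, and conjugation is slightly more formal than the paper's direct expansion of $\eta^{-1}\star\delta\star\eta$, but the content is identical.
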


\begin{proof}
	The fundamental group based at $\st$ is generated by paths of the form $\gamma = \eta^{-1} \PathConc \delta  \PathConc \eta$, where $\eta$ is a path from $\st$ to some marked point $p \in S$ and $\delta$ is a small loop around $p$. 
	By virtue of the path concatenation and inversion formulas, it suffices to consider the periods along such generators.  By the path concatenation formula, we have
	\begin{align}
		\int_{\gamma} s_1\cdots s_n 
		&= \sum_{0\le j \le k \le n}
		\int_{\eta^{-1}} s_1\cdots s_j \!
		\int_{\delta} \!\! s_{j+1}\cdots s_k \!
		\int_{\eta} s_{k+1} \cdots s_n. \label{eq:triple-concat}
	\end{align}
	Note that the terms with $j=k$ (i.e.~with no integrals over $\delta$) combine to give the iterated integral $\int_{\eta^{-1} \PathConc\eta} s_1\cdots s_n$, which vanishes since $\eta^{-1} \PathConc\eta$ is a contractible loop. Hence it is enough to show that each term with $j < k$ has weight at most $n-1$.  By a direct calculation (see e.g.~\cite[Lemma~2.5]{Goncharov:MplMixedTateMotives}), it follows that
	\begin{equation*}
		\int_{\delta} s_{j+1}\cdots s_k
		= \begin{cases}
			1/(k-j)! & \text{if $s_{j+1}=\cdots = s_k=p$ and}\\
			0            & \text{otherwise.} \\
		\end{cases}
	\end{equation*}
	This lies in $\MZVipi[(k-j)-1] \subseteq \Per[(k-j)-1]{\X_S}$ by \autoref{lem:(n+1)!} in the appendix, and the result follows.
\end{proof}

\begin{example}\label{ex:wt2-loop}
Let us fix a pair $p,q \in S$ of distinct marked points, and an additional marked point $t \in S$.   We compute the period
\[
I \defas \int_{\gamma_t} qp
\]
where $\gamma_t \in \FGpd{\X_S}$ is a loop based at $\tinf$ whose homology class is dual to $\dlog{t}$.    Following the recipe in the proof of \autoref{lem:periods-weight-one}, we write $\gamma_t = \eta^{-1} \PathConc\delta  \PathConc\eta$ for a path $\eta$ from $\tinf$ to $\ttt$ and a small loop $\delta$ around $t$.  The path concatenation expansion then collapses to 
\begin{align*}
I = \int_{\eta^{-1}} q \int_\delta p +\int_{\delta}q \int_\eta p
= \begin{cases}
-\tfrac{1}{2\ipi}\log(p-q) & t=p \\
\tfrac{1}{2\ipi}\log(q-p) & t=q \\
0 & t \ne q,p
\end{cases}
\end{align*}
where the branch of logarithm is determined by the homotopy class of $\eta$ (or equivalently, the homotopy class of $\gamma_t$). \qed
\end{example}

\subsection{Hyperlogarithms}
\label{sec:hlogs}

Suppose that $\X_S$ is a marked curve and $\gamma$ is a path from some tangential base point $\pt$ to a point $z \in \Xo_S$ inside the punctured curve. As we vary $z$, the path $\gamma$ lifts locally to a smooth family of paths $\gamma_z$ from $\pt$ to $z$. Every element $v \in \T{S}$ therefore defines a germ
\begin{equation}
	\hlog{v}(z)
	\defas \int_{\gamma_z} v 
	\label{eq:def-hlog}%
\end{equation}
of a holomorphic, multivalued function on $\Xo_S$.  Such a function is called a hyperlogarithm \cite{LappoDanilevsky:algorithmique,LappoDanilevsky,Poincare1884}. 

Observe that if $\widetilde{\gamma}$ is another path to $z$ starting from some tangential base point $\qt$, then we have $\widetilde\gamma = \gamma \PathConc \xi$ for some path $\xi$ from $\qt$ to $\pt$, and hence by the path concatenation formula, the corresponding hyperlogarithms are related by
\begin{equation}
	\widetilde{\hlog{s_1\cdots s_n}}
	= \hlog{s_1\cdots s_n}
	+ \hlog{s_1\cdots s_{n-1}} \cdot \int_\xi s_n
	+ \hlog{s_1\cdots s_{n-2}} \cdot \int_\xi s_{n-1}s_n
	+ \cdots 
\label{eq:hyperlog-monodromy}%
\end{equation}
The coefficients $\int_\xi s_j\cdots s_n \in \Per{}$ are periods of the curve $\X_S$.  Therefore the $\Per{}$-linear combinations of hyperlogarithms generate a canonically defined subsheaf
\[
	\HL {}  \subset \sO{\Xo_S}
\]
of analytic functions on $\Xo_S$.  It is a locally constant sheaf of algebras over the ring $\Per{}$ of periods.  By definition, any path $\gamma$ as above defines a surjective algebra homomorphism to the stalk
\begin{equation}
	\int_\gamma\colon (\PerT{S},\shuffle) \longrightarrow \HL{z}. \label{eq:hyperlog-germ}
\end{equation}
In fact, this map is also injective \cite[Corollary~5.6]{Brown2009}:
\begin{proposition}\label{prop:hlog-iso}
	The map \eqref{eq:hyperlog-germ} is an isomorphism of $\Per{}$-algebras.
\end{proposition}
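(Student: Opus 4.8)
The plan is to establish injectivity of the map \eqref{eq:hyperlog-germ}, since surjectivity is already built into the definition of $\HL{}$ as the sheaf of $\Per{}$-linear combinations of hyperlogarithms. The key structural fact I would exploit is that iterated integration intertwines the ``deconcatenation'' coproduct on $\T{S}$ with differentiation: for a path $\gamma_z$ from $\pt$ to the variable point $z$, one has the differential equation
\[
\td \hlog{s_1 \cdots s_n} = \hlog{s_1 \cdots s_{n-1}} \cdot \dlog{s_n},
\]
together with the tangential-base-point normalization $\Rlim_{z \to \pt} \hlog{s_1\cdots s_n} = 0$ for all nonempty words (by \eqref{eq:II-identity} and the definition of the regularized limit). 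These two properties together characterize the germ $\hlog{v}$ uniquely, and this is the engine of the argument.

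The main step is to prove that the $\hlog{v}$, as $v$ ranges over words in $\T{S}$, are linearly independent over the field of fractions of $\Per{}$ --- or more precisely, that any $\Per{}$-linear relation among them is trivial. I would argue by induction on the maximal length $N$ of a word appearing in a putative relation $\sum_v c_v \hlog{v} = 0$ with $c_v \in \Per{}$. Applying $\td$ and using the differential equation above, each top-length term $c_v \hlog{v}$ with $\abs{v} = N$, $v = s_1\cdots s_N$, contributes $c_v \hlog{s_1\cdots s_{N-1}}\cdot \dlog{s_N}$. Since the forms $\dlog{s}$ for $s \in S$ are linearly independent in $\A[1]{\X_S}$ (they are dual to the canonical homology basis), one can collect, for each fixed last letter $s \in S$, a shorter relation $\sum_{w : \abs{w}=N-1} c_{ws} \hlog{w} = 0$, to which the inductive hypothesis applies, forcing $c_{ws} = 0$ for all $w$ and all $s$; this kills all length-$N$ terms. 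One then iterates down, and at the bottom (length zero) uses that the constant function $1$ is nonzero. This is essentially the content of \cite[Corollary~5.6]{Brown2009}, whose proof I would follow; the only point requiring care in our $\ZZ$-coefficient setting is that $\Per{}$ is a domain (it sits inside $\CC$), so ``$\Per{}$-linearly independent'' is unambiguous and passing to the fraction field is harmless. The base case of the induction, where one must check that the regularized limits at $\pt$ really do vanish so that no constant survives, uses \eqref{eq:II-identity} and the multiplicativity of $\Rlim$.

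I expect the main obstacle to be the bookkeeping in the inductive descent: after differentiating one obtains a relation among one-forms with hyperlogarithmic coefficients, and one must argue cleanly that the $\dlog{s}$ can be ``separated'' to yield independent shorter relations --- this requires knowing that no hyperlogarithm $\hlog{w}$ of length $<N$ secretly equals a $\Per{}$-combination producing a $\dlog{\infty}=0$ cancellation, which is exactly what the induction hypothesis (applied at length $N-1$) supplies, so the logic must be threaded carefully to avoid circularity. A secondary subtlety is independence of everything from the choice of base point and admissible chart: but this is already handled by \eqref{eq:hyperlog-monodromy}, which shows a change of base point acts on the stalk by a $\Per{}$-linear (in fact invertible, unipotent) automorphism, so injectivity for one choice gives it for all. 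Once linear independence is in hand, injectivity of \eqref{eq:hyperlog-germ} is immediate, and combined with the definition we get the claimed isomorphism of $\Per{}$-algebras.
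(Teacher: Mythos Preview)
The paper does not give a proof here; it simply cites \cite[Corollary~5.6]{Brown2009} for injectivity, and your sketch is precisely that standard argument, so there is nothing to compare.

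Two minor corrections. First, with the paper's conventions (see \eqref{eq:hlog-diff}) differentiation strips the \emph{first} letter,
\[
\td \hlog{s_1\cdots s_n} = \hlog{s_2\cdots s_n}\,\dlog{s_1},
\]
not the last; this is purely cosmetic and your induction works verbatim after swapping ``last letter'' for ``first letter''. Second, after differentiating $\sum_v c_v \hlog{v}=0$ and separating by the linearly independent forms $\dlog{s}$, the resulting relations involve hyperlogarithms of \emph{all} lengths $\le N-1$, not just length $N-1$; the inductive hypothesis then forces every $c_v$ with $|v|\ge 1$ to vanish in one stroke, and the empty word is handled since $1\ne 0$ in $\Per{}\subset\CC$. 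There is no need to ``iterate down'', and the tangential-limit normalization you mention is not actually used in the argument.
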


Note that since $\Per{}$ is a filtered ring, the free algebra $\PerT{S}$ carries two natural filtrations.  More precisely, let us consider the $\ZZ$-submodule
\[
W_n^k\PerT{S} \defas \sum_{j \le k} \Per[n-j]{} \cdot S^{j} \subset \PerT{S}
\]
consisting of elements of total weight $n$ that involve monomials in the free algebra of length at most $k$.  Using \autoref{prop:hlog-iso}, we can transfer these filtrations to the sheaf $\HL{}$, giving locally constant subsheaves
\[
\HL[n]{}[k]  \subset \HL{}
\]
such that $\HL[n]{}[k] \subset \HL[n+1]{}[k] \cap \HL[n]{}[k+1]$.  It follows from \eqref{eq:hyperlog-monodromy} that these filtrations are independent of the path $\gamma$ used to define the isomorphism of $\PerT{S}$ with the stalks of $\HL{}$.  We refer to the filtration $\HL[\bullet]{}$ and $\HL{}[\bullet]$ as the filtrations by the \defn{total weight} and the \defn{relative weight}, respectively.  
We denote by
\begin{equation}
\gr \HL{} = \bigoplus_{k\ge 0} \HL{}[k]/\HL{}[k-1] \cong \underline{\PerT{S}}
\label{eq:hlog-gr}%
\end{equation}
the constant sheaf obtained by taking the associated graded with respect to the relative weight filtration.

The monodromy of a hyperlogarithm is computed by \eqref{eq:hyperlog-monodromy} as the special case when $\xi$ is a closed loop based at $\pt = \qt$.
The weight drop for the corresponding periods (\autoref{lem:loop-weight}) implies that the monodromy representation is unipotent with respect to both filtrations:
\begin{lemma}\label{lem:mon-unipotent}
	If $\hlog{} \in \HL[n]{z}[k]$ is the germ of a hyperlogarithm at $z\in\Xo_S$, then its analytic continuation $\gamma \cdot \hlog{}$ along a loop $\gamma \in \fg{\Xo_S,z}$ has the form
\[
	\gamma \cdot \hlog{} \equiv \hlog{} \mod  \HL[n-1]{z}[k-1].
\]
\end{lemma}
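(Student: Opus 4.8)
The plan is to reduce the statement to the monodromy formula \eqref{eq:hyperlog-monodromy} and then apply the weight drop for periods of closed loops (\autoref{lem:loop-weight}). First I would fix a germ $\hlog{} \in \HL[n]{z}[k]$ and, using the isomorphism \eqref{eq:hyperlog-germ} of \autoref{prop:hlog-iso}, write $\hlog{} = \hlog{v}$ for some $v \in W_n^k\PerT{S}$. By linearity it suffices to treat the case where $v$ is of the form $c \cdot s_1\cdots s_\ell$ with $c \in \Per[n-\ell]{}$ and $\ell \le k$, and since the period coefficient $c$ is unaffected by analytic continuation, we may further assume $c=1$ and $v = s_1\cdots s_\ell$ with $\ell \le \min(n,k)$ — equivalently, reduce to $\hlog{} \in \HL[\ell]{z}[\ell]$ with $\ell \le k$, $\ell \le n$. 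The point is that the total and relative weights behave compatibly enough that proving the congruence modulo $\HL[\ell-1]{z}[\ell-1]$ in this reduced case yields the congruence modulo $\HL[n-1]{z}[k-1]$ in general.

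Next I would apply \eqref{eq:hyperlog-monodromy} with $\xi = \gamma$ a closed loop based at $\pt$. This gives
\[
\gamma \cdot \hlog{s_1\cdots s_\ell} = \hlog{s_1\cdots s_\ell} + \sum_{j=1}^{\ell} \hlog{s_1\cdots s_{\ell-j}} \cdot \int_\gamma s_{\ell-j+1}\cdots s_\ell.
\]
The term $j=0$ reproduces $\hlog{}$, so it remains to check that every term with $1 \le j \le \ell$ lies in $\HL[\ell-1]{z}[\ell-1]$. The factor $\hlog{s_1\cdots s_{\ell-j}}$ is a hyperlogarithm of relative weight $\ell-j \le \ell-1$ and total weight $\ell-j$; by \autoref{lem:loop-weight}, the period $\int_\gamma s_{\ell-j+1}\cdots s_\ell$ has weight at most $j-1$ since the word $s_{\ell-j+1}\cdots s_\ell$ has length $j \ge 1$. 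Multiplying, the product has total weight at most $(\ell-j) + (j-1) = \ell-1$ and relative (length) weight at most $\ell-j \le \ell-1$, so it lies in $\HL[\ell-1]{z}[\ell-1]$ as required. This is where \autoref{lem:loop-weight} is the essential input: without the weight drop for loop periods one would only get the congruence modulo $\HL[n-1]{z}[k-1]$ replaced by the weaker modulo $\HL[n]{z}[k-1]$, which is not enough.

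The only genuine obstacle is the bookkeeping in the first paragraph: making precise that it is enough to verify the congruence for a single homogeneous word with trivial period coefficient, and that the two filtrations interact correctly under the reduction. The cleanest way to handle this is probably to observe directly that $\HL[n]{z}[k]$ is spanned over $\ZZ$ by elements $c\cdot\hlog{s_1\cdots s_\ell}$ with $c\in\Per[n-\ell]{}$ and $\ell\le\min(n,k)$, and that the monodromy action is $\Per{}$-linear, so the computation above applied to $\hlog{s_1\cdots s_\ell}$ alone suffices; multiplying through by $c$ only raises total weight by $n-\ell$ on both the error term (landing it in $\HL[n-1]{z}[k-1]$) and on $\hlog{}$ itself. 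With that observation in place the proof is just the one-line application of \eqref{eq:hyperlog-monodromy} and \autoref{lem:loop-weight} sketched above.
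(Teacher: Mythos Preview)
Your proposal is correct and follows exactly the approach the paper indicates: the lemma is stated immediately after the remark that the monodromy is computed by \eqref{eq:hyperlog-monodromy} with $\xi$ a closed loop, together with the weight drop of \autoref{lem:loop-weight}, and your write-up simply spells out this two-line argument with the necessary bookkeeping for the bifiltration. One small notational wrinkle worth cleaning up: the loop $\gamma$ in the statement is based at $z$, whereas the $\xi$ in \eqref{eq:hyperlog-monodromy} to which \autoref{lem:loop-weight} applies is the conjugate loop based at the tangential base point $\pt$; this does not affect the argument but should be said explicitly.
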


It follows immediately from the definition \eqref{eq:II} of iterated integrals that the de Rham differential acts on hyperlogarithms by
\begin{equation}
	\td \hlog{s_1\cdots s_n} = \hlog{s_2\cdots s_n} \cdot \dlog{s_1} \in \HL{}\otimes \A[1]{\X_S}
	\label{eq:hlog-diff}%
\end{equation}
so that we have a subcomplex
\[
	\VAr{\X_S} \defas \HL{} \otimes \A{\X_S} \subset \forms{\Xo_S}
\]
of the holomorphic de Rham complex of the punctured curve. It is filtered by the total and relative weights, where elements of $\A[1]{\X_S}$ are assigned weight one.
From \eqref{eq:hlog-diff} it is clear that the stalk complex of $\VAr{\X_S}$ at any point admits a contracting homotopy defined on generators by
\begin{align}
L_{s_1\cdots s_n} \dlog{s} \mapsto L_{ss_1\cdots s_n}. \label{eq:hyperlog-primitive}
\end{align}
We therefore have the following refinement of the Poincar\'e lemma:
\begin{lemma}\label{lem:hlog-Poincare}
	The inclusion $\underline{\Per{}} \hookrightarrow \VAr{\X_S}$ of the constants is a quasi-isomorphism of sheaves of filtered differential graded algebras.
\end{lemma}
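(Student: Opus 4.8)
The plan is to verify that the contracting homotopy displayed in~\eqref{eq:hyperlog-primitive} does the job, working stalk by stalk. Fix a point $z \in \Xo_S$ and a path $\gamma$ from a tangential base point to $z$, so that by \autoref{prop:hlog-iso} we may identify the stalk of $\HL{}$ at $z$ with $\PerT{S}$ and hence the stalk of $\VAr{\X_S}$ with $\PerT{S} \otimes \A{\X_S} \cong \PerT{S} \otimes \Lambda^\bullet(\ZZ^S)$, a complex concentrated in degrees $0$ and $1$ since $\Xo_S$ is a curve. Under this identification, \eqref{eq:hlog-diff} says that the differential sends the degree-zero element $s_1\cdots s_n$ to $\sum_{s\in S}(s_2\cdots s_n)\otimes \dlog{s}$ — that is, it is a ``deconcatenation of the first letter'' map — while the homotopy $h$ of~\eqref{eq:hyperlog-primitive} sends $(s_1\cdots s_n)\otimes\dlog{s}$ in degree one to the degree-zero element $s s_1\cdots s_n$, and is zero on $(s_1\cdots s_n)\otimes\dlog{\infty}$ by the convention $\dlog{\infty}=0$. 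First I would check that $h$ is well defined and $\Per{}$-linear; this is immediate because $\{\dlog{s}\}_{s\in S}$ is a basis of $\A[1]{\X_S}$ and $\{s_1\cdots s_n\}$ a basis of $\PerT{S}$ over $\Per{}$.

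Next I would compute $\td h + h \td$ on homogeneous elements. In degree zero, for $w = s_1\cdots s_n$ with $n \ge 1$ we have $h\td w = h\big(\sum_{s}(s_2\cdots s_n)\otimes\dlog{s}\big) = \sum_{s\in S} s\, s_2 \cdots s_n$; meanwhile $\td h w = 0$ since $h$ vanishes in degree zero. On the empty word (the constant $1$) both terms vanish. So $(\td h + h\td)(s_1\cdots s_n) = s_1\cdots s_n + \sum_{s \ne s_1} s\,s_2\cdots s_n$, which is not quite the identity — so I would instead normalize $h$ by the standard simplicial trick, or more cleanly observe that the correct contracting homotopy is the one that re-attaches the first letter \emph{only}: $h\big((s_1\cdots s_n)\otimes \dlog{s}\big) = (s\,s_1\cdots s_n)$, and the differential out of degree zero must be paired against it correctly. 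The right bookkeeping is: $\td(s_1\cdots s_n) = (s_2\cdots s_n)\otimes\dlog{s_1}$, reading off $s_1$ as the \emph{label of the form}; i.e.\ the differential records the first letter as a one-form rather than summing over all $s$. With that reading of~\eqref{eq:hlog-diff}, in degree zero $h\td w = h\big((s_2\cdots s_n)\otimes\dlog{s_1}\big) = s_1 s_2\cdots s_n = w$ for $n\ge 1$, and on $n=0$ both sides vanish, matching the augmentation onto $\underline{\Per{}}\subset\HL{}$ in degree zero. In degree one, $\td h\big((s_1\cdots s_n)\otimes\dlog{s}\big) = \td(s\,s_1\cdots s_n) = (s_1\cdots s_n)\otimes\dlog{s}$, while $h\td$ of a one-form is zero for degree reasons; so $\td h + h\td = \mathrm{id}$ in degree one. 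Hence $\td h + h\td = \mathrm{id} - \iota\circ p$ where $p$ projects onto the constants and $\iota$ is the inclusion, proving that $\underline{\Per{}}\hookrightarrow \VAr{\X_S}$ is a stalkwise quasi-isomorphism, hence a quasi-isomorphism of sheaves.

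Finally I would record the two compatibility statements in the claim. The homotopy $h$ strictly preserves the total weight (re-attaching a letter adds one to the length but the coefficient ring weight is unchanged, and a one-form carries weight one) and it maps relative weight $\le k$ into relative weight $\le k+1$ while lowering the form degree by one, exactly as the differential raises it; so $\iota$, $p$, and $h$ are all filtered maps and the quasi-isomorphism respects both filtrations. Multiplicativity (``of differential graded algebras'') follows because $\VAr{\X_S}$ is a subalgebra of $\forms{\Xo_S}$ under wedge product, $\underline{\Per{}}$ sits inside it as a subalgebra, and $\iota$ is an algebra map by construction; one does not need $h$ to be a derivation. The only genuinely delicate point is the one flagged above — getting the sign/indexing conventions in~\eqref{eq:hlog-diff} and~\eqref{eq:hyperlog-primitive} to line up so that $h$ is an honest contracting homotopy onto the constants rather than onto some larger subspace — and this is purely a matter of careful bookkeeping with the bar notation, not a substantive obstacle.
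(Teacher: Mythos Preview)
Your proposal is correct and follows exactly the paper's approach: verify that the map~\eqref{eq:hyperlog-primitive} is a contracting homotopy onto the constants, stalk by stalk, and note that it respects the weight filtration. Your initial misreading of~\eqref{eq:hlog-diff} as a sum over $s\in S$ (rather than the single term $\hlog{s_2\cdots s_n}\dlog{s_1}$) is corrected in the next paragraph, and once corrected the verification of $\td h + h\td = \mathrm{id} - \iota\circ p$ is exactly right; you could simply delete the false start and present the clean computation directly.
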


Hyperlogarithms based at a tangential base point $\st=\cvf{z}$ have logarithmic singularities at the punctures $\pInf{S}$. The expansion \eqref{eq:log-singularities} at the base point $\st$ can be obtained explicitly using  shuffle regularization as in \eqref{eq:shreg-expansion}.  Indeed, we may write any $v \in \T{S}$ as a shuffle polynomial $v = \sum_k \dlog{s}^{\tp k} \shuffle v_k$, where $v_k \in \T{S}$ is a linear combination of monomials whose last factor has no singularities at $s$. Therefore
\begin{equation}
	\hlog{v}(z)
	= \sum_{k=0}^n \frac{\hlog{v_k}(z)}{k!} \rbrac{\frac{\log(z-s)}{2\ipi}}^k
	\label{eq:hlog-zero-expansion}%
\end{equation}
where the hyperlogarithms $L_{v_k}$ are holomorphic at $s$.

\subsection{Universal polylogarithms}

We now recall the construction of polylogarithms on the moduli space of marked curves, following Brown~\cite{Brown2009} and Goncharov~\cite{Goncharov2004}.  The main results in this section are due to them, although our exposition differs.  The primary technical difference is that we work with coefficients in $\ZZ$ rather than $\QQ$, so we justify at some points why this is possible.

\subsubsection{Moduli spaces and the universal curve}
Let us denote by  $\M{S}$ the moduli space parametrizing isomorphism classes of $S$-marked curves.  Since the set of marked points is $\pInf{S} = S \cup \sset{\infty}$, this is an abbreviation for the standard notation $\M{0,S_\infty}$ or $\M{0,N}$, where $N = |S|+1$.  Recall that $\M{S}$ is a complex manifold of dimension $|S|-2$ and it comes equipped with a universal family of marked curves
\[
\uX{S} \to \M{S}
\]
such that the fibre $\uX{S,y}$ over any point $y \in \M{S}$ is a marked curve whose isomorphism class is $y$.  We denote by $\uXo{S} \subset \uX{S}$ the universal punctured curve. 

If $S \subset T$ is an inclusion of finite sets, there is a fibration $\M{T} \to \M{S}$ defined by forgetting the marked points labelled by $T \setminus S$.  The fibres have dimension $|T\setminus S|$. In particular if $|T|=|S|+1$, then this fibration is canonically identified with the universal punctured curve $\uXo{S}$.

\subsubsection{Cross ratios and differential forms}
For any quadruple $i,j,k,l \in \pInf{S}$ of markings, the cross ratio of the corresponding marked points gives a natural function
\begin{equation*}
	f= \crossrat{i}{j}{k}{l}\colon
	\M{S} \longrightarrow \CC \setminus\!\sset{0,1}.
\end{equation*}
  If $T =S \sqcup \sset{z}$ and we choose an embedding $\sset{0,1} \hookrightarrow S$ then the cross ratios of the form $(z,1;0;\infty)$ give admissible charts on the fibres of the universal curve $\uX{S} \supset \M{T}\to \M{S}$, and we obtain a trivialization
\[
	\uX{S} \cong \M{S} \times \PP[1]
\]
of the underlying fibration, so that the marked points in $\uX{S}$ are indicated by the cross ratios $\crossrat{s}{1}{0}{\infty}\colon \M{S} \to \PP[1]$ for each $s\in \pInf{S}$.

The logarithmic differentials of cross ratios generate a subring we denote by
\[
\A{\M{S}} \defas \abrac{\frac{\tdlog f}{2\ipi}\,\middle|\, f \textrm{ is a cross ratio}} 
\subset \forms{\M{S}},
\]
with the constants $\A[0]{\M{S}} \defas \ZZ \subset \forms[0]{\M{S}}$.

\begin{example}
	If $\abs{S}=2$, then $\M{S}$ is a point and $\A{\M{S}} = \ZZ$.  \qed
\end{example}

\begin{example}
If $\abs{S}=3$, any cross ratio $z \colon \M{S} \to \PP[1]\setminus\{0,1,\infty\}$ is an isomorphism.   The other five independent cross ratios can be expressed in terms of $z$ as $z^{-1},1-z,(1-z)^{-1},(z-1)z^{-1}$ and $z(z-1)^{-1}$, so that 
	\[
	\A[1]{\M{S}} =  \ZZ \frac{\td z}{2 \ipi z} \oplus \ZZ \frac{\td z}{2\ipi(z-1)}
	\]
	is the $\ZZ$-module of forms on $\PP[1]\setminus\{0,1,\infty\}$ considered above.\qed
\end{example}
As explained by Getzler~\cite[Section~3.8]{Getzler1995} and Brown~\cite[Section~6.1]{Brown2009}, Arnold's calculation~\cite{Arnold:ColoredBraid} of the cohomology of the planar configuration spaces implies that $\A{\M{S}}$ projects isomorphically onto the integral cohomology ring of the moduli space:
\begin{equation}
	\A{\M{S}}
	\cong
	\cohlgy{\M{S};\ZZ},
	\label{eq:M-cohomology}%
\end{equation}
and this allows for the explicit presentation of $\A{\M{S}}$ in terms of generators and relations (called Arnold's relations).

In particular, if $|T| = |S|+1$ and we identify $\M{T}$ with the universal punctured curve, then the restriction of an element $\alpha \in \A[1]{\M{T}}$ to a fibre $y$ gives an element $\alpha|_{\uX{S,y}} \in \A[1]{\uX{S,y}} \cong \ZZ^S$.  Done in families, this gives rise to an exact sequence
\[
\xymatrix{
0 \ar[r]& \A[1]{\M{S}} \ar[r]& \A[1]{\M{T}} \ar[r] & \A[1]{\uX{S}} \ar[r] & 0
}
\]
of free $\ZZ$-modules, where $\A[1]{\uX{S}} \cong \ZZ^S$ is the natural $\ZZ$-module of relative differentials on the universal curve.  A choice of splitting of this exact sequence gives an isomorphism of graded $\ZZ$-modules
\begin{equation}
\A{\M{T}} \cong \A{\M{S}} \otimes \A{\uX{S}}
\label{eq:universal-relative}%
\end{equation}
which one can use inductively to construct a basis for $\A{\M{S}}$. 

More generally, for an arbitrary projection $f \colon \M{T} \to \M{S}$ we may define the relative differentials
\[
\A{\M{T}/\M{S}} = \frac{\A{\M{T}}}{f^*\A[1]{\M{S}}\wedge \A{\M{T}}}
\]
and we obtain a non-canonical isomorphism
\begin{equation}
\A{\M{T}} \cong \A{\M{S}} \otimes \A{\M{T}/\M{S}} \label{eq:arnold-tensor}
\end{equation}
of $\ZZ$-modules that becomes canonical once we pass to the associated graded of the usual fibrewise Hodge filtration
\[
F^j \A{\M{T}} = f^*\A[j]{\M{S}}\wedge\A{\M{T}}.
\]

\subsubsection{Polylogarithms on moduli spaces}
Given an embedding $\sset{0,1} \injection S$, we obtain an admissible chart on each fibre of the universal curve $\uX{S}$, and the corresponding tangential base points give  global sections of the tangent bundle $\tb{\uX{S}}$.  Since the fibration is topologically locally trivial, the fundamental groupoids of the fibres assemble into a locally constant sheaf $\FGpd{\uX{S}}$ of groupoids on $\M{S}$. Given a local section $\gamma \in \FGpd{\uX{S}}$ and an element $v \in \T{S}$, we can compute the corresponding period integrals in the fibres to obtain a holomorphic function $\int_\gamma v$ on $\M{S}$.

\begin{definition}
	The \defn{sheaf of polylogarithms on $\M{S}$} is the subsheaf of rings
	\[
	\Vr{S} \defas \abrac{ \int_{\FGpd{\uX{S}}} \T{S} } \subset \sO{\M{S}}
	\]
	generated by period integrals on the universal curve.
\end{definition}

The sheaves $\Vr{S}$ are locally constant by analytic continuation and come equipped with an exhaustive increasing weight filtration
\[
\underline{\ZZ} = \Vr[0]{S}\subset\Vr[1]{S} \subset \cdots
\]
defined as above for periods. 
  By \autoref{lem:periods-weight-one}, $\Vr[1]{S}$ is the subsheaf of $\sO{\M{S}}$ generated by the constant $\tfrac{1}{2}$ and the branches of the functions $\tfrac{1}{2\ipi}\log f$ where $f$ is a cross ratio. One can write all germs of higher weight as special functions known as multiple polylogarithms \cite[Section~5.4]{Brown2009} but we will not need the explicit details of this description.  Note that while the construction of $\Vr{S}$ relies on a choice of labels $\{0,1\}\hookrightarrow S$, the same argument as in \autoref{lem:V-XS-canonical} shows that the subsheaf $\Vr{S} \subset \sO{\M{S}}$ is, in fact, independent of this choice.  Similarly, the argument in \autoref{lem:loop-weight} shows that periods of the form $\int_\gamma s_1\cdots s_n \in \Vr{S}$ have weight $n-1$ if $\gamma$ is a loop.

\begin{lemma}
The monodromy representation of $\fg{\M{S}}$ on the stalks of $\Vr{S}$ is unipotent with respect to the weight filtration $\Vr[\bullet]{S}$.
\end{lemma}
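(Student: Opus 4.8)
The plan is to adapt the proof of \autoref{lem:mon-unipotent} to families over $\M{S}$. Since $\Vr{S}$ is a locally constant sheaf of rings, a loop $\delta \in \fg{\M{S}}$ based at a point $y$ acts on the stalk of $\Vr{S}$ at $y$ by a ring automorphism $\delta_*$. First I would recall how $\delta_*$ is computed: the stalk is generated as a ring by the germs $\int_\gamma v$, where $\gamma$ ranges over local sections of the locally constant groupoid sheaf $\FGpd{\uX{S}}$ near $y$ and $v \in \T{S}$, with $\int_\gamma v$ of weight at most the length of $v$; and since the tangential base points are monodromy-invariant global sections of the tangent bundle of $\uX{S}$ (and the forms $\dlog{s}$ are likewise monodromy-invariant), the monodromy transports the germ $\int_\gamma v$ to $\int_{\delta_*\gamma} v$, where $\delta_*\gamma$ is the monodromy transport of the section $\gamma$ of $\FGpd{\uX{S}}$. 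Crucially, $\delta_*\gamma$ is a path with the \emph{same} source and target (tangential) base points $\pt,\qt$ as $\gamma$.

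Next I would use this to reduce the weight. Because $\delta_*\gamma$ and $\gamma$ have the same endpoints, the composite $\Xi := (\delta_*\gamma)\PathConc\gamma^{-1}$ is a local section of the sheaf of fundamental groups based at $\qt$ --- a ``loop'' --- and $\delta_*\gamma = \Xi\PathConc\gamma$. For a word $s_1\cdots s_n$ of length $n \ge 1$, the path-concatenation formula \eqref{eq:path-concat} then gives
\[
	\int_{\delta_*\gamma} s_1\cdots s_n
	= \int_\gamma s_1\cdots s_n
	+ \sum_{j=1}^{n} \Big(\int_{\Xi} s_1\cdots s_j\Big)\Big(\int_{\gamma} s_{j+1}\cdots s_n\Big).
\]
In each term of the sum, the first factor is a period of a loop and hence has weight at most $j-1$ (this is the weight drop for loops recorded after the definition of $\Vr{S}$, i.e.\ the family version of \autoref{lem:loop-weight}), while the second factor has weight at most $n-j$; as the weight filtration is multiplicative, the product lies in $\Vr[n-1]{S}$. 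Therefore $\delta_*\big(\int_\gamma s_1\cdots s_n\big) \equiv \int_\gamma s_1\cdots s_n \pmod{\Vr[n-1]{S}}$.

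Finally I would conclude formally. The ring automorphism $\delta_*$ fixes each ring generator $\int_\gamma v$ of weight $n$ modulo $\Vr[n-1]{S}$, so expanding products and using multiplicativity of the weight filtration shows that $\delta_*$ maps the stalk of $\Vr[n]{S}$ into itself; applying the same reasoning to $\delta^{-1}$ gives equality, and then $\delta_*$ induces the identity on each associated graded piece $\Vr[n]{S}/\Vr[n-1]{S}$. This is precisely the asserted unipotence with respect to $\Vr[\bullet]{S}$.

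The step I expect to require the most care is the identification of the monodromy action $\delta_*$ with composition by loop sections of $\FGpd{\uX{S}}$: this rests on the fact that, the marked points and tangential base points of $\uX{S}\to\M{S}$ being labelled and globally defined, the monodromy of $\M{S}$ permutes neither of them, so that $\delta_*\gamma$ really does share its endpoints with $\gamma$. Once this geometric input is secured, the remaining weight bookkeeping is formally identical to the single-curve case treated in \autoref{lem:mon-unipotent}.
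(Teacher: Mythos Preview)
Your proposal is correct and follows essentially the same approach as the paper: identify the monodromy of $\int_\gamma v$ as $\int_{\delta_*\gamma} v$ where $\delta_*\gamma$ differs from $\gamma$ by composition with a loop in the fibre (since the tangential base points are fixed), then apply path concatenation and the family version of \autoref{lem:loop-weight}. The paper's proof is terser and places the loop at the source rather than the target of $\gamma$, but the argument is the same.
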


\begin{proof}
Consider the analytic continuation of a period integral $L = \int_\gamma v \in \Vr[n]{S}$ around a loop $\eta \in \fg{\M{S}}$.  It is given by the integral $\widetilde L \defas \int_{\eta \cdot \gamma} v$  where $\eta \cdot \gamma \in \FGpd{\uX{S}}$ is the fibrewise path obtained by parallel transport of $\gamma$ around $\eta$.  Note that $\eta\cdot \gamma$ and $\gamma$ have the same tangential base points.  Therefore $\eta \cdot \gamma = \gamma \star \tau$ is the composition of $\gamma$ with some loop $\tau$ in the fibre.  Hence the result follows from the path concatenation formula and \autoref{lem:loop-weight}.
\end{proof}

Any polylogarithm $L = \int_\gamma v$ satisfies a Picard--Fuchs equation (the dihedral Knizhnik--Zamolodchikov equation of~\cite{Brown2009}), which implies that
\[
\td L  \in \Vr{S}\otimes \A[1]{\M{S}} \subset \sforms[1]{\M{S}}.
\]
Note that by shuffle regularization, it is enough to compute the differential of an absolutely convergent integral. In the case of a single monomial $v=s_n^{k_n}\cdots s_1^{k_1}$, written minimally so that $s_i \ne s_{i+1}$ and $k_i \ge 1$, we see from \cite[Theorem~2.1]{Goncharov:MplMixedTateMotives}:
\begin{equation}
	\td L
	= \sum_{i=1}^n \frac{\td \log \crossrat{s_i}{\infty}{s_{i+1}}{s_{i-1}}}{2\ipi}
	\cdot \int_{\gamma} s_n^{k_n}\cdots s_{i+1}^{k_{i+1}}s_i^{k_i - 1}s_{i-1}^{k_{i-1}}\cdots s_1^{k_1}
	\label{eq:td-hlog}%
	,
\end{equation}
where $s_0 \defas \gamma(0)$ and $s_{n+1} \defas \gamma(1)$ denote the endpoints of integration.

As a result, the sheaf of \defn{polylogarithmic $k$-forms}
\begin{equation}
		\VAr{S}
		\defas \Vr{S} \otimes \A{\M{S}}
		\subset \sforms{\M{S}}
		\label{eq:VAr-M}%
	\end{equation}
is a subcomplex of the de Rham complex of $\M{S}$.  It carries a canonical filtration $\VAr{S}[\bullet]$ by the total weight:
\[
\VAr[k]{S}[j] \defas \Vr[j-k]{S}\otimes \A[k]{\M{S}} \subset \VAr[k]{S}.
\]

\subsubsection{Boundary divisors and regularized restrictions}

We now recall and refine some key results on the asymptotic behaviour of polylogarithms on $\M{S}$.

Recall that $\M{S}$ has a natural compactification, obtained by adding a simple normal crossings divisor $\bbM{S} = \bM{S} \setminus \M{S}$ that parametrizes stable nodal curves of  genus zero, or equivalently, nested linear collisions of collections of marked points.  Each boundary stratum of codimension $k$ is isomorphic to a product of the form $\M{A_0}\times \cdots \times \M{A_k}$ for some finite sets $A_j$ and hence it carries a natural sheaf of polylogarithms $\Vr{A_0}\cdots \Vr{A_k} \subset \sO{\M{A_0}\times\cdots\times \M{A_k}}$ which we equip with the natural filtration by the total weight.  

For any stratum of codimension $k$, we can find a collection of cross ratios $t_1,\ldots,t_k \colon \bM{S} \to \PP[1]$ that give a system of normal crossings coordinates transverse to the stratum. Over any sufficiently small open set $\U \subset \M{A_0}\times \cdots \times \M{A_k}$ they give transverse coordinates on an analytic tubular neighbourhood
\[
\U \times \D^k  \hookrightarrow \bM{S}
\]
where $\D \subset \PP[1]$ is a small disk centred at zero. 

Let $\W \subset \D$ be a contractible neighbourhood of the unit tangential base point at $0$.  Then we have the following $\ZZ$-linear refinement of a theorem of Brown:

\begin{theorem}[\cite{Brown2009}]\label{prop:moduli-log-sing}
Let $L \in \Vr[j]{S}$ be a polylogarithm defined on the open set $\U\times \W^k \subset \M{S}$.  Then there exists a unique finite collection of holomorphic functions $L_{j_1\cdots j_k}$ on $\U \times \D^k$ such that
\begin{align}
L = \sum_{j_1,\ldots,j_k} \frac{L_{j_1\cdots j_k}}{j_1!\cdots j_k!} \rbrac{\frac{\log t_1}{2\ipi}}^{j_1} \cdots \rbrac{\frac{\log t_k}{2\ipi}}^{j_k} \label{eq:boundary-expansion}
\end{align}
over $\U \times \W^k$.
Moreover, the coefficients restrict to polylogarithms on the boundary:
\begin{equation}
L_{j_1\cdots j_k}|_\U \in W_{j-j_1-\cdots-j_k} (\Vr{A_0}\cdots\Vr{A_k}). \label{eq:polylog-restrict}
\end{equation}
\end{theorem}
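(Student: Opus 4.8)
The plan is to reduce the statement, by induction on the codimension $k$ together with the iterated-fibration description of $\M{S}$, to the one-variable expansion \eqref{eq:hlog-zero-expansion} of hyperlogarithms at a tangential base point. The $\QQ$-linear version is Brown's \cite{Brown2009}, so the only genuinely new points are (i) that the boundary coefficients $L_{j_1\cdots j_k}|_\U$ lie in the $\ZZ$-module $W_{j-j_1-\cdots-j_k}(\Vr{A_0}\cdots\Vr{A_k})$ and not merely in its $\QQ$-span, and (ii) that the $L_{j_1\cdots j_k}$ extend holomorphically from $\U\times\W^k$ to all of $\U\times\D^k$. Both will come out of the reduction, because the formulas that drive it --- shuffle regularization \eqref{eq:shreg-expansion} and the expansion \eqref{eq:hlog-zero-expansion} --- are integral identities, and taking regularized limits is an algebra homomorphism.

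For the case of codimension one, let $D=\{t_1=0\}$ be the divisor, corresponding to a partition $\pInf{S}=A_0\sqcup A_1$ with $|A_i|\ge 2$. Following Brown, one first chooses the order in which the forgetful maps are composed to build $\M{S}$ so that along $D$, on the relevant fibre of the corresponding fibration $\M{S}\to\M{S'}$, the degeneration is a single marked point $z$ approaching another marked point $s$, and so that $t_1$ differs from the fibre coordinate $z-s$ adapted to the tangential base point at $s$ by a unit holomorphic function near $D$; then $\tfrac{\log t_1}{2\ipi}-\tfrac{\log(z-s)}{2\ipi}$ is a weight-one polylogarithm that is holomorphic near $D$. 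Restricting $\gamma$ and $v$ to the fibres presents $L=\int_\gamma v$ as a hyperlogarithm in $z$ whose constants of integration, in the sense of \eqref{eq:hyperlog-monodromy}, are polylogarithms. I would then split off the letter $\dlog{s}$ by shuffle regularization \eqref{eq:shreg-expansion} and apply \eqref{eq:hlog-zero-expansion}, obtaining
\[
L=\sum_{j_1\ge 0}\frac{\hlog{v_{j_1}}}{j_1!}\left(\frac{\log(z-s)}{2\ipi}\right)^{j_1},
\]
a finite sum (since $v$ has bounded length, in accordance with the unipotence of the monodromy in \autoref{lem:mon-unipotent}), with each $\hlog{v_{j_1}}$ holomorphic along $D$ and of weight at most $j-j_1$; a binomial re-expansion using the preceding sentence rewrites this as an expansion in $\log t_1$ without increasing weights. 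The key point is that, the formulas used being integral, the regularized restriction $L_0$ --- and likewise each coefficient $L_{j_1}$ --- lands in the $\ZZ$-lattice $W_{j-j_1}(\Vr{A_0}\Vr{A_1})$ rather than only in its $\QQ$-span; here one invokes Brown's analysis to identify a hyperlogarithm that is holomorphic at $s$, restricted to $D$, with a genuine period integral on the nodal limit curve, whose fundamental groupoid and differential forms split compatibly with the product $\M{A_0}\times\M{A_1}$.

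For the inductive step, given $L\in\Vr[j]{S}$ on $\U\times\W^k$, I would apply the codimension-one case to $\{t_k=0\}$ to write $L=\sum_{j_k}\tfrac{1}{j_k!}L_{j_k}\left(\tfrac{\log t_k}{2\ipi}\right)^{j_k}$, with each $L_{j_k}$ a polylogarithm of weight at most $j-j_k$ on a neighbourhood of $\{t_k=0\}$ that restricts to a polylogarithm on the product moduli space cut out by that divisor, while $t_1,\dots,t_{k-1}$ restrict to normal-crossings coordinates transverse to the deeper strata there. Since the sheaf of polylogarithms on a product is the tensor product of the factors' sheaves and the expansion \eqref{eq:boundary-expansion} is multiplicative, the statement on a product reduces to the statement on each factor, so the inductive hypothesis expands each $L_{j_k}$ in $t_1,\dots,t_{k-1}$; reassembling gives \eqref{eq:boundary-expansion} with coefficients obeying \eqref{eq:polylog-restrict}. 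Uniqueness of the expansion, and holomorphy of the $L_{j_1\cdots j_k}$ on all of $\U\times\D^k$, then follow as in the one-variable case, from the $\shf{O}$-linear independence of the distinct monomials in the $\log t_i$ and the fact that a branch of a polylogarithm with no logarithmic singularity transverse to a boundary divisor extends holomorphically across it.

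The hard part will be the integral bookkeeping in the codimension-one step, where the two lattices have to be reconciled: one has to check that the splitting of the fundamental groupoid of the degenerate curve is compatible with the product structure of the boundary stratum, and that the periods appearing as constants of integration lie in $W_\bullet(\Vr{A_0}\Vr{A_1})$ with the correctly lowered weight --- the latter being precisely the loop weight drop of \autoref{lem:loop-weight}. Once that is in place, the rest is a formal manipulation of the explicitly integral shuffle-regularization formulas.
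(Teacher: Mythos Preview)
Your overall architecture---induction on the codimension $k$, with the codimension-one step driven by shuffle regularization and the hyperlogarithm expansion \eqref{eq:hlog-zero-expansion}---matches the paper's Appendix~\ref{sec:appendix-mpl} in spirit. However, your codimension-one step has a real gap. You reduce to the situation where ``the degeneration is a single marked point $z$ approaching another marked point $s$'' and identify $t_1$ with the fibre coordinate $z-s$ up to a unit. This is only correct for the boundary divisors $D_A$ with $|A|=2$. A general codimension-one stratum corresponds to the simultaneous collision of an arbitrary subset $A\subset \pInf{S}$ with $|A|\ge 2$; when $|A|>2$, the divisor $D_A$ does not meet a generic fibre of any forgetful map $\M{S}\to\M{S\setminus\{z\}}$ at all (it lies entirely over the boundary of $\bM{S\setminus\{z\}}$), so the single-variable hyperlogarithm expansion in $z$ cannot see it, and $t_1$ is not comparable to any $z-s$.

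The paper handles this by parametrizing the multi-point collision directly: rescale all $a\in A$ by $t$ so that they collide at the origin as $t\to 0$, and then analyze the regularized limit of a period $\int_\gamma v$ by decomposing $\gamma$ via path concatenation into three kinds of pieces---paths that stay among the $B$-points (where $\dlog{a}\to\dlog{*}$ uniformly), paths that stay among the $A$-points (handled after the conformal rescaling $z\mapsto z/t$), and a single connecting path between the two regions. This path decomposition of the degenerating fundamental groupoid is the geometric input your argument is missing; once it is in place, the shuffle/dominated-convergence bookkeeping you describe goes through integrally. The paper also first reduces, via the differential equation \eqref{eq:td-hlog} and induction on weight, to controlling only the leading coefficient $L_{0\cdots 0}|_\U$, which streamlines the analysis; you may find that reduction helpful as well. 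Finally, your appeal to \autoref{lem:loop-weight} for the weight of the constants of integration is slightly misplaced: those constants arise from path concatenation along open paths, not loops, and their weight bound comes simply from the fact that shuffle regularization \eqref{eq:shreg-expansion-int} does not raise weight.
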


\begin{proof}[Sketch of proof]
	As explained by Brown~\cite{Brown2009}, the existence of an expansion of the form~\eqref{eq:boundary-expansion} follows from the differential equation~\eqref{eq:td-hlog} via a multivariate analogue of Fuchs' theorem.  The statement \eqref{eq:polylog-restrict} about the restrictions (with coefficients in $\QQ$) follows from a corresponding factorization of the differential equation on the boundary strata, and an analysis of the solutions along one-dimensional boundary components.

In \autoref{sec:appendix-mpl}, we outline an alternative proof of \eqref{eq:polylog-restrict} with integer coefficients, using our definition in terms of periods.  The basic idea is as follows:  given a family $\X_t$ of smooth $S$-marked curves that degenerates to a nodal curve $\X_0$ when $t = 0$, we can analyze the asymptotics of periods on $\X_t$ as $t \to 0$ using shuffle regularization.  We find that in the regularized limit, a period of $\X_t$ breaks into a polynomial in periods of the irreducible components of $\X_0$. 
\end{proof}

\subsubsection{Fibrations of moduli spaces and the Poincar\'e lemma}

Evidently if $f\colon \M{T} \to \M{S}$ is the projection defined by an inclusion $S \subset T$ of finite sets, the pullback of functions along $f$ gives an embedding of sheaves $f^*\Vr{S}\subset \Vr{T}$. Using such projections, we can decompose polylogarithms in terms of hyperlogarithms on curves as follows; cf.~\cite[Corollary~6.17]{Brown2009}.

Let us suppose that $T = S \sqcup \sset{z}$ so that we can identify the natural projection $f \colon \M{T} \to \M{S}$ with the universal punctured curve $\uXo{S}$.  Suppose that $x \in \M{T}$ and choose a germ of a section $\gamma_z \in \FGpd{\uX{T}}_x$ consisting of paths in $\uX{T}$ that end at $z \in T$.  Then period integrals of the form $\int_{\gamma_z} v$ where $v \in \T{S}$ are, by definition, hyperlogarithms on the fibres of $\uX{S}$, and taking $\Vr{S}$-linear combinations of such integrals we obtain a map
\begin{equation}
\int_{\gamma_z} \colon \Vr{S,f(x)}\abrac{S}  \to \Vr{T,x}. \label{eq:fibration-basis}
\end{equation}
\begin{proposition}[{\cite[Proposition~6.15]{Brown2009}}]\label{prop:fibration-basis}
	The map \eqref{eq:fibration-basis} is an isomorphism of filtered $\ZZ$-modules.
\end{proposition}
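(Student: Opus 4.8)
The plan is to establish that the map \eqref{eq:fibration-basis} is a weight-filtered isomorphism by an induction on the weight, with \autoref{prop:hlog-iso} supplying injectivity and a fibrewise form of the Poincar\'e lemma \autoref{lem:hlog-Poincare} supplying surjectivity. First I would fix an embedding $\sset{0,1}\injection S$; this trivializes the universal curve, provides the admissible fibre coordinate $z$ on $\M{T}=\uXo{S}$, and splits \eqref{eq:universal-relative}, so that $\A[1]{\M{T}}=f^*\A[1]{\M{S}}\oplus\A[1]{\M{T}/\M{S}}$ with the relative factor $\A[1]{\M{T}/\M{S}}=\bigoplus_{s\in S}\ZZ\,\dlog{s}$ spanned by the forms $\dlog{s}$ in the coordinate $z$; correspondingly the differential on polylogarithmic forms decomposes as $\td=\td_{\mathrm{base}}+\td_{\mathrm{fib}}$. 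That \eqref{eq:fibration-basis} preserves the weight is immediate, since $\int_{\gamma_z}v$ is a fibrewise hyperlogarithm of weight $\abs{v}$ while $f^*$ preserves weights. For injectivity I would restrict to the fibres of $f$: if $\sum_v c_v\int_{\gamma_z}v=0$ near $x$ with $c_v\in\Vr{S,f(x)}$, then restriction to a nearby fibre $\uXo{S,y}$ replaces $c_v$ by its value $c_v(y)\in\Per{\uX{S,y}}$ and replaces $\int_{\gamma_z}v$ by the corresponding hyperlogarithm on that fibre, and \autoref{prop:hlog-iso} --- under which these hyperlogarithms form a $\Per{}$-basis of the stalk of $\HL{}$ --- forces $c_v(y)=0$ for $y$ in an open set, hence $c_v=0$.

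For surjectivity I would prove by induction on $n$ that $\Vr[n]{T,x}$ is contained in the image $M$ of \eqref{eq:fibration-basis}; keeping track of weights along the way also shows the inverse is filtered. The case $n=0$ is clear since $M\supset f^*\Vr{S,f(x)}\ni1$. Given $L\in\Vr[n]{T,x}$, the differential equation \eqref{eq:td-hlog} puts $\td L$ in $\Vr[n-1]{T}\otimes\A[1]{\M{T}}$; I keep its fibre component $\phi\in\Vr[n-1]{T,x}\otimes\A[1]{\M{T}/\M{S}}$, whose coefficients lie in $M$ by the inductive hypothesis. The map \eqref{eq:fibration-basis} intertwines $\td_{\mathrm{fib}}$ with the ``strip the leading letter'' differential, and thus identifies the fibrewise complex $(M\otimes\A{\M{T}/\M{S}},\td_{\mathrm{fib}})$ with $\Vr{S,f(x)}\otimes_\ZZ\bigl(\T{S}\otimes_\ZZ\A{\M{T}/\M{S}},\td\bigr)$; the inner complex is a resolution of $\ZZ$ by free $\ZZ$-modules carrying the $\ZZ$-linear ``prepend a letter'' contracting homotopy \eqref{eq:hyperlog-primitive} of \autoref{lem:hlog-Poincare}, so the whole complex is acyclic in positive fibre degree with primitives in $M$. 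Since the fibres are curves, $\phi$ is automatically $\td_{\mathrm{fib}}$-closed, hence $\phi=\td_{\mathrm{fib}}\tilde L$ for some $\tilde L\in M$ which, by a weight count, is the image of an element of $W_n(\Vr{S,f(x)}\abrac{S})$. Replacing $L$ by $L-\tilde L$ reduces us to the case $\td_{\mathrm{fib}}L=0$.

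In that remaining case, \autoref{prop:hlog-iso} shows $L$ is constant along the fibres of $f$, so $L=f^*g$ for a holomorphic germ $g$ on $\M{S}$, and it remains to see that $g$ is itself a polylogarithm. I would read $g$ off a boundary divisor of $\bM{T}$: fixing $s_0\in S$, let $D\subset\bM{T}$ be the divisor on which the point $z$ collides with $s_0$. The bubble then carries only the two points $z,s_0$, so $f$ extends to $\bar f\colon\bM{T}\to\bM{S}$ restricting to an isomorphism $D\xrightarrow{\sim}\M{S}$; since $L=\bar f^*g$ extends holomorphically across $D$ without logarithmic singularity, \autoref{prop:moduli-log-sing} identifies the restriction $L|_D$ with a polylogarithm of weight at most $n$ on $D\cong\M{S}$. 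Thus $g=L|_D\in\Vr[n]{S,f(x)}$ and $L\in M$, completing the induction and the filtered statement.

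I expect the surjectivity step to be the main obstacle. Its delicate ingredients are the fibrewise Poincar\'e lemma with exact control of weights --- and, in order to keep the statement integral, of the underlying $\ZZ$-lattice --- together with the identification of $\td_{\mathrm{fib}}$-closed polylogarithms on $\M{T}$ with pullbacks from $\M{S}$ via regularized restriction to a boundary divisor; the injectivity argument and the surrounding bookkeeping should be routine.
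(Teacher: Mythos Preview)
Your proposal is correct and follows essentially the same route as the paper: induction on the weight, apply the fibrewise differential via \eqref{eq:td-hlog} and the induction hypothesis, invoke the hyperlogarithmic Poincar\'e lemma \eqref{eq:hyperlog-primitive} to produce a primitive $\widetilde L$, and then identify the fibrewise-constant remainder $L-\widetilde L$ with a polylogarithm on $\M{S}$ via the regularized restriction of \autoref{prop:moduli-log-sing} at the boundary divisor where $z$ collides with a fixed $s_0\in S$. You are slightly more explicit than the paper in separating out injectivity (which indeed follows from \autoref{prop:hlog-iso} on a fibre), but otherwise the argument matches.
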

\begin{proof}
This was stated with $\QQ$-coefficients in \emph{op.~cit.} but it holds integrally as follows.  The proposition is obvious in weight zero, so by induction, we can assume that it holds up to weight $n$ and prove it in weight $n+1$.  Given an element $L \in \Vr[n+1]{T}$, its differential $\td L$ along the fibres lies in $\Vr[n]{T}\otimes\A[1]{\uX{S}}$ by \eqref{eq:td-hlog}.  Hence by induction, $\td L$ is a hyperlogarithmic one-form on the fibres.  Applying the Poincar\'e lemma for hyperlogarithms fibrewise using \eqref{eq:hyperlog-primitive}, we construct fibrewise hyperlogarithm $\widetilde L \in \int_\gamma \Vr{S}\abrac{S}$ such that the relative differential $\td(L-\widetilde L)$ vanishes on the fibres, i.e.~$L-\widetilde L$ is fibrewise constant.  It therefore suffices to show that $\Reglim{z}{\st} (L-\widetilde L) \in \Vr{S}$, but this is the special case of \autoref{prop:moduli-log-sing} for the boundary hypersurface $(k=1)$ corresponding to the collision of the pair $A_0 = \{s,z\}$. 
\end{proof}

\begin{example}\label{ex:fib-basis}
	Let $S = \{0,1\}$ and $T = \{0,1,t\}$ and use the cross ratio $z = \crossrat{t}{1}{0}{\infty}$ as an admissible admissible chart on $\uXo{S}\cong\M{T} \cong \PP[1]\setminus\{0,1,\infty\}$. Let $\gamma_z$ denote the straight path from $0$ to $z$, and consider the function $L \in \Vr[1]{T}$, a local section defined for $z\in\CC\setminus [1,\infty)$, given by the period integral
\begin{equation*}
	L 
	\defas \int_{\gamma_z} [\dlog{0}|\dlog{t}|\dlog{1}].
\end{equation*}
As written, it is not immediately obvious that this function is a hyperlogarithm, because the the integrand depends on $z$. But the differential equation \eqref{eq:td-hlog} shows
\begin{equation}
	\td L = (\dlog{0}-\dlog{1})\int_{\gamma_z} [\dlog{0}|\dlog{1}]
	+\dlog{1} \int_{\gamma_z} [\dlog{0}|\dlog{t}],
	\tag{$\ast$}%
	\label{ex-eq:fib-basis-td}%
\end{equation}
where the first period is the hyperlogarithm $\hlog{01}$. The second period does not depend on the puncture labelled $1$ and is thus a function on $\M{\sset{0,t}}$, that is, a constant. Indeed, rescaling by $1/t$ identifies it as the normalized zeta value
\begin{equation*}
	\int_{\gamma_z}[\dlog{0}|\dlog{t}]
	= \int_{\dch}[\dlog{0}|\dlog{1}]
	= \frac{-\mzv{2}}{(2\ipi)^2}
	= \frac{1}{24}
	\in \MZVipi[2]
\end{equation*}
according to \autoref{thm:M03-periods}. We can therefore integrate \eqref{ex-eq:fib-basis-td} to conclude that $L=\hlog{001}-\hlog{101} + \frac{1}{24} \hlog{1} + C$ is indeed a hyperlogarithm. In this case, the constant of integration $C$ is zero, because the hyperlogarithms vanish at zero by definition and therefore
\begin{equation*}
	C
	= \lim_{z\rightarrow 0} L
	= \int_{\gamma_0} [\dlog{0}|\dlog{0}|\dlog{1}]
	= 0
\end{equation*}
is convergent and zero due to \eqref{eq:II-identity}. \qed
\end{example}

The isomorphism of stalks \eqref{eq:fibration-basis} depends on the choice of a path in the fibre.  However, as with hyperlogarithms on a single curve, we obtain a canonical relative weight filtration $\Vr{T}[\bullet]$ whose associated graded is the fibrewise constant sheaf
\[
\gr \Vr{T} = \bigoplus_{k\ge 0} \Vr{T}[k]/\Vr{T}[k-1] \cong f^{-1}\Vr{S} \abrac{S}
\]
of free algebras with coefficients given by polylogarithms on the base.

This process can be iterated.  For an arbitrary projection $f \colon \M{T} \to \M{S}$ we can choose flag of subsets $S=S_0 \subset S_1 \subset \cdots \subset S_N = T$ where $|S_j|=|S|+j$.  We obtain a multi-index filtration $\Vr{T}[\bullet,\cdots,\bullet]$ where the successive superscripts account for the relative weight in the successive fibres.  If we choose a succession of base points and paths in fibres as above, we obtain an identification of the stalk of $\Vr{T}$ at $x \in \M{T}$ with an iterated free algebra:
\begin{equation}
\Vr{T,x} \cong \Vr{S,f(x)}\abrac{S_0}\abrac{S_1}\cdots \abrac{S_{N-1}}
\label{eq:partial-fibration-basis}%
\end{equation}
which descends to a canonical isomorphism on the associated multigraded. In the extremal case $|S|=2$, we obtain a complete description of the stalk:
\[
\Vr{T,x} \cong \MZVipi\abrac{S_0}\abrac{S_1}\cdots \abrac{S_{N-1}},
\]
showing that the only constant polylogarithms are the normalized MZVs.

Applying the Poincar\'e lemma for hyperlogarithms iteratively along the fibres of an arbitrary projection $f \colon \M{T} \to \M{S}$, we see that the complex
\begin{align}
\VAr{f} \defas \Vr{T} \otimes \A{\M{T}/\M{S}} \subset \sforms{\M{S}/\M{T}} \label{eq:relative-forms-MS}
\end{align}
of relative polylogarithmic forms is exact in positive degrees (c.f.\ {\cite[Theorem~9.1]{Brown2009}} or \cite[Theorem~1]{Bogner2015}):

\begin{proposition}\label{prop:moduli-poincare}
For an arbitrary projection $f\colon \M{T} \to \M{S}$, the inclusion
\[
f^{-1}\Vr{S} \hookrightarrow \VAr{f}
\]
of the fibrewise constant sheaf is a quasi-isomorphism of filtered complexes.
\end{proposition}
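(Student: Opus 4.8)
The plan is to reduce to the case in which $f$ forgets a single marked point, and then to invoke the Poincar\'e lemma for hyperlogarithms (\autoref{lem:hlog-Poincare}) in essentially the form already used to prove \autoref{prop:fibration-basis}. So the first step is to factor $f\colon \M{T}\to\M{S}$ as a tower of projections $\M{T}=\M{S_N}\to\M{S_{N-1}}\to\cdots\to\M{S_0}=\M{S}$, each forgetting exactly one point, and to argue by induction on $N=\abs{T\setminus S}$, the case $N=0$ being trivial.

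For the base case $N=1$, write $T=S\sqcup\{z\}$ and identify $\M{T}$ with the universal punctured curve $\uXo{S}$. Then $\A{\M{T}/\M{S}}$ is the module of relative differentials on the universal curve, equal to $\underline{\ZZ}$ in degree $0$ and locally $\bigoplus_{s\in S}\ZZ\,\dlog{s}$ in degree $1$, so that $\VAr{f}$ is a two-term complex; by the differential equation \eqref{eq:td-hlog} its relative differential is precisely the hyperlogarithm differential \eqref{eq:hlog-diff}, but with coefficients enlarged from $\Per{}$ to the sheaf $\Vr{S}$ of polylogarithms on the base. Under the isomorphism of \autoref{prop:fibration-basis}, the stalk of $\VAr{f}$ at a point $x$ becomes the complex $\bigl(\Vr{S,f(x)}\abrac{S},\td\bigr)$, and the assignment $L_{s_1\cdots s_n}\,\dlog{s}\mapsto L_{s s_1\cdots s_n}$ of \eqref{eq:hyperlog-primitive} is a contracting homotopy onto the constants $\Vr{S,f(x)}$; prepending a letter raises the hyperlogarithm weight by one while the form $\dlog{s}$ it consumes has weight one, so the homotopy preserves the total weight filtration. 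Since being a (filtered) quasi-isomorphism of sheaves is a stalk-local condition, $f^{-1}\Vr{S}\hookrightarrow\VAr{f}$ is a quasi-isomorphism of filtered complexes in this case.

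For the inductive step, factor $f=h\circ g$ with $g\colon\M{T}\to\M{U}$ forgetting one point and $h\colon\M{U}\to\M{S}$ forgetting the remaining $N-1$. Equip $\VAr{f}=\Vr{T}\otimes\A{\M{T}/\M{S}}$ with the relative Hodge filtration of \eqref{eq:arnold-tensor}, defined by requiring at least $p$ of the one-form factors to lie in the $\M{U}/\M{S}$-directions; the relative differential splits as $\td=\td_{T/U}+\td_{U/S}$ with $\td_{T/U}$ preserving the filtration and $\td_{U/S}$ raising it by one. Hence the associated graded complex carries the differential $\td_{T/U}$ and is isomorphic to $\bigoplus_p g^{-1}\A[p]{\M{U}/\M{S}}\otimes\VAr{g}$; the base case applied to $g$ identifies its cohomology with $g^{-1}\VAr{h}$, so the spectral sequence of the filtered complex has $E_1=g^{-1}\VAr{h}$ with induced differential $g^{-1}\td_{U/S}$. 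Because $g^{-1}$ is exact, the inductive hypothesis $h^{-1}\Vr{S}\hookrightarrow\VAr{h}$ gives $E_2=g^{-1}h^{-1}\Vr{S}=f^{-1}\Vr{S}$ concentrated in a single spot; the sequence degenerates, and running the same bookkeeping on $\gr^W$ shows the resulting isomorphism is filtered. (Alternatively one can bypass the spectral sequence: the iterated decomposition \eqref{eq:partial-fibration-basis} exhibits the stalk of $\VAr{f}$ as a tensor product over $\Vr{S,f(x)}$ of $N$ hyperlogarithm complexes, each contractible via \eqref{eq:hyperlog-primitive}, and one applies the tensor-of-homotopies formula.)

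The genuinely new input is the base case, and there the only substantive point is that \eqref{eq:td-hlog} really does identify the $g$-relative differential with the hyperlogarithm differential with $\Vr{S}$-coefficients; the rest is the formal fact that the bar-type differential on a free algebra is contractible by prepending letters. I therefore expect the main obstacle to be organizational rather than analytic: keeping the weight filtration and the relative Hodge filtration straight simultaneously, and transporting the stalkwise contracting homotopy coherently through the isomorphisms of \autoref{prop:fibration-basis} and \eqref{eq:partial-fibration-basis} while checking at each stage that weights add up as claimed.
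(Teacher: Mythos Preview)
Your proof is correct and follows essentially the same route as the paper: induct on the number of forgotten marked points, handle the base case via \autoref{prop:fibration-basis} together with the hyperlogarithm contracting homotopy \eqref{eq:hyperlog-primitive}, and for the inductive step filter $\VAr{f}$ by the relative Hodge filtration coming from the factorization through $\M{U}$ and run the associated spectral sequence. The paper's proof is terser---it collapses your $E_1$/$E_2$ analysis into a single sentence about the associated graded being a resolution---but the content is the same; your alternative via tensoring the stalkwise contracting homotopies from \eqref{eq:partial-fibration-basis} is also sound and arguably cleaner for tracking the weight filtration.
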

\begin{proof}
We proceed by induction on the number $n$ of marked points forgotten by $f$. When $n=1$, the claim follows from \autoref{prop:fibration-basis} by applying the Poincar\'e lemma for hyperlogarithms (\autoref{lem:hlog-Poincare}) along the fibres of the universal curve $\uX{S}$.  For $n > 1$, we can factor the projection as 
\[
\xymatrix{
\M{T} \ar[r]^-{g} & \M{T'} \ar[r]^{f'} & \M{S}
}
\]
where $|T| = |T'| + 1$.  Consider the decreasing filtration by subcomplexes $F^j \VAr{f} = g^*\VAr[j]{T'} \wedge \VAr{f}$. By \eqref{eq:arnold-tensor}, the associated graded sheaf is the tensor product $\gr \VAr{f} \cong   g^{-1}\VAr{f'}\otimes_{g^{-1}\Vr{T'}}\VAr{g}$ which, by induction, is a resolution of the fibrewise constant sheaf $g^{-1}{f'^{-1}\Vr{S}}  \otimes_{g^{-1}\Vr{T'}} g^{-1}{\Vr{T'}} \cong f^{-1}\Vr{S}$.  Hence the result follows by considering the corresponding spectral sequence.
\end{proof}

\begin{corollary}
The polylogarithmic de Rham complex $\VAr{S} \subset \sforms{\M{S}}$ is a resolution of the constant sheaf $\underline{\MZVipi}$, compatible with the weight filtration.
\end{corollary}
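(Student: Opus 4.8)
The plan is to deduce the corollary directly from \autoref{prop:moduli-poincare}, by taking the projection there to be the map from $\M{S}$ to a point.

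First I would fix the two-element subset $S_0 \subset S$ coming from the embedding $\{0,1\}\hookrightarrow S$ used to define $\Vr{S}$, and set $f\colon \M{S}\to\M{S_0}$ to be the associated projection (which forgets every marked point outside $S_0$). Then I would identify both sides of \autoref{prop:moduli-poincare} for this particular $f$. On the one hand, $\M{S_0}$ is a point (as $|S_0|=2$), so it has no nonzero one-forms and there is nothing to quotient out in the definition \eqref{eq:relative-forms-MS} of the relative forms; hence $\A{\M{S}/\M{S_0}} = \A{\M{S}}$ and, by \eqref{eq:VAr-M},
\[
\VAr{f} = \Vr{S}\otimes\A{\M{S}/\M{S_0}} = \Vr{S}\otimes\A{\M{S}} = \VAr{S}
\]
compatibly with the weight filtrations. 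On the other hand, $\Vr{S_0}$ is the ring of periods of the marked projective line with three marked points $0,1,\infty$, which equals $\MZVipi$ as a weight-filtered ring by \autoref{thm:M03-periods}; so the fibrewise constant sheaf $f^{-1}\Vr{S_0}$ is the constant sheaf $\underline{\MZVipi}$ on $\M{S}$, with its weight filtration.

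With these identifications, \autoref{prop:moduli-poincare} applied to $f$ says exactly that $\underline{\MZVipi}=f^{-1}\Vr{S_0}\hookrightarrow\VAr{f}=\VAr{S}$ is a quasi-isomorphism of filtered complexes, which is the assertion of the corollary. (The case $|S|=2$ is trivial, since then $\M{S}$ is a point and $\VAr{S}=\MZVipi$; for $|S|\ge 3$ the projection $f$ forgets $|S|-2\ge 1$ marked points, so \autoref{prop:moduli-poincare} applies as stated.) I do not anticipate a genuine obstacle here: the whole content is carried by \autoref{prop:moduli-poincare}, and the only things to verify are the bookkeeping identifications $\VAr{f}=\VAr{S}$ and $f^{-1}\Vr{S_0}=\underline{\MZVipi}$, the matching of the weight filtrations under them, and the fact that $\Vr{S}$ (hence everything above) is independent of the chosen labelling $\{0,1\}\hookrightarrow S$, which follows as in \autoref{lem:V-XS-canonical}.
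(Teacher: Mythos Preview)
Your proposal is correct and is exactly the intended derivation: the paper states this as an immediate corollary of \autoref{prop:moduli-poincare}, and the only content is precisely the bookkeeping you spell out---taking $f\colon\M{S}\to\M{S_0}$ with $|S_0|=2$, identifying $\VAr{f}=\VAr{S}$ since $\M{S_0}$ is a point, and invoking \autoref{thm:M03-periods} to identify $\Vr{S_0}=\MZVipi$.
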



\section{Cohomology of polylogarithm sheaves}
\label{sec:monodromy}
We now turn to the following question: suppose that
\[
	\cyc \subset \M{S}
\]
is a topological subspace.  Which polylogarithms on $\M{S}$ become single-valued when restricted to $\cyc$?
In sheaf-theoretic terms, we would like to understand the weight-filtered space of global sections
\[
	\sect{\cyc,\Vr{S}|_\cyc} = \cohlgy[0]{\cyc,\Vr{S}|_\cyc}.
\]
More generally, we may ask about the higher sheaf cohomology groups, and for instance seek criteria for their vanishing.

For a general subspace $\cyc$, the problem seems difficult, but it is quite tractable in the case where $\cyc$ has the structure of an iterated fibration by punctured disks in universal curves.  Such a situation can be reduced inductively to the one-dimensional case of hyperlogarithms, so we begin with a discussion in the case of a single curve.

\subsection{Cohomology of hyperlogarithms}
\label{sec:hyperlog-cohlgy}

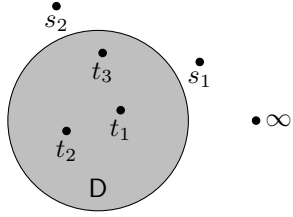
\begin{figure}
	\centering
\begin{tikzpicture}[scale=0.6]
\draw[fill=lightgray] (0,0) circle (2);
\drawvertex[below]{t_1}{0.5,0.23}
\drawvertex[below]{t_2}{-0.7,-0.23}
\drawvertex[below]{t_3}{0.1,1.5}
\drawvertex[below]{s_1}{30:2.6}
\drawvertex[below]{s_2}{110:2.7}
\drawsilentvertex{inf}{0:3.5}
\draw (0:4) node {$\infty$};
\draw (-90:1.5) node {$\D$};
\end{tikzpicture}%
\caption{A disk enclosing marked points $T = \sset{t_1,t_2,t_3}$ so that $S\setminus T = \sset{s_1,s_2}$.}\label{fig:disk-T}%
\end{figure} 

Suppose we are given an $S$-marked curve $\X$ and a subset $T \subset S$.  Let $\D \subset \X$  be a contractible open set such that $\pInf{S} \cap \D = T$ as illustrated in \autoref{fig:disk-T}, and let $\Do \defas \D \setminus T \subset \Xo$ be the corresponding open set.  Let $\HL{}$ be the sheaf of hyperlogarithms on $\Xo$ from \autoref{sec:hlogs}.

In this section we explain how to compute the sheaf cohomology $\cohlgy{\Do,\HL{}}$ as a module over the ring $\Per{}$ of periods of $\X$. We begin with the following observation:
\begin{lemma}\label{lem:H0-hol}
Any global section $L \in \cohlgy[0]{\Do,\HL{}}$ extends to a holomorphic function on the whole disk $\D$.
\end{lemma}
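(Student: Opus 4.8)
The plan is to reduce to the behaviour of $L$ at one marked point at a time. Since $L$ is already holomorphic on $\Do$, it suffices to show that $L$ extends holomorphically across each $t \in T$. Fix such a $t$, choose a local coordinate $\zeta$ on $\X$ centred at $t$ (coming from an admissible chart), and write $\ttt = \cvf{\zeta}|_{\zeta = 0}$ for the associated tangential base point. Pick a point $z_0$ in a small punctured neighbourhood of $t$ contained in $\Do$, together with a path $\gamma$ from $\ttt$ to $z_0$ inside $\Xo$. By \autoref{prop:hlog-iso}, the germ of $L$ at $z_0$ equals $\int_\gamma v$ for some $v \in \PerT{S}$, so near $z_0$ the function $L$ is a $\Per{}$-linear combination of hyperlogarithms based at $\ttt$. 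Applying the logarithmic expansion \eqref{eq:hlog-zero-expansion} at $s = t$ to each of those hyperlogarithms and collecting terms then gives, on a sector at $\ttt$,
\[
	L = \sum_{k = 0}^{n} \frac{h_k(\zeta)}{k!}\left(\frac{\log\zeta}{2\ipi}\right)^{k},
\]
where the $h_k$ are holomorphic in a full neighbourhood of $t$, $\log\zeta$ denotes the branch determined by $\gamma$, and $n$ is taken minimal, so that $h_n \ne 0$ unless $n = 0$.

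The main step is then to use single-valuedness to eliminate all positive powers of the logarithm. As a global section of the locally constant sheaf $\HL{}|_{\Do}$, the function $L$ is invariant under the monodromy around $t$; but analytic continuation along a small loop about $t$ fixes the holomorphic functions $h_k$ and sends $\tfrac{\log\zeta}{2\ipi}$ to $\tfrac{\log\zeta}{2\ipi} + 1$, so we also have
\[
	L = \sum_{k = 0}^{n} \frac{h_k(\zeta)}{k!}\left(\frac{\log\zeta}{2\ipi} + 1\right)^{k}.
\]
By the uniqueness of such logarithmic expansions (as noted following \eqref{eq:log-singularities}), the coefficients of $(\tfrac{\log\zeta}{2\ipi})^{k}$ in the two displays must agree; comparing the coefficients of $(\tfrac{\log\zeta}{2\ipi})^{n - 1}$ forces $h_n = 0$, hence $n = 0$. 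Thus $L = h_0$ on the sector, and since $L$ and the holomorphic function $h_0$ are both holomorphic on the connected punctured neighbourhood of $t$, they agree there; so $L$ extends holomorphically across $t$. As $t \in T$ was arbitrary, $L$ extends to a holomorphic function on all of $\D$.

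The one point that needs genuine care — and which I expect to be the real content of the argument — is the passage from the sectorial expansion \eqref{eq:hlog-zero-expansion}, which a priori only describes a hyperlogarithm on a sector near a tangential base point, to an honest statement about the monodromy of the single-valued function $L$ around $t$. This is made legitimate by two facts: the coefficient functions $h_k$ are holomorphic across $t$ while the logarithmic expansion is unique, and a global section of a locally constant sheaf on $\Do$ is genuinely single-valued on all of $\Do$, in particular around each $t \in T$. Everything else — the local rewriting in terms of hyperlogarithms and the comparison of coefficients — is routine.
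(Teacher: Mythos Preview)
Your proof is correct and follows exactly the approach the paper takes: use the logarithmic expansion \eqref{eq:hlog-zero-expansion} at each $t\in T$ and invoke single-valuedness to kill the positive powers of $\log$. The paper's own proof is just the one-line version of what you have written out in detail.
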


\begin{proof}
This follows easily from the expansion \eqref{eq:hlog-zero-expansion} of $L$ in a neighbourhood of a marked point  $t\in T \subset \D$, and the fact that $L$ has no monodromy around $t$.
\end{proof}

To describe the cohomology algebraically, let us choose a tangential base point $\pt$ and a path $\eta$  from $\pt$ to a point $q \in \Do$ gives an identification of the stalk $\HL{q}$ with the free algebra $\PerT{S}$.  Taking germs of global sections, we obtain an embedding
\begin{equation}
\phi_\eta \colon \cohlgy[0]{\Do,\HL{}} \hookrightarrow \PerT{S}. \label{eq:H^0(L)}
\end{equation}
that is inverse to the iterated integral $\int_\eta$.  In light of \autoref{lem:H0-hol}, this embedding is independent of the endpoint $q \in \Do$, and in fact depends only on the homotopy class of $\eta$ as a path from $\pt$ to the contractible set $\D$ in the punctured curve $\Xo \cup \D = \X \setminus (\pInf{S}\setminus T)$.  Note that if $p \in T$, there is a preferred choice of such a homotopy class, given by the trivial path.  

To get a feel for this embedding, it is helpful to consider some examples:
\begin{example}\label{ex:sv-log}
Take an admissible coordinate $z$ and  an element $s \in S \setminus T$. Since the only branch point of the corresponding hyperlogarithm $\hlog{s}(z)=\log\frac{z-s}{p-s}$ is at the point $z=s \notin \D$, any branch of $\hlog{s}$ is single-valued  on $\D$, giving an element of $\cohlgy[0]{\Do,\HL{}}$.  Thus $s \in \phi_\eta(\cohlgy[0]{\Do,\HL{}}) \subset \PerT{S}$. \qed
\end{example}

\begin{example}
In contrast, suppose $t \in T$, and consider a hyperlogarithm of the form $\hlog{ut}$ for some element $u \in \PerT{S}$ of total weight $n$.  By the path concatenation formula \eqref{eq:hyperlog-monodromy}, the monodromy of $\hlog{ut}$ around a simple loop at $t$ is non-trivial:
\[
	\hlog{ut} \mapsto \hlog{ut} + \hlog{u} \mod \HL[n-1]{}.
\]
Hence $\hlog{ut}$ is not single-valued in $\Do$, so that $ut \notin \phi_\eta(\cohlgy[0]{\Do,\HL{}})$. \qed
\end{example}
More generally, every non-zero element of the left ideal $\PerT{S}T \subset \PerT{S}$ in the free algebra corresponds to a hyperlogarithm with nontrivial monodromy in $\Do$.  (Note that the left ideal is taken with respect to the standard noncommutative product on $\PerT{S}$, not the commutative shuffle product.)  In a certain sense, this is the only way that a hyperlogarithm can fail to be single-valued in $\Do$.  More precisely, let us write
\[
	\PerT{S|T} \defas \PerT{S}/\PerT{S}T
\]
for the quotient $\Per{}$-module. We will prove the following:

\begin{theorem}\label{thm:cohomology-hyperlogs}
The natural composition
\begin{align}
\xymatrix{
\cohlgy[0]{\Do,\HL{}}\,\, \ar@{^(->}[r]^-{\phi_\eta} & \PerT{S} \ar@{->>}[r] & \PerT{S|T}
} \label{eq:sv-iso}
\end{align}
is an isomorphism of bifiltered $\Per{}$-modules, and the induced isomorphism
\[
\gr W^\bullet\cohlgy[0]{\Do,\HL{}} \cong \PerT{S|T}
\]
is independent of the path $\eta$.  Moreover, we have the vanishing
\[
\cohlgy[j]{\Do,\HL{}} = 0
\]
for all $j > 0$.  In fact, the complex computing $\cohlgy{\Do,\HL{}}$ admits a contracting homotopy of total weight one.
\end{theorem}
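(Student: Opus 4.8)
The plan is to reduce the computation to group cohomology of a free group, where it becomes explicit linear algebra in the free algebra $\PerT{S}$, and then to run the answer back through the weight filtrations. Since $\Do$ is a disk with $\abs{T}$ punctures, it is homotopy equivalent to a wedge of $\abs{T}$ circles, hence a $K(F,1)$ with $F=\fg{\Do,q}$ free on the classes $\ell_t$ ($t\in T$) of small positive loops around the punctures. Consequently $\cohlgy{\Do,\HL{}}$ is computed by the two-term complex of the standard free resolution of $\ZZ$ over $\ZZ F$,
\[
\PerT{S}\xrightarrow{\ \partial\ }\bigoplus_{t\in T}\PerT{S},\qquad \partial(v)=\big((\ell_t-1)\cdot v\big)_{t\in T},
\]
where $\PerT{S}$ is the stalk $\HL{q}$, identified via $\int_\eta$, with its monodromy action. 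This already gives $\cohlgy[j]{\Do,\HL{}}=0$ for $j\ge 2$, identifies $\cohlgy[0]{\Do,\HL{}}$ via $\phi_\eta$ with the subspace $\ker\partial\subseteq\PerT{S}$ of monodromy invariants (cf.\ \autoref{lem:H0-hol}), and identifies $\cohlgy[1]{\Do,\HL{}}$ with $\operatorname{coker}\partial$. By the monodromy formula \eqref{eq:hyperlog-monodromy} combined with the path concatenation rule \eqref{eq:path-concat}, the operator $\ell_t-1$ acts by $v\mapsto\sum_{v=v'v'',\,v''\ne\emptyset}v'\cdot\int_{\xi_t}(v'')$, where $\xi_t$ is the conjugate of $\ell_t$ determined by $\eta$.

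I would then analyse the leading behaviour of $\partial$ with respect to the relative weight filtration, for which a period coefficient has weight zero, so that $v'\int_{\xi_t}(v'')$ has relative weight $\abs{v'}$. Hence the top-order part of $\ell_t-1$ comes only from single-letter suffixes $v''=s$; there $\int_{\xi_t}(\dlog{s})$ is the winding number of $\xi_t$ about $s$, equal to $1$ if $s=t$ and to $0$ otherwise, whereas longer suffixes lower the relative weight strictly by \autoref{lem:loop-weight} --- this is exactly the unipotence asserted in \autoref{lem:mon-unipotent}. The relative-weight associated graded of $\partial$ is therefore the purely combinatorial, $\eta$-independent operator $\partial_0=(\rho_t)_{t\in T}$, where $\rho_t$ deletes the final letter of a word when it equals $t$ and annihilates the word (and the empty word) otherwise. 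Now $\bigcap_{t\in T}\ker\rho_t$ is the $\Per{}$-span of the empty word together with the words whose final letter lies in $S\setminus T$; this is a natural complement of the left ideal $\PerT{S}T$ and maps isomorphically onto $\PerT{S|T}$. Moreover $(\rho_t)_{t}$ is split surjective by the $\Per{}$-linear contracting homotopy $h_0\colon(u_t)_t\mapsto\sum_{t}u_t\,t$, which raises both the relative and the total weight by one.

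Because the relative weight filtration of $\PerT{S}$ is finite in each total weight, this picture lifts from the associated graded to the genuine complex --- either by the homological perturbation lemma or by a descending induction on relative weight with corrections built from $h_0$. This yields at once: $\partial$ is surjective, so $\cohlgy[j]{\Do,\HL{}}=0$ for all $j\ge 1$; there is a bifiltered direct sum decomposition $\PerT{S}=\ker\partial\oplus\PerT{S}T$, whence the composite of $\phi_\eta$ with the quotient map $\PerT{S}\twoheadrightarrow\PerT{S|T}$ is an isomorphism of bifiltered $\Per{}$-modules, with associated graded the $\eta$-independent isomorphism $\gr W^\bullet\cohlgy[0]{\Do,\HL{}}\cong\PerT{S|T}$ above; and the resulting contracting homotopy $h=(\partial|_{\PerT{S}T})^{-1}$ of the cochain complex raises total weight by one.

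I expect the crux to be the leading-term analysis of the second paragraph: one must verify, via \autoref{lem:loop-weight}, that after passing to the relative-weight associated graded the only surviving part of $\ell_t-1$ is the combinatorial $\rho_t$, i.e.\ that all the weight-dropping period coefficients $\int_{\xi_t}(v'')$ constitute a strictly filtration-lowering perturbation; and that the section $h_0$ and the decomposition $\PerT{S}=\ker\partial\oplus\PerT{S}T$ introduce no denominators, so that the entire argument is valid with $\ZZ$-coefficients rather than only after tensoring with $\QQ$.
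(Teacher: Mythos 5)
Your proposal is correct and follows essentially the same route as the paper: reduce to group cohomology of the free fundamental group via the two-term complex, observe (using \autoref{lem:loop-weight}/\autoref{lem:mon-unipotent}) that the monodromy differential is the letter-deleting operator $\delta_1$ plus a perturbation strictly lowering relative weight, contract $\delta_1$ with the $\Per{}$-linear homotopy $(u_t)_t\mapsto\sum_t u_t t$ whose image is $\PerT{S}T$, and transfer via the (integrally valid, terminating) perturbation series $(1+h\delta')^{-1}$. The paper's proof is exactly this argument made explicit, including the bifiltered commutative triangle identifying $\cohlgy[0]{\Do,\HL{}}$ with $\PerT{S|T}$ and the weight-one contracting homotopy.
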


\begin{remark}
Note that while $\cohlgy[0]{\Do,\HL{}}$ is a commutative algebra over $\Per{}$, the theorem only describes its $\Per{}$-module structure; not the multiplication.\qed
\end{remark}

Applying the theorem to the case $\D = \X\setminus\{\infty\}$, so that $S = T$, we obtain the following immediate corollary:
\begin{corollary}\label{cor:cohlgy-whole-curve}
For any marked curve $\X$, we have
\[
\cohlgy[j]{\Xo,\HL{}} = \begin{cases}
\Per{} & j = 0 \\
0 & j > 0.
\end{cases}
\]
\end{corollary}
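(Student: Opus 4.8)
The plan is to derive the statement by specializing \autoref{thm:cohomology-hyperlogs} to the largest possible disk, so that every finite marked point becomes a puncture. Concretely, I would take $\D \defas \X \setminus \sset{\infty}$. Since $\X$ is a genus-zero curve, any admissible chart identifies $\D$ with $\CC$, so $\D$ is contractible and is a legitimate instance of the configuration in \autoref{fig:disk-T}. Moreover $\pInf{S} \cap \D = S$, because $\D$ contains all the finite markings $S$ and excludes only $\infty$; thus $T = S$ in the notation of the theorem, and $\Do = \D \setminus T = \Xo$. Hence the theorem applies verbatim with $T = S$.

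It then remains to identify the target module $\PerT{S|S} = \PerT{S}/\PerT{S}S$. First I would observe that the left ideal $\PerT{S}S$ is precisely the $\Per{}$-span of the nonempty words in the alphabet $S$: any nonempty word factors as $w = w' s$ with $s \in S$ its final letter, hence lies in $\PerT{S}S$, while conversely every generator of the ideal is such a word. The empty word, which carries weight zero, is therefore a free $\Per{}$-module generator of the quotient, giving a filtered isomorphism $\PerT{S|S} \cong \Per{}$ with all of $\Per{}$ sitting in relative weight zero.

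Combining these two observations, the isomorphism \eqref{eq:sv-iso} of \autoref{thm:cohomology-hyperlogs} specializes to $\cohlgy[0]{\Xo,\HL{}} \cong \PerT{S|S} \cong \Per{}$ as filtered $\Per{}$-modules, while the final assertion of the theorem yields the vanishing $\cohlgy[j]{\Xo,\HL{}} = 0$ for all $j > 0$. This is exactly the claimed computation. Since the corollary is a direct specialization, there is no genuine obstacle; the only points requiring care are the elementary bookkeeping confirming $T = S$ and $\Do = \Xo$ for the chosen $\D$, together with the identification of $\PerT{S}S$ with the nonempty words. The outcome also matches the expected geometry: any hyperlogarithm whose reduced word ends in a letter $s$ has nontrivial monodromy about the puncture $s$, so the single-valued hyperlogarithms on the fully punctured curve are exactly the constants, namely the periods $\Per{}$.
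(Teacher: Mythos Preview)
Your proof is correct and follows exactly the paper's approach: the corollary is derived in the paper by applying \autoref{thm:cohomology-hyperlogs} to the disk $\D = \X \setminus \sset{\infty}$, so that $T = S$ and $\PerT{S|S} \cong \Per{}$. Your additional bookkeeping (identifying $\PerT{S}S$ with the nonempty words and $\Do$ with $\Xo$) simply makes explicit what the paper leaves implicit.
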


The rest of this section is devoted to the proof of \autoref{thm:cohomology-hyperlogs}.   The basic idea is as follows: note that the projection $\PerT{S} \to \PerT{S|T}$ has a natural splitting, whose image is the submodule of $\PerT{S}$ generated by  words that do not end with an element of $T$.  To a first approximation, the inverse of the composition \eqref{eq:sv-iso} is defined by taking the hyperlogarithms associated to such elements.   While such hyperlogarithms may still have nontrivial monodromy in $\Do$, we will see that the monodromy can be cancelled inductively through the addition of terms that lie in the ideal $\PerT{S}T$. 

To begin the proof, let us observe that $\Do$ is homotopy equivalent to a bouquet of circles. Hence the universal cover of $\Do$ is contractible, i.e.~$\Do$ is a $K(\pi,1)$ space, so that the cohomology of the local system $\HL{}$ may be computed as the group cohomology of the fundamental group of $\Do$ with coefficients in the monodromy representation on any stalk $\HL{q}\cong \PerT{S}$.

For each $t \in T$ choose a loop $\gamma_t \in \fg{\Do,\vec{p}}$ so that the elements  $\{\gamma_t\}_{t\in T}$ give a collection of free generators for the fundamental group that are dual to the corresponding differential forms:
\begin{equation}
	\int_{\gamma_t} \dlog{s}
	=\begin{cases}
		0 & \text{if $t\neq s$ and} \\
		1 & \text{if $t=s$.} \\
	\end{cases}
	\label{eq:duality-subset}%
\end{equation}
We may then compute the group cohomology using the standard two-term complex for the cohomology of a free group:
\[
	\chn{\HL{}} \defas \rbrac{\xymatrix{
	\PerT{S} \ar[r]^-{\delta} & {\PerT{S}}^{\oplus T}} }
\]
where the differential $\delta$ computes the monodromy around the generators:
\[
	\delta \hlog{u} = (\gamma_t \cdot \hlog{u} - \hlog{u})_{t \in T}.
\]
We will construct an explicit contracting homotopy for the complex $\chn{\HL{}}$.

To this end, let us decompose the differential $\delta$ into homogeneous components
\[
	\delta = \delta_1 + \delta_2+\cdots 
\]
where
\[
\mapdef{\delta_j}{\PerT{S}}{{\PerT{S}}^{\oplus T}}
{s_1\cdots s_n}{\left( s_1\cdots s_{n-j} \int_{\gamma_t} s_{n-j+1}\cdots s_n \right)_{t \in T}}
\]
gives the component in the path concatenation formula \eqref{eq:hyperlog-monodromy} that drops the relative weight of a hyperlogarithm by $j$.  We will view $\delta$ as a perturbation of the simpler differential $\delta_1$, which is easier to understand:
\begin{lemma}
The projection $\chn[0]{\HL{}} = \PerT{S}  \to \PerT{S|T}$ and its natural splitting $\PerT{S|T} \to \PerT{S}$ extend to a homotopy equivalence
\[
	\rbrac{\chn{\HL{}},\delta_1} \cong \PerT{S|T} 
\]
where $\PerT{S|T}$ is viewed a complex concentrated in degree zero.
\end{lemma}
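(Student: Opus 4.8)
The plan is to make the differential $\delta_1$ completely explicit, and then read off the homotopy equivalence from an elementary splitting, using only the fact that a two-term complex of free $\Per{}$-modules whose differential is an isomorphism is contractible. By the path concatenation formula \eqref{eq:hyperlog-monodromy} one has $\delta_1(s_1\cdots s_n) = \bigl(s_1\cdots s_{n-1}\int_{\gamma_t}s_n\bigr)_{t\in T}$, where $\int_{\gamma_t}s_n = \int_{\gamma_t}\dlog{s_n}$ is, up to the factor $\tfrac{1}{2\ipi}$, the winding number of $\gamma_t$ around $s_n$. By the duality relation \eqref{eq:duality-subset} this number is $1$ when $s_n = t$ and $0$ when $s_n\in T\setminus\sset{t}$; moreover it is also $0$ when $s_n\in S\setminus T$, because $\gamma_t$ lies inside the contractible set $\D$, which contains no marked point outside $T$ (equivalently, $\dlog{s_n}$ is exact on $\Do$ for such $s_n$, as in \autoref{ex:sv-log}). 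So $\delta_1$ sends a word $w\,t$ ending in some $t\in T$ to $w$ placed in the $t$-th summand, and annihilates any word whose last letter lies in $S\setminus T$.

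Next I would split $\PerT{S} = A\oplus B$ as a $\Per{}$-module, where $A$ is spanned by the words \emph{not} ending in $T$ — which is exactly the image of the natural splitting $\PerT{S|T}\hookrightarrow\PerT{S}$, and which equals $\ker\delta_1$ by the computation above — and $B$ is spanned by the words ending in $T$. The same computation shows that $\delta_1$ restricts to an isomorphism $\delta_1|_B\colon B \xrightarrow{\ \sim\ } \PerT{S}^{\oplus T}$, $\,w\,t\mapsto (w\ \text{in slot}\ t)$, whose inverse is the right-concatenation map $h\colon (u_t)_{t\in T}\mapsto \sum_{t\in T}u_t\,t$. Hence the complex $(\chn{\HL{}},\delta_1)$ is the direct sum of $A$ placed in degree $0$ and the contractible complex $\bigl(B\xrightarrow{\ \sim\ }\PerT{S}^{\oplus T}\bigr)$; composing the quotient $\pi\colon\chn[0]{\HL{}}=\PerT{S}\twoheadrightarrow\PerT{S|T}$ with the splitting $\iota$ gives the asserted homotopy equivalence. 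Extending $h$ by zero on $A$ gives an explicit contracting homotopy: one checks at once that $\delta_1 h = \mathrm{id}$ on $\PerT{S}^{\oplus T}$ and $h\delta_1 = \mathrm{id} - \iota\pi$ on $\PerT{S}$. Since $h$ merely appends one letter of $T$, it is $\Per{}$-linear and raises word length by one; tracking total weights then identifies it as the ``contracting homotopy of total weight one'' referred to in \autoref{thm:cohomology-hyperlogs}, and shows the homotopy equivalence is compatible with the total and relative weight filtrations.

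I do not expect a genuine obstacle here: the entire content lies in the first computation — correctly evaluating $\int_{\gamma_t}s_n$ for \emph{all} $s_n\in S$, rather than only for $s_n\in T$ — together with the observation that $\delta_1|_B$ is invertible. The one point needing a little care is the compatibility with the weight filtrations. The reason it is worth isolating this clean splitting of $(\chn{\HL{}},\delta_1)$ is that it is precisely what makes the subsequent homological perturbation argument go through once the higher terms $\delta_2,\delta_3,\dots$, which strictly lower the relative weight, are reinstated in the proof of \autoref{thm:cohomology-hyperlogs}.
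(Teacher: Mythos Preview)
Your proof is correct and follows essentially the same approach as the paper: both compute $\delta_1$ explicitly via the path concatenation formula and the duality \eqref{eq:duality-subset}, define the same contracting homotopy $h\colon(u_t)_{t\in T}\mapsto\sum_t u_t\,t$, and verify $\delta_1 h=1$. You are somewhat more explicit than the paper in spelling out why $\int_{\gamma_t}\dlog{s_n}=0$ for $s_n\in S\setminus T$ and in writing the decomposition $\PerT{S}=A\oplus B$, but the underlying argument is the same.
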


\begin{proof}
From the path concatenation formula \eqref{eq:path-concat} or \eqref{eq:hyperlog-monodromy}, we see that
\[
	\delta_1s_1\cdots s_n = \rbrac{ s_1\cdots s_{n-1} \int_{\gamma_t} \dlog{s_n}}_{t \in T}
\]
for any word $s_1\cdots s_n \in \PerT{S}$.  Consider the cochain homotopy operator  $h \colon \chn{\HL{}}\to\chn[\bullet-1]{\HL{}}$ defined by the formula
\[
	\mapdef{h}{{\PerT{S}}^{\oplus T}}{\PerT{S}}
	{(u_t)_{t \in T}}{\sum_{t \in T} u_t t.}
\]
Then clearly the image of $h$ is the ideal $\PerT{S}T \subset \PerT{S}$.  Moreover, the identity $\delta_1 h = 1$ holds because of the duality \eqref{eq:duality-subset}. Hence $h$ is our desired contracting homotopy, and the result follows.
\end{proof}

Note that the contracting homotopy $h : (\chn{\HL{}},\delta_1)\to(\chn[\bullet-1]{\HL{}},\delta_1)$ constructed in the proof has biweight $(1,1)$ with respect to the natural filtrations on hyperlogarithms, i.e.
\[
h\chn[1]{\HL[n]{}[j]} \subset \chn[0]{\HL[n+1]{}[j+1]}.
\]
We will use this operator to construct a contracting homotopy for the complex $(\chn{\HL{}},\delta)$ itself.  (This is a simple instance of the perturbation lemma of homological algebra~\cite{Brown1965,Crainic2004a,Gugenheim1972}.)

Let $\delta' \defas \delta - \delta_1$ be the difference of the two differentials; it is a sum of terms that decrease the relative weight by two or more.  By the unipotence of monodromy (\autoref{lem:mon-unipotent}), we have
\[
\delta'\chn[0]{\HL[n]{}[j]} \subset \chn[1]{\HL[n-1]{}[j-2]}
\]
and therefore
\[
h\delta'  \chn{\HL[n]{}[j]} \subset \chn{\HL[n]{}[j-1]}.
\]
Thus, the endomorphism $h\delta'$ is nilpotent with respect to the relative weight filtration on hyperlogarithms, so that the following operator is well-defined and invertible:
\[
	\Psi \defas (1+h\delta')^{-1} = 1 - h\delta' +  (h\delta')^2 - \cdots 
\]
Moreover $\Psi$ preserves both weight filtrations.  An elementary calculation shows that $\Psi$ gives a commutative diagram of bifiltered complexes
\[
\xymatrix{
(\chn{\HL{}},\delta_1) \ar[rr]^{\Psi}\ar[rd] && (\chn{\HL{}},\delta) \ar[ld] \\
& \PerT{S|T}
}
\]
which establishes \autoref{thm:cohomology-hyperlogs}.

\begin{remark}
	For an element $u \in \PerT{S|T}$, let us denote by $L_{u|T} \in \cohlgy[0]{\Do,\HL{}}$ the corresponding global section.  Tracing through the proof, one finds that for any $s \in S$, we have the identity
	\[
	\td L_{su|T} = \begin{cases}
	L_{u|T} \cdot \dlog{s} & \text{if $s\in S\setminus T$ and} \\
	 (L_{u|T}-L_{u|T}(s)) \cdot \dlog{s} & \text{if $s \in T$.} \\
	\end{cases}
	\]
This identity can be used recursively to give an explicit description of the global section associated to any element of $\PerT{S|T}$ in terms of iterated integrals along the path $\eta$. \qed
\end{remark}

\begin{remark}\label{rem:monodromy-cancel}
Given any one-cochain $\lambda = (\lambda_t)_{t \in T} \in {\PerT{S}}^{\oplus T}$, let
\begin{align}
\Lambda \defas (1+h\delta')^{-1}h\lambda  = (h-h\delta'h+h\delta'h\delta'h- \cdots)\lambda \in \PerT{S}. \label{eq:monodromy-cancellation}
\end{align}
Then $\lambda = \delta\Lambda$, i.e.~the monodromy of the hyperlogarithm $\hlog{\Lambda}$ around each  marked point $t \in T$ is equal to $L_{\lambda_t}$.\qed
\end{remark}

\begin{example}\label{ex:monodromy-coboundary}
Given $p \in T$ and $q \in S \setminus T$, consider the one-cocycle $(\lambda_t)_{t \in T}$ defined by
\[
\lambda_t = \begin{cases}
q & t = p \\
0 & t \ne p.
\end{cases}
\]
Let us compute a coboundary $\Lambda$ for $\lambda$ using \eqref{eq:monodromy-cancellation}.  We have
\[
h \lambda = \sum_{t \in T}\lambda_t t = qp,
\]
and therefore
\[
h\delta'h \lambda = h \rbrac{\int_{\gamma_t} qp}_{t \in T} = \sum_{t \in T}\rbrac{\int_{\gamma_t}qp} \cdot t = -\tfrac{\log(p-q)}{2\ipi}\cdot  p \in \PerT{S}
\]
by \autoref{ex:wt2-loop}, for an appropriate branch of the logarithm.  Meanwhile  $(h\delta')^jh \lambda = 0$ for $j > 1$ by weight considerations and hence
\[
\Lambda = h\lambda - h\delta'h\lambda =  qp + \tfrac{\log(p-q)}{2\ipi}p
\]
Hence the hyperlogarithm $L_{qp}+\tfrac{\log(p-q)}{2\ipi}L_p$ has monodromy $L_q$ around $p$ and trivial monodromy around the other points in $T$.\qed
\end{example}

\subsection{Families of disks}\label{sec:families-cycles}

We now consider a version of the previous construction ``in families''.  Suppose given a commutative diagram
\begin{equation}\vcenter{%
\xymatrix{
\cyc' \ar[r]\ar[d]_{f} & \uXo{S}\ar[d] \\
\cyc \ar[r] & \M{S}
}}
\label{eq:family-of-cycles}%
\end{equation}
where $f \colon \cyc' \to \cyc$ is a locally trivial fibration of topological spaces, such that the induced map on each fibre is homotopy equivalent to the inclusion of an open punctured disk $\Do$ encircling a collection $T \subset S$ of marked points as in the previous section. 

Let us denote by $\Vr{\cyc'}$ and $\Vr{\cyc}$ the pullbacks of the locally constant sheaves of polylogarithms on $\uXo{S}$ and $\M{S}$ to the spaces $\cyc'$ and $\cyc$, respectively.  Choosing a suitable homotopy classes of paths in the fibres, we may use \autoref{prop:fibration-basis} to express $\Vr{\cyc'}$ in terms of fibrewise hyperlogarithms.  Arguing exactly as in \autoref{thm:cohomology-hyperlogs}, we may compute the fibrewise cohomology of the sheaf $\Vr{\cyc'}$ (i.e.~the derived direct image $Rf_*\Vr{\cyc'}$), giving the following:
\begin{lemma}\label{lem:family-cycles-push}
There is a canonical isomorphism
\begin{equation*}
	\gr f_* \Vr{\cyc'} \cong \Vr{\cyc}\abrac{S|T} 
\end{equation*}
and all higher direct images vanish: $R^jf_*\Vr{\cyc'} = 0$ for $j > 0$.
\end{lemma}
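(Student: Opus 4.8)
The plan is to run the proof of \autoref{thm:cohomology-hyperlogs} verbatim, now relative to the base $\cyc$. Since $f$ is a locally trivial fibration whose fibres are homotopy equivalent to a punctured disk $\Do$, and $\Do$ is a $K(\pi,1)$ whose fundamental group is free on the loops $\{\gamma_t\}_{t\in T}$, the derived direct image $Rf_*\Vr{\cyc'}$ is computed, over any sufficiently small contractible open $\U\subset\cyc$ on which $f$ and the monodromy local system trivialize, by the two-term complex of sheaves
\[
\chn{\Vr{\cyc'}} \;=\; \Big(\, \Vr{\cyc}\abrac{S} \xrightarrow{\ \delta\ } {\Vr{\cyc}\abrac{S}}^{\oplus T} \,\Big),
\qquad
\delta L = (\gamma_t \cdot L - L)_{t\in T},
\]
where I have used \autoref{prop:fibration-basis} (applied to the sheaves of \eqref{eq:family-of-cycles} after pulling them back, together with a chosen homotopy class of fibrewise path) to identify the stalks of $\Vr{\cyc'}$ with the free algebra $\Vr{\cyc}\abrac{S}$ carrying the fibrewise monodromy action.

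First I would note that the coefficients $\int_{\gamma_t} w \in \Vr{\cyc}$ occurring in this action are periods of \emph{loops}, so they lower the total weight by the family version of \autoref{lem:loop-weight} recorded after the definition of the sheaves $\Vr{S}$ (pullback to $\cyc$ respecting the weight filtration by construction); hence, exactly as in \autoref{lem:mon-unipotent}, $\delta$ is unipotent for both the total and the relative weight filtration. Splitting $\delta = \delta_1 + \delta'$, with $\delta_1$ lowering the relative weight by one and $\delta'$ by at least two, the operator $h\colon {\Vr{\cyc}\abrac{S}}^{\oplus T}\to\Vr{\cyc}\abrac{S}$, $(u_t)_{t\in T}\mapsto\sum_{t\in T}u_t\,t$, satisfies $\delta_1 h=1$ by the duality \eqref{eq:duality-subset}, has image the left ideal $\Vr{\cyc}\abrac{S}T$, and therefore exhibits $(\chn{\Vr{\cyc'}},\delta_1)$ as homotopy equivalent to the degree-zero complex $\Vr{\cyc}\abrac{S|T}$. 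Since $h\delta'$ strictly lowers the relative weight it is locally nilpotent, so $\Psi\defas(1+h\delta')^{-1}$ is a well-defined bifiltered automorphism which intertwines $\delta_1$ and $\delta$ and is compatible with the augmentation onto $\Vr{\cyc}\abrac{S|T}$, by the perturbation lemma as in \autoref{sec:hyperlog-cohlgy}. Thus over each such $\U$ the complex $\chn{\Vr{\cyc'}}$ resolves $\Vr{\cyc}\abrac{S|T}$, which gives $R^jf_*\Vr{\cyc'}=0$ for $j>0$ and an isomorphism of filtered sheaves $f_*\Vr{\cyc'}\cong\Vr{\cyc}\abrac{S|T}$ over $\U$.

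The one genuinely new point, which I expect to be the main obstacle, is globalization and canonicity: the local identification $\Vr{\cyc'}\cong\Vr{\cyc}\abrac{S}$ depends on the chosen fibrewise path. So, exactly as in \autoref{thm:cohomology-hyperlogs} where the induced map on the associated graded was shown to be path-independent, I would check that the local composites $f_*\Vr{\cyc'}\hookrightarrow\Vr{\cyc}\abrac{S}\twoheadrightarrow\Vr{\cyc}\abrac{S|T}$ induce a path-independent isomorphism on the associated graded of the weight filtration, since by the path-concatenation formula \eqref{eq:hyperlog-monodromy} a change of fibrewise path alters the identification only by terms of strictly lower weight (again using the weight drop for loops). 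These canonical local graded isomorphisms then patch to the claimed global isomorphism $\gr f_*\Vr{\cyc'}\cong\Vr{\cyc}\abrac{S|T}$ of sheaves on $\cyc$; the vanishing $R^jf_*\Vr{\cyc'}=0$ for $j>0$ is already a local statement and needs no patching.
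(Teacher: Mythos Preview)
Your proposal is correct and follows essentially the same approach as the paper: the paper's proof simply says to use \autoref{prop:fibration-basis} to express $\Vr{\cyc'}$ in terms of fibrewise hyperlogarithms and then to argue ``exactly as in \autoref{thm:cohomology-hyperlogs}'', which is precisely what you have spelled out. Your discussion of globalization and canonicity via path-independence on the associated graded is a helpful elaboration of a point the paper leaves implicit.
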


Suppose now that we are given a tower of diagrams \eqref{eq:family-of-cycles} as above:
\begin{equation}\vcenter{%
\xymatrix{
\cyc_{n}\ar[d]_-{f_n} \ar@/_3pc/[ddd]_-{f} \ar[r] & \uXo{S_{n-1}} \cong \M{S_{n}}\ar[d] \\
\cyc_{n-1} \ar[d]_-{f_{n-1}} \ar[r] & \uXo{S_{n-2}} \cong \M{S_{n-1}}  \ar[d] \\
\vdots \ar[d]_-{f_1} &\vdots \ar[d] \\
\cyc_{0}  \ar[r] & \M{S_{0}} 
}}\label{eq:fibration-tower}%
\end{equation}

We then have the following:
\begin{theorem}\label{thm:families-push}
For the iterated relative weight filtration $W^{\bullet,\cdots,\bullet}\Vr{\cyc_n}$ obtained from the tower \eqref{eq:fibration-tower}, we have
\[
\gr W^{\bullet,\cdots,\bullet}f_*\Vr{\cyc_n} \cong \Vr{\cyc_0}\abrac{S_0|T_0} \abrac{S_1|T_1} \cdots \abrac{S_{n-1}|T_{n-1}},
\]
and all of the higher direct images vanish: $R^jf_*\Vr{\cyc_n} = 0$ for $j > 0$.
\end{theorem}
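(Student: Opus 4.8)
The plan is to argue by induction on the height $n$ of the tower \eqref{eq:fibration-tower}. The case $n=0$ is vacuous, since then $f$ is the identity. For the inductive step I would factor $f$ as the composite $\cyc_n \xrightarrow{f_n} \cyc_{n-1} \xrightarrow{f'} \cyc_0$, where $f' = f_1\circ\cdots\circ f_{n-1}$ is the projection associated to the shorter tower $\cyc_{n-1}\to\cdots\to\cyc_0$, to which the inductive hypothesis applies. The last map $f_n\colon \cyc_n \to \cyc_{n-1}$ sits in a diagram of the form \eqref{eq:family-of-cycles}, being pulled back along $\cyc_{n-1}\to\M{S_{n-1}}$ from the universal punctured curve $\uXo{S_{n-1}}\to\M{S_{n-1}}$, so \autoref{lem:family-cycles-push} applies and gives $R^j(f_n)_*\Vr{\cyc_n}=0$ for $j>0$ together with a canonical isomorphism $\gr (f_n)_*\Vr{\cyc_n} \cong \Vr{\cyc_{n-1}}\abrac{S_{n-1}|T_{n-1}}$ for the relative weight filtration of this fibration. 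In particular $R(f_n)_*\Vr{\cyc_n}$ is concentrated in degree zero, and the Grothendieck spectral sequence for the composite $f = f'\circ f_n$ collapses to give $Rf_*\Vr{\cyc_n}\cong Rf'_*\bigl((f_n)_*\Vr{\cyc_n}\bigr)$.

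It then remains to compute $Rf'_*$ of the filtered sheaf $\shf M \defas (f_n)_*\Vr{\cyc_n}$. The key point is that its associated graded $\Vr{\cyc_{n-1}}\abrac{S_{n-1}|T_{n-1}}$ is, as a sheaf of $\Vr{\cyc_{n-1}}$-modules, free on the set of words in $S_{n-1}$ not ending in a letter of $T_{n-1}$, hence a direct sum of relative-weight-shifted copies of $\Vr{\cyc_{n-1}}$. The inductive hypothesis gives $R^jf'_*\Vr{\cyc_{n-1}}=0$ for $j>0$ and identifies $\gr f'_*\Vr{\cyc_{n-1}}$ with $\Vr{\cyc_0}\abrac{S_0|T_0}\cdots\abrac{S_{n-2}|T_{n-2}}$; since the relative weight filtrations are locally finite in each total weight, $Rf'_*$ commutes with the direct sums in question, so $R^jf'_*$ annihilates each graded piece of $\shf M$ for $j>0$ and commutes with the free-algebra construction on $H^0$. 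Feeding this into the spectral sequence of the filtration on $\shf M$—working weight by weight, where all the sheaves involved are finite-rank local systems, so that the spectral sequence is a finite one exactly as in the proof of \autoref{prop:moduli-poincare}—yields $R^jf'_*\shf M=0$ for $j>0$ and $\gr f'_*\shf M \cong \bigl(\gr f'_*\Vr{\cyc_{n-1}}\bigr)\abrac{S_{n-1}|T_{n-1}} \cong \Vr{\cyc_0}\abrac{S_0|T_0}\cdots\abrac{S_{n-1}|T_{n-1}}$, which is the assertion of the theorem.

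The main obstacle I anticipate is purely bookkeeping: keeping track of the interaction between the new relative weight filtration coming from $f_n$ and the ones inherited from the shorter tower, and making the spectral-sequence step legitimate despite the weight filtration being infinite. Both are handled by restricting to a fixed total weight, where the relative weight filtrations become finite and every sheaf in sight is a local system of finite rank, so that higher pushforwards commute with the relevant (now finite) direct sums and the filtration spectral sequence degenerates after finitely many steps; one then passes to the colimit over the total weight. The genuinely substantive input—the cancellation of monodromy along a single punctured disk, encoded in the contracting homotopy of total weight one—has already been isolated in \autoref{thm:cohomology-hyperlogs} and \autoref{lem:family-cycles-push}, so the proof of \autoref{thm:families-push} is essentially a formal dévissage built on top of it.
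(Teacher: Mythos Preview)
Your proposal is correct and follows essentially the same route as the paper: induction on $n$, invoking \autoref{lem:family-cycles-push} for the single-step pushforward, and a filtration spectral sequence to propagate the vanishing and the associated-graded computation through the composite. The only cosmetic difference is that the paper isolates your spectral-sequence step as a standalone lemma (\autoref{lem:cohlgy-inc-filt}) asserting that if $\cohlgy[j]{\gr\chn{}}=0$ for $j\neq 0$ then $\cohlgy[j]{\chn{}}=0$ and $\gr\cohlgy[0]{\chn{}}=\cohlgy[0]{\gr\chn{}}$, rather than re-deriving it inline or pointing to the proof of \autoref{prop:moduli-poincare}; invoking this lemma at each stage of the tower makes the d\'evissage a one-liner.
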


\begin{proof}
Combining \autoref{lem:family-cycles-push} with a standard fact about filtered complexes (\autoref{lem:cohlgy-inc-filt} below), we see that for the sheaves in question, the associated graded construction commutes with the derived pushforward $Rf_{j*}$.  Hence the result follows easily by induction on $n$.
\end{proof}

\begin{lemma}\label{lem:cohlgy-inc-filt}
Suppose $\chn{}$ is a cochain complex with an exhaustive increasing filtration $0=W^{-1}\chn{} \subset W^0\chn{} \subset W^1\chn{} \subset \cdots$ such that $\cohlgy[j]{\gr \chn{}} = 0$ for $j \ne 0$.  Then $\cohlgy[j]{\chn{}} = 0$ for $j \ne 0$, and $\gr \cohlgy[0]{\chn{}} = \cohlgy[0]{\gr \chn{}}$.
\end{lemma}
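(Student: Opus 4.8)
The plan is to unwind the filtration one step at a time via the long exact cohomology sequences of the subquotient inclusions, and then pass to the colimit; this sidesteps any convergence bookkeeping for the spectral sequence of the filtered complex, though that route would work equally well. For each $p \ge 0$ there is a short exact sequence of complexes
\[
0 \to W^{p-1}\chn{} \to W^p\chn{} \to \gr[p]\chn{} \to 0,
\]
and I would prove, by induction on $p$ with base case $W^{-1}\chn{} = 0$, the two statements: (i) $\cohlgy[j]{W^p\chn{}} = 0$ for all $j \ne 0$; and (ii) the sequence
\[
0 \to \cohlgy[0]{W^{p-1}\chn{}} \to \cohlgy[0]{W^p\chn{}} \to \cohlgy[0]{\gr[p]\chn{}} \to 0
\]
is exact. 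For (i), the long exact sequence places $\cohlgy[j]{W^p\chn{}}$ between $\cohlgy[j]{W^{p-1}\chn{}}$ and $\cohlgy[j]{\gr[p]\chn{}}$, both of which vanish for $j \ne 0$ --- the first by the inductive hypothesis, the second by the hypothesis of the lemma. For (ii), the same sequence around degree $0$ reads $\cohlgy[-1]{\gr[p]\chn{}} \to \cohlgy[0]{W^{p-1}\chn{}} \to \cohlgy[0]{W^p\chn{}} \to \cohlgy[0]{\gr[p]\chn{}} \to \cohlgy[1]{W^{p-1}\chn{}}$, whose two outer terms vanish by the hypothesis and by (i) at the previous stage respectively, leaving exactly the claimed short exact sequence.

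The second step is the passage to the limit. Since the filtration is exhaustive we have $\chn{} = \colim{p} W^p\chn{}$, and a filtered colimit of complexes of abelian groups is exact, so cohomology commutes with it: $\cohlgy[j]{\chn{}} = \colim{p}\cohlgy[j]{W^p\chn{}}$. Statement (i) then gives $\cohlgy[j]{\chn{}} = 0$ for $j \ne 0$. For the assertion about the associated graded, note that by (ii) each transition map $\cohlgy[0]{W^{p-1}\chn{}} \to \cohlgy[0]{W^p\chn{}}$ is injective, hence so is every $\cohlgy[0]{W^p\chn{}} \to \cohlgy[0]{\chn{}}$; defining $W^p\cohlgy[0]{\chn{}}$ to be the image of the latter, we identify it with $\cohlgy[0]{W^p\chn{}}$, whence $\gr[p]\cohlgy[0]{\chn{}} \cong \cohlgy[0]{W^p\chn{}}/\cohlgy[0]{W^{p-1}\chn{}} \cong \cohlgy[0]{\gr[p]\chn{}}$ using (ii) once more. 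Summing over $p$ gives $\gr\cohlgy[0]{\chn{}} \cong \cohlgy[0]{\gr\chn{}}$.

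The one place needing a little care is this colimit step: it is essential that the filtration is exhaustive (not merely that the subquotients are acyclic in nonzero degrees), one should be explicit that it is exactness of directed colimits that licenses commuting cohomology past the colimit, and one should check that the resulting filtration on $\cohlgy[0]{\chn{}}$ is the expected one --- the last point being automatic from the injectivity of the transition maps found in the first step. Everything else is a routine diagram chase in the long exact sequence.
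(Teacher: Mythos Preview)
Your proof is correct and follows the same approach as the paper: the paper's proof simply says to pass to the colimit $\cohlgy{\chn{}} = \colim{j}\cohlgy{W^j\chn{}}$ and then handle each $W^j\chn{}$ inductively via the long exact sequences of the successive graded quotients. You have spelled out exactly these steps in detail, including the care needed at the colimit stage.
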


\begin{proof}
Since $\chn{} = \bigcup_{j \ge 0} W^j\chn{}$, we have $\cohlgy{\chn{}} = \colim{j}\cohlgy{W^j\chn{}}$, so the problem reduces to studying the individual filtered pieces.  These can be dealt inductively using the long exact sequences associated to the successive graded quotients $\gr[j]\chn{} = W^j\chn{}/W^{j-1}\chn{}$.
\end{proof}

\begin{example}
    We may apply the results above to the case in which $\cyc_j = \M{S_j}$, so that $S_j = T_j$ for all $j$. (This is a relative version of \autoref{cor:cohlgy-whole-curve}.) We find that $Rf_*\Vr{S_n} =  \Vr{S_0}$
    in the derived category of sheaves on $\M{S_0}$.    Projecting all the way down to a point, we obtain the following computation of the cohomology of the sheaf of polylogarithms on the moduli space $\M{S}$:
	\[
	\cohlgy[j]{\M{S},\Vr{S}}
	= \begin{cases} 
		\MZVipi & \text{if $j=0$ and}\\
		0       & \text{if $j > 0$.}
	\end{cases}
	\]
In particular, the only polylogarithms on $\M{S}$ that are globally single-valued are the constants. \qed
\end{example}

\begin{example}\label{ex:MS-sv}
Consider the subsheaf $\Vr{S}\overline{\Vr{S}} \cong \Vr{S}\otimes_{\MZVipi}\overline{\Vr{S}} \subset \sCinf{\M{S}}$ generated by polylogarithms and their complex conjugates.  Choosing a full flag of subsets $S_2\subset S_3 \subset \cdots \subset S_N = S$ with $|S_j| = j$, we may use the relative weight filtrations on the antiholomorphic factors to obtain a multi-index filtration on $\Vr{S}\overline{\Vr{S}}$ whose associated graded is
\[
\gr W^{\bullet,\cdots,\bullet} (\Vr{S}\overline{\Vr{S}}) = \Vr{S}\abrac{S_2}\abrac{S_3}\cdots \abrac{S_N}.
\]
Combining the result of the previous example with \autoref{lem:cohlgy-inc-filt}, we obtain an isomorphism
	\[
	\gr W^{\bullet,\cdots,\bullet}\cohlgy[j]{\M{S},\Vr{S}\overline{\Vr{S}}} = \begin{cases} \MZVipi\abrac{S_2}\cdots\abrac{S_N} & j=0\\
	0 & j > 0 .
	\end{cases}
	\]
	The elements of $\cohlgy[0]{\M{S},\Vr{S}\overline{\Vr{S}}}$ are called \defn{single-valued polylogarithms}; see, e.g.~\cite{Brown2004,Brown2004a} for the one-dimensional case.  Note that the degrees of freedom for the global sections match the degrees of freedom for the stalks of $\Vr{S}$. Hence for any germ of a holomorphic polylogarithm, we can cancel its monodromy by adding sections of $\Vr{S}\overline{\Vr{S}}$ whose holomorphic parts have lower multi-weight, resulting in a globally single-valued function. An explicit construction is discussed in \autoref{rem:sv-int}.\qed
\end{example}


\section{Integration on moduli spaces of disks}
\label{sec:cycles}

\subsection{Moduli spaces of marked disks}
\label{sec:disk-moduli-spaces}

Let $P$ be a finite set, and let $Q$ be a totally ordered finite set. We assume throughout that $2|P|+|Q| \ge 2$.  A \defn{$(P,Q)$-marked disk} consists of the following data:
\begin{itemize}
\item a compact complex analytic disk $\D$ with boundary $\partial \D$,
\item a special marked point $\infty \in \partial \D$,
\item an embedding of $P$ in the interior $\D \setminus \partial \D$,
\item an embedding of $Q$ in $\partial \D \setminus \{\infty\}$, such that the total order on $Q$ is compatible with the natural orientation of the boundary.
\end{itemize}

Any $(P,Q)$-marked disk $\D$ is evidently isomorphic to the closed upper half-plane $\bHH \subset \PP[1]$ in such a way that the special marked point $\infty \in \partial \D$ is sent to the point at infinity $\infty \in \bHH$. Thus the moduli space $\C{P,Q}$ of $(P,Q)$-marked disks is a connected real analytic manifold of dimension $2\abs{P}+\abs{Q}-2$ that is canonically isomorphic to the quotient
\[
	\C{P,Q} \cong \rbrac{\Conf_{P}(\HH) \times \Conf_{Q,+}(\RR)}/G,
\]
where
\begin{align*}
	\Conf_P(\HH) 
	&\defas \set{ (x_p)_{p \in P} }{x_{p} \ne x_{p'} \textrm{ if }p \ne p'} \subset \HH^P
	\quad\text{and}
\\
	\Conf_{Q,+}(\RR)
	&\defas \set{(x_q)_{q\in Q}}{x_{q} < x_{q'} \textrm{ if }q < q'} \subset \RR^Q
\end{align*}
is the configuration space of $\HH$ and the ordered configuration space of $\RR$, respectively, and where
\[
G \cong \RR_{>0}\ltimes \RR,
\]
is the group of conformal transformations of $\bHH$ that preserve $\infty$.
It acts by $(a,b) \cdot y = ay+b$ on $y \in \bHH$, and diagonally on the configuration spaces. Note that $\C{P,Q}$ has a canonical orientation, which descends from the natural orientation of the configuration spaces.

The moduli space $\C{P,Q}$ comes equipped with a universal marked disk
\[
\uD{P,Q} \to \C{P,Q},
\]
whose total space can be realized concretely as the quotient of the product $\Conf_P(\HH) \times \Conf_{Q,+}(\RR) \times \bHH$ by the diagonal action of $G$.  The marked points in the fibres are indicated by the sections $\C{P,Q} \to \uD{P,Q}$ obtained from the natural evaluation maps $P \times \Conf_P(\HH) \to \HH$ and $Q \times \Conf_{Q,+}(\RR) \to \partial \bHH$.  We denote by
\[
\partial\uD{P,Q} \subset \uD{P,Q}
\]
the boundary, induced by the boundary $\RR \cup \sset{\infty}$ of $\bHH \subset \PP[1]$, and we denote by
\[
\uDo{P,Q} \subset \uD{P,Q}
\]
the universal punctured disk, obtained by removing the boundary $\partial\uD{P,Q}$ and all interior marked points in the fibres.  Note that the total space of $\uDo{P,Q}$ is canonically identified with $\C{P\sqcup\{*\},Q}$, where $*$ is an extra  label.

A marked disk can be ``doubled'' by gluing it to its complex conjugate along its boundary, giving an $S$-marked curve, where 
\[
	S = P \sqcup \Pb \sqcup Q.
\]
In this way we obtain canonical real analytic maps
\begin{equation}
	\iota       \colon  \C{P,Q}   \injection     \M{S} \qquad\text{and}\qquad
	\tilde\iota \colon \uD{P,Q}  \hookrightarrow \uX{S} 
	\label{eq:moduli-embedding}%
\end{equation}
that embed $\C{P,Q}$ as a connected component of the fixed point set of an antiholomorphic involution on $\M{S}$, so that $\M{S}$ can be viewed as a complexification of $\C{P,Q}$; see~\cite{Ceyhan2007a,Ceyhan2007}.  The map $\tilde \iota$ then embeds the fibres of $\uD{P,Q}\cong \C{P,Q} \times \bHH$ as closed disks in the universal curve $\uX{S}\cong \M{S} \times \PP[1]$.

When $|Q| \ge 2$, one can construct a canonical pair of complex conjugate coordinates on the fibres.  Indeed, there is a preferred embedding $\{0,1\} \hookrightarrow Q$: the one that sends that sends 0 and 1 to the minimal and maximal elements of $Q$, respectively.  In this way, we trivialize the universal curve over $\C{P,Q}$, and the cross ratios $z \defas \crossrat{\ast}{1}{0}{\infty}$ and $\zb \defas \crossrat{\bar{\ast}}{1}{0}{\infty}$ give the desired coordinates on the disk fibres. Note that this identifies the boundary of the fibres with the real line $\RR \cup \sset{\infty}$.

When $|Q| \le 1$ we must have $|P| \geq 1$ and can similarly construct coordinates by sending an interior marked point to $\iu \in \HH$, but those coordinates are not cross ratios, so a different convention for tangential base points is required in that case.  We avoid treating this case since it turns out to be unnecessary for our purposes.

%


\subsection{Polylogarithms on moduli spaces of disks}
\label{sec:disk-moduli-cohlgy}

The smooth manifold $\C{P,Q}$ has a natural real analytic structure, and we will say that a complex-valued function on $\C{P,Q}$ is \defn{analytic} if its real and imaginary parts are real analytic.  We denote by $\sO{\C{P,Q}} \subset \sCinf{\C{P,Q}}$ the sheaf of analytic functions on $\C{P,Q}$ and by $\sforms{\C{P,Q}}$ the locally free $\sO{\C{P,Q}}$-module of analytic differential forms.  Equivalently, since $\M{S}$ is a complexification of $\C{P,Q}$, we can identify $\sforms{\C{P,Q}}$ with the subsheaf of the $C^\infty$ differential forms generated by restrictions of holomorphic forms.  Note that the restriction is an isomorphism on stalks, so that $\iota^*\sforms{\M{S}}$ is isomorphic to the abstract inverse image sheaf $\iota^{-1}\sforms{\M{S}}$. 

We adopt the following notations for the sheaves of polylogarithms and associated forms:
\begin{align*}
\A{\C{P,Q}} &\defas \iota^*\A{\M{S}} \subset \forms{\C{P,Q}},  \\
\Vr{P,Q} &\defas \iota^* \Vr{S} \subset \sO{\C{P,Q}}\quad\text{and} \\
\VAr{P,Q} &\defas \iota^* \VAr{S}\subset \sforms{\C{P,Q}}.
\end{align*}
Evidently the sheaves $\Vr{P,Q} \cong \iota^{-1}\Vr{S}$ and $\VAr{P,Q} \cong \iota^{-1}\VAr{S}$ are locally constant and filtered by the total weight. Similarly, for a projection $f \colon \C{P',Q'} \to \C{P,Q}$ defined by forgetting a collection of marked points, we have a projection $\widetilde f \colon \M{S'} \to \M{S}$ and we define the sheaves of relative differentials
\[
\VAr{f} \defas \iota^*\VAr{\widetilde f} \subset \sforms{\C{P',Q'}/\C{P,Q}}
\]
by pulling back the relative forms for $\widetilde f$ as defined in \eqref{eq:relative-forms-MS}.  We also introduce the following notations for the global sections:
\[
\V{\C{P,Q}} \defas \sect{\C{P,Q},\Vr{P,Q}}
\qquad\text{and}\qquad
\VA{\C{P,Q}} \defas \sect{\C{P,Q},\VAr{P,Q}}.
\]
Our main result is then the following:
\begin{theorem}\label{thm:disk-moduli-pushforward}
Let $f\colon \C{P',Q'} \to \C{P,Q}$ be a projection defined by forgetting $n = |P'|-|P|$ interior marked points and $m = |Q'|-|Q|$ boundary marked points.  Then the following statements hold:
\begin{enumerate} 
\item \label{thm:disk-moduli-pushforward:acyc} The sheaves $\Vr{P',Q'}$ are acyclic along the fibres, i.e.
\[
	R^jf_*\Vr{P',Q'} =0
\]
for all $j > 0$.
\item \label{thm:disk-moduli-pushforward:deRham}  The relative polylogarithmic forms compute the singular cohomology of the fibres.  More precisely, there are canonical quasi-isomorphisms
\[
	f_*(\VAr{f},\td) \cong Rf_*(\VAr{f},\td)  \cong  \Vr{P,Q} \otimes Rf_* \underline{\ZZ}
\]
of complexes of sheaves on $\C{P,Q}$.
\item \label{thm:disk-moduli-pushforward:integral} If $\omega \in \VA[d]{\C{P',Q'}}[j]$ is such that the fibre integrals defining the pushforward $f_*\omega$ converge absolutely, then
\[
	f_*\omega \in \VA[d-2n-m]{\C{P,Q}}[j-k]
\]
is a polylogarithmic form, but the weight has dropped at least by
\[
k \defas  \begin{cases}
	n+1  & \text{if $|Q|=0$ and $|Q'| > 0$, or} \\
	n & \text{otherwise.}
	\end{cases}
\]
\end{enumerate}
\end{theorem}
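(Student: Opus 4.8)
The plan is to exploit the transitivity of fibre integration. If a projection $f$ factors as $\C{P',Q'} \to \C{P'',Q''} \to \C{P,Q}$, then $f_*\omega = (f'')_*\big((f')_*\omega\big)$ whenever the iterated fibre integrals converge absolutely, which they do (by Fubini) once the total integral converges absolutely. Since every projection factors into a chain of forgetful maps each erasing a single marked point, it suffices to prove all three statements when $f$ forgets one point and then compose, keeping track of how the weights $k$ accumulate. Two cases occur. If the erased point is a boundary point, the fibre of $f$ is a single open arc of $\partial\D$ between two consecutive boundary markings — in particular contractible. If it is an interior point, the fibre is the interior of the disk with the remaining interior markings deleted; under the doubling construction this sits inside $\uXo{S}$ as an open punctured disk encircling the set $T$ of remaining interior markings, exactly as in the set-up of \eqref{eq:family-of-cycles}, and the restriction of $\Vr{P',Q'}$ to it is a sheaf of single-valued polylogarithms (cf.\ \autoref{ex:MS-sv}). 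In this interior case the fibre integral will be computed by a single-valued integration in the spirit of Schnetz and Brown, using the cohomological results of \autoref{sec:monodromy}.

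For the acyclicity statement and the relative de~Rham statement I would argue as follows. When the erased point is a boundary point, the fibre is contractible, so the higher derived pushforwards of the locally constant sheaf $\Vr{P',Q'}$ vanish along it. When it is an interior point, the fibre is a punctured disk encircling $T$, and the vanishing $R^{>0}f_*\Vr{P',Q'}=0$ (together with the identification of the fibrewise global sections) follows from the single-valued counterpart of \autoref{thm:cohomology-hyperlogs}, i.e.\ from \autoref{thm:families-push} applied to this fibration, the monodromy around the remaining interior markings being cancelled by the perturbation-lemma mechanism of \autoref{sec:monodromy}. Composing over the chain gives the acyclicity statement for general $f$. For the relative de~Rham statement, each graded piece of the complex $\VAr{f}$ is a finite-rank locally free module over $\Vr{P',Q'}$, hence $f_*$-acyclic by what was just proved, so $f_*\VAr{f}\simeq Rf_*\VAr{f}$; and restricting the relative Poincar\'e lemma \autoref{prop:moduli-poincare} to $\C{P',Q'}$ along $\iota$ shows that $\VAr{f}$ resolves $f^{-1}\Vr{P,Q}$, whence $Rf_*\VAr{f}\simeq Rf_*f^{-1}\Vr{P,Q}\simeq \Vr{P,Q}\otimes Rf_*\underline{\ZZ}$ by the projection formula.

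For the weight-drop statement, fix a single forgetful map and let $\omega\in\VA[d]{\C{P',Q'}}[j]$. By the relative de~Rham statement just established, $\omega$ restricted to a fibre — a top-degree relative polylogarithmic form — is $\td\Phi$ for a relative polylogarithmic form $\Phi$ of degree one less, single-valued along the fibre, of total weight $j$; the only non-routine point (in the interior case) is that a fibrewise primitive is single-valued only after correcting it by a closed polylogarithmic form, which is possible precisely because $Rf_*$ computes the fibre cohomology and the top fibre cohomology vanishes. In the boundary-point case, $f_*\omega$ is then the difference of the regularised values of $\Phi$ at the two endpoints of the arc (boundary markings, or $\infty$), which lie in $\Vr{P,Q}$ of weight $\le j$ by \autoref{prop:moduli-log-sing}; so $k=0$, except when $Q=\emptyset$, where the two ends collapse to $\infty$ and the change of tangential-base-point convention (together with $\dlog{\infty}=0$) is seen by a direct computation to cost one extra unit, $k=1$. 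In the interior-point case, $\Phi$ has total weight $j$ and, by Stokes' theorem and contour deformation,
\[
	f_*\omega \;=\; \sum_{\ell}\ \int_{\ell}\Phi,
\]
where $\ell$ ranges over the outer boundary circle of the punctured-disk fibre together with one small loop around each remaining interior marking. Each $\int_\ell\Phi$ is the integral of the weight-$j$ polylogarithmic one-form $\Phi$ around a \emph{closed} loop, hence lies in $\Vr{P,Q}$ of weight $\le j-1$ by \autoref{lem:loop-weight} (whose proof already builds in the arithmetic input $1/(n+1)!\in\MZVipi[n]$ of \autoref{lem:(n+1)!}); so $k=1=n'-n$. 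Composing over the chain, each interior-point step drops the weight by one, each boundary-point step by zero, and the single step (if any) that brings $|Q|$ from positive to zero drops it by one more — which is exactly the stated value of $k$, and parts one and two guarantee the intermediate pushforwards are again polylogarithmic forms so the induction closes.

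The hard part will be the interior-point case, namely making the residue identity $f_*\omega=\sum_\ell\int_\ell\Phi$ precise. I expect three obstacles there: (i) constructing the single-valued polylogarithmic primitive $\Phi$, which is exactly where the monodromy-cancellation apparatus of \autoref{sec:monodromy} is needed, a naive fibrewise antiderivative having genuine monodromy around the remaining interior markings; (ii) justifying the contour deformation and the regularisation of the loop integrals $\int_\ell\Phi$, since the boundary circle passes through the markings $Q\cup\{\infty\}$ where $\Phi$ is singular, and the small loops require an $\varepsilon\to 0$ analysis in which individually divergent contributions must cancel — this is the ``more general regularisation scheme'' the introduction alludes to, and the subtlest point; (iii) checking that the regularised loop integrals are again polylogarithms with \emph{integer} coefficients and obey \autoref{lem:loop-weight} in the single-valued setting, where $\Phi$ mixes holomorphic and antiholomorphic logarithmic forms — which should reduce, by complex conjugation, to the purely holomorphic statement. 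Everything else — the reduction to single forgetful maps, the acyclicity and relative de~Rham statements, and the additivity of the weight drop along the chain — I expect to be comparatively routine.
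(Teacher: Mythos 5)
Your skeleton matches the paper's proof: factor $f$ into elementary forgetful maps, deduce acyclicity from the machinery of \autoref{sec:monodromy} (contractible fibres for a boundary point, the tower \eqref{eq:forget-interior-tower} and \autoref{thm:families-push} for an interior point), get the de Rham statement from \autoref{prop:moduli-poincare} plus the projection formula, and obtain the weight drop from a fibrewise single-valued $(1,0)$-primitive, a regularized Stokes argument, and ultimately \autoref{lem:loop-weight}. The genuine gap sits exactly where you place ``the hard part'': your identity $f_*\omega=\sum_\ell\int_\ell\Phi$ with each term controlled ``by \autoref{lem:loop-weight}'' cannot be justified as stated. The primitive $\Phi$ is not closed and its coefficients mix holomorphic and antiholomorphic hyperlogarithms, so its integrals over the small circles and over the outer boundary are neither homotopy-invariant nor iterated integrals of words in the forms $\dlog{s}$, which is all that \autoref{lem:loop-weight} covers; and ``reducing by complex conjugation'' does not repair this. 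In the paper the small circles are handled by a Schnetz-style residue theorem: the limit is $-2\ipi\Res_p\alpha$, and the weight drop comes from extracting the coefficient of $\dlog{p}$ together with the asymptotics of \autoref{prop:disk-log-expansion}, not from the loop lemma. On the outer boundary the drop is invisible term by term: each interval and half-circle contributes a regularized limit of full weight $j$. One must replace $\alpha$ interval-by-interval by fibrewise holomorphic branches $\beta_q$ (\autoref{lem:hol-restrict}), prove that each half-circle contribution equals $\int_{\gamma_q}\beta_q$ (\autoref{prop:half-residue}, via the partial-annulus asymptotic argument --- the individually divergent pieces do not simply cancel), and show that different branches satisfy $\beta_{q_1}-\beta_{q_2}\in$ weight $\le j-1$ because they differ by monodromy; only then does the top-weight part assemble into a closed-loop integral of a single holomorphic form, to which \autoref{lem:loop-weight} finally applies. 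These steps, together with the reduction to $|Q|\ge 2$ by base change (\autoref{lem:CPQ-base-change}) needed to have cross-ratio fibre coordinates, are the real content of part three and are not supplied by your outline.

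A smaller misattribution: in the boundary-forgetting step whose target has $Q=\emptyset$, the extra unit of drop is not a ``direct computation'' about changing tangential base points or $\dlog{\infty}=0$; it is again \autoref{lem:loop-weight}, because the fibre closes up into a loop at $\infty$ between \emph{opposite} tangential base points, and the absolute convergence of the integral makes it independent of the tangent vector, so the lemma still applies. Your bookkeeping of how the drops accumulate along the factorization (one per interior point, one extra at the step that empties $Q$) is correct, as are the acyclicity and de Rham arguments.
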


Note that if $|P| = 1$ and $|Q|=0$, or if $|P|=0$ and $|Q|=2$, then the base $\C{P,Q}$ is a point.  Specializing the theorem to such projection, we obtain 
\begin{corollary}\label{cor:arnold-integral}
For any $P$ and $Q$, the following statements hold:
\begin{enumerate}
\item The sheaves $\Vr{P,Q}$ are acyclic.
\item The complex $(\VA{\C{P,Q}},\td)$  computes the singular cohomology of $\C{P,Q}$ with coefficients in the filtered ring $\MZVipi$.
\item Suppose that $\omega \in \A[\dim \C{P,Q}]{\C{P,Q}}$ is a volume form whose integral over $\C{P,Q}$ converges absolutely. Then
\[
\int_{\C{P,Q}} \omega \in \begin{cases}
\MZVipi[|P|-1] & \text{if $Q=\emptyset$ and} \\
\MZVipi[|P|+|Q|-2] & \text{otherwise}. \\
\end{cases}
\]
\end{enumerate}
\end{corollary}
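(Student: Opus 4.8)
The plan is to deduce \autoref{cor:arnold-integral} from \autoref{thm:disk-moduli-pushforward} by taking $f$ to be a forgetful projection from $\C{P,Q}$ onto a zero-dimensional moduli space of disks. The hypothesis $2|P|+|Q|\ge 2$ always permits such a projection: when $Q=\emptyset$ (so $|P|\ge 1$) we forget all but one interior point, landing in $\C{\{p_0\},\emptyset}$; when $|Q|\ge 2$ we forget all interior points and all but two boundary points, landing in $\C{\emptyset,\{q_0,q_1\}}$; and when $|Q|=1$ (again $|P|\ge 1$) we forget all but one interior point together with the single boundary point, landing once more in $\C{\{p_0\},\emptyset}$. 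In every case the base is a single point whose double is a three-times-marked $\PP[1]$, so by \autoref{thm:M03-periods} its sheaf of (necessarily constant) polylogarithms is the filtered ring $\MZVipi$. Moreover, since the base is a point, $f_*$ is the global-sections functor on $\C{P,Q}$ and the relative polylogarithmic forms $\VAr{f}$ coincide with $\VAr{P,Q}$.

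Granting this, the acyclicity statement is immediate: the first part of \autoref{thm:disk-moduli-pushforward} for this $f$ says $R^jf_*\Vr{P,Q} = \cohlgy[j]{\C{P,Q},\Vr{P,Q}} = 0$ for $j>0$. For the de Rham statement, the second part of the theorem becomes a chain of quasi-isomorphisms
\[
(\VA{\C{P,Q}},\td)\;\cong\;Rf_*(\VAr{P,Q},\td)\;\cong\;\MZVipi\otimes Rf_*\underline{\ZZ},
\]
in which $Rf_*\underline{\ZZ}$ is the singular cochain complex of $\C{P,Q}$; this is exactly the asserted computation of the singular cohomology with coefficients in $\MZVipi$, compatibly with the weight filtration. (Equivalently, one can argue directly: $\VAr{P,Q}$ resolves $\underline{\MZVipi}$ by the Poincar\'e lemma of \autoref{prop:moduli-poincare} pulled back to $\C{P,Q}$, and its terms $\VAr[k]{P,Q}=\Vr{P,Q}\otimes_{\ZZ}\A[k]{\C{P,Q}}$ are acyclic by the first statement, so hypercohomology is computed by global sections.)

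For the integrality statement, write $d=\dim\C{P,Q}=2|P|+|Q|-2$. A top-degree form lies in $\VA[d]{\C{P,Q}}[d]$, because $\VAr[d]{P,Q}[d]=\Vr[0]{P,Q}\otimes\A[d]{\C{P,Q}}=\A[d]{\C{P,Q}}$; thus $\omega$ has weight $j=d$, and the hypothesis that $\int_{\C{P,Q}}\omega$ converges absolutely is precisely the one needed to form $f_*\omega$ in the third part of \autoref{thm:disk-moduli-pushforward}. It then remains to run the numerology. If $Q=\emptyset$ we have $n=|P|-1$, $m=0$, and since neither source nor target carries a boundary marked point, $k=n=|P|-1$; hence $d-2n-m=0$ and $j-k=|P|-1$, so $\int_{\C{P,Q}}\omega\in\VA[0]{\mathrm{pt}}[|P|-1]=\MZVipi[|P|-1]$. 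If $|Q|\ge 1$, then the target carries a boundary marked point exactly when $|Q|\ge 2$: for $|Q|\ge 2$ one has $n=|P|$, $m=|Q|-2$ and $k=n$, while for $|Q|=1$ one has $n=|P|-1$, $m=1$ and $k=n+1=|P|$. In either sub-case a short computation gives $d-2n-m=0$ and $j-k=|P|+|Q|-2$, so $\int_{\C{P,Q}}\omega\in\MZVipi[|P|+|Q|-2]$, as claimed.

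Since all of the analytic and arithmetic content has already been packaged into \autoref{thm:disk-moduli-pushforward}, the corollary is essentially bookkeeping and no step should present a serious obstacle; the only points demanding care are, first, choosing the forgetful projection so that its target is genuinely a point (for $|Q|\le 1$ this forces us to also discard the single boundary point, which is exactly why the weight-drop constant increases by one in that case and nonetheless yields the advertised weight), and second, checking that for $|Q|\le 1$, where tangential-base-point conventions on $\C{P,Q}$ itself were never fixed, these spaces occur here only as a zero-dimensional \emph{base}---where $\Vr{}=\MZVipi$ unconditionally---and never as a fibre over which one integrates, so that the hypotheses of \autoref{thm:disk-moduli-pushforward} are genuinely satisfied. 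The real difficulty---the weight drop, the regularized form of Stokes' theorem, and the residue computation---resides entirely in the proof of that theorem.
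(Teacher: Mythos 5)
Your proposal is correct and follows essentially the same route as the paper: the first two statements are read off directly from parts (1) and (2) of \autoref{thm:disk-moduli-pushforward} applied to a forgetful projection onto a zero-dimensional moduli space, and the third follows from part (3) with exactly the same choice of projections in the three cases $|Q|\ge 2$, $|Q|=1$, $Q=\emptyset$ and the same weight bookkeeping (drops of $|P|$, $|P|$, and $|P|-1$ respectively). Your numerology matches the paper's, so there is nothing to add.
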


\begin{proof}
The first to statements are immediate.  For the third, note that by definition, the weight of $\omega$ is equal to $\dim \C{P,Q} = 2|P|+|Q|-2$.  Now apply \autoref{thm:disk-moduli-pushforward:integral} of \autoref{thm:disk-moduli-pushforward} for a suitable projection to a point.  If $|Q|\ge 2$, use a projection that forgets all elements of $P$ and all but two elements of $Q$, so the weight drops by $|P|$.   If $|Q| = 1$, use a projection that forgets the unique element of $Q$ and all but one of the elements of $P$; again the weight drops by $1 + (|P|-1) = |P|$.  Finally if $|Q|=0$, forget all but one of the elements of $P$; the weight drops by $|P|-1$.
\end{proof}
%
%

Let us outline the proof of \autoref{thm:disk-moduli-pushforward}.  First, observe that every projection can be factored as a sequence of elementary projections in which only a single boundary or interior marked point is forgotten, i.e.~where either $P'=P$ and $Q'= \Qq$, or $P' = \Pp$ and $Q' = Q$.  By functoriality, it is sufficient to prove the theorem in these two cases separately, which we do in \autoref{sec:forget-bdry} and \autoref{sec:forget-interior} below.

As we shall see, \autoref{thm:disk-moduli-pushforward:acyc}  follows immediately  from the general results \autoref{sec:monodromy}.  But also \autoref{thm:disk-moduli-pushforward:deRham} follows from \autoref{thm:disk-moduli-pushforward:acyc}.  Indeed, since the sheaves $\VAr[k]{f}$ of relative differentials are obtained by tensoring $\Vr{P',Q'}$ with a constant sheaf of free $\ZZ$-modules, their higher direct images also vanish.  Hence the hyperderived pushforward $Rf_*(\VAr{f},\td)$ coincides with the ordinary pushforward $f_*(\VAr{f},\td)$ of this sheaf of dg algebras.  On the other hand, it follows from \autoref{prop:moduli-poincare} that $\VAr{f}$ is a resolution of the locally constant sheaf $f^{-1}\Vr{P,Q}$; hence we have an isomorphism $Rf_*(\VAr{f},\td) \cong Rf_*(f^{-1}\Vr{P,Q}) \cong \Vr{P,Q} \otimes Rf_*\underline{\ZZ}$ by the projection formula.

Finally, for \autoref{thm:disk-moduli-pushforward:integral} of \autoref{thm:disk-moduli-pushforward}, we recall that the pushforward of differential forms along the projection  $f \colon \C{P',Q'} \to \C{P,Q}$ is a $\forms{\C{P,Q}}$-linear operation, and annihilates any form of degree less than the fibre dimension $2m+n$.  In light of the decomposition \eqref{eq:arnold-tensor} of the $\ZZ$-module of forms into vertical and horizontal pieces, it is sufficient to establish \autoref{thm:disk-moduli-pushforward:integral} for a form whose degree is equal to the fibre dimension.  We do so by constructing a suitable fibrewise primitive for the differential form, and using Stokes theorem; the existence of such a primitive follows from \autoref{thm:disk-moduli-pushforward:deRham}, but to establish that the result is a polylogarithm on the base---and especially to control its weight---we need to undertake a careful analysis of the geometry and the asymptotics of the primitive on the boundary.

%
%
%
%
%
%
%

\subsection{Forgetting a boundary marked point}
\label{sec:forget-bdry}

\subsubsection{Geometry of the projection}
\label{sec:acyc-bdry}
Consider a projection $f \colon \C{P,\Qq} \to \C{P,Q}$ in which a single boundary  marked point called $q$ is forgotten.  Considering the embeddings in the moduli space of curves, we obtain a commutative diagram
\[
\xymatrix{
\C{P,\Qq} \ar@{^(->}[r]\ar[d]_{f} & \M{\Sq}\ar[d] \cong \uXo{S} \\
\C{P,Q}\ar@{^(->}[r]  & \M{S}
}
\]
of the form \eqref{eq:family-of-cycles}.  The fibres of $f$ are open intervals, corresponding to marked disks in which the locations of the points labelled by $P$ and $Q$ are fixed, and the point $q$ moves between its neighbouring marked points $q_-,q_+ \in \pInf{Q}$. 

Note that if $|Q| > 0$ then $q_- \ne q_+$, but if $|Q| = 0$, then $q_- = q_+ = \infty$ so the fibres give a path between  opposite tangential base points at $\infty$.


\subsubsection{Structure of polylogarithms}
\label{sec:boundary-polylogs}
The fibres of $f$ are contractible, and hence any polylogarithm is single-valued on the fibres, and the higher cohomology of any locally constant sheaf along the fibres vanishes, which gives \autoref{thm:disk-moduli-pushforward:acyc} and hence \autoref{thm:disk-moduli-pushforward:deRham} in this case.  Note that by using $q_-$ as a base point for hyperlogarithms, we get a canonical isomorphism
\[
f_*\Vr{P,\Qq} \cong \Vr{P,Q} \abrac{S}.
\]
This is a special case of  \autoref{lem:family-cycles-push} in which $T = \emptyset$.

%

\subsubsection{Fibre integration}
\label{sec:interval-integration}
Now suppose that $\omega \in \VA[1]{\C{P,\Qq}}[j]$ is a one-form whose pushforward $f_*\omega$ converges absolutely.  Its restriction to the fibres of $f$ is a hyperlogarithmic one-form and hence the  integral along the fibre is given by a period of the universal curve.  It therefore lies in $\V{\C{P,Q}}[j]$ by definition.

To complete the proof of \autoref{thm:disk-moduli-pushforward} in this case, we need to establish the additional weight drop when $Q = \emptyset$.  To see this, we use the fact the fibres give a loop in the universal curve, based at $\infty$, and apply \autoref{lem:loop-weight} on the weight drop for closed loops.  An important subtlety here is that \autoref{lem:loop-weight} only applies to loops that are based at some fixed tangential base point, whereas the loop in question goes between the opposite base points $\pm \tinf$.  However, we are assuming that the integral converges absolutely.  Therefore the result is independent of the tangential base point and \autoref{lem:loop-weight} still applies.

%
%
%
%
%

\subsection{Forgetting an interior marked point}
\label{sec:forget-interior}

\subsubsection{Geometry of the projection}
Now consider the more interesting projection $g\colon \C{\Pp,Q} \to \C{P,Q}$ which forgets a single interior marked point $z$.  As explained in \autoref{sec:disk-moduli-spaces}, it can be naturally identified with the punctured universal marked disk $\uDo{} = \uDo{P,Q} \to \C{P,Q}$, so to simplify the notation we shall write
\[
	 \Vr{\uD{}} = \Vr{\Pp,Q} \qquad\text{and}\qquad
	\VAr{\uD{}} = \VAr{g}
\]
for the sheaves of polylogarithmic functions and relative differential forms.

Note that under the maps that double a disk to get a genus zero curve, the projection $g$ corresponds to a projection of the form $\M{S\cup\sset{z,\zb}} \to \M{S}$ that forgets both $z$ and its conjugate $\zb$.   In what follows it will be convenient to break the symmetry between $z$ and $\zb$ by factoring the projection $\M{\Spp} \to \M{S}$ into a pair of projections, where we first forget $\zb$, then forget $z$. The first of these projections gives an isomorphism of the universal punctured disk $\uDo{}$ onto its image: indeed, for a marked genus zero curve obtained by doubling a disk, the location of the conjugate point $\zb$ is uniquely determined by the location of $z$.  Hence we have a commutative diagram
\begin{equation}\vcenter{%
\xymatrix{
\uDo{} \ar@{^(->}[r]\ar@{=}[d] & \M{\Spp}\ar[d] \cong \uXo{\Sp} \\
\uDo{}\ar@{^(->}[r] \ar[d]_-{g}  & \M{\Sp} \cong \uXo{S} \ar[d]\\
\C{P,Q}\ar@{^(->}[r]  & \M{S}
}}\label{eq:forget-interior-tower}%
\end{equation}
where the middle horizontal arrow embeds $\uDo{}$ as a punctured disk in the universal curve $\uXo{S}$ as in \autoref{sec:disk-moduli-spaces}.  Note that this diagram is exactly of the type considered in \autoref{thm:families-push}.  We conclude immediately that the higher direct images of $\Vr{\uD{}}$ vanish, so that \autoref{thm:disk-moduli-pushforward:acyc} and \autoref{thm:disk-moduli-pushforward:deRham} of \autoref{thm:disk-moduli-pushforward} hold for the projection $g$.


In order to shorten the proof of \autoref{thm:disk-moduli-pushforward:integral}, we will assume throughout this section that $\abs{Q}\geq 2$, so that $z$ and $\bar{z}$ are cross ratios as discussed in \autoref{sec:disk-moduli-spaces}:
\begin{lemma}\label{lem:CPQ-base-change}
In proving \autoref{thm:disk-moduli-pushforward:integral} of \autoref{thm:disk-moduli-pushforward} for the projection $g$, we may assume without loss of generality that $|Q| \ge 2$.
\end{lemma}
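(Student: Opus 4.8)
The plan is to reduce the general case of \autoref{thm:disk-moduli-pushforward:integral} for $g$ to the case $|Q|\ge 2$ by artificially introducing extra boundary marked points, applying the known case, and then descending the conclusion back down. So suppose \autoref{thm:disk-moduli-pushforward:integral} is already established for projections forgetting one interior point over a base $\C{P,Q}$ with $|Q|\ge 2$, and consider $g\colon\C{\Pp,Q}\to\C{P,Q}$ with $|Q|\le 1$ (so $|P|\ge 1$). First I would choose a set $R$ of $2-|Q|$ new boundary labels, set $Q''\defas Q\sqcup R$ with total order extending that of $Q$, so $|Q''|=2$. Forgetting $R$ gives fibrations $\pi\colon\C{P,Q''}\to\C{P,Q}$ and $\pi'\colon\C{\Pp,Q''}\to\C{\Pp,Q}$, each an iterate of the ``forget a boundary point'' projections of \autoref{sec:forget-bdry}; forgetting $z$ gives $g''\colon\C{\Pp,Q''}\to\C{P,Q''}$, which is of the kind for which the theorem is assumed known. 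The resulting square is Cartesian: a point of $\C{\Pp,Q}\times_{\C{P,Q}}\C{P,Q''}$ is a single disk carrying compatible markings by $\Pp\cup Q$ and by $P\cup Q''$, i.e.\ exactly a $(\Pp,Q'')$-marked disk; thus $g''$ is a base change of $g$ along $\pi$, and $\pi'$ is a base change of $\pi$.

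Next I would invoke base change for fibre integration: for $\omega\in\VA[d]{\C{\Pp,Q}}[j]$ one has $\pi^{*}(g_{*}\omega)=g''_{*}(\pi'^{*}\omega)$. This requires no properness, because the square is Cartesian and $\pi'$ carries each fibre of $g''$ diffeomorphically, and orientation-compatibly, onto the corresponding fibre of $g$: the location of the interior point $z$ in a disk is unconstrained by the extra boundary points $R$, so along these fibres $\pi'^{*}\omega$ is literally $\omega$. In particular the absolute convergence of $g_{*}\omega$ transfers to $g''_{*}(\pi'^{*}\omega)$. Since $\pi'^{*}\omega\in\VA[d]{\C{\Pp,Q''}}[j]$ and $|Q''|=2$, the assumed case yields $g''_{*}(\pi'^{*}\omega)\in\VA[d-2]{\C{P,Q''}}[j-1]$, with weight drop $k=1$, which is also the value of $k$ predicted for $g$ (note that the $n+1$ case never occurs for $g$, since $g$ does not forget boundary points). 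Hence $\pi^{*}(g_{*}\omega)\in\VAr[d-2]{P,Q''}[j-1]$.

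It remains to descend along $\pi$: if $\alpha$ is an analytic form on $\C{P,Q}$ with $\pi^{*}\alpha\in\VAr[d-2]{P,Q''}[j-1]$ then $\alpha\in\VAr[d-2]{P,Q}[j-1]$. For this I would use the structure of polylogarithms along $\pi$: iterating the $T=\emptyset$ case of \autoref{lem:family-cycles-push} (as in \autoref{sec:boundary-polylogs}), $\pi_{*}\VAr{P,Q''}$ carries a relative weight filtration whose relative-weight-zero part is precisely the subsheaf $\pi^{-1}\VAr{P,Q}\subset\VAr{P,Q''}$ of $\pi$-horizontal, fibrewise-constant sections, and on this bottom piece the total weight filtration of $\VAr{P,Q''}$ restricts to the pullback of that of $\VAr{P,Q}$. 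Since $\pi^{*}\alpha$ is a pullback it is fibrewise constant, so it lies in $\pi^{-1}\VAr[d-2]{P,Q}$ and its total weight there agrees with the total weight of $\alpha$; thus $\alpha\in\VAr[d-2]{P,Q}[j-1]$, which is the desired conclusion for $g$.

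The real work is the last step: one must check that $\pi^{*}$ is strictly compatible with the total weight filtration on the relative-weight-zero part, so that the weight bound obtained upstairs descends without loss. This amounts to unwinding the relative weight filtrations of \autoref{lem:family-cycles-push} and \autoref{thm:families-push}; alternatively, one can exhibit the retraction explicitly as the regularized restriction of \autoref{prop:moduli-log-sing} along the boundary face on which the points of $R$ have collided away from the rest of the markings, which by construction does not raise the weight. Everything else---the choice of $R$, the Cartesian square, base change for fibre integration, and the appeal to the case $|Q|\ge 2$---is routine.
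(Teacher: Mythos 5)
Your argument is correct and follows essentially the same route as the paper: adjoin boundary labels to reach two boundary points, form the Cartesian square, apply the base change formula $\pi^{*}(g_{*}\omega)=g''_{*}(\pi'^{*}\omega)$, and descend using the fact that pullback identifies polylogarithms on $\C{P,Q}$, with their weight filtration, with the fibrewise-constant polylogarithms on $\C{P,Q''}$. The paper compresses this last descent step into a one-line assertion that $f^{*}\colon\V{\C{P,Q}}\to\V{\C{P,\{0,1\}}}$ is a filtered injection whose image is exactly the fibrewise-constant polylogarithms, which is precisely the point you flag as ``the real work'' and sketch via the relative weight filtration.
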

\begin{proof}
If $|Q| < 2$, choose an embedding $Q \hookrightarrow \{0,1\}$.  We then obtain a Cartesian diagram of projections
\[
\xymatrix{
\uDo{P,\{0,1\}} \ar[r]^-{\tilde f} \ar[d]^-{\tilde g} & \uDo{} \ar[d]^-{g} \\
\C{P,\{0,1\}} \ar[r]^-{f} & \C{P,Q}.
}
\]
The pullback $f^* \colon \V{\C{P,Q}} \to \V{\C{P,\{0,1\}}}$ is an injection of filtered rings whose image is exactly the polylogarithms on $\C{P,\{0,1\}}$ that are constant along the fibres of $f$. Hence the desired statement for the pushforward $g_*$ follows from the corresponding statement for $\widetilde g_*$  by the base change formula $f^*g_*\omega = \tilde g_* \tilde f^*\omega$.
\end{proof}

\subsubsection{Structure of polylogarithms}
\label{sec:disk-polylogs}
 Note that while we view the locations of $z$ and $\zb$ as complex conjugate coordinates on the fibres of $\uD{}$, the corresponding cross ratios give genuinely independent \emph{holomorphic} coordinates on the moduli space $\M{\Spp}$.  As a consequence, the sections of the sheaf $\Vr{\uD{}} \defas \Vr{\Pp,Q}$ of polylogarithms are $\Vr{P,Q}$-linear combinations of certain analytic functions $L(z,\zb)$ that may have singularities whenever $z$ or $\zb$ collide with the other marked points, or when $z$ collides with $\zb$; i.e.~on the real line $z=\zb$ in our coordinates.

To describe these functions algebraically we use the diagram \eqref{eq:forget-interior-tower}.  Taking the relative weights of hyperlogarithms along the two projections, we obtain a bifiltration $\Vr{\uD{}}[\bullet,\bullet]$ such that the functions in $\Vr{\uD{}}[\bullet,0]$ are independent of $\zb$ (hence holomorphic in the natural complex structure on the disk fibres), and the functions in $\Vr{\uD{}}[0,0] = g^*\Vr{P,Q}$ are constant on the fibres.

After a suitable choice of tangential base points and paths, we may write a section $L \in \Vr{\uD{}}$ in the form
\begin{equation}
L = \sum_{j} a_j \hlog{u_j}\bhlog{v_j} \label{eq:disk-fibration-basis}
\end{equation}
where the individual factors are as follows:
\begin{itemize}
\item The coefficients $a_j \in g^*\Vr{P,Q}$ are fibrewise constant polylogarithms.
\item The functions $\hlog{u_j}$ with $u_j \in \T{S}$ are the restrictions of hyperlogarithms on the universal curve $\uX{S}$, constructed by integrating the forms
\[
	\dlog{s} = \frac{1}{2\ipi} \frac{\td z}{z-s}, \qquad s \in S.
\]
\item The functions $\bhlog{v_j}$ with $v_j \in \T{S,z}$ are the restrictions of hyperlogarithms on the universal curve $\uX{S\cup\sset{z}}$, constructed integrating the forms
\[
	\bdlog{s} = \frac{1}{2\ipi} \frac{\td \zb}{\zb-s}, \qquad s \in S \cup \sset{z}.
\]
Note that in general, such functions are \emph{not} antiholomorphic on the fibres, because they may involve the differential form $\bdlog{z} = \frac{1}{2\ipi} \frac{\td \zb}{\zb -z}$. 
\end{itemize}

Note that only a proper subset of the expressions of the form \eqref{eq:disk-fibration-basis} will define functions that are single-valued along the fibres of $\uD{}$, i.e.~sections of $g_*\Vr{\uD{}}$.  The degrees of freedom can be determined by applying~\autoref{thm:families-push} to the diagram \eqref{eq:forget-interior-tower}. 
\begin{lemma}\label{lem:disk-gr}
The direct image sheaf $g_*\Vr{\uD{}}$ on $\C{P,Q}$ has a double filtration with associated bigraded
\begin{equation}
	\gr W^{\bullet,\bullet} g_*\Vr{\uD{}} \cong  \Vr{P,Q} \otimes \underline{\T{S|P}} \otimes \underline{\T{S,z}}, \label{eq:disk-gr}
\end{equation}
and $R^kg_*\Vr{\uD{}} = 0$ for all $k > 0$.
\end{lemma}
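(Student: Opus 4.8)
The plan is to recognise the commutative diagram \eqref{eq:forget-interior-tower} as a two-step instance of the tower \eqref{eq:fibration-tower} and to read the statement off from \autoref{thm:families-push}. Explicitly, I would set $\cyc_0 = \C{P,Q}$ with $S_0 = S$; $\cyc_1 = \uDo{}$ with $S_1 = \Sp$, embedded in $\uXo{S_0} = \uXo{S} \cong \M{S_1}$ by the middle row of \eqref{eq:forget-interior-tower}; and $\cyc_2 = \uDo{}$ with $S_2 = \Spp$, embedded in $\uXo{S_1} = \uXo{\Sp} \cong \M{S_2}$ by the top row. The structure maps are $f_1 = g$ and $f_2 = \mathrm{id}$, so that $f = f_1 \circ f_2 = g$, and the pullback of $\Vr{S_2} = \Vr{\Spp}$ to $\cyc_2$ is exactly $\iota^*\Vr{\Spp} = \Vr{\Pp,Q} = \Vr{\uD{}}$.

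The only thing that needs checking is that both levels of the tower are of the form \eqref{eq:family-of-cycles}. For $f_1 = g$ this is the description of $\C{P,Q}$ from \autoref{sec:disk-moduli-spaces}: the map $\uDo{} \to \C{P,Q}$ is the universal punctured disk, hence a locally trivial fibration, and its fibre over an isomorphism class of marked disk is the open disk with its interior marked points deleted; realised inside the corresponding fibre of $\uXo{S} \to \M{S}$, this fibre is $\HH \setminus P$, and since $\HH \subset \PP[1]$ is a contractible open set with $\pInf{S} \cap \HH = P$, it is an open punctured disk encircling $T_0 = P \subset S_0$. For $f_2$ one has to notice that, although $f_2$ is the \emph{identity} map of the space $\uDo{}$, the top row of \eqref{eq:forget-interior-tower} presents $\uDo{}$ as the image of a section of $\uXo{\Sp} \to \M{\Sp} \cong \uXo{S}$ over $\cyc_1 = \uDo{}$, which in each fibre selects the single point $\zb$. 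Since $\zb$ lies in the open lower half-plane it avoids $Q$, $\infty$, $z$ and all of $P$, while $\zb \ne \pb$ for every $\pb \in \Pb$ because $z$ is distinct from all interior marked points; hence a continuously varying family of small contractible disks about $\zb$ realises $f_2$ as a locally trivial family of punctured disks encircling $T_1 = \emptyset \subset S_1$. This ``degenerate'' case --- a contractible disk enclosing no marked point --- is permitted by the hypotheses of \autoref{lem:family-cycles-push} and \autoref{thm:families-push}, and it is the only point of the argument that needs a moment's care.

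Granting this, \autoref{thm:families-push} applies and yields $R^k g_* \Vr{\uD{}} = 0$ for $k > 0$ together with
\[
\gr W^{\bullet,\bullet} g_* \Vr{\uD{}} \;\cong\; \Vr{\cyc_0}\abrac{S_0|T_0}\abrac{S_1|T_1} \;=\; \Vr{P,Q}\abrac{S|P}\abrac{\Sp|\emptyset},
\]
where the iterated relative weight filtration is exactly the bifiltration $\Vr{\uD{}}[\bullet,\bullet]$ introduced just above the lemma. Since $\T{\Sp|\emptyset} = \T{\Sp}$ is the free algebra on the alphabet $S \cup \sset{z}$, i.e.\ $\T{S,z}$, the right-hand side equals $\Vr{P,Q} \otimes \underline{\T{S|P}} \otimes \underline{\T{S,z}}$, which is \eqref{eq:disk-gr}. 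I do not expect any genuine obstacle: the entire content sits in \autoref{thm:families-push}, and the only real care needed is the bookkeeping of the two distinguished subsets --- in particular, remembering that $T_1$ is empty and not $\Pb$ --- together with matching the alphabet $\Sp$ with the labels $S \cup \sset{z}$ used in the decomposition \eqref{eq:disk-fibration-basis}.
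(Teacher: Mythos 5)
Your proposal is correct and is essentially the paper's own argument: the paper proves the lemma precisely by applying \autoref{thm:families-push} to the tower \eqref{eq:forget-interior-tower}, leaving the bookkeeping implicit. Your identification of the two levels --- fibres of $g$ as punctured disks encircling $T_0=P\subset S$, and the section picking out $\zb$ as a degenerate disk encircling $T_1=\emptyset\subset\Sp$, so that the factors are $\T{S|P}$ and $\T{\Sp|\emptyset}=\T{S,z}$ --- is exactly what the paper intends.
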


Let us gives some examples of such fibrewise single-valued functions, using $\vec{\infty}$ as the base point for hyperlogarithms:

\begin{example}
If $p \in P$ and $q \in Q$, then the functions 
\begin{equation*}
\hlog{\pb} = \frac{\log(z-\pb)}{2\ipi} ,\quad
\bhlog{p}  = \frac{\log(\zb - p)}{2\ipi} ,\quad
\hlog{q}   = \frac{\log(z-q)}{2\ipi} \quad\text{and}\quad
 \bhlog{q} = \frac{\log(\zb-q)}{2\ipi}
\end{equation*}
are single-valued; in each case the logarithm stays on the standard principal branch (the imaginary part vanishes as $z,\zb \to \infty$ along the positive real axis).
These functions have symbols
\begin{equation*}
1 \otimes \pb\otimes 1,\qquad
1\otimes 1\otimes p,\qquad
1\otimes  q\otimes 1 \qquad\text{and}\qquad
1 \otimes 1 \otimes q 
\end{equation*}
in the associated bigraded. \qed
\end{example}

\begin{example}
For $p \in P$, the functions $\hlog{p} = \frac{\log(z-p)}{2\ipi}$ and $\bhlog{\pb} = \frac{\log(\zb - \pb)}{2\ipi}$ are not single-valued; they have monodromy $+1$ and $-1$ around $p$, respectively.  However, the linear combination
\[
	\hlog{p} + \bhlog{\pb} = \frac{\log|z-p|^2}{2\ipi} \in \V{\C{P\sqcup\sset{z},Q}}[1]
\]
is single-valued.  Its symbol in the associated bigraded is $1 \otimes \pb$. \qed
\end{example}

\begin{example}
The hyperlogarithm
\begin{equation}
\bhlog{z} 
= \frac{\log(\zb-z)}{2\ipi}
= \frac{\log(-2\iu \Im z)}{2\ipi} 
= -\frac{1}{4} + \frac{\log(2\Im z)}{2\ipi}
\label{eq:logIm}%
\end{equation} is single-valued on the disk.  Its symbol is $1 \otimes 1 \otimes \zb$.
\qed 
\end{example}

\begin{example}\label{ex:disk-dilog}
	For $q\in Q$ and $p \in P$, we can construct a section of $g_*\Vr{\uD{}}$ whose symbol is $1 \otimes q \otimes \pb$ as follows.  Start with the obvious fibrewise hyperlogarithm
\[
\Lambda_0 \defas \hlog{q}\bhlog{\pb}
\]
with symbol $1 \tp q \otimes \pb$.  In light of the examples above, we see that the monodromy of $\Lambda_0$ around $p$ is equal to  $-\hlog{q}$, while the monodromy around the other interior marked points is trivial.  As explained in \autoref{ex:monodromy-coboundary}, we can construct a holomorphic hyperlogarithm having exactly this monodromy, namely
\[
\Lambda = \hlog{qp} + \tfrac{\log(p-q)}{2\ipi}\hlog{p}.
\]
Therefore the function
\[
\Lambda_0+\Lambda = \hlog{q}\bhlog{\pb} +  \hlog{qp} + \tfrac{\log(p-q)}{2\ipi}\hlog{p}
\]
is single-valued along the fibres, but its symbol is still $1 \otimes q\otimes \pb$. \qed
\end{example}

In what follows, we will need some information about the asymptotic behaviour of polylogarithms on the disk fibres.  For this, it is convenient to introduce polar coordinates
\[
	r = |z-s| \qquad\text{and}\qquad \theta = \arg(z-s)
\] 
centred at the marked points $s \in P \cup Q$. For $s=\infty$, we set $r=\abs{z^{-1}}$ and $\theta=\arg(-z^{-1})$. We will only refer to $\theta$ in the cases when $s\in \pInf{Q}$, and we fix the branch of $\arg$ so that $\theta \in [0,\pi]$ in these cases.

\begin{proposition}\label{prop:disk-log-expansion}
Suppose that $|Q| \ge 2$ and let $L \in g_*\Vr[j]{\uD{}}$ be a polylogarithm of weight $j$ on $\uDo{}$ that is single-valued along the fibres. Then we have the following asymptotic behaviour of $L$ in the natural fibre coordinates $(z,\zb)$:
\begin{enumerate}
\item \label{prop:disk-log-expansion:real} The germ of $L$ at an unmarked boundary point $x \in \partial\uD{} \setminus \pInf{Q}$ has the form
\begin{equation}
L = \sum_{n=0}^j  L_n \log^n(2 i \Im z)
\label{eq:disk-log-expansion:real}%
\end{equation}
where the functions $L_n$ are analytic at $x$.  Moreover 
\[
	L_0= L_0^{hol} + \asyO{|\Im z|}
\]
for some hyperlogarithm $L_0^{hol} \in \Vr{\uD{}}[\bullet,0]$ that is holomorphic along the fibres and defined in a neighbourhood of $x$.

\item \label{prop:disk-log-expansion:Q} The germ of $L$ at a boundary marked point $q \in \pInf{Q}$ can be written in the form
\[
L = \sum_{n=0}^N\sum_{m=0}^M L_{mn}(r,\theta) \log^m r \log^n\theta 
\]
where the functions $L_{mn}$ are analytic for $(r,\theta) \in \sset{0} \times [0,\pi)$. An analogous expansion exists where $\log \theta$ is replaced by $\log(\pi-\theta)$ and the functions $L_{mn}$ are analytic for $(r,\theta) \in \sset{0}\times(0,\pi]$.

\item  \label{prop:disk-log-expansion:P} The germ of $L$ at an interior marked point $p \in P$ has the form
\[
L = \sum_{n=0}^j L_n  \log^n r 
\]
for some functions $L_n$ that are analytic at $p$. Moreover,
\[
	L_0 \in g^*\Vr[j]{P,Q} + \asyO{r}.
\]
\end{enumerate}
\end{proposition}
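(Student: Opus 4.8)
\emph{Overall plan.} The plan is to read off all three expansions from Brown's description of the logarithmic singularities of polylogarithms on $\M{\Spp}$ along the boundary divisors of $\bM{\Spp}$ (\autoref{prop:moduli-log-sing}), once the relevant boundary strata have been identified. By \autoref{lem:disk-gr} and the fibration tower \eqref{eq:forget-interior-tower}, a single-valued section $L\in g_*\Vr[j]{\uD{}}$ is the restriction to $\uDo{}\subset\M{\Spp}$ of a polylogarithm of weight $\le j$; so near any boundary point of $\uDo{}$ I can apply \autoref{prop:moduli-log-sing} with a system of normal-crossing cross ratios transverse to the stratum, get an expansion whose coefficients restrict to polylogarithms on the stratum by \eqref{eq:polylog-restrict}, and then rewrite everything in the polar coordinates $(r,\theta)$. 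In the fibre coordinates $(z,\zb)$ the three cases are: (i) $z$ and $\zb$ colliding at a point of $\RR\setminus\pInf S$; (ii) $z$ and $\zb$ both colliding with a boundary marked point $q\in\pInf Q$; (iii) $z$ colliding with an interior marked point $p\in P$ while $\zb$ simultaneously collides with $\pb$.

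\emph{Case (i).} The locus $z=\zb$ is the real trace of the one boundary divisor of $\M{\Spp}$ along which $z$ and $\zb$ collide. A cross ratio such as $\crossrat{z}{a}{\zb}{b}$ with $a,b\in S$ is transverse to it and equals $(z-\zb)=-2\iu\Im z$ times an analytic unit near $x$, so \autoref{prop:moduli-log-sing} yields $L=\sum_{n=0}^{j}L_n\log^n(2\iu\Im z)$ with $L_n$ analytic on a two-dimensional neighbourhood of $x$. The restriction $L_0|_{z=\zb}$ is, by \eqref{eq:polylog-restrict}, a polylogarithm of weight $\le j$ on the surviving component $\M{\Sp}$ (the bubble carrying $z$ and $\zb$ having no moduli), which pulled back to $\uDo{}$ is a hyperlogarithm $L_0^{hol}\in\Vr{\uD{}}[\bullet,0]$ holomorphic in the residual node position $\approx z$; and $L_0-L_0^{hol}$ vanishes on $\{z=\zb\}$, so it is $\asyO{\abs{z-\zb}}=\asyO{\abs{\Im z}}$ by Taylor expansion in the transverse coordinate.

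\emph{Cases (ii) and (iii).} In both cases I set $t_1=\crossrat{z}{a}{s}{b}$ and $t_2=\crossrat{\zb}{a}{s'}{b}$ with $a,b\in S$, where $s=s'=q$ for (ii) and $(s,s')=(p,\pb)$ for (iii); these are cross ratios, transverse to the stratum being approached, and on $\C{\Pp,Q}$ they equal $re^{\iu\theta}$, resp.\ $re^{-\iu\theta}$, times analytic units. \autoref{prop:moduli-log-sing} then presents $L$ as a polynomial in $\log t_1$ and $\log t_2$---equivalently in $\log r$ and $\theta$---with coefficients that are polylogarithms on the stratum. In case (iii) $p$ is interior, so $L$ is invariant under the loop $\theta\mapsto\theta+2\pi$ around $p$; since Brown's coefficients are themselves single-valued while $\log t_1,\log t_2$ change by $\pm2\ipi$, an elementary argument forces $L$ to involve $\theta$ only through the invariant combination $\tfrac1{2\ipi}\log(t_1t_2)=\tfrac1{\ipi}\log r+(\text{analytic})$, which gives $L=\sum_{n=0}^{j}L_n\log^n r$ with $L_n$ analytic at $p$; and the leading coefficient $L_0$ is the restriction of $L$ to the stratum where $z$ has merged with $p$ and $\zb$ with $\pb$, which by \eqref{eq:polylog-restrict} lies in $g^*\Vr[j]{P,Q}$ (the bubbles carrying $z$ and $\zb$ having no moduli), with $L_0$ differing from it by $\asyO{r}$. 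In case (ii) there is no such loop and the $\theta$-dependence survives: for $\theta$ in a compact subinterval of $(0,\pi)$ the only relevant divisor is the one where $z$, $\zb$ and $q$ collide into a single bubble, whose transverse coordinate is $\propto r$, giving the $\log r$-expansion with analytic coefficients; while as $\theta\to0$ (resp.\ $\theta\to\pi$) one additionally approaches the deeper stratum where $z$ and $\zb$ collide inside that bubble, a cross ratio transverse to which restricts on $\C{\Pp,Q}$ to $\theta$ (resp.\ $\pi-\theta$) times an analytic unit, so \autoref{prop:moduli-log-sing} at this corner produces the expansions in $\log^m r\,\log^n\theta$, resp.\ $\log^m r\,\log^n(\pi-\theta)$, with coefficients analytic for $(r,\theta)\in\sset{0}\times[0,\pi)$, resp.\ $\sset{0}\times(0,\pi]$.

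\emph{Main obstacle.} The analytic input---\autoref{prop:moduli-log-sing} and \eqref{eq:polylog-restrict}---is already available, so the remaining work is geometric. I expect the main difficulty to lie in choosing honest normal-crossing cross-ratio coordinates near each corner of $\bM{\Spp}$ and checking that, restricted to $\C{\Pp,Q}$, the transverse ones really do behave like $\Im z$, $r$, $\theta$ and $\pi-\theta$ up to analytic units---in particular identifying the divisor whose transverse coordinate becomes $\propto\theta$ near the ends of the $\theta$-interval in case (ii)---and in verifying that the single-valuedness reduction in case (iii) respects the weight filtration, so that $L_0$ lands in $g^*\Vr[j]{P,Q}$ rather than merely $g^*\Vr{P,Q}$. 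Keeping everything $\ZZ$-linear is then automatic via \eqref{eq:polylog-restrict}.
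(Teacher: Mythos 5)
Your overall strategy coincides with the paper's: apply \autoref{prop:moduli-log-sing} at the appropriate boundary strata of $\bM{\Spp}$, rewrite the transverse cross ratios in terms of $\Im z$, $r$, $\theta$, use fibrewise single-valuedness in case (iii) to collapse the expansion onto powers of $\log r$, and extract both ``moreover'' statements from \eqref{eq:polylog-restrict} plus Taylor expansion in the transverse coordinate; your cases (i) and (iii) are essentially the paper's proof.

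The one concrete problem is the coordinate pair you propose at the outset for case (ii). With $s=s'=q$ you take $t_1=\crossrat{z}{a}{q}{b}\sim re^{\iu\theta}$ and $t_2=\crossrat{\zb}{a}{q}{b}\sim re^{-\iu\theta}$ (up to analytic units). Both of these vanish to first order on the \emph{same} irreducible boundary divisor, the one where the set $\sset{z,\zb,q}$ collides, and neither vanishes on the deeper divisor where $\sset{z,\zb}$ collide inside that bubble; the two divisors you would implicitly need, corresponding to the collisions $\sset{z,q}$ and $\sset{\zb,q}$, do not even meet, since these subsets overlap in $q$ without being nested. So $(t_1,t_2)$ is not a normal-crossings transverse system and \autoref{prop:moduli-log-sing} does not apply to it; moreover $\log t_1\sim\log r+\iu\theta$ and $\log t_2\sim\log r-\iu\theta$, so any expansion in these logarithms yields polynomials in $\log r$ and $\theta$ only and can never produce the $\log\theta$ and $\log(\pi-\theta)$ terms that the statement (and the genuine asymptotics) require, while the would-be ``coefficients'' are polylogarithms on the bubble that are themselves singular as $\theta\to 0,\pi$. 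Your later sketch of case (ii)---the nested corner $\sset{z,\zb}\subset\sset{z,\zb,q}$ with one transverse coordinate $\propto r$ and a second one $\propto\theta$---is the correct picture and is exactly what the paper does, but you leave the second cross ratio unidentified and flag it as the main obstacle. The paper supplies it: keep $t_1=\crossrat{z}{q_1}{q}{q_2}\sim r$ (with $q_1\neq q_2\in\Qinf$ distinct from $q$) and take $t_2=\crossrat{z}{q}{\zb}{\infty}=-2\iu e^{\iu\theta}\sin\theta$, which behaves as $-2\iu\theta$ near $\theta=0$ and as $2\iu(\pi-\theta)$ near $\theta=\pi$; with this choice the fibrewise change of variables $(t_1,t_2)\to(r,\theta)$ is analytic and the stated expansions follow.
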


\begin{proof}
All three statements are a straightforward application of \autoref{prop:moduli-log-sing} on the asymptotics of polylogarithms on the boundary strata of $\bM{S\cup\sset{z,\zb}}$. 

For the first statement, we consider the codimension-one boundary stratum given by the collision of the marked points $z$ and $\zb$.  This stratum is isomorphic to $\M{S\cup\sset{z}}$ and the transverse coordinate $t = \crossrat{\zb}{0}{z}{\infty}$ gives an expansion in 
$
\tfrac{1}{2\ipi}\log t 
= \frac{1}{2\ipi} \log \frac{z-\zb}{z}
\equiv \frac{1}{2\ipi}\log(z-\zb)
\mod \Vr{P,Q}[1,0]$, as desired.

For the second statement, we consider the codimension-two stratum of $\M{S}$ corresponding to a nested collision of the sets $\{z,\zb\} \subset \{z,\zb,q\}$.  As transverse coordinates in this case we choose the cross ratios $t_1 = \crossrat{z}{q_1}{q}{q_2}$ and $t_2 = \crossrat{z}{q}{\zb}{\infty}$, where $q_1\ne q_2 \in \Qinf$ are distinct from $q$ (for $q=\infty$, set $t_2=\crossrat{z}{\infty}{\zb}{0}$).  We have
\[
t_1 = \frac{(z-q)(q_1-q_2)}{(z-q_2)(q_1-q)} = r \cdot \frac{q_2-q_1}{q_2(q_1-q)} \mod r,\theta
\]
and 
\[
t_2 = \frac{z-\zb}{q-\zb} = \frac{r(e^{\iu \theta}-e^{-\iu\theta})}{-re^{-\iu \theta}} =-2 \iu \theta \mod \theta
\]
so that the fibrewise change of variables $(t_1,t_2) \to (r,\theta)$ is analytic, and the statement follows from the expansion in $(t_1,t_2)$.

Finally, for the third statement, we consider the codimension-two stratum corresponding to the independent collision of $z$ with $p$ and $\zb$ with $\pb$.  This stratum is isomorphic to $\M{S}$.  Using the cross ratios $t = \crossrat{z}{0}{p}{\infty}$ and  $\bar t = \crossrat{\zb}{0}{\pb}{\infty}$ as transverse coordinates, we obtain an expansion that is a polynomial in the functions $\log(z-p)$ and $\log(\zb -\pb)$, which are multivalued on the fibres.  One can then check that the single-valuedness of $L$ in a neighbourhood of $p$ forces the expansion to collapse to a polynomial in the single-valued logarithm  $\log|z-p|^2 = 2 \log r$.
\end{proof}

\subsubsection{Single-valued primitives}

Now suppose that $\omega \in g_{\ast} \VAr[2]{\uD{}}[j]$ is a section of the sheaf of relative two-forms on $\C{\Pp,Q} \cong \uDo{}$, of total weight $j$. 
We can apply \eqref{eq:universal-relative} twice and write $\omega$ uniquely in the form
\begin{equation}
	\omega = \sum_{t\in S\cup\sset{z}} \sum_{s\in S} f_{s,t} \cdot \bdlog{t} \wedge \dlog{s},
	\label{eq:relative-disc-form-basis}%
\end{equation}
where the coefficients $f_{s,t} \in \Vr{\uD{}}$ are local sections of $\Vr{\uD{}}$.

Suppose further that the pushforward $g_*\omega$ converges absolutely.  In what follows we will abuse notation and write
\[
\int_{\uD{}} \omega \defas g_*\omega
\]
for the pushforward.  To complete the proof of \autoref{thm:disk-moduli-pushforward}, we must show that $\int_{\uD{}}\omega$ is a polylogarithm on $\C{P,Q}$ with weight at most $j-1$.

To compute the pushforward of $\omega$, we search for a primitive, and apply Stokes theorem.    Note that, by definition, the cohomology classes of $\omega$ give a section of the sheaf of fibrewise cohomologies:
\[
[\omega] \in \scohlgy[2]{g_*\VAr{\uD{}},\td}.
\]
But by \autoref{thm:disk-moduli-pushforward:deRham} of \autoref{thm:disk-moduli-pushforward}, this sheaf is isomorphic to $\Vr{P,Q}\otimes R^2g_*\cZZ$, and since the fibres are punctured disks, their second cohomologies vanish, i.e.~we have $R^2g_*\cZZ=0$.   Hence over any contractible open set in the base, we can find a fibrewise primitive for $\omega$.  For the purposes of calculation, this primitive can be chosen arbitrarily, but to establish the desired bound on the weight of $g_*\omega$, and to give a concrete algorithm for calculating the integral, it is convenient to construct a primitive that is suitably compatible with the complex structure on the fibres:

\begin{proposition}\label{lem:1-0-primitive}
The form $\omega$ admits a fibrewise primitive $\alpha \in g_*\VAr[1]{\uD{}}[j]$ which has type $(1,0)$ in the Dolbeault decomposition along the fibres.
\end{proposition}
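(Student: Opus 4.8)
The plan is to build $\alpha$ directly, by first solving the fibrewise $\bar\partial$-equation inside the algebra of polylogarithms and then correcting the solution so that it becomes single-valued along the fibres.

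First I would write $\omega$ in the basis form~\eqref{eq:relative-disc-form-basis} and regroup it as $\omega=\sum_{s\in S}\bar\eta_s\wedge\dlog{s}$, where $\bar\eta_s\defas\sum_{t\in S\cup\sset{z}}f_{s,t}\,\bdlog{t}$ is a relative $(0,1)$-form with polylogarithmic coefficients. After a splitting of~\eqref{eq:universal-relative} the forms $\bdlog{t}\wedge\dlog{s}$ are $\ZZ$-linearly independent in the module of relative $2$-forms and, like $\dlog{s}$ and $\bdlog{t}$ individually, are single-valued; since $\omega$ is single-valued along the fibres of $g$, the fibrewise monodromy acts coefficient-wise and invariance of $\omega$ forces each $f_{s,t}$, hence each $\bar\eta_s$, to be single-valued along the fibres. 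It then suffices to find, for each $s\in S$, a polylogarithm $\beta_s$ that is single-valued along the fibres, of total weight at most $j-1$, with $\bar\partial\beta_s=\bar\eta_s$ along the fibres: then $\alpha\defas\sum_{s\in S}\beta_s\,\dlog{s}$ has type $(1,0)$, total weight at most $j$, and satisfies $\td\alpha=\bar\partial\alpha=\omega$ along the fibres, since the $(2,0)$-component $\partial\alpha$ vanishes on the one-dimensional fibres.

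To produce a first approximation I would use the tower~\eqref{eq:forget-interior-tower}: along the disk fibres of $g$ the operator $\bar\partial$ is the relative de Rham differential of the top projection $\M{\Spp}\to\M{\Sp}$ (the $\zb$-direction), restricted to the real locus. Applying \autoref{prop:fibration-basis} and the contracting homotopy~\eqref{eq:hyperlog-primitive} for $\zb$-hyperlogarithms---concretely, writing $f_{s,t}=\sum_k g_{s,t,k}\bhlog{v_k}$ with the $g_{s,t,k}$ holomorphic along the fibres and setting $\beta_s^{(0)}=\sum_{t,k}g_{s,t,k}\bhlog{tv_k}$---gives a polylogarithm of total weight at most $j-1$ with $\bar\partial\beta_s^{(0)}=\bar\eta_s$. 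In general $\beta_s^{(0)}$ is multivalued along the fibres, and removing this monodromy is the crux of the argument.

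The fibre of $g$ is a punctured disk $\Do\subset\D$ enclosing the interior marked points $P$, homotopy equivalent to a wedge of $\abs{P}$ circles, so $\pi_1(\Do)$ is free on loops $\set{\gamma_p}{p\in P}$. The obstruction $M_{s,p}\defas\gamma_p\cdot\beta_s^{(0)}-\beta_s^{(0)}$ is a polylogarithm of total weight at most $j-2$ by \autoref{lem:mon-unipotent}; the key point is that, because $\bar\eta_s$ is single-valued, $\bar\partial M_{s,p}=\gamma_p\cdot\bar\eta_s-\bar\eta_s=0$, so $M_{s,p}$ is \emph{holomorphic along the fibres}, i.e.\ a section of $\Vr{\uD{}}[\bullet,0]$. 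Applying the monodromy-cancellation procedure of \autoref{rem:monodromy-cancel}---that is, \autoref{thm:cohomology-hyperlogs} in the relative form of \autoref{thm:families-push}, for the fibration $g$ itself (fibres $\Do$, encircled set $P$)---and using that $\pi_1(\Do)$ is free, so that every $1$-cochain is a coboundary, I obtain a fibrewise-holomorphic polylogarithm $\beta_s^{\mathrm{corr}}\in\Vr{\uD{}}[\bullet,0]$ of total weight at most $j-1$ whose monodromy around each $\gamma_p$ is $-M_{s,p}$. Being holomorphic along the fibres, $\bar\partial\beta_s^{\mathrm{corr}}=0$, so $\beta_s\defas\beta_s^{(0)}+\beta_s^{\mathrm{corr}}$ still satisfies $\bar\partial\beta_s=\bar\eta_s$ but now has trivial monodromy around every generator of $\pi_1(\Do)$, hence is single-valued along the fibres. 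As all of these constructions are functorial in the fibre, the resulting $\alpha=\sum_{s}\beta_s\,\dlog{s}$ is a section of $g_*\VAr[1]{\uD{}}[j]$ of the required type. I expect this monodromy cancellation to be the main difficulty: one must remove the monodromy of $\beta_s^{(0)}$ without disturbing the $\bar\partial$-equation, which is possible only because the obstruction is holomorphic along the fibres---and that, in turn, is forced by the single-valuedness of $\omega$.
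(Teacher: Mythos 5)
Your proposal is correct and follows essentially the same route as the paper's proof: integrate in $\zb$ via the contracting homotopy \eqref{eq:hyperlog-primitive} to get a possibly multivalued $(1,0)$ fibrewise primitive, observe that its fibrewise monodromy is holomorphic along the fibres (because $\omega$ is single-valued and analytic continuation commutes with differentiation), and then cancel that monodromy by a fibrewise holomorphic, hence $\bar\partial$-closed, hyperlogarithmic correction supplied by the cohomology vanishing of \autoref{thm:cohomology-hyperlogs} and \autoref{rem:monodromy-cancel}. Your only cosmetic deviation is carrying out the argument coefficient-by-coefficient in the basis $\bdlog{t}\wedge\dlog{s}$ (after first checking single-valuedness of the $f_{s,t}$), where the paper phrases the same cancellation for the form $\talpha$ as a whole.
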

\begin{proof}
	Expand the coefficients $f_{s,t}$ in \eqref{eq:relative-disc-form-basis} into hyperlogarithms as in \eqref{eq:disk-fibration-basis}.
	To construct a primitive we just integrate with respect to $\zb$ according to \eqref{eq:hyperlog-primitive}, that is $\bhlog{u} \bdlog{t} \mapsto \bhlog{tu}$.
	The result is a $(1,0)$-form $\talpha=\sum_{s\in S} \tilde{\alpha}_s \dlog{s} \in \VAr[1]{\uD{}}[j]$ of weight $j$ that is a fibrewise primitive for $\omega$, but $\talpha$ may have monodromy in the fibres. 

	To correct this, choose a contractible open subset of the base $\C{P,Q}$ and a collection of loops $\gamma_p$ for $p \in P$ that freely generate the fundamental groups of the fibres.  Note that since $\omega$ has no monodromy, and analytic continuation commutes with differentiation, the monodromy
\[
\delta \talpha = ((\gamma_p-1)\talpha)_{p\in P}
\]
around any generating loop $\gamma_p$ must be a closed $(1,0)$-form, and hence holomorphic.  We may therefore think of the collection $\delta\talpha$ as a fibrewise one-cocycle for the sheaf of fibrewise holomorphic hyperlogarithmic forms.  Applying the vanishing of cohomology for hyperlogarithm sheaves as in \autoref{sec:monodromy}, we conclude that there exists a fibrewise holomorphic form $\talpha'$ whose monodromy in the fibres is equal to that of $\talpha$.   But since $\talpha'$ is holomorphic, it is also closed along the fibres, so $\alpha \defas \talpha - \talpha'$ gives the desired primitive of $\omega$.
\end{proof}
Note that the holomorphic form $\tilde{\alpha}'=\sum_s \tilde{\alpha}'_s \dlog{s}$ can be constructed explicitly using 
\eqref{eq:monodromy-cancellation}. The resulting fibrewise single-valued polylogarithms $\alpha_s=\tilde{\alpha}_s-\tilde{\alpha}'_s \in g_*\Vr[j-1]{\uD{}}$ are the coefficients of
\begin{equation}
	\alpha = \bigg( \sum_{s \in S}  \frac{\alpha_s(z,\zb)}{(2\ipi)(z-s)} \bigg) \td z.
	\label{eq:disk-1,0}%
\end{equation}
\begin{example}
	Consider a two-form $\omega = \hlog{q}\cdot \bdlog{\bar{p}} \wedge \dlog{s}$. We obtain the $(1,0)$-form $\tilde{\alpha} = \Lambda_0 \dlog{s}$ as a primitive, where $\Lambda_0=\tilde{\alpha}_s=\hlog{q} \bhlog{\bar{p}}$ has holomorphic monodromy $-\hlog{q}$ around $p$. 
	In \autoref{ex:disk-dilog} we constructed a holomorphic polylogarithm $-\Lambda$ with the same monodromy, and hence we obtain a holomorphic form $\tilde{\alpha}'=-\Lambda \dlog{s}$ such that $\alpha = (\Lambda+\Lambda_0) \dlog{s}$ is single-valued primitive of $\omega$. \qed
\end{example}
\begin{remark}[global single-valuedness]\label{rem:sv-int}
	Once we fix a tangential base point $\st$ in $P \cup \pInf{Q}$, the fibration basis \eqref{eq:partial-fibration-basis} in the disk 
	globalizes to an embedding
	\begin{equation*}
		g_{\ast} \Vr{\uD{}} \injection \Vr{P,Q} \tp \underline{\T{S}} \tp \underline{\T{S,z}},
	\end{equation*}
	because by single-valuedness in the fibres it does not matter which path $\gamma$ in $\HH$ from $\st$ to $z$ we choose to define the hyperlogarithms. This also means that analytic continuation along a loop $\eta \in \fg{\C{P,Q}}$ in the base acts only on the first factor $\Vr{P,Q}$, since its action on the hyperlogarithms in the fibre is by conjugation of $\gamma$.
	The construction $\sum_t f_{s,t} \bdlog{t} \mapsto \alpha_s$ of the single-valued primitive given in the proof of \autoref{lem:1-0-primitive} thus determines a morphism of sheaves
	\begin{equation*}
		\int_{\sv}\colon g_{\ast} \Vr{\uD{}} \tp \A[1]{\uX{S\cup\sset{z}}} \rightarrow g_{\ast} \Vr{\uD{}}
		\quad\text{such that}\quad
		\frac{\partial}{\partial \zb} \int_{\sv} f \cdot \bdlog{t} = \frac{f}{2\ipi(\zb-t)},
	\end{equation*}
	and for a global section $f \in \V{\C{P\cup\sset{z},Q}}$ also $\int_{\sv} f \bdlog{t} \in \V{\C{P\cup\sset{z},Q}}$ is single-valued (this was observed in \cite{SchlottererSchnetz:ClosedStringGenus0}). An iteration of this map gives rise to a single-valued version of hyperlogarithms, i.e.\ an explicit embedding
	\begin{equation*}
		\T{S,z} \injection \V{\C{P\cup\sset{z},Q}},\quad
		t_1\cdots t_n \mapsto \int_{\sv} \bdlog{t_1} \cdots \int_{\sv} \bdlog{t_n}.
	\end{equation*}
	The restriction of this map to words in $\T{\Pb}\subset \T{S,z}$ reproduces \cite[Theorem~8.1]{Brown2004a} and provides an explicit construction for the global sections in \autoref{ex:MS-sv}. Some further special cases have been discussed in \cite{ChavezDuhr:Triangles,Schnetz:NumbersAndFunctions}. \qed
\end{remark}

\subsubsection{Regularized Stokes theorem on the fibres}

Let $\epsilon > 0$ be given, and let $\uD[\epsilon]{} \subset \uD{}$ be the closed subset obtained by removing the open (half-)disk $\{|z-s| <\epsilon\}$ of radius $\epsilon$ around each $s \in P\cup Q$ and the disk $\{\abs{z}^{-1}<\epsilon\}$ around $\infty$.  As illustrated in \autoref{fig:blowup-disk}, the boundary cycle $\partial\uD[\epsilon]{}$ in a given fibre is, for $\epsilon$ sufficiently small, the sum of several smooth one-manifolds, which we label as follows:

\begin{itemize}
\item For each boundary marked point $q$, let $q'$ be its successor in the cyclic order.  We denote by
\[
\intvl{q} \defas  \buD{} \cap (q,q')
\]
the portion of the boundary lying on the real line $z=\zb$ between $q$ and $q'$.
\item For each marked point $s \in P \cup \Qinf$, we denote by
\[
\arc{s} \subset \buD{}
\]
the (half-)circle of radius $\epsilon$ centred at $s$, with orientation induced from that of the disk. By convention, $\arc{\infty}$ is the half-circle $\abs{z}= \epsilon^{-1}$.
\end{itemize}
Then the connected components of $\partial\uD[\epsilon]{}$ are given by the circles $\arc{p}$ for $p \in P$, and the one-cycle
\[
\outD{} \defas \sum_{q \in \Qinf} \intvl{q}+\arc{q}.
\]

\begin{figure}
\centering
\begin{tikzpicture}[scale=0.8]

\begin{scope}
\def\Rad{4}
\def\rad{0.75}
\node (q1) at (-130:\Rad) {};
\node (q2) at (-70:\Rad) {};
\node (q3) at (-20:\Rad) {};
\node (qinf) at (90:\Rad) {};

\draw[fill=lightgrey] (0,0) circle (\Rad);

\placeoutseg{q1}{-130}{-70}{$(q_1,q_2)^\epsilon$}
\placeoutseg{q2}{-70}{-20}{$(q_2,q_3)^\epsilon$}
\placeoutseg{q3}{-20}{90}{$(q_3,\infty)^\epsilon$}
\placeoutseg{qinf}{90}{230}{$(\infty,q_1)^\epsilon$}

\placeinterior{1}{30:0.5*\Rad}
\placeinterior{2}{160:0.6*\Rad}
\placeinterior{3}{-50:0.3*\Rad}

\placeoutpt{q1}{$q_1$}
\placeoutpt{q2}{$q_2$}
\placeoutpt{q3}{$q_3$}
\placeoutpt{qinf}{$\infty$}
\clip (0,0) circle (\Rad);
\placeoutarc{q1}{-130}{$\arc{q_1}$}
\placeoutarc{q2}{-70}{$\arc{q_2}$}
\placeoutarc{q3}{-20}{$\arc{q_3}$}
\placeoutarc{qinf}{90}{$\arc{\infty}$}

\end{scope}
\end{tikzpicture}%
\caption{The surface $\uD[\epsilon]{}$ obtained by removing small disks around the marked points $P = \sset{p_1,p_2,p_3}$ and $\Qinf = \sset{q_1,q_2,q_3,\infty}$.}%
\label{fig:blowup-disk}%
\end{figure}
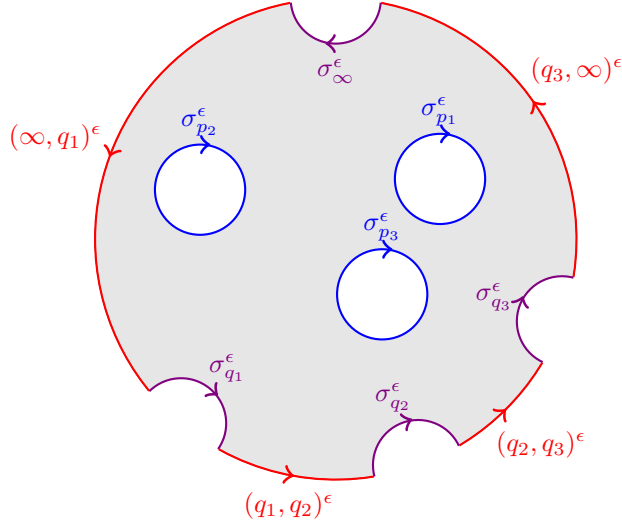

The integral over $\uD[\epsilon]{}$ can be computed using Stokes theorem:
\begin{proposition}\label{prop:disk-stokes}
For $\epsilon$ sufficiently small, the one-form $\alpha$ extends continuously to the boundary $\partial \uD[\epsilon]{}$, and we have
\[
\int_{\uD[\epsilon]{}} \omega = \int_{\partial \uD[\epsilon]{}} \alpha.
\]
\end{proposition}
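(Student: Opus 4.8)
\emph{Proof strategy.} The plan is to verify that Stokes' theorem applies to $\alpha$ on the compact region $\uD[\epsilon]{}$ inside a fixed fibre of $g$. By \autoref{lem:1-0-primitive}, $\alpha$ is a fibrewise primitive of $\omega$, so $\td\alpha=\omega$ on $\uD[\epsilon]{}\cap\uDo{}$; since $\alpha$ has fibre type $(1,0)$ we may write $\alpha=g\,\td z$ in a fibre chart, whence $\omega=\td\alpha=(\partial_{\zb}g)\,\td\zb\wedge\td z$, and $\omega$ is absolutely integrable over $\uD[\epsilon]{}\subset\uD{}$ by hypothesis. It therefore suffices to show (i) that $\alpha$ extends to a continuous one-form on all of $\uD[\epsilon]{}$, and then (ii) to invoke a mild strengthening of Stokes' theorem permitting a primitive that is merely continuous---rather than $C^1$---up to the boundary.

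For (i), the only points of $\uD[\epsilon]{}$ not already lying in $\uDo{}$ are those on the real axis, namely the segments $\intvl{q}$ for $q\in\Qinf$; once $\epsilon$ is small enough every point of these segments (including the corners $q\pm\epsilon$ where they meet the arcs) is an unmarked boundary point. Away from the segments $\alpha$ is an analytic polylogarithmic form, since for $\epsilon$ small the circles $\arc{p}$ $(p\in P)$ and the half-circles $\arc{q}$ minus their endpoints $(q\in\Qinf)$ all lie in $\uDo{}$ and avoid the marked points. So (i) reduces to showing that $\alpha$ extends continuously across the real axis at each unmarked boundary point $x$.

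This last point is the heart of the matter. Writing $\alpha=\big(\sum_{s\in S}\tfrac{\alpha_s}{2\ipi(z-s)}\big)\td z$, its constituents $\alpha_s$ are fibrewise single-valued polylogarithms and the factors $1/(2\ipi(z-s))$ are analytic near $x$; so by the first part of \autoref{prop:disk-log-expansion} applied to each $\alpha_s$, the coefficient $g$ of $\alpha$ has near $x$ a finite expansion $g=\sum_{n\ge0}g_n\log^n(2\iu\Im z)$ with each $g_n$ analytic at $x$. Since $\partial_{\zb}\log(2\iu\Im z)=\partial_{\zb}\log(z-\zb)=-(z-\zb)^{-1}$, differentiating gives
\[
	\partial_{\zb}g=\sum_{n\ge0}(\partial_{\zb}g_n)\log^n(2\iu\Im z)-\frac{1}{z-\zb}\sum_{n\ge1}n\,g_n\log^{n-1}(2\iu\Im z).
\]
The first sum grows at most logarithmically along the real line $z=\zb$ and hence contributes an integrable singularity to $|\omega|$; the second has a simple pole there whose residue is, up to nonzero constants, the polynomial in $\log(2\iu\Im z)$ with coefficients $g_1,g_2,\dots$, and---unless all of these coefficients vanish identically on $z=\zb$---it contributes a singularity of size $|\Im z|^{-1}$ times a power of $\log|\Im z|$, which is non-integrable and cannot be cancelled by the first sum, since $|\Im z|^{-1}$ dominates every power of $\log|\Im z|$. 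As $\omega$ is absolutely integrable near $x$, we conclude $g_n|_{z=\zb}=0$ for all $n\ge1$; each such $g_n$ is then divisible by $z-\zb$, so $g_n\log^n(2\iu\Im z)\to0$ as $\Im z\to0$, and $g$ extends continuously to $x$ with boundary value $g_0|_{z=\zb}$. Hence $\alpha$ extends continuously to a neighbourhood of $x$ in $\uD[\epsilon]{}$, and running this over all the segments $\intvl{q}$ proves (i).

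For (ii), exhaust the interior of $\uD[\epsilon]{}$ by the compact submanifolds-with-corners $\uD[\epsilon]{}\cap\{\Im z\ge\delta\}$, $\delta>0$, on which $\alpha$ is smooth, so that the classical Stokes theorem gives $\int_{\uD[\epsilon]{}\cap\{\Im z\ge\delta\}}\omega=\int_{\partial(\uD[\epsilon]{}\cap\{\Im z\ge\delta\})}\alpha$. Letting $\delta\to0$, the left-hand side tends to $\int_{\uD[\epsilon]{}}\omega$ by dominated convergence (using the absolute integrability of $\omega$), while the right-hand side tends to $\int_{\partial\uD[\epsilon]{}}\alpha$ by the continuity of $\alpha$ up to the boundary established in (i), the boundary orientations being those fixed above; this yields the asserted identity. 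I expect step (i) to be the main obstacle: it is exactly the absolute convergence of $g_*\omega$---and nothing weaker---that forces the logarithmic singular parts of the primitive to vanish along the real axis, which is in turn what makes the boundary segments $\intvl{q}$ harmless and permits the contour-deformation evaluation of $g_*\omega$ carried out in the sequel.
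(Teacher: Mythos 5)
Your proof is correct, and its skeleton---expand the coefficients of $\alpha$ near an unmarked real boundary point via \autoref{prop:disk-log-expansion:real} of \autoref{prop:disk-log-expansion}, then combine absolute integrability of $\omega$ with a limiting form of Stokes' theorem---is the same as the paper's. Where you differ is in how the boundary behaviour is disposed of: the paper stops at the observation that $\alpha$ has at worst logarithmic divergences along $z=\zb$ and then defers the ``local calculation'' to the general regularized Stokes theorem for absolutely integrable forms with logarithmic singularities, citing \cite[Theorem~4.11]{Brown2009}; you instead carry out that local calculation, showing that $\omega=(\partial_{\zb}g)\,\td\zb\wedge\td z$ would acquire a non-integrable singularity of order $|\Im z|^{-1}$ (up to logarithms) unless every logarithmic coefficient $g_n$, $n\ge 1$, vanishes identically on the real axis, whence $\alpha$ is genuinely continuous up to the segments $\intvl{q}$ and the classical Stokes theorem applies after exhausting by $\{\Im z\ge\delta\}$ and using dominated convergence. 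This buys a self-contained justification of the continuity assertion in the statement (which the paper's citation leaves implicit, and which is used again in \autoref{lem:hol-restrict}), and it runs on exactly the same ``divergence contradicts absolute integrability'' mechanism that the paper deploys later in the annulus argument of \autoref{prop:half-residue}; the cost is that your argument is specific to this geometry, whereas the cited theorem covers log-singular primitives in general. Two small points to make your version airtight: when arguing that the $(z-\zb)^{-1}$ term cannot be cancelled, fix the largest $n$ with $g_n|_{z=\zb}\not\equiv 0$ near the point and note that the coefficients above it, vanishing on the axis, contribute only integrable terms after division by $z-\zb$; and the divisibility of those $g_n$ by $z-\zb$ uses that they are restrictions of holomorphic functions of the independent coordinates $(z,\zb)$, which is exactly what \autoref{prop:moduli-log-sing} supplies.
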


\begin{proof}
Since $\alpha$ is analytic away from the marked points and the boundary of the disk, the only possible issue comes from the singularities of $\alpha$ along the boundary $\outD{}$ of the disk, i.e.~where $z=\zb$.  But by expanding the coefficients $\alpha_s$ in   \eqref{eq:disk-1,0} using \autoref{prop:disk-log-expansion:real} of \autoref{prop:disk-log-expansion}, we see that $\alpha$ has at worst logarithmic divergences in this region, and no poles.  A local calculation using this expansion then gives the result; we omit the details since this is a special case of a more general version of Stokes' theorem which holds for absolutely integrable differential forms with logarithmic singularities; see~\cite[Theorem 4.11]{Brown2009}.
\end{proof}

In \autoref{sec:inner-contribution} through \autoref{sec:half-circle-contribution} below, we will show that the contribution of each component of the boundary has a regularized limit as $\epsilon \to 0$, so that we may compute
\begin{align*}
\int_{\uD{}}\omega
&= \lim_{\epsilon \to 0} \int_{\uD[\epsilon]{}} \omega \\
&= \lim_{\epsilon \to 0} \int_{\partial\uD[\epsilon]{}} \alpha \\
&= \sum_{p \in P} \rbrac{\Reglim{\epsilon}{0}\int_{\arc{p}} \alpha} + \sum_{q\in\Qinf}\rbrac{\Reglim{\epsilon}{0}\int_{\intvl{q}}\alpha + \Reglim{\epsilon}{0}\int_{\arc{q}}\alpha}.
\end{align*}
We show moreover that each inner circle contributes a polylogarithm on the base $\C{P,Q}$ of weight at most $j-1$.  Meanwhile the intervals and half-circles forming the cycle $\outD{}$ individually contribute polylogarithms of weight at most $j$.  We then show in \autoref{sec:weight-drop} that when all these contributions are added up,  the highest weight parts cancel out, so that the total contribution of $\outD{}$ has weight at most $j-1$, as claimed.

\subsubsection{Contributions from interior circles}
\label{sec:inner-contribution}
The contributions to the integral given by the interior boundary circles can be computed using residues, as follows.   Let us define the residue of $\alpha$ at an interior marked point $p\in P$ to be the (regularized) coefficient of $\tfrac{\td z}{z-p}$:
\begin{align*}
\Res_{p}\alpha & \defas \Rlim_{z \to p}\Rlim_{\zb \to \pb} \frac{(z-p)\alpha}{\td z} \\
&= \Rlim_{z \to p}\Rlim_{\zb \to \pb} \sum_{s \in S} \frac{\alpha_s(z,\zb)(z-p)}{2\ipi(z-s)} \\
&= \Rlim_{z \to p}\Rlim_{\zb \to \pb} \frac{\alpha_p(z,\zb)}{2\ipi} \\
&\in \frac{1}{2\ipi}\Vr[j-1]{P,Q}.
\end{align*}
In the third step we have used \autoref{prop:disk-log-expansion:P} of \autoref{prop:disk-log-expansion} to conclude that the coefficients $\alpha_s$ from \eqref{eq:disk-1,0} have at worst logarithmic divergences at $p$, so that the function $\frac{(z-p)\alpha_s}{(z-s)}$ vanishes as $(z,\zb) \to (p,\pb)$, unless $s = p$.

With this notation in hand, we have the following analogue of the residue theorem, which we learned from a paper of Schnetz \cite[Section~2.8]{Schnetz2014}:
\begin{lemma}
The interior circles contribute convergent limits
\[
\lim_{\epsilon \to 0} \int_{\arc{p}}  \alpha
= -2 \ipi \Res_p\alpha
= -\lim_{(z,\zb)\to(p,\pb)} \alpha_p
\in \Vr[j-1]{P,Q}.
\]
\end{lemma}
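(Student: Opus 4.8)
The plan is to evaluate $\int_{\arc p}\alpha$ for small $\epsilon$ by a direct local analysis near $p$, using the explicit shape \eqref{eq:disk-1,0} of the primitive together with the interior-point expansion \autoref{prop:disk-log-expansion:P} of \autoref{prop:disk-log-expansion}. First I would separate the $\dlog p$-term: near $p$, write $\alpha = \alpha_p\dlog p + \sum_{s\in S\setminus\sset{p}}\alpha_s\dlog s$. For $s\ne p$ the form $\dlog s$ is regular at $p$, so on $\arc p = \sset{|z-p|=\epsilon}$ one has $\abs{\dlog s} = \asyO{\epsilon}$, while by \autoref{prop:disk-log-expansion:P} the coefficient $\alpha_s$ (which has weight $j-1$) grows at most like a power of $\log\epsilon$; hence $\int_{\arc p}\alpha_s\dlog s = \asyO{\epsilon\log^{j-1}(1/\epsilon)}\to 0$. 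It remains to treat $\int_{\arc p}\alpha_p\dlog p$. Parametrising $\arc p$ with the orientation it carries as part of $\partial\uD[\epsilon]{}$ --- which is clockwise about $p$, since $\uD[\epsilon]{}$ lies outside the removed disk --- and writing $z = p+\epsilon e^{-i\phi}$, one gets $\dlog p = -\tfrac{1}{2\pi}\td\phi$ on $\arc p$, so $\int_{\arc p}\alpha_p\dlog p = -\tfrac{1}{2\pi}\int_0^{2\pi}\alpha_p\,\td\phi$. Substituting the expansion $\alpha_p = \sum_n L_n\log^n r$ of \autoref{prop:disk-log-expansion:P} (with $L_n$ analytic at $p$ and $L_0\in g^*\Vr[j-1]{P,Q}+\asyO{r}$), and using that the angular average over a circle of radius $\epsilon$ of a real-analytic function equals its value at the centre up to $\asyO{\epsilon^2}$, I obtain
\[
\int_{\arc p}\alpha = -\sum_{n\ge 0}\bigl(L_n(p,\pb)+\asyO{\epsilon^2}\bigr)\log^n\epsilon + o(1)
\qquad(\epsilon\to 0).
\]

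Next I would show that the left-hand side has a finite limit, which is where the hypothesis that $g_*\omega = \int_{\uD{}}\omega$ converges absolutely enters decisively: it says $\omega$ is absolutely integrable on a punctured disk about $p$ in the fibre, so applying Stokes' theorem to the (regular) annulus $\sset{\epsilon\le\abs{z-p}\le\epsilon_0}$ for a fixed small $\epsilon_0$ shows that $\int_{\arc p}\alpha$ differs from its value at the fixed radius $\epsilon_0$ by $\int_{\sset{\epsilon\le\abs{z-p}\le\epsilon_0}}\omega$, which converges as $\epsilon\to 0$. Comparing this with the displayed asymptotics and using that the powers $\log^n\epsilon$ have pairwise different rates of divergence, one is forced to conclude $L_n(p,\pb) = 0$ for every $n\ge 1$. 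Hence $\lim_{\epsilon\to 0}\int_{\arc p}\alpha = -L_0(p,\pb)$; moreover, since each $L_n$ with $n\ge 1$ is analytic and vanishes at $(p,\pb)$ it is $\asyO{r}$ there, so $r\log^n r\to 0$ shows that $\alpha_p$ has the genuine limit $L_0(p,\pb) = \lim_{(z,\zb)\to(p,\pb)}\alpha_p$. Since a genuine limit coincides with the regularised one, $2\ipi\Res_p\alpha = L_0(p,\pb)$, which gives the two displayed equalities. Finally $L_0(p,\pb)\in\Vr[j-1]{P,Q}$ because $L_0\in g^*\Vr[j-1]{P,Q}+\asyO{r}$ by \autoref{prop:disk-log-expansion:P} and the $\asyO{r}$ part vanishes at $p$.

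The main obstacle is precisely the step showing that $\alpha_p$ has an \emph{honest} limit at $p$, i.e.\ that the logarithmic coefficients $L_n(p,\pb)$ with $n\ge 1$ vanish. This is not automatic from single-valuedness alone: for instance $\tfrac{\log\abs{z-p}^2}{2\ipi}\dlog p$ is a single-valued $(1,0)$-form whose $\dlog p$-coefficient has a genuine $\log r$ singularity at $p$, and such forms are exactly the ones whose fibre integral diverges. So the vanishing of the logarithmic terms, the convergence of $\int_{\arc p}\alpha$, and the absolute convergence of $g_*\omega$ are three faces of one phenomenon, and the argument above passes between them via Stokes' theorem on a punctured annulus. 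The remaining ingredients --- the $\asyO{\epsilon}$ bound on the off-diagonal terms, the angular-average identity, and the asymptotic independence of the $\log^n\epsilon$ --- are routine, and the whole computation takes place over a contractible open set in the base on which $\alpha$ is defined, as constructed in \autoref{lem:1-0-primitive}.
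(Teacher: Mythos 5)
Your proof is correct and follows essentially the same route as the paper's: expand the coefficients of $\alpha$ at $p$ via \autoref{prop:disk-log-expansion}, evaluate the clockwise circle integral, and use the absolute integrability of $\omega$ to force the $\log^n\epsilon$ terms to vanish, leaving $-\lim_{(z,\zb)\to(p,\pb)}\alpha_p \in \Vr[j-1]{P,Q}$. The only difference is one of detail: where the paper simply asserts that the limit of $\int_{\arc{p}}\alpha$ "must exist a priori", you justify this explicitly by applying Stokes' theorem on the annulus $\sset{\epsilon\le\abs{z-p}\le\epsilon_0}$, which is a welcome clarification rather than a new idea.
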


\begin{proof}
Considering the expansion of the coefficients of $\alpha$ around $p$ using \autoref{prop:disk-log-expansion:P} of \autoref{prop:disk-log-expansion}, we see that that
\[
	\alpha = f(\log|z-p|) \frac{\td z}{z-p} + \asyO{1}\td z
\]
where $f$ is a polynomial whose constant term is $\Res_p\alpha$, and $\asyO{1}$ denotes terms that are uniformly bounded as $z \to p$.  Since the radius of the circle decreases linearly in $\epsilon$, the integral of the bounded terms vanishes in the limit $\epsilon \to 0$.  Meanwhile, the function $\log|z-p|$ is constant on the circle $\arc{p}$, where its value is the radius $\epsilon$.  Taking account of the orientation of the circle and applying the usual residue theorem for holomorphic forms, we have
\[
\lim_{\epsilon \to 0} \int_{\arc{p}}  \alpha = \lim_{\epsilon \to 0} \int_{\arc{p}} f(\log \epsilon) \frac{\td z}{z-p} = -\lim_{\epsilon \to 0} 2\ipi f(\log \epsilon).
\]
Since $\omega$ is absolutely integrable, we know a priori that the limit must exist, and hence the polynomial $f$ must in fact be equal to its constant term $\Res_p(\alpha)$, giving the result.
\end{proof}

\subsubsection{Contributions from boundary intervals}

Let us choose a boundary marked point $q \in \Qinf$ and let $q'$ be its successor in the cyclic order.  
\begin{lemma}\label{lem:hol-restrict}
In any contractible neighbourhood of $(q,q')$, there exists a unique fibrewise holomorphic form $\beta_q \in \VAr[1]{\uD{}}$  such that
\[
\alpha|_{(q,q')} = \beta_q|_{(q,q')}.
\]
\end{lemma}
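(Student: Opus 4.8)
The plan is to obtain $\beta_q$ by stripping off, coefficient by coefficient, the logarithmic singularities of $\alpha$ along the open boundary arc $(q,q')$, using the asymptotic expansion of \autoref{prop:disk-log-expansion:real} of \autoref{prop:disk-log-expansion}. Write $\alpha$ as in \eqref{eq:disk-1,0}, $\alpha=\bigl(\sum_{s\in S}\tfrac{\alpha_s}{2\ipi(z-s)}\bigr)\td z$ with $\alpha_s\in g_*\Vr[j-1]{\uD{}}$. The relevant geometric fact is that $(q,q')$ contains no marked point of $S=P\sqcup\Pb\sqcup Q$: no point of $Q$ lies strictly between $q$ and its successor $q'$, and the points of $P$ and $\Pb$ lie off the real axis of the doubled curve. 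Consequently the functions $\tfrac{1}{z-s}$ are fibrewise holomorphic near $(q,q')$ inside the universal curve $\uX{S}$, so it is enough to produce, for each $s\in S$, a fibrewise holomorphic section $\alpha_s^{\mathrm{hol}}\in\Vr{\uD{}}[\bullet,0]$ defined near $(q,q')$ whose restriction to $(q,q')$ agrees with that of $\alpha_s$ (in the regularized sense, i.e.\ after discarding positive powers of $\log(2\iu\Im z)$); then $\beta_q\defas\bigl(\sum_{s\in S}\tfrac{\alpha_s^{\mathrm{hol}}}{2\ipi(z-s)}\bigr)\td z$ is the required form.

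First I would record uniqueness. A fibrewise holomorphic relative one-form $h\,\td z\in\VAr[1]{\uD{}}$ on a contractible neighbourhood of $(q,q')$ has $h$ holomorphic in $z$ along the fibres and no singularity on the arc (which lies in the interior of the doubled curve, away from all marked points); if $h\,\td z$ restricts to $0$ on $(q,q')$ then $h$ vanishes on the totally real arc $(q,q')$ and hence vanishes identically by the identity theorem. Applied to a difference of two candidate primitives (whose regularized restrictions to $(q,q')$ coincide, and which, being holomorphic, have no regularization to perform), this shows $\beta_q$ is unique; the same argument will also glue the local pieces below.

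For existence I would argue locally and glue. Fix an unmarked point $x\in(q,q')$. By \autoref{prop:disk-log-expansion:real} of \autoref{prop:disk-log-expansion}, each $\alpha_s$ has near $x$ a finite expansion $\alpha_s=\sum_n \alpha_{s,n}\log^n(2\iu\Im z)$ with the $\alpha_{s,n}$ analytic at $x$ and $\alpha_{s,0}=\alpha_s^{\mathrm{hol}}+\asyO{\abs{\Im z}}$ for a fibrewise holomorphic hyperlogarithm $\alpha_s^{\mathrm{hol}}\in\Vr{\uD{}}[\bullet,0]$ defined near $x$; by construction its restriction to $(q,q')$ near $x$ is the regularized restriction of $\alpha_s$. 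Since the logarithmic expansion is unique (\autoref{prop:moduli-log-sing}), the germ $\alpha_s^{\mathrm{hol}}$ is canonically determined by $\alpha_s$, so for $x,x'\in(q,q')$ the germs at $x$ and $x'$ restrict to the same function on the overlapping arc and hence agree as holomorphic functions by the uniqueness argument above. They therefore glue to a single fibrewise holomorphic section $\alpha_s^{\mathrm{hol}}$ near $(q,q')$, and $\beta_q\defas\bigl(\sum_{s\in S}\tfrac{\alpha_s^{\mathrm{hol}}}{2\ipi(z-s)}\bigr)\td z$ is a fibrewise holomorphic $(1,0)$-form in $\VAr[1]{\uD{}}$ (of weight at most $j$, since each $\alpha_s^{\mathrm{hol}}$ has weight at most $j-1$) with $\alpha|_{(q,q')}=\beta_q|_{(q,q')}$. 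Restricting to the prescribed contractible neighbourhood completes the proof.

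I expect the only genuinely delicate point to be this globalisation step: \autoref{prop:disk-log-expansion} is pointwise, so one must check both that the holomorphic germs it extracts are intrinsic to $\alpha_s$ (which is exactly the uniqueness of the logarithmic expansion) and that holomorphic functions near the open arc $(q,q')$ are rigid enough to be reconstructed from their boundary values — and it is precisely the absence of marked points of $S$ on $(q,q')$, together with the fact that the doubled curve extends holomorphically across this arc, that makes the identity theorem available. The weight bookkeeping and the verification that $\beta_q$ lies in $\VAr[1]{\uD{}}$ are routine.
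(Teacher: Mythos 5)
Your proof is correct and follows essentially the same route as the paper's (one-line) argument: extract the fibrewise holomorphic germ $L_0^{hol}$ from the expansion in \autoref{prop:disk-log-expansion:real} of \autoref{prop:disk-log-expansion} near each point of $(q,q')$, and use fibrewise holomorphicity plus agreement on the totally real arc (which contains no marked points) for uniqueness and gluing. The only place you deviate is the hedge about \emph{regularized} restrictions: the paper instead invokes the continuity of $\alpha$ up to $(q,q')$ (asserted in \autoref{prop:disk-stokes}), which forces the coefficients of the positive powers of $\log(2\iu\Im z)$ to vanish on the arc, so the genuine restriction $\alpha|_{(q,q')}$ exists and coincides with $\beta_q|_{(q,q')}$ exactly as the lemma states --- cite that continuity rather than weakening the conclusion to an equality of regularized restrictions.
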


\begin{proof}
This follows immediately from the asymptotics of polylogarithms along the real line as in \autoref{thm:disk-moduli-pushforward}, and the fact that $\alpha$ extends continuously to the boundary $(q,q') \subset \buD{}$.  
\end{proof}

\begin{corollary}
For each $q \in \Qinf$, we have the identity
\[
\Reglim{\epsilon}{0} \int_{(q,q')^\epsilon} \alpha 
= 
\Reglim{\epsilon}{0} \int_{(q,q')^{\epsilon}}\beta_q \in \Vr[j]{P,Q}.
\]
\end{corollary}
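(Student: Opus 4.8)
The first equality is formal.  By \autoref{lem:hol-restrict} the one-forms $\alpha$ and $\beta_q$ agree on the open interval $(q,q')$, and the truncated interval $(q,q')^\epsilon$ is contained in $(q,q')$ for every sufficiently small $\epsilon>0$; hence $\int_{(q,q')^\epsilon}\alpha=\int_{(q,q')^\epsilon}\beta_q$ as (local) functions on the base, and the regularized limits agree as soon as they exist.  So the content of the statement is the existence of $\Reglim{\epsilon}{0}\int_{(q,q')^\epsilon}\beta_q$ and the claim that it is a polylogarithm on $\C{P,Q}$ of weight at most $j$.

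The plan is to identify this regularized limit with a fibrewise \emph{regularized iterated integral} of the kind developed in \autoref{sec:polylogs}.  Over a contractible open set $\U$ of the base, restricting to a fibre of $g$ turns $\beta_q$ into a hyperlogarithmic one-form on the doubled curve $\X_S$ with fibrewise-constant coefficients pulled back from $\Vr{P,Q}$; using the fibration basis \autoref{prop:fibration-basis} one may write $\beta_q=\sum c_{u,s}\,\hlog{u}\,\dlog{s}$ with $c_{u,s}\in g^*\Vr{P,Q}$, $u\in\T{S}$ and $s\in S$.  The real segment $(q,q')\subset\RR\subset\partial\bHH$ --- which is exactly the gluing locus inside $\X_S$ and contains no marked points in its interior --- traversed from $q$ to $q'$ is a path $\delta_q$ between the tangential base points at $q$ and $q'$.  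Since $\dlog{s}$ has at worst a simple pole at the marked points and each fibrewise hyperlogarithm has at worst logarithmic growth there (see \autoref{sec:hlogs}), $\int_{(q,q')^\epsilon}\beta_q$ has a polynomial expansion in $\log\epsilon$, so its regularized limit exists and equals the regularized iterated integral $\int_{\delta_q}\beta_q$ (alternatively one may appeal to the assumed absolute convergence of $g_*\omega$).  Expanding each $\int_{\delta_q}\hlog{u}\,\dlog{s}$ by the path composition and shuffle regularization formulas \eqref{eq:path-concat} and \eqref{eq:shreg-expansion-int} at the two endpoints, it becomes a $\ZZ$-linear combination of regularized iterated integrals of words in $\T{S}$ between tangential base points on the fibres of $\uX{S}$, hence a section of $\Vr{P,Q}$ over $\U$ by definition; multiplying by the $c_{u,s}$ keeps it in $\Vr{P,Q}$.

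The remaining point --- and the one I expect to be the main obstacle --- is the weight bound.  One must know not merely that $\beta_q$ is a fibrewise holomorphic polylogarithmic form, but that its total weight does not exceed $j$.  For this I would argue, as in the proof of \autoref{lem:hol-restrict} and the first part of \autoref{prop:disk-log-expansion}, that $\beta_q$ is the ``holomorphic truncation'' of $\alpha\in g_*\VAr[1]{\uD{}}[j]$ along the codimension-one stratum $\{z=\zb\}$ of $\bM{\Spp}$, so that its coefficients lie in $W_j$ of the polylogarithms on that stratum by the $\ZZ$-linear structure theorem \autoref{prop:moduli-log-sing}; writing $\beta_q=\sum c_{u,s}\,\hlog{u}\,\dlog{s}$ with $\mathrm{wt}(c_{u,s})+|u|+1\le j$ and noting that $\int_{\delta_q}\hlog{u}\,\dlog{s}$ is a period of weight at most $|u|+1$, every term of $\int_{\delta_q}\beta_q$ has weight at most $j$, so the regularized limit lies in $\Vr[j]{P,Q}$.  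Everything else is bookkeeping carried out with the tools of \autoref{sec:polylogs}.
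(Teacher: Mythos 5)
Your argument is correct and follows the paper's own route: the first equality is exactly \autoref{lem:hol-restrict}, and the membership in $\Vr[j]{P,Q}$ is obtained, as in the paper, by recognizing the regularized limit as the regularized iterated integral of the fibrewise holomorphic form $\beta_q$ along the path in the universal curve $\uX{S}$ between the tangential base points at $q$ and $q'$, which is a polylogarithm on the base by the very definition of $\Vr{P,Q}$. Your extra bookkeeping (fibration basis, shuffle regularization, and the weight bound on $\beta_q$ via \autoref{prop:moduli-log-sing}) just makes explicit what the paper leaves implicit in \autoref{lem:hol-restrict} and \autoref{prop:disk-log-expansion}.
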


Note that the regularized limit $\Reglim{\epsilon}{0} \int_{(q,q')^\epsilon} \beta_q$ is exactly the definition of the regularized integral of $\beta_q$ along the straight path in the universal curve $\uX{S}$ between the tangential base points $\qt = \cvf{z}|_{q}$ and $-\vec{q'}=-\cvf{z}|_{q'}$.  Hence it lies in $\Vr[j]{P,Q}$, as claimed.

\subsubsection{Contributions from boundary half-circles}
\label{sec:half-circle-contribution}

We now consider the contribution from the half-circle $\arc{q}$ centred at a marked point $q \in \Qinf$.  These too can be computed using the corresponding holomorphic form $\beta_q$, but the proof is rather more involved:

\begin{proposition}\label{prop:half-residue}
We have the identity
\[
\Reglim{\epsilon}{0} \int_{\arc{q}} \alpha =  \int_{\gamma_q}\beta_q \in \Vr[j]{P,Q}
\]
where $\gamma_q$ is a tiny half-loop between opposite tangential base points at $q$.
\end{proposition}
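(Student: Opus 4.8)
The plan is to reduce both sides to the \emph{same} explicit angular integral and then match them. Fix the admissible fibre coordinate $z$ in which $q$ lies on $\RR\cup\sset{\infty}$, and parametrize $\arc{q}$ by $z=q+\epsilon e^{\iu\theta}$, with $\theta$ running from $\pi$ to $0$ in accordance with the orientation induced from $\uD[\epsilon]{}$. Writing $\alpha=\bigl(\sum_{s\in S}\tfrac{\alpha_s(z,\zb)}{2\ipi(z-s)}\bigr)\td z$ as in \eqref{eq:disk-1,0}, I first localize: for $s\neq q$ the factor $(z-s)^{-1}$ is bounded near $q$, so by \autoref{prop:disk-log-expansion:Q} of \autoref{prop:disk-log-expansion} one has $\int_{\arc{q}}\tfrac{\alpha_s}{2\ipi(z-s)}\td z=\asyO{\epsilon\log^{j}(1/\epsilon)}$, which vanishes as $\epsilon\to0$, whereas for $s=q$ a direct substitution gives $\int_{\arc{q}}\tfrac{\alpha_q}{2\ipi(z-q)}\td z=\tfrac{-1}{2\pi}\int_0^\pi\alpha_q(q+\epsilon e^{\iu\theta},q+\epsilon e^{-\iu\theta})\,\td\theta$. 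Running the identical computation for the fibrewise holomorphic form $\beta_q$ of \autoref{lem:hol-restrict}, written $\beta_q=\sum_s\beta_q^{(s)}\dlog{s}$, and recalling that by definition $\int_{\gamma_q}\beta_q=\Reglim{\epsilon}{0}\int_{\arc{q}}\beta_q$ (the half-circle integral regularized at $q$ as $\epsilon\to0$), I am reduced to proving $\Reglim{\epsilon}{0}\int_0^\pi\alpha_q(q+\epsilon e^{\iu\theta},q+\epsilon e^{-\iu\theta})\,\td\theta=\Reglim{\epsilon}{0}\int_0^\pi\beta_q^{(q)}(q+\epsilon e^{\iu\theta})\,\td\theta$.

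The key structural input --- and the main obstacle --- is that the pole coefficient $\alpha_q$ is analytic in $\zb$ in a neighbourhood of $q$: its local expansion at $q$ contains no $\log(\zb-q)$ and no $\log(\zb-z)$ terms, only powers of $\log(z-q)$. This is forced by the absolute convergence of $g_\ast\omega$: such a term would give $\bar\partial\alpha_q$ a simple pole along $\zb=q$, respectively along $\zb=z$, and since only the $s=q$ summand of $\alpha$ carries the additional pole $(z-q)^{-1}$, the two-form $\omega=\bar\partial\alpha$ would acquire near the boundary point $q$ a local factor $(\zb-q)^{-1}(z-q)^{-1}$, respectively $(\zb-z)^{-1}(z-q)^{-1}$; writing $z-q=\rho e^{\iu\psi}$ one finds $\int\!\abs{(\zb-q)^{-1}(z-q)^{-1}}\sim\int\rho^{-1}\td\rho\,\td\psi=\infty$ and $\int\!\abs{(\zb-z)^{-1}(z-q)^{-1}}\sim\int(\sin\psi)^{-1}\td\psi=\infty$ over the half-disk, while the contributions of the summands $s\neq q$ are strictly less singular at $q$ and cannot cancel them. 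Consequently, near $q$ the coefficient $\alpha_q$ has an expansion $\sum_m\alpha_{q,m}(z,\zb)\tfrac{\log^m(z-q)}{(2\ipi)^m m!}$ with $\alpha_{q,m}$ regular at $z=\zb=q$, so its regularized radial limit $\widehat{\alpha_q}(\theta)\defas\Rlim_{r\to0}\alpha_q(q+re^{\iu\theta},q+re^{-\iu\theta})$ equals the polynomial $\sum_m\alpha_{q,m}(q,q)\tfrac{(\iu\theta)^m}{(2\ipi)^m m!}$ in $\iu\theta$. Restricting $\alpha_q$ to the ray $\zb=z$ (i.e.\ to $(q,q')$) recovers $\beta_q^{(q)}$ by \autoref{lem:hol-restrict}, and feeding the hyperlogarithmic expansion \eqref{eq:hlog-zero-expansion} of $\beta_q^{(q)}$ at $q$ with $\log(z-q)\mapsto\iu\theta$ into the radial limit gives the same polynomial; hence $\widehat{\alpha_q}=\widehat{\beta_q^{(q)}}$.

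It remains to interchange the regularized limit with the $\theta$-integral and evaluate. Since $\alpha_q$ has no $\log\theta$ or $\log(\pi-\theta)$ singularities near $q$ --- there being no $\log(\zb-z)$ terms --- the integrand $\alpha_q(q+\epsilon e^{\iu\theta},q+\epsilon e^{-\iu\theta})$ is, for $\theta\in[0,\pi]$, a polynomial in $\log\epsilon$ whose coefficients are analytic in $\theta$ and converge as $\epsilon\to0$, so $\Reglim{\epsilon}{0}\int_0^\pi\alpha_q\,\td\theta=\int_0^\pi\widehat{\alpha_q}(\theta)\,\td\theta$, and likewise for $\beta_q^{(q)}$; combined with the previous paragraph this gives the required identity. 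Finally, using $\int_0^\pi\theta^m\,\td\theta=\pi^{m+1}/(m+1)$ and the collapse of the powers of $\ipi$, the common value is $\tfrac{-1}{2\pi}\int_0^\pi\widehat{\beta_q^{(q)}}(\theta)\,\td\theta=\sum_m\tfrac{-1}{2^{m+1}(m+1)!}\,\alpha_{q,m}(q,q)$, an explicit combination of polylogarithms on the base $\C{P,Q}$ of complementary weight $\le j-1-m$ with coefficients $\tfrac{1}{2^{m+1}(m+1)!}\in\MZVipi[m+1]$ (as in the proof of \autoref{thm:M03-periods}), so that the total weight is $\le j$ and the result lies in $\Vr[j]{P,Q}$, as claimed. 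The whole argument rests on the middle paragraph: once one knows, via absolute convergence, that the genuinely antiholomorphic part of the single-valued primitive $\alpha$ leaves no residue on the pole coefficient at a boundary marked point, the half-circle around $q$ behaves exactly like the regularized half-loop integral of the holomorphic model $\beta_q$ --- and the role of the disk geometry is precisely that a radial approach to $q\in\RR$ drives both $z$ and $\zb$ to $q$.
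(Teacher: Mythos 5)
Your overall strategy is different from the paper's: you localize to the pole coefficient $\alpha_q$ of $\dlog{q}$ and try to match its regularized radial limit at $q$ directly with that of $\beta_q^{(q)}$, whereas the paper sets $\lambda=\alpha-\beta_q$, expands it in polar coordinates as $\sum_{m,n}(a_{mn}(\theta)+rb_{mn}(r,\theta))\log^mr\log^n\theta\,(\tfrac{\td r}{r}+\iu\td\theta)$, and kills all the $a_{mn}$ by integrating $\lambda$ over a partial annulus of \emph{variable} opening angle $\Theta$ and playing boundedness of $\int\omega$ against the linear independence of $\log^n\Theta$ over analytic germs. Several of your peripheral steps are fine (the $s\ne q$ terms of $\alpha$ are indeed $\asyO{\epsilon\log^M(1/\epsilon)}$ on $\arc{q}$, the final evaluation $\sum_m\beta_{q,m}(q)/(2^{m+1}(m+1)!)$ and the weight count are correct, and the identification of $\Reglim{\epsilon}{0}\int_{\arc{q}}\beta_q$ with $\int_{\gamma_q}\beta_q$ holds, though it is a small contour-deformation lemma rather than a definition). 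The problem is that your entire argument hinges on the middle paragraph, and that is where there is a genuine gap.

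Concretely, two things are missing. First, the deduction ``$\omega$ absolutely integrable $\Rightarrow$ $\alpha_q$ has no $\log(\zb-q)$ or $\log(\zb-z)$ terms'' is asserted by comparing the singularity of one summand of $\omega$ at a time, but the summands are not individually integrable and can interfere: for instance every term $f_{s,z}\,\bdlog{z}\wedge\dlog{s}$ (any $s$, not just $s=q$) contributes a factor $\tfrac{1}{\zb-z}$ whose absolute integral $\int\tfrac{\td\theta}{\sin\theta}$ already diverges along the \emph{whole} boundary unless the coefficients vanish there, so the constraint of absolute integrability entangles all the $f_{s,t}$ simultaneously and your claim that the $s\neq q$ summands ``cannot cancel'' the $s=q$ one is exactly what needs proof. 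Second, and more seriously, even granting the absence of those logarithms, \autoref{prop:disk-log-expansion}\autoref{prop:disk-log-expansion:Q} only gives coefficients $L_{mn}(r,\theta)$ \emph{analytic in $(r,\theta)$}, not continuous at $(q,q)$ as functions of $(z,\zb)$: the transverse coordinate $t_2=\crossrat{z}{q}{\zb}{\infty}\sim -2\iu\theta$ is a genuine modulus of the boundary stratum, so the $r\to 0$ limit of $\alpha_{q,m}$ can be a nonconstant (poly)logarithmic function of $e^{-2\iu\theta}$ (such functions arise from integrating $\bdlog{z}$ near the corner). Your step ``$\widehat{\alpha_q}(\theta)$ is the polynomial $\sum_m\alpha_{q,m}(q,q)(\iu\theta)^m/((2\ipi)^m m!)$'' silently assumes this $\theta$-dependence away, and matching $\alpha_q$ with $\beta_q^{(q)}$ only on the ray $\theta=0$ cannot detect it. It is precisely to exclude such angle-dependent terms that the paper integrates over the annulus with a free parameter $\Theta$ and extracts the coefficient of $\log^{M'+1}\epsilon$ as a function of $\Theta$; without an argument of that kind (or some other input beyond the restriction to the real segment), your identity $\widehat{\alpha_q}=\widehat{\beta_q^{(q)}}$ is unproven.
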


\begin{proof}
We have $\Reglim{\epsilon}{0}\int_{\arc{q}}\beta_q = \int_{\gamma_q} \beta_q$ by a simple contour deformation argument.  Hence it is enough to show that  $\Reglim{\epsilon}{0}\int_{\arc{q}}\lambda = 0$, where $\lambda = \alpha - \beta_q$.

Using polar coordinates $(r,\theta)$ centred at $q$, applying \autoref{prop:disk-log-expansion} to the polylogarithmic coefficients of $\alpha$, and similarly applying the expansion \eqref{eq:hlog-zero-expansion} for holomorphic hyperlogarithms to the coefficients of $\beta_q$, we obtain an expansion
\[
\lambda = \sum_{m=0}^M \sum_{n=0}^N (a_{mn}(\theta) + rb_{mn}(r,\theta) )\log^m r \log^n \theta \rbrac{ \frac{\td r}{r} + \iu \td \theta}
\]
for some functions $a_{mn}$ and $b_{mn}$ that are analytic at $(r,\theta) = (0,0)$.  In these coordinates, the path of integration $\arc{q}$ is given by $r = \epsilon$ and $\theta \in [0,\pi]$, so we immediately find
\[
\Rlim_{\epsilon \to 0} \int_{\arc{q}} \lambda = \sum_{n} \int_0^\pi a_{0n}(\theta) \log^n(\theta) \td \theta.
\]
It therefore suffices to prove that the functions $a_{0n}$ are identically zero.  We will in fact prove that $a_{mn} = 0$ for all $m,n$.

Suppose by way of contradiction that at least one of the functions $a_{mn}$ is nonzero, and let
\[
M' = \max\set{m \in \ZZ_{\ge 0}}{a_{mn} \ne 0 \textrm{ for some }n}.
\]
We may therefore write
\begin{align*}
\lambda &= \sum_{m=0}^{M'}\sum_{n=0}^N a_{mn}\log^m r \log^n\theta \rbrac{r^{-1}\td r  + \iu \td \theta} \\
&\ \ \ +   r \cdot \sum_{m=0}^M\sum_{n=0}^N b_{mn} \log^m r \log^n\theta  \rbrac{r^{-1}\td r + \iu \td \theta}. 
\end{align*}
Consider the integral of $\lambda$ over the boundary of the partial annulus $\W(\epsilon,\Theta)$ shown in \autoref{fig:annulus}, as a function of the inner radius $\epsilon$ and the opening angle $\Theta$.  We assume that the constant outer radius $R$, and the opening angle $\Theta$ lie in some fixed compact neighbourhood of the origin, on which the functions $a_{mn},b_{mn}$ are analytic.

\begin{figure}[t]
\centering
\begin{tikzpicture}
\def\R{4}
\def\r{1.5}
\begin{scope}
\clip  (0,0) -- (0:{\R+1}) -- (70:{\R+1}); 
\draw[fill=lightgray] (0,0) circle (\R);
\draw[fill=white] (0,0) circle (\r);
\end{scope}
\drawsilentvertex{a}{0,0}
\draw[dashed,blue] (-0.5,0) -- ({\R+0.5},0);
\draw[dashed,blue] (70+180:0.5) -- (70:{\R+0.5});
\draw[sxred,very thick,edge] (0:\R) arc (0:70:\R);
\draw[sxred,very thick,edge] (70:\r) arc (70:0:\r);
\draw[sxred,very thick,edge] (0:\r) -- (0:\R);
\draw[sxred,very thick,edge] (70:\R) -- (70:\r);
\draw[sxred] (35:{\R+0.5}) node {$A_R$};
\draw[sxred] (35:{\r+0.5}) node {$A_\epsilon$};
\draw[sxred] ({(\r+\R)/2},-0.3) node {$A_0$};
\draw[sxred] (80:{(\r+\R)/2}) node {$A_\Theta$};
\draw[blue] (0:\R+1) node {$\theta = 0$};
\draw[blue] (70:\R+0.75) node {$\theta = \Theta$};
\draw[thick,dashed,->] (0,0) -- (15:\R);
\draw (20:{0.5*(\r+\R)}) node {$R$};
\draw[thick,dashed,->] (0,0) -- (60:\r);
\draw (45:0.5*\r) node {$\epsilon$};
\end{tikzpicture}%
\caption{The partial annulus $\W(\epsilon,\Theta)$.}\label{fig:annulus}%
\end{figure}
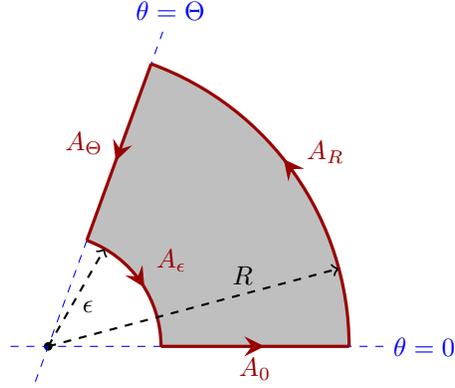

On the one hand, we have $\td \lambda = \td \alpha  = \omega$ since $\beta$ is holomorphic and $\alpha$ is a primitive of our two-form $\omega$.  Therefore
\[
\int_{\partial \W(\epsilon,\Theta)}\lambda = \int_{\W(\epsilon,\Theta)} \omega
\]
by Stokes' theorem.  Note that since $\omega$ is absolutely integrable, the integral is bounded as $\epsilon \to 0$, uniformly in $\Theta$.

On the other hand, we can explicitly compute the integral along the boundary components indicated in \autoref{fig:annulus}.  Firstly, from \autoref{lem:hol-restrict} and the definition of $\lambda$, we know that $\lambda$ is continuous on $\W(\epsilon,\Theta)$, and that it vanishes on the real line segment $\theta = 0$.  It follows that 
\[
\int_{A_0} \lambda = 0.
\]
Moreover, the functions $a_{mn},b_{mn}$ are divisible by $\theta$, and hence the one-forms $a_{mn}\log^n\theta\, \td \theta$ and $b_{mn}\log^n\theta\, \td \theta$ are uniformly bounded in any compact neighbourhood of $(0,0)$.  Therefore
\begin{align*}
\int_{A_\epsilon} \lambda &= \sum_{m=0}^{M'}\sum_{n=0}^N \log^m \epsilon \int_{0}^\Theta a_{mn} \log^n\theta \,\iu\td \theta \\
&\ \ \ +   \epsilon \sum_{m=0}^M\sum_{n=0}^N  \log^m \epsilon \int_{0}^{\Theta}b_{mn} \log^n\theta \, \iu \td \theta \\
&= \asyO{\log^{M'}\epsilon}
\end{align*}
as $\epsilon \to 0$, uniformly in $\Theta$.  Similarly
\[
	\int_{A_R} \lambda = \asyO{1}
\] 
since the outer radius is fixed.  Finally, we have
\begin{align*}
	\int_{A_{\Theta}} \lambda &= \sum_{m=0}^{M'} \sum_{n=0}^N a_{mn}(\Theta)\log^n\Theta \int^\epsilon_R \frac{\log^m r \td r}{r} \\
&\ \ \ + \sum_{m=0}^M \sum_{n=0}^N \log^n\Theta\int_{R}^\epsilon b_{mn}(r,\theta) \log^m r \, \td r \\
&= \frac{\log^{M'+1}\epsilon}{M'+1} \sum_{n=0}^N a_{M'n}(\Theta) \log^n\Theta  + \asyO{\log^{M'}\epsilon}
\end{align*}
Therefore
\[
	\int_{\partial\W(\epsilon,\Theta)} \lambda  = \frac{\log^{M'+1} \epsilon}{M'+1} \sum_{n=0}^N a_{M'n}(\Theta)\log^n\Theta  + \asyO{\log^{M'}\epsilon}
\]
so that the boundedness in the limit $\epsilon \to 0$ implies that 
\[
\sum_{n=0}^N a_{M'n}(\Theta)\log^n\Theta = 0
\]
for all $\Theta$ sufficiently small.  But the functions $\log^n\Theta$ for $0\le n \le N$  are linearly independent over the ring $\CC\{\Theta\}$ of analytic germs at $\Theta = 0$.  Therefore $a_{M'n} = 0$ for all $n$, which contradicts the definition of $M'$.
\end{proof}

\subsubsection{Total contribution of the outer boundary}
\label{sec:weight-drop}

We now complete the proof of \autoref{thm:disk-moduli-pushforward} by proving that the total contribution of the cycle $\outD{}$ has weight $j-1$.

The key point is the following lemma relating the various holomorphic forms $\beta_q$ associated to $\alpha$:

\begin{lemma}
If $q_1,q_2 \in \Qinf$ are boundary marked points, then the difference $\beta_{q_1} - \beta_{q_2}$ between any branches of the corresponding fibrewise holomorphic forms has weight at most $j-1$.
\end{lemma}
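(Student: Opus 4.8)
\textit{Plan of proof.} The plan is to recognise $\beta_{q_1}$ and $\beta_{q_2}$ as two branches of one and the same multivalued fibrewise holomorphic polylogarithmic one-form, and then to conclude immediately from the unipotence of monodromy (\autoref{lem:mon-unipotent}).

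First I would give an intrinsic description of the form $\beta_q$ attached to a boundary interval $(q,q')$. Recall from the proof of \autoref{prop:disk-log-expansion} that the locus $z=\zb$ is the codimension-one boundary divisor $D \cong \M{\Sp}$ of $\bM{\Spp}$, with transverse coordinate $t = \crossrat{\zb}{0}{z}{\infty}$, and that $\tfrac{1}{2\ipi}\log t$ differs from $\tfrac{1}{2\ipi}\log(z-\zb)$ only by $\tfrac{1}{2\ipi}\log z$, which is holomorphic along the fibres of $g$. Applying \autoref{prop:moduli-log-sing} to the coefficients of the $(1,0)$-form $\alpha \in g_{\ast}\VAr[1]{\uD{}}[j]$ along $D$ produces a finite expansion
\[
	\alpha = \sum_{n\ge 0} \frac{\alpha^{(n)}}{n!}\rbrac{\frac{\log t}{2\ipi}}^{n},
\]
in which each $\alpha^{(n)}$ is a polylogarithmic one-form whose restriction to $D$ has total weight at most $j-n$; in particular $\alpha^{(0)}|_D$ is a fibrewise holomorphic polylogarithmic one-form on $D$ of weight at most $j$, with coefficients of weight at most $j-1$. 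Comparing this expansion with \autoref{prop:disk-log-expansion:real} of \autoref{prop:disk-log-expansion} in a neighbourhood of a point of $(q,q')$, and using that the $(1,0)$-type of $\alpha$ together with the absolute integrability of $\omega$ rules out genuine poles there (so that the only singularity of $\alpha$ near $(q,q')$ is the logarithmic one along $D$, cf.\ \autoref{prop:disk-stokes}), I would deduce that $\alpha^{(0)}|_D$ agrees with $\alpha$ on $(q,q')$; the uniqueness clause of \autoref{lem:hol-restrict} then forces $\beta_q = \alpha^{(0)}|_D$ in a neighbourhood of $(q,q')$.

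Next I would observe that, since $\alpha$ is a single section defined over a neighbourhood of the whole fibre disk, the expansion above is valid near the entire real boundary, so $\beta \defas \alpha^{(0)}|_D$ is a well-defined, globally multivalued fibrewise holomorphic polylogarithmic one-form: as a polylogarithm on $D \cong \M{\Sp}$ it is holomorphic and multivalued in $z$ away from $S$, and inside a thin tubular neighbourhood of the real boundary the only branch points it meets lie in $\Qinf$. Hence $\beta_{q_1}$ and $\beta_{q_2}$ are germs of branches of this one form $\beta$, and after analytically continuing one of them to the domain of the other along a path in the fibre they differ by $(\sigma-\mathrm{id})\beta$, where $\sigma$ is the corresponding monodromy operator (a product of small loops around the points of $\Qinf$ lying between $(q_1,q_1')$ and $(q_2,q_2')$). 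Since the coefficients of $\beta$ are fibrewise hyperlogarithms of weight at most $j-1$, \autoref{lem:mon-unipotent} applied coefficientwise shows that $(\sigma-\mathrm{id})\beta$ has coefficients of weight at most $j-2$, hence weight at most $j-1$ as a one-form, which is the assertion.

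I expect the only genuine obstacle to be the identification $\beta_q = \alpha^{(0)}|_D$ in the first step: one must verify carefully that the holomorphic shadow furnished by \autoref{prop:disk-log-expansion:real} of \autoref{prop:disk-log-expansion} is exactly the restriction to $D$ of the logarithm-free part of $\alpha$, which amounts to tracking the interaction between the Dolbeault type of $\alpha$, its logarithmic singularities along $D$, and its milder (integrable) singularities at the marked points of $\Qinf$. Granting this, the remaining steps are purely formal.
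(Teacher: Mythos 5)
Your proposal is correct and takes essentially the same route as the paper: you identify $\beta_{q_1}$ and $\beta_{q_2}$ as branches (via regularized restriction to the divisor $z=\zb$, using \autoref{prop:moduli-log-sing} and the uniqueness in \autoref{lem:hol-restrict}) of one multivalued holomorphic polylogarithmic one-form attached to $\alpha$, so their difference is a monodromy term whose weight drops by unipotence (\autoref{lem:mon-unipotent}). This is exactly the paper's argument, merely spelled out in more detail than its one-sentence proof.
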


\begin{proof}
Note that the segments $(q,q')$ for $q \in \Qinf$ are embedded as paths in the universal curve $\uX{S}$, which is in turn embedded as an irreducible component of the boundary divisor in  $\bM{\Spp}$ given by the equation $z=\zb$.  By definition $\alpha$ is the restriction of a multivalued holomorphic form on $\M{\Spp}$, and from this perspective, the forms $\beta_{q_1}$ and $\beta_{q_2}$ are simply the restrictions of different branches of the same holomorphic one-form on the moduli space to the boundary divisor $\uX{S}\subset \bM{\Spp}$.  Hence the difference $\beta_{q_1}-\beta_{q_2}$ is given by the monodromy of some polylogarithms on $\M{\Spp}$, and in particular it has weight strictly less than $j$.
\end{proof}

Combining this result with \autoref{lem:hol-restrict} and \autoref{prop:half-residue}, we find for any fixed $q_0 \in \Qinf$ that
\begin{align*}
\lim_{\epsilon\to 0} \int_{\outD{}}\alpha &= \Reglim{\epsilon}{0} \sum_{q \in \Qinf} \int_{\intvl{q}+\arc{q}} \alpha \\
&= \Reglim{\epsilon}{0} \sum_{q \in \Qinf} \int_{\intvl{q}+\arc{q}} \beta_q  \\
&= \Reglim{\epsilon}{0} \sum_{q \in \Qinf} \int_{\intvl{q}+\arc{q}} \beta_{q_0} \mod  \Vr[j-1]{P,Q}\\ 
&=\int_{\gamma}\beta_{q_0} \mod \Vr[j-1]{P,Q}  \\
&\in \Vr[j-1]{P,Q}
\end{align*}
where $\gamma_{q_0}$ denotes a loop that is freely homotopic to $\outD{}$ and based at some tangential base point $\qt_0$.  In the final step we have used the weight drop for iterated integrals of holomorphic forms around closed loops (\autoref{lem:loop-weight}).  This completes the proof of \autoref{thm:disk-moduli-pushforward}.


\section{Application to formality morphisms}
\label{sec:formality}
\subsection{Weights of admissible graphs}

We now briefly explain how to apply our results to formality morphisms.  Recall from \cite{Kontsevich2003} that an \defn{admissible graph of type $(n,m)$} is a directed graph $\Gamma$ with $2n+m-2$ edges, a set $P$ of ``internal'' vertices and a set $Q$ of ``external'' vertices, where $|P|=n$ and $|Q|=m$. The external vertices are sinks and no parallel edges or self-loops are allowed, but note that this does not exclude anti-parallel edges as in \autoref{prop:zeta3-graph}.

For example, \autoref{fig:h8-graph} in the introduction is an admissible graph of type $(8,2)$; we draw the vertices in $Q$ on a dashed blue line representing the boundary of a disc.
This particular graph has precisely two outgoing edges at each internal vertex, which means that it contributes to the star product. Examples of more general admissible graphs can be found in \autoref{sec:software}.

To an admissible graph, one associates a volume form $\omega_{\Gamma}$ on the moduli space $\C{P,Q}$ as follows.  To each edge $p \to s$ with $p \in P$ and $s \in S = P\sqcup Q$ one associates a one-form $\alpha_{p\to s}$, called the propagator.  Then
\[
	\omega_\Gamma \defas \bigwedge_{\text{edges $e$}} \alpha_e \in \forms[2n+m-2]{\C{P,Q}}.
\]
Note that the sign depends on the additional choice of an ordering of the edges.

There are many different ways of defining propagators~\cite{Alekseev2016,Kontsevich1999,Kontsevich2003,Rossi2014}, but all of the explicitly known ones are captured by a one-parameter family described by Rossi--Willwacher~\cite{Rossi2014}:
\begin{equation}
	\alpha_{p \to s}^t
	\defas \frac{1-t}{2 \pi \iu} \tdlog \crossrat{p}{\pb}{s}{\infty} + \frac{t}{2 \pi \iu} \tdlog \crossrat{p}{\pb}{\overline{s}}{\infty}
	\in \A[1]{\C{P,Q}}[t]
	,
	\label{eq:angle-t}%
\end{equation}
where $t \in \RR$ is a real parameter. Note that this propagator is independent of $t$ whenever $s \in Q$, since then $s = \overline{s}$. 
The degree of $\omega_\Gamma^t$ as a polynomial in $t$ is thus at most $2n-2$, because $\omega_\Gamma^t=0$ if one of the boundary vertices is not attached to any edge.
There is only one exception to this rule, namely the unique admissible graph of type $(0,2)$, which does not have any edges and for which $\omega_{\Gamma}^t=1$.

It is shown in \cite{Rossi2014} that  the corresponding volume integral
\begin{equation}
	c_\Gamma^t \defas \int_{\C{P,Q}} \omega_{\Gamma}^t
	\in \CC[t]
	\label{eq:formality-weight}%
\end{equation}
is absolutely convergent, for any value of $t$. Using \autoref{cor:arnold-integral}, we see that the coefficients of the polynomial $c_\Gamma^t\in \MZVipi[][t]$ are normalized MZVs of weight at most
\[
w(n,m) \defas \begin{cases}
	n+m-2 &	\text{if $m > 0$ and}\\
	n-1 &	\text{if $m = 0$.}
\end{cases}
\]
However, an additional constraint is evident, namely that the family \eqref{eq:angle-t} of propagators transforms under complex conjugation as
$
	\overline{\alpha_e^t}
	=\alpha_e^{1-t}
$.
Therefore the real part of $c_{\Gamma}^t$ is an even function of $t-\tfrac{1}{2}$, while the imaginary part is an odd function of $t-\tfrac{1}{2}$.  More precisely, we have the following
\begin{theorem}\label{thm:formality-weights}
If $\Gamma$ is an admissible graph of type $(n,m)\neq(0,2)$, then
\[
c_\Gamma^t = a(\tmt) + \iu(1-2t)b(\tmt)
\]
where $a,\iu b \in \MZVipi{}[\tmt]$ are polynomials in $\tmt = t(1-t)$ with the following properties:
\begin{itemize}
\item $a$ has degree at most $n-1$ and its coefficients lie in $\Re\MZVipi[w(n,m)] \subset \RR$,
\item $b$ has degree at most $n-2$ and its coefficients lie in $\Im\MZVipi[w(n,m)] \subset \RR$.
\end{itemize}
\end{theorem}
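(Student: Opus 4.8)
The plan is to assemble the statement from three ingredients that are already in place: the integrality-and-weight bound of \autoref{cor:arnold-integral}, the elementary bound $\deg_t\omega_\Gamma^t\le 2n-2$ recalled above (valid precisely because $(n,m)\ne(0,2)$), and the complex-conjugation symmetry $\overline{\alpha_e^t}=\alpha_e^{1-t}$ of the Rossi--Willwacher propagators, together with the parity observation following it. Throughout I may assume $\omega_\Gamma^t\not\equiv 0$, as otherwise $c_\Gamma^t=0$ and we take $a=b=0$.

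First I would make the $t$-dependence explicit over $\ZZ$. Since each propagator lies in $\A[1]{\C{P,Q}}[t]$ we may write $\omega_\Gamma^t=\sum_{k=0}^{2n-2}t^k\eta_k$ with $\eta_k\in\A[2n+m-2]{\C{P,Q}}$. Because $\int_{\C{P,Q}}\omega_\Gamma^t$ converges absolutely for every real $t$ by \cite{Rossi2014}, Lagrange interpolation expresses each $\eta_k$ as a $\QQ$-linear combination of finitely many of the forms $\omega_\Gamma^{t_\ell}$, so each $\eta_k$ is itself absolutely integrable, and \autoref{cor:arnold-integral} gives $r_k\defas\int_{\C{P,Q}}\eta_k\in\MZVipi[w(n,m)]$. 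Hence $c_\Gamma^t=\sum_k r_k t^k$ is a polynomial of degree at most $2n-2$ whose coefficients lie in the $\ZZ$-module $\MZVipi[w(n,m)]$.

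Next I would exploit the symmetry. From $\overline{\alpha_e^t}=\alpha_e^{1-t}$ we get $\overline{\omega_\Gamma^t}=\omega_\Gamma^{1-t}$ as complex-valued forms on the oriented real manifold $\C{P,Q}$, hence $\overline{c_\Gamma^t}=c_\Gamma^{1-t}$ as an identity of polynomials. Thus the real polynomial $\Re c_\Gamma^t=\tfrac12(c_\Gamma^t+c_\Gamma^{1-t})$ is invariant under $t\mapsto 1-t$ and $\Im c_\Gamma^t=\tfrac1{2i}(c_\Gamma^t-c_\Gamma^{1-t})$ is anti-invariant, with $t$-coefficients $\Re(r_k)\in\Re\MZVipi[w(n,m)]$ and $\Im(r_k)\in\Im\MZVipi[w(n,m)]$ respectively. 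An invariant polynomial is a polynomial in $\tmt=t(1-t)$, so $\Re c_\Gamma^t=a(\tmt)$; since $\deg_t$ of an invariant polynomial is even, $\deg_\tmt a\le n-1$. The anti-invariant polynomial $\Im c_\Gamma^t$ vanishes at $t=\tfrac12$, hence factors as $(1-2t)b_0(t)$; writing $\Im c_\Gamma^t=\sum_k p_k t^k$ and $b_0(t)=\sum_k q_k t^k$, the recursion $q_k=p_k+2q_{k-1}$ shows $q_k=\sum_{j\le k}2^{k-j}p_j$ lies in the $\ZZ$-span of the $\Im(r_j)$, so $b_0$ has coefficients in $\Im\MZVipi[w(n,m)]$; being invariant, $b_0(t)=b(\tmt)$, and $\deg_t b_0\le 2n-3$ forces $\deg_\tmt b\le n-2$. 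Finally, rewriting an invariant polynomial $\sum_j f_j t^j=\sum_e g_e\tmt^e$ in the $\tmt$-basis is an integral operation — working downward, $g_d=(-1)^d f_{2d}$ and each lower $g_e$ is a $\ZZ$-linear combination of the $f_j$ and the higher $g_{e'}$ — so the coefficients of $a$ remain in $\Re\MZVipi[w(n,m)]$ and those of $b$ in $\Im\MZVipi[w(n,m)]$, which is the assertion.

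I expect the one genuinely delicate point to be the division of $\Im c_\Gamma^t$ by $1-2t$. At face value the leading coefficient of $b_0$ equals $\pm\tfrac12$ times the top $t$-coefficient of $\Im c_\Gamma^t$, so a careless argument would only land $b$ in $\tfrac12\Im\MZVipi[w(n,m)]$ — which, as \autoref{tab:period-gens} shows, genuinely enlarges the weight and would break the statement. The resolution is to perform the division from the bottom degree upward, where the recursion $q_k=p_k+2q_{k-1}$ is manifestly integral, so that the apparent half-integer is recognized a posteriori as a $\ZZ$-combination of lower coefficients; equivalently, $1-2t$ is primitive, so Gauss's lemma already guarantees $b_0\in\Im\MZVipi[w(n,m)][t]$. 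Everything else is routine.
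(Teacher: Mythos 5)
Your proposal is correct, and it rests on exactly the same three inputs as the paper's proof (the weight bound from \autoref{cor:arnold-integral}, the degree bound $\deg_t c_\Gamma^t\le 2n-2$, and the symmetry $\overline{c_\Gamma^t}=c_\Gamma^{1-t}$); the only divergence is in the algebraic bookkeeping. The paper avoids the "delicate point" you flag altogether: using $t^2=t-\tmt$ it decomposes $\MZVipi[][t]=\MZVipi[][\tmt]\oplus t\,\MZVipi[][\tmt]$ and writes $c_\Gamma^t=p(\tmt)+t\,q(\tmt)$ with $p,q\in\MZVipi[][\tmt]$; the conjugation symmetry then forces $\overline{p}=p+q$, so $q=-2\iu\Im p$ and one simply takes $a=\Re p$, $b=\Im p$, with the degree bounds falling out of $\deg_t c_\Gamma^t\le 2n-2$ by parity. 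In particular there is no division by $1-2t$ and no separate argument that an invariant polynomial is an integral polynomial in $\tmt$ — the factor-of-two issue you resolve via the bottom-up recursion $q_k=p_k+2q_{k-1}$ (or Gauss's lemma) is built into the splitting from the start. Your route through $\Re c_\Gamma^t$ and $\Im c_\Gamma^t$ is a bit longer but equally valid, and it has one genuine advantage: the Lagrange-interpolation step showing that each $t$-coefficient $\eta_k$ of $\omega_\Gamma^t$ is absolutely integrable (so that \autoref{cor:arnold-integral} applies coefficientwise) makes explicit a point that the paper merely asserts in the sentence preceding the theorem.
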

\begin{proof}
Since $t^2=t-\tau$, we have $\MZVipi[][t] = \MZVipi[][\tau] \oplus t\MZVipi[][\tau]$, so that we may write  $c_{\Gamma}^t = p(\tau) + t q(\tau)$ for some polynomials  $p,q\in\MZVipi[][\tau]$. The relation $\overline{c}_{\Gamma}^t=c_{\Gamma}^{1-t}$ implies $\overline{p}=p+q$, so that $q=-2\iu\Im p$ is divisible by two. We then take $b=\iu q/2=\Im p$ and $a=\Re p$.
\end{proof}

\subsection{An example}
\label{sec:example}

To illustrate the integration algorithm developed in \autoref{sec:cycles} and implemented in our software, we shall calculate the formality coefficient associated to a graph of type $(3,2)$. In what follows, we will often represent a differential form $\omega_\Gamma^t$ simply by drawing a picture of the graph $\Gamma$, and we list an order of the edges to specify the sign.
With this notation understood, the result of this section is as follows:
\begin{proposition}\label{prop:zeta3-graph}
The weight of the graph with edges $yx,y0,xy,x1,zx,z1$ is
\[
	\bigintsss_{\displaystyle\C{\sset{x,y,z},\sset{0,1}}}\rbrac{\zetagraph[0.7]}
	= (1-2t) \frac{\mzv{3}}{(2\ipi)^3}.
\]
\end{proposition}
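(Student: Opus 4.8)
The plan is to compute the integral directly by applying the fibre-integration machinery of \autoref{thm:disk-moduli-pushforward}, peeling off one interior marked point at a time, starting with $z$. First I would fix coordinates: use the embedding $\{0,1\}\hookrightarrow Q$ sending $0$ and $1$ to the extremes of $Q$, so that the points $x,y,z$ become cross ratios $\crossrat{x}{1}{0}{\infty}$, etc., living in $\HH$, with $0<1$ on the real boundary. The six propagators are read off from \eqref{eq:angle-t}: the edges $y0$, $x1$, $z1$ target boundary points and are $t$-independent, while $yx$, $xy$, $zx$ are the genuinely $t$-dependent ones. Writing out $\omega_\Gamma^t = \alpha_{yx}\wedge\alpha_{y0}\wedge\alpha_{xy}\wedge\alpha_{x1}\wedge\alpha_{zx}\wedge\alpha_{z1}$ and expanding each $\alpha^t$ into its holomorphic part $\dlog{\cdot}$ and antiholomorphic part $\bdlog{\cdot}$, one sees $\omega_\Gamma^t$ is a polylogarithmic $6$-form of weight $6$ on $\C{\{x,y,z\},\{0,1\}}$, a $6$-manifold.

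Next I would integrate out $z$ via the projection $g\colon \C{\{x,y,z\},\{0,1\}}\to\C{\{x,y\},\{0,1\}}$. Only the two edges incident to $z$, namely $\alpha_{zx}^t$ and $\alpha_{z1}$, depend on the $z$-coordinate; their wedge is a fibrewise $2$-form, and the fibre is the punctured disk $\uDo{}$. By \autoref{thm:disk-moduli-pushforward:integral} of \autoref{thm:disk-moduli-pushforward} (with $n=1$, $|Q|=2>0$, so $k=n=1$), the pushforward $g_*(\alpha_{zx}^t\wedge\alpha_{z1})$ is a polylogarithmic $0$-form on $\C{\{x,y\},\{0,1\}}$ of weight at most $1$; in fact one can compute it explicitly using the single-valued primitive construction of \autoref{lem:1-0-primitive} and the residue lemmas of \autoref{sec:inner-contribution}--\autoref{sec:half-circle-contribution}. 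The interior circle $\arc{x}$ contributes (minus) the value of the relevant primitive coefficient at $x$; a short calculation along the lines of \autoref{ex:disk-dilog} gives a function of $\hlog{x}$-type on the base, and the $(1-2t)$ factor emerges because $\alpha_{zx}^t-\alpha_{zx}^{1-t}$ isolates the single-valued combination $\tdlog|z-\pb|^{2}$. Then $\omega_\Gamma^t$ pushes forward to $(g_*(\alpha_{zx}^t\wedge\alpha_{z1}))\cdot\alpha_{yx}^t\wedge\alpha_{y0}\wedge\alpha_{xy}^t\wedge\alpha_{x1}$, a polylogarithmic $4$-form of weight $\le 5$ on the $4$-manifold $\C{\{x,y\},\{0,1\}}$.

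Now I would iterate: forget $y$, landing on $\C{\{x\},\{0,1\}}$, and then forget $x$, landing on a point. Each step drops the weight by at least one (again $|Q|=2>0$ throughout), so after three interior points are removed the answer has weight at most $3$, lands in $\MZVipi[3]$, and is $\mathbb{Q}[(2\pi\iu)^{-1}]$-spanned by $1$ and $\mzv{3}/(2\ipi)^3$ (using \autoref{tab:period-gens}). The $(1-2t)$ prefactor survives because it factors out at the very first step, and the parity constraint of \autoref{thm:formality-weights} then forces the $t$-independent part to vanish (it would have to be a weight-$\le 3$ real constant appearing with an even power of $t-\tfrac12$, but the explicit $(1-2t)$ shows $c_\Gamma^t$ is odd in $t-\tfrac12$). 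It remains only to pin down the rational coefficient, which I would do by carrying the explicit hyperlogarithm/shuffle-regularization computation through all three pushforwards — at the last stage the surviving integral is a standard weight-$3$ MZV integral over $\C{\{x\},\{0,1\}}$ (an interval, plus boundary half-loops) evaluating via \autoref{thm:M03-periods} and \autoref{ex:reg-MZV}-type manipulations to $\mzv{3}/(2\ipi)^3$ up to the rational factor, which the bookkeeping fixes to be $1$.

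The main obstacle is the explicit tracking of the single-valued primitives and their regularized boundary contributions through the three successive fibre integrations: one must be careful that the monodromy-cancellation terms introduced at each stage (as in \autoref{ex:monodromy-coboundary} and \autoref{ex:disk-dilog}) do not spuriously contribute to higher weight, and that the orientation/sign conventions from the chosen edge-ordering and the canonical orientation of $\C{P,Q}$ are consistently applied. Everything else — the weight bound, the membership in $\MZVipi[3]$, the $(1-2t)$ factor, and the vanishing of the rational part — follows formally from \autoref{thm:disk-moduli-pushforward} and \autoref{thm:formality-weights}.
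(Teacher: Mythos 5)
You follow essentially the same route as the paper: integrate out $z$, then $y$, then $x$ using \autoref{thm:disk-moduli-pushforward} (the paper packages the first two steps as \autoref{lem:wedge-integral} for the wedge $zx,z1$ and \autoref{lem:bubble} for the tadpole $yx,y0,xy$), with the weight bookkeeping placing the answer in $\MZVipi[3]$ exactly as you say. The genuine gap is in your treatment of the $t$-dependence. The pushforward over $z$ of $\alpha^t_{z\to x}\wedge\alpha_{z\to 1}$ is \emph{independent} of $t$ --- this is the content of \autoref{lem:wedge-integral} --- so your claim that the $(1-2t)$ factor ``emerges'' there and ``factors out at the very first step'' is false. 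The only $t$-dependence in the reduced integrand comes from the tadpole pushforward of \autoref{lem:bubble}, which is linear in $t$ but \emph{not} proportional to $(1-2t)$: it equals $\rbrac{\tfrac{t}{2\ipi}\log\tfrac{x}{-\xb}-\tfrac{1-2t}{2\ipi}\log\tfrac{x-\xb}{x}}\cdot\alpha_{x\to 0}$. Since your argument for discarding the rational part rests on this spurious factorization, it is circular: \autoref{thm:formality-weights} plus the weight bound only give $c^t_\Gamma=a+\iu(1-2t)b$ with $a\in\Re\MZVipi[3]$ allowed to be a nonzero rational multiple of $\tfrac{1}{48}$, and proving $a=0$ is precisely the nontrivial content of the proposition, not a formal consequence of parity.

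What closes the argument (and is how the paper proceeds) is: (i) since the wedge pushforward is $t$-independent and the tadpole pushforward is linear in $t$, the final integrand over $\C{\sset{x},\sset{0,1}}$ is linear in $t$, so by \autoref{thm:formality-weights} both $a$ and $b$ are constants and it suffices to compute at a single value, say $t=0$; (ii) the explicit $x$-integration at $t=0$ must actually be carried out, because the separate regularized boundary contributions are $-\tfrac{1}{480}-\tfrac{\mzv{3}}{(2\ipi)^3}$ from $(0,1)$, $\tfrac{1}{1440}+\tfrac{\mzv{3}}{2(2\ipi)^3}$ from $(\infty,0)^\epsilon$, $-\tfrac{1}{192}+\tfrac{\mzv{3}}{(2\ipi)^3}$ from $\arc{1}$, $\tfrac{19}{2880}+\tfrac{\mzv{3}}{2(2\ipi)^3}$ from $(1,\infty)^\epsilon$, and $0$ from $\arc{0}$ and $\arc{\infty}$; the rational parts cancel only in the sum, leaving the purely imaginary value $\mzv{3}/(2\ipi)^3$, whence $a=0$ and $c^t_\Gamma=(1-2t)\,\mzv{3}/(2\ipi)^3$. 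Your proposal defers exactly this cancellation to unspecified ``bookkeeping'', so as written it establishes neither the absence of a rational term nor the coefficient $1$.
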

We shall break the calculation into subtasks in which we calculate the pushforward along the maps that forget the marked points $z$, $y$ and $x$ in turn.


\subsubsection{Pushforward of a wedge (forgetting $z$)}
\label{sec:wedge-integral}

In this section we will illustrate the algorithm in detail by proving
\begin{lemma}\label{lem:wedge-integral}
	For the projection $g \colon \C{P\cup \sset{z},Q} \to \C{P,Q}$ that forgets $z$, we have
\begin{equation}
g_{\ast}\rbrac{ \freewedgegraph[0.6] }
= g_{\ast} \left( \alpha^t_{z\rightarrow x} \wedge \alpha^t_{z\rightarrow y} \right)
= \frac{1}{2 \ipi} \log \frac{x-\yb}{y-\xb} \label{eq:example1}
\end{equation}
for any internal vertices $x,y\in P$.
The result is independent of $t$, and in the limits when $x \to p$ and $y \to q>p$ land on the boundary it becomes
\begin{equation}
g_{\ast}\rbrac{\wedgegraph[0.8]}
= \frac{1}{2 \ipi} \log \frac{x-q}{q-\xb}
\quad\text{and}\quad
g_{\ast}\rbrac{\pqwedgegraph[0.8]} = \frac{1}{2}.
\label{eq:pushwedge}%
\end{equation} 
\end{lemma}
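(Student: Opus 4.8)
The plan is to run the integration algorithm of \autoref{sec:forget-interior} explicitly on the weight-$2$ two-form $\omega\defas\alpha^t_{z\to x}\wedge\alpha^t_{z\to y}$ (so that \autoref{thm:disk-moduli-pushforward} already guarantees its pushforward is a polylogarithm of weight at most $1$ on $\C{P,Q}$, and only its precise value is at issue). First I would write $\omega$ in the fibre coordinates $(z,\zb)$. Using $\dlog{\infty}=\bdlog{\infty}=0$ one has $\tfrac{1}{2\pi\iu}\tdlog\crossrat{z}{\zb}{s}{\infty}=\dlog{s}-\bdlog{s}$, so each propagator splits as $\alpha^t_{z\to s}=A_s-\overline{A}_s$ with $A_s\defas(1-t)\dlog{s}+t\dlog{\overline s}$ fibrewise holomorphic and $\overline{A}_s\defas(1-t)\bdlog{s}+t\bdlog{\overline s}$; since $\dlog{a}\wedge\dlog{b}=0=\bdlog{a}\wedge\bdlog{b}$, wedging leaves the $(1,1)$-form $\omega=-A_x\wedge\overline{A}_y-\overline{A}_x\wedge A_y$. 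Next, following \autoref{lem:1-0-primitive}, I would integrate in $\zb$ via $\bhlog{u}\,\bdlog{t}\mapsto\bhlog{tu}$: with $F_s\defas(1-t)\bhlog{s}+t\bhlog{\overline s}$ an antiholomorphic primitive of $\overline{A}_s$, the $(1,0)$-form $\widetilde\alpha=F_yA_x-F_xA_y$ satisfies $\td\widetilde\alpha=\omega$ because $A_x,A_y$ have constant coefficients. Its fibrewise monodromy is nontrivial only around $x$ and $y$, equal there to $+tA_y$ and $-tA_x$, and is holomorphic; by the monodromy-cancellation recipe of \autoref{rem:monodromy-cancel} (compare \autoref{ex:monodromy-coboundary} and \autoref{ex:disk-dilog}), and since $\hlog{x}$ has monodromy exactly $1$ around $x$ so the correction series terminates at first order, it is killed by the holomorphic form $\widetilde\alpha'=t\hlog{x}A_y-t\hlog{y}A_x$. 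Thus $\alpha\defas\widetilde\alpha-\widetilde\alpha'=(F_y+t\hlog{y})A_x-(F_x+t\hlog{x})A_y$ is a fibrewise single-valued $(1,0)$-primitive of $\omega$ of weight $2$.

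I would then apply the regularized Stokes theorem \autoref{prop:disk-stokes} together with the boundary evaluations of \autoref{sec:inner-contribution} through \autoref{sec:half-circle-contribution}. For $p\in P\setminus\{x,y\}$ the coefficient of $\dlog{p}$ in $\alpha$ vanishes, so $\arc{p}$ contributes nothing. The circles $\arc{x}$ and $\arc{y}$ contribute, via the residue formula of \autoref{sec:inner-contribution}, the regularized limits of $-(1-t)(F_y+t\hlog{y})$ at $(x,\xb)$ and of $(1-t)(F_x+t\hlog{x})$ at $(y,\yb)$, i.e.\ explicit combinations of logarithms of differences of $x,y,\xb,\yb$. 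The outer cycle $\outD{}$ contributes $\int_\gamma\beta_{q_0}$, where $\beta_{q_0}$ is the holomorphic form agreeing with $\alpha$ along a chosen boundary segment (\autoref{lem:hol-restrict}) and $\gamma$ is a loop freely homotopic to $\outD{}$, hence encircling $P$ once (using \autoref{prop:half-residue} and the telescoping of the $\beta_q$ as in \autoref{sec:weight-drop}); by path concatenation this reduces to loop integrals of $\beta_{q_0}$ around $x$ and $y$ together with their monodromy-shift corrections, again explicit logarithms, the remaining terms vanishing by weight and by \eqref{eq:II-identity}. Summing the three contributions and simplifying — using $\log\tfrac{\yb-x}{\xb-y}=\log\tfrac{x-\yb}{y-\xb}$ and $\log(-1)=\ipi$, with all branches fixed by the homotopy classes of the paths involved — I would obtain $\tfrac{1}{2\ipi}\log\tfrac{x-\yb}{y-\xb}$; in particular every $t$-dependent term cancels, which is the source of the asserted independence of $t$.

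Finally, \eqref{eq:pushwedge} follows by continuity of $g_\ast$ under the collisions $x\to p$, $y\to q$ of interior with boundary points: since $\overline s=s$ for a boundary marked point, the first limit of \eqref{eq:example1} is $\tfrac{1}{2\ipi}\log\tfrac{x-q}{q-\xb}$, and letting $x\to p\in Q$ as well, approached from within $\HH$, gives $\tfrac{1}{2\ipi}\log\tfrac{p-q}{q-p}=\tfrac{1}{2\ipi}\log(-1)=\tfrac12$. The main obstacle is the consistent bookkeeping of logarithm branches: each residue contribution at $x,y$ and the contribution of the outer loop $\outD{}$ is well defined only up to an integer, and only after pinning down all the branches coherently through the homotopy classes appearing in the construction of $\alpha$ and in the boundary integrals do the pieces assemble into the single logarithm $\tfrac{1}{2\ipi}\log\tfrac{x-\yb}{y-\xb}$ with no leftover constant; checking that the numerous $t$-dependent residue terms cancel, against one another and against the outer contribution, is the other delicate point.
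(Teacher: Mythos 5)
Your plan is sound and the individual steps you do carry out check against the machinery of \autoref{sec:forget-interior} (the vertical part of $\omega$ is indeed $-A_x\wedge\overline{A}_y-\overline{A}_x\wedge A_y$; the monodromies of $\widetilde\alpha$ are $+tA_y$ at $x$ and $-tA_x$ at $y$; the correction series does terminate at first order by weight; the only interior residues are at $x$ and $y$ and have the values you state). But your route is genuinely different from the paper's. The paper first observes that \autoref{thm:disk-moduli-pushforward} forces the answer to be a weight-one, single-valued, $x\leftrightarrow y$-antisymmetric polylogarithm in $x,\xb,y,\yb$ that stays finite as the points approach the boundary; this pins it down to $\rho\cdot\tfrac{1}{2\ipi}\log\tfrac{x-\yb}{y-\xb}$ for a single coefficient $\rho$, which is seen to be a $t$-independent integer because it is detected in the boundary limit where the propagators lose their $t$-dependence. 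The explicit algorithm is then run only once, at $t=\tfrac12$ and with $y$ already degenerated to a boundary point $q$, to read off $\rho=1$. You instead run the algorithm head-on in the general interior case with general $t$. That buys self-containedness (no classification of weight-one invariants, no appeal to the limit being attained continuously \emph{before} knowing the answer), but it costs you exactly the two things you flag: the outer-boundary contribution must be evaluated exactly, not merely modulo lower weight — so you need the actual differences $\beta_q-\beta_{q_0}$, not just the telescoping of \autoref{sec:weight-drop} — and the cancellation of the residual $t$-dependence is a genuine computation, not a formality: your two residues supply only $\tfrac{(1-t)^2}{2\ipi}\log\tfrac{x-\yb}{y-\xb}$ plus integer multiples of $t(1-t)$, so the outer cycle must produce precisely the missing $\tfrac{2t-t^2}{2\ipi}\log\tfrac{x-\yb}{y-\xb}$ together with compensating constants. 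This is doable with careful branch bookkeeping, but it is the step your proposal asserts rather than performs; the paper's symmetry argument is designed precisely to make that verification unnecessary.
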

Observe that for $x,y \in \HH$ the quantity $\frac{x-\yb}{y-\xb}\in \CC\setminus(-\infty,0]$ avoids the branch cut of the logarithm, so \eqref{eq:example1} is single-valued on $\C{P,Q}$.
As $(x,y)\rightarrow (p,q)$ with $p <q$, the quantity $\frac{x-\yb}{y-\xb}$ approaches $-1$ from the upper half-plane.  Hence the logarithm approaches $+\ipi$, giving \eqref{eq:pushwedge}.

Our \autoref{thm:disk-moduli-pushforward} almost completely fixes the solution without any calculation.  Indeed, since the pushforward is a polylogarithm of weight $1$ on $\C{P,Q}$ that depends only on the positions of $x$ and $y$, it is a linear combination of logarithms of cross ratios on $\M{\sset{x,\xb,y,\yb}}$. But it must  also be anti-symmetric under swapping $x\leftrightarrow y$, so it is necessarily a $\ZZ[t]$-linear combination of
\begin{equation*}
	\frac{1}{2\ipi} \log \frac{x-\yb}{y-\xb},\qquad
	\frac{1}{2\ipi} \log \frac{x-y}{\xb-\yb}\qquad\text{and}\qquad
	\frac{1}{2\ipi} \log \frac{x-\xb}{y-\yb}.
\end{equation*}
The second of these functions is not single-valued, and the third  is divergent when $x$ or $y$ approach the boundary. Hence the result must be a multiple of the first function, and merely its coefficient $\rho\in\ZZ[t]$ remains to be determined. This can be read off from the limit  $(x,y)\rightarrow(p,q)$. But in this limit the propagators \eqref{eq:angle-t} become independent of $t$. Hence $\rho\in\ZZ$ and we are free to fix the value of $t$ arbitrarily. We choose the harmonic propagator $t=1/2$.

Let us now explicitly follow the algorithm to compute the pushforward of
\[
\omega \defas 2\rbrac{\wedgegraph[0.8]}
= 2\alpha_{z\to x}^{1/2} \wedge \alpha_{z \to q}^{1/2},
\]
in other words, compute the integral of $\omega$ over $z\in \HH\setminus \sset{x}$ for fixed $x$ and $q$. The algorithm of \autoref{sec:forget-interior} proceeds as follows:
\begin{enumerate}
\item \textbf{Write the restriction of $\omega$ to the fibres in the natural basis of relative forms,} i.e.\ as a linear combination of the forms $\dlog{s} \wedge \bdlog{s'}$, where $\dlog{s} = \frac{\td z}{z-s}$ and $\bdlog{s'} = \frac{\td \zb}{\zb-s'}$ for $s \in S = \sset{p,\pb,q}$ and $s' \in S \cup \sset{z}$:
\begin{align*}
	\omega 
	&\equiv (\dlog{x}+\dlog{\xb}-\bdlog{x}-\bdlog{\xb}) \wedge (\dlog{q}-\bdlog{q})
	= \bdlog{q} \wedge (\dlog{x}+\dlog{\xb}) - (\bdlog{x}+\bdlog{\xb}) \wedge \dlog{q}. 
\end{align*}
Here $\equiv$ denotes equivalence modulo the ideal generated by $g^*\forms[1]{\C{P,Q}}$.

\item \textbf{Construct a possibly multivalued (1,0)-form $\talpha$ that is a fibrewise primitive for $\omega$.} 
We use the simple recipe~\eqref{eq:hyperlog-primitive} to construct primitives of hyperlogarithmic one-forms by integration with respect to $\zb$. This gives
\begin{equation*}
\talpha \defas \bhlog{q}(\dlog{x}+\dlog{\xb}) - (\bhlog{x}+\bhlog{\xb})\dlog{q}
\end{equation*}
where $\bhlog{s} = \frac{1}{2\ipi}\log(\zb -s)$ denotes the hyperlogarithm based at $\infty$ obtained by integrating $\bdlog{s}$. 

\item \textbf{Cancel the monodromy of $\talpha$ to get a single-valued primitive $\alpha$.}
Recall the recipe in the proof of \autoref{lem:1-0-primitive}: since the monodromy $\delta \talpha$ is holomorphic, we can use \autoref{rem:monodromy-cancel} to produce a fibrewise holomorphic form $\talpha'$ with the same monodromy.  (See also \autoref{ex:disk-dilog}.)  The single-valued primitive is then given by
\begin{equation*}
\alpha = \talpha - \talpha' = \talpha - (1+h\delta')^{-1}h\delta\talpha
\end{equation*}
where $\delta'$ and $h$ are the operators on holomorphic hyperlogarithms defined in \autoref{sec:hyperlog-cohlgy}. This construction makes heavy use of the path concatenation formula, and in particular the recipe in \autoref{lem:loop-weight} to compute integrals around closed loops.

In our example, we only have a single monodromy around $x$ to consider. Because the rational forms $\dlog{s}$ in $\talpha$ are single-valued, we need only operate on their hyperlogarithmic coefficients. For these logarithms, we simply get
\[
\delta \bhlog{s} = \int_{\gamma_x}\bdlog{s} = \begin{cases}
	-1 & \text{when $s=\xb$ and} \\
	0 & \text{for $s \ne \xb$,}
\end{cases}
\]
so that $\delta\talpha = \dlog{q}$. The holomorphic form $h\delta\talpha = (1+h\delta')^{-1}h\delta\talpha = \hlog{x}\dlog{q}$ has exactly this monodromy, giving the single-valued primitive 
\begin{align*}
\alpha
&= \alpha' -\hlog{x}\dlog{q}
=  \bhlog{q}(\dlog{x}+\dlog{\xb}) - \rbrac{\bhlog{x}+\bhlog{\xb}+\hlog{x}}\dlog{q}.
\end{align*}

\item \textbf{Compute the residues of $\alpha$ at the interior points,} as in \autoref{sec:inner-contribution}. This is done by specializing $(z,\zb) = (x,\xb)$ in the relevant coefficients, in our case for $\dlog{x}$:
\begin{equation*}
2 \ipi \Res_x \alpha 
= \Reglim{z}{x} \Reglim{\zb}{\xb} \bhlog{q}(\zb)
= \bhlog{q}(\xb).
\end{equation*}

\item \textbf{Express the restriction of $\alpha$ to the outer boundary components in terms of holomorphic hyperlogarithms.} This is the computation of the limit $z \to \zb$ in appropriate branches. 
\begin{figure}%
	\centering%
	\includegraphics[width=5cm]{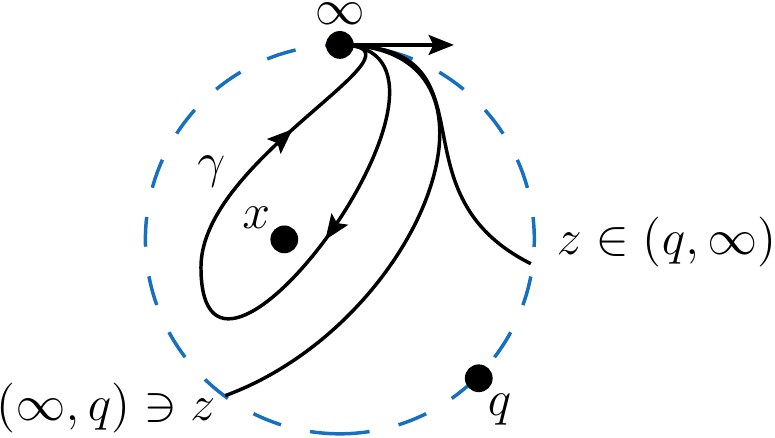}%
	\caption{Our choice of paths, starting at $\infty$, for the hyperlogarithm realization.}%
	\label{fig:example-paths}%
\end{figure}
In our example, the boundary component $(q,\infty)$ ends at our chosen tangential base point for hyperlogarithms. Thus the holomorphic and antiholomorphic hyperlogarithms agree on this component, and we have
\[
\beta_q =  \hlog{q}(\dlog{x}+\dlog{\xb}) - (2\hlog{x}+\hlog{\xb})\dlog{q}.
\]
On the other hand, the analytic continuation of $L_q(\zb)$ and $L_q(z)$ through the disk to the boundary component $(\infty,q)$ differ by the integral of $\dlog{q}-\bdlog{q}$ along a path between opposite tangential base points at $q$,
\[
	\bhlog{q}|_{(\infty,q)}=\left.\frac{\log(\zb-q)}{2\ipi}\right|_{\zb=z<q}
	= \frac{\log \abs{z-q}-\ipi}{2\ipi} = \hlog{q}|_{(\infty,q)}-1,
\]
for the path of integration indicated in \autoref{fig:example-paths}. It follows that
\[
\beta_\infty =(\hlog{q}-1)(\dlog{x}+\dlog{\xb}) -(2\hlog{x}+\hlog{\xb})\dlog{q}.
\]

\item \textbf{Integrate along the boundary,} summing up the contributions as in \autoref{sec:weight-drop}.  Using \eqref{eq:hyperlog-primitive}, we get primitives
\[
f_q = \hlog{xq}+\hlog{\xb q}-2\hlog{qx}-\hlog{q\xb}
\quad\text{and}\quad
f_\infty = f_q - \hlog{x}-\hlog{\xb}
\]
of $\beta_q$ and $\beta_{\infty}$ that have vanishing regularized at the tangential base point.  We thus find
\begin{align*}
\int_{\partial \uD{}}\alpha 
&= \Reglim{\epsilon}{0} \int_{\arc{q}+(q,\infty)^{\epsilon}} (\beta_q-\beta_{\infty}) + \int_{\partial\uD{}} \beta_{\infty} \\
&= (f_\infty-f_q)(q)- f_\infty(\gamma) \\
&= -\hlog{x}(q)-\hlog{\xb}(q) + \int_{\gamma}(2qx+q\xb-xq-\xb q +x + \xb)
\end{align*}
where $\gamma$ is any clockwise loop at $\infty$ that is homotopic to the boundary $\partial\uD{}$, see \autoref{fig:example-paths}. To compute the period of this loop, we follow \autoref{lem:loop-weight} and write $\gamma$ as the conjugation $\eta^{-1}\PathConc \gamma_x\PathConc \eta$ of a small clockwise loop $\gamma_x$ at $x$, by a path $\eta$ from $\infty$ to $x$. Applying the path concatenation formula, the only terms that contribute are the ones involving $\int_{\gamma_x} x = -1$, giving
\begin{align*}
\int_{\gamma}(2qx+q\xb-xq-\xb q+x+\xb) &= 2\int_{\eta^{-1}}q \int_{\gamma_x}x - \int_{\gamma_x} x\int_\eta q + \int_{\gamma_x} x  \\
&= 2\hlog{q}(x) + \hlog{q}(x) -1.
\end{align*}

\item \textbf{Add up the residue and boundary contributions.}
In our example,
\begin{align*}
\int_{\uD{}}\omega 
&= -2\ipi \Res_{x}\alpha + \int_{\partial\uD{}}\alpha 
=-\bhlog{q}(\xb)-\hlog{x}(q)-\hlog{\xb}(q)+3\hlog{q}(x)-1 \\
&
= \frac{\log (x-q)-\log(\xb-q) -\ipi}{\ipi}
= \frac{1}{\ipi} \log \frac{x-q}{q-\xb}
\end{align*}
where $\log$ (as always) denotes the principal branch and we used the identity
\[
\hlog{x}(q)+\hlog{\xb}(q)
= \tfrac{1}{2\ipi}\log(q-x)(q-\xb) 
= \tfrac{1}{2\ipi}\log(x-q)(\xb-q)
= \hlog{q}(x)+\bhlog{q}(\xb)
\]
to write the expression in a basis of hyperlogarithms. (This can be done algorithmically following the recipe in the proof of \autoref{prop:fibration-basis}.) 
\end{enumerate}

\begin{remark}
	For the logarithmic propagator $t=0$, \autoref{lem:wedge-integral} reads
	\begin{equation*}
		\int_{\HH\setminus\sset{x}} 
		\left(\frac{\td z}{z-x}-\frac{\td \zb}{\zb-x}\right)
		\wedge
		\left(\frac{\td z}{z-y}-\frac{\td \zb}{\zb-y}\right)
		= 2\ipi \log \frac{x-\yb}{y-\xb}.
	\end{equation*}
	Note that the integrand is a holomorphic function of $x$ and $y$, but the result is only real analytic. In particular, the integration does not commute with partial derivatives with respect to $\xb$ or $\yb$. This non-holomorphicity is possible because the integrand does not extend to $\bHH$ due to the singularities at $z\in\sset{x,y,\infty}$. \qed
\end{remark}

\subsubsection{Pushforward of a tadpole (forgetting $y$)}

\begin{lemma}\label{lem:bubble}
	For the projection $g \colon \C{P\cup\sset{y},Q} \to \C{P,Q}$ which forgets $y$ and the graph with the three edges $yx,y0,xy$, we have
\begin{equation}
g_{\ast}\rbrac{\loopgraph[0.8]}
 = \left( \frac{t}{2\ipi} \log \frac{x}{-\xb} - \frac{1-2t}{2\ipi} \log \frac{x-\xb}{x} \right)
 \cdot \rbrac{\singleedgeL[0.8]}.
 \label{eq:pushloop}%
\end{equation}
\end{lemma}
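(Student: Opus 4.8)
The plan is to compute $g_*\omega_\Gamma^t$ by the fibre–integration algorithm of \autoref{sec:forget-interior}, exactly as carried out for \autoref{lem:wedge-integral}, while keeping track of the parameter $t$. Since only the markings $x$, $y$, $0$ are attached to edges of $\Gamma$, every other marking is a spectator, so by the base–change property used in the proof of \autoref{lem:CPQ-base-change} the form $g_*\omega_\Gamma^t$ is pulled back along the forgetful map from $\C{\sset{x},\sset{0,1}}$, and we may assume $P=\sset{x}$ and $Q=\sset{0,1}$, so that $z,\zb$ are genuine fibre coordinates on the $y$–disk. Writing $S=\sset{x,\xb,0,1}$, the propagator $\alpha_{y\to 0}^t=\dlog{0}-\bdlog{0}$ is purely relative and independent of $t$, whereas $\alpha_{y\to x}^t$ and $\alpha_{x\to y}^t$ each split, after a choice of splitting of \eqref{eq:arnold-tensor}, into a relative part and a horizontal cross–ratio logarithmic $1$–form on the base. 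Expanding $\omega_\Gamma^t=\alpha_{y\to x}^t\wedge\alpha_{y\to 0}^t\wedge\alpha_{x\to y}^t$ and retaining only the part of horizontal degree one (the rest pushes forward to zero for degree reasons, the fibre being $2$–dimensional), the pushforward reduces to a $\ZZ[t]$–linear combination of terms $\eta\cdot g_*\xi$, where $\eta\in\A[1]{\C{P,Q}}[t]$ is a cross–ratio logarithmic $1$–form and $\xi$ is an explicit relative $2$–form built from $\dlog{s},\bdlog{s}$ with $s\in S$.

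For each such $\xi$ I would run the eight steps of the algorithm: integrate in $\zb$ to produce a $(1,0)$–primitive $\talpha$; cancel the monodromy of $\talpha$ around $x$ and $\xb$ via \eqref{eq:monodromy-cancellation}, using the weight–one loop integrals of \autoref{ex:wt2-loop} and \autoref{ex:monodromy-coboundary}, to obtain a fibrewise single–valued primitive $\alpha=\talpha-\talpha'$; and then add the interior residue $-2\ipi\Res_x\alpha$, the boundary–interval contributions along $(0,1)^\epsilon,(1,\infty)^\epsilon,(\infty,0)^\epsilon$, and the half–circle contributions at $0,1,\infty$ obtained from the holomorphic restrictions $\beta_q$ via \autoref{prop:half-residue}, invoking \autoref{lem:loop-weight} for the closed loop in the final step. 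As a consistency check, \autoref{thm:disk-moduli-pushforward} shows a priori that $g_*\omega_\Gamma^t$ is a polylogarithmic $1$–form of weight at most $2$ on $\C{\sset{x},\sset{0,1}}$, and the conjugation symmetry $\overline{\alpha_e^t}=\alpha_e^{1-t}$ used in \autoref{thm:formality-weights} forces its $t$–dependence to split into a part even and a part odd in $t-\tfrac12$; combined with the boundary asymptotics of polylogarithms (\autoref{prop:disk-log-expansion}) and with the values obtained by letting $x$ tend to a boundary point on either side of $0$ and by specializing to $t=0,1$ (where the propagators collapse, as in the derivation of \eqref{eq:pushwedge}), these reduce the task to determining finitely many $\ZZ[t]$–coefficients, which the residue and half–circle computations supply.

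The main obstacle I anticipate is the bookkeeping around the \emph{divergent} summand $\log\tfrac{x-\xb}{x}$, which has no counterpart in \autoref{lem:wedge-integral}. Its origin is the incoming edge $x\to y$: in contrast to the pure–source situation of the wedge, the relative parts of $\alpha_{y\to x}^t$ and $\alpha_{x\to y}^t$ share the poles $x$ and $\xb$, so the relative $2$–form $\xi$ contains a term proportional to $\dlog{x}\wedge\bdlog{x}$, whose fibrewise primitive is the hyperlogarithm $\bhlog{x}=\tfrac{1}{2\ipi}\log(\zb-x)$, and the residue at $x$ then evaluates $\tfrac{1}{2\ipi}\log(\xb-x)$, a weight–one polylogarithm on the base that behaves like $\log\lvert\Im x\rvert$ as $x$ approaches the boundary. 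One must verify that, after subtracting the monodromy–cancelling holomorphic form $\talpha'$ and adding the three boundary contributions — each carrying its own $t$–dependence and branch ambiguity — everything recombines precisely into $\bigl(\tfrac{t}{2\ipi}\log\tfrac{x}{-\xb}-\tfrac{1-2t}{2\ipi}\log\tfrac{x-\xb}{x}\bigr)$ times the propagator $\alpha_{x\to 0}^t=\tfrac{1}{2\ipi}\tdlog\crossrat{x}{\xb}{0}{\infty}$, on the principal branches. Pinning down these signs and branches, and in particular the coefficient $-(1-2t)$ of the divergent term, is where the delicacy lies.
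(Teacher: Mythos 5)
Your plan is sound and would yield the stated formula: it is exactly the fibre-integration algorithm of \autoref{sec:forget-interior}, and your reduction to $P=\sset{x}$, $Q=\sset{0,1}$, the isolation of the horizontal-degree-one part of $\alpha^t_{x\to y}$, and the identification of the divergent summand as the delicate point are all correct. The paper, however, takes a much shorter route: it runs the explicit computation only at $t=1$, where the integrand splits as $\omega_1\wedge\tfrac{1}{2\ipi}\tdlog\tfrac{x-\xb}{\xb}+\omega_2\wedge\tfrac{1}{2\ipi}\tdlog\tfrac{x}{\xb}$ with $g_*\omega_1=0$, and where the naive $(1,0)$-primitive $(\bhlog{0}-\bhlog{x})\dlog{\xb}$ of $\omega_2$ is already single-valued, has no interior residues, and vanishes at $\infty$; the entire pushforward then collapses to one contour integral over $\RR$, evaluated by the classical residue theorem at the pole $y=\xb$ after closing the contour in the lower half-plane. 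The general $t$ is then recovered from the degree bound in $t$ and the conjugation symmetry $\overline{c^t}=c^{1-t}$, the step you also gesture at. Your version buys uniformity in $t$ at the cost of substantially more bookkeeping (monodromy cancellation, interior residues and half-circle contributions all reappear for general $t$); on the other hand, the $t=1$ value together with parity and the degree bound still leaves the coefficient of $t(1-t)$ in the real part undetermined, so the full computation you propose is actually what pins the answer down completely.

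Two small corrections to your reasoning, neither of which invalidates the plan. First, your diagnosis of the divergent summand $\log\tfrac{x-\xb}{x}$ as the interior residue at $x$ of a $\bdlog{x}\wedge\dlog{x}$ term is not how it arises at $t=1$: there the primitive has no interior residues at all, and the divergence is produced by the boundary integral over $\RR$, whose integrand acquires a pole at $y=\xb$ that approaches the contour as $\Im x\to 0$. (Interior residues do appear for other values of $t$, so your mechanism can contribute there, but it is not the universal source of the divergence.) Second, the propagators do not become $t$-independent at $t=0,1$ in the way they do in the boundary limit used for \eqref{eq:pushwedge}; they merely specialize to the logarithmic propagator and its conjugate, which are still nontrivial forms, so these specializations simplify but do not trivialize the computation.
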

\begin{remark}
Note that for $t\neq \tfrac{1}{2}$, the coefficient of this form has a logarithmic divergence on the real line $x = \xb$.  However, this divergence is compensated by the vanishing of $\alpha_{x\rightarrow 0}^t = \tdlog \tfrac{x}{\xb}$, so that the form is still continuous up to the boundary of $\C{P,Q}$.  This demonstrates a general phenomenon: even if we start with a form that is analytic on the integration domain, the integral over the fibres may produce a form with logarithmic divergences on the boundary. \qed
\end{remark}
The computation of this example is very similar to \autoref{sec:wedge-integral}, so we skip the details. For $t=1$, the integrand decomposes into vertical and horizontal components
\[
	\omega_{\Gamma}^{t=1} =  \omega_1 \wedge \frac{1}{2\ipi} \tdlog \frac{x-\xb}{\xb} + \omega_2 \wedge \frac{1}{2\ipi}\tdlog \frac{x}{\xb}
\]
where the vertical components are given by
\begin{equation*}
	\omega_1 \equiv  (\bdlog{x}-\bdlog{\xb}) \wedge \dlog{0} \quad\text{and}\quad
	\omega_2 \equiv  (\bdlog{0}-\bdlog{x}) \wedge \dlog{\xb}.
\end{equation*}
The pushforward of $\omega_1$ turns out to vanish, and the single-valued primitive
\[
	\alpha = \talpha = (\bhlog{0}-\bhlog{x}) \dlog{\xb} = -\frac{1}{2\ipi}\log\left( 1-\frac{x}{\yb} \right)\dlog{\xb}
\]
of $\omega_2$ has no residues in $y\in\HH$ and vanishes at $y=\infty$. So the only contribution comes from the real line $y=\yb$, where the primitive is holomorphic. Thus
\begin{equation*}
	g_{\ast}\omega_2
	= \frac{1}{4\pi^2} \int_{\RR\setminus\sset{0}} \log \left(1-\frac{x}{y} \right) \frac{\td y}{y-\xb}
	=\frac{1}{2\ipi} \log \frac{\xb-x}{\xb}
\end{equation*}
by the residue theorem at $y=\xb$ since we can close the contour of integration in the lower half-plane where the logarithm is single-valued.
This result already implies \eqref{eq:pushloop} by considering the form of the real and imaginary parts of $g_*\omega^t_\Gamma$ as a polynomial in $t$ (similar to \autoref{thm:formality-weights}).

\subsubsection{Proof of the proposition (forgetting $x$)}
The previous results can be combined to establish \autoref{prop:zeta3-graph}, because the differential form associated to the graph under consideration factorizes as
\begin{equation*}
\zetagraph  =  \loopgraph \wedge \singleedgeR \wedge \wedgegraphR
\end{equation*}
such that we can compute the pushforwards that integrate over $y$ and $z$ independently using \autoref{lem:wedge-integral} and \autoref{lem:bubble}, respectively.  Therefore we have
\begin{equation*}
	\bigintsss_{\displaystyle\C{\sset{x,y,z},\sset{0,1}}}\rbrac{\zetagraph[0.7]}
	= \bigintsss_{\displaystyle\C{\sset{x},\sset{0,1}}} \mu \wedge \rbrac{\singleedgeR[0.6]} \wedge \nu
\end{equation*}
where $\mu$ and $\nu$ denote the forms from \eqref{eq:pushloop} and \eqref{eq:pushwedge}.
We compute the result for $t=0$ and note that this implies the full \autoref{prop:zeta3-graph}, since the integrand is linear in $t$, via \autoref{thm:formality-weights}. Using a tangential base point at $0$, we have
\begin{equation*}
	\mu|_{t=0} 
	= -\frac{1}{2\ipi} \log \left( 1-\frac{\xb}{x} \right) \td \log \frac{x}{\xb}
	= -\left( \int_0^{\xb} \dlog{x} \right) (\dlog{0}-\bdlog{0})
	= \bhlog{x} (\bdlog{0}-\dlog{0})
\end{equation*}
Similarly, with this base point, \eqref{eq:pushwedge} becomes $\nu=\hlog{1}-\bhlog{1}+\frac{1}{2}$ and the shuffle product $\bhlog{x} \cdot \bhlog{1} = \bhlog{1x} + \bhlog{x1}$ gives
\begin{equation*}
	\mu|_{t=0} \wedge \rbrac{\singleedgeR[0.6]} \wedge \nu
	= \left(\bhlog{x}\hlog{1}+\frac{1}{2}\bhlog{x} - \bhlog{1x}-\bhlog{x1} \right)
	\left( \bdlog{0} \wedge \dlog{1}-\bdlog{1} \wedge \dlog{0} \right).
\end{equation*}
Since $\C{1,2} \cong \HH$ is simply connected, there are no sources for monodromies and
\begin{equation*}
	\alpha=
	\left( \bhlog{0x}\hlog{1}-\bhlog{0x1}-\bhlog{01x}+\frac{1}{2}\bhlog{0x} \right) \dlog{1}
	-
	\left( \bhlog{1x}\hlog{1}-\bhlog{1x1}-\bhlog{11x}+\frac{1}{2}\bhlog{1x} \right) \dlog{0}
\end{equation*}
is automatically a single-valued primitive. The limits at $\xb\rightarrow x\in (0,1)$, like
\begin{equation*}
	\bhlog{1x}
	\rightarrow \lim_{\xb \rightarrow x}
	  \int_0^{\xb} [\dlog{1}|\dlog{x}]
	= \int_0^x [\dlog{1}|\dlog{x}]
	= \hlog{11} - \hlog{01}
	= \int_0^x [\dlog{1}-\dlog{0}|\dlog{1}],
\end{equation*}
produce periods of $\M{\sset{0,1,x}}$ that we rewrite as hyperlogarithms in $x$ with \autoref{prop:fibration-basis}. This algorithm is illustrated in \autoref{ex:fib-basis} in the case of $\bhlog{0x1}$ and is implemented as $\texttt{fibrationBasis}$ in the program {\HyperInt}, see \cite{Panzer:PhD,Panzer:HyperInt}. For the other hyperlogarithms of $\xb$ we find the limits
\begin{gather*}
	\bhlog{0x}  \rightarrow \tfrac{1}{24},  \qquad
	\bhlog{1x1} \rightarrow \hlog{111}-\hlog{101}, \qquad
	\bhlog{0x1} \rightarrow \hlog{001}-\hlog{101}+\tfrac{1}{24}\hlog{1},\\
	\bhlog{11x} \rightarrow \hlog{111} - \hlog{011} \qquad\text{and}\qquad
	\bhlog{01x} \rightarrow \hlog{101}+\hlog{011}-2\hlog{001}-\tfrac{1}{24}\hlog{1}.
\end{gather*}
The holomorphic form $\beta_0$ that agrees with $\alpha$ on $(0,1)$ is therefore given by
\begin{equation*}
	\beta_0
	=
	\left(\hlog{001}-\hlog{011}+ \frac{1}{24} \hlog{1} + \frac{1}{48} \right) \dlog{1}
	+\left(\hlog{011}-\hlog{111} + \frac{1}{2}\hlog{01}-\frac{1}{2}\hlog{11} \right)\dlog{0},
\end{equation*}
a one-form with holomorphic hyperlogarithm coefficients.
Its regularized integral over $(0,1)$ gives MZVs according to \autoref{thm:M03-periods}. Using shuffle regularization as illustrated for $\hlog{1001}$ in \autoref{ex:reg-MZV}, we calculate that
\begin{align*}
\int_{(0,1)}\beta_0
	&= \Reglim{x}{1} \left( 
		\hlog{1001}-\hlog{1011}+\hlog{0011}-\hlog{0111}
		+\frac{\hlog{001}-\hlog{011}}{2}
		+\frac{\hlog{11}}{24}
		+\frac{\hlog{1}}{48}
	\right) \\
	&= -\frac{1}{480} - \frac{\mzv{3}}{(2\ipi)^3} \in \MZVipi[4].
\end{align*}
According to \autoref{tab:period-gens}, this period is does \emph{not} lie in $\MZVipi[3]$, but when we add it to the remaining boundary contributions
\begin{align*}
	\Reglim{\epsilon}{0}\int_{(\infty,0)^{\epsilon}} \alpha
	&= \frac{1}{1440} + \frac{\mzv{3}}{2(2\ipi)^3}, &
	\Reglim{\epsilon}{0} \int_{\arc{1}} \alpha
	&= -\frac{1}{192} + \frac{\mzv{3}}{(2\ipi)^3}, \\
	\Reglim{\epsilon}{0}\int_{(1,\infty)^{\epsilon}} \alpha
	&= \frac{19}{2880} + \frac{\mzv{3}}{2(2\ipi)^3} \quad\text{and}&
	\Reglim{\epsilon}{0} \int_{\arc{0}} \alpha
	&=
	\Reglim{\epsilon}{\infty} \int_{\arc{\infty}} \alpha
	= 0
\end{align*}
we find that the total is indeed $\mzv{3}/(2\ipi)^3 \in \MZVipi[3]$, which proves \autoref{prop:zeta3-graph} and illustrates concretely the weight drop established in \autoref{sec:weight-drop}.


\section{Software}
\label{sec:software}

\subsection{Overview}

We now give a brief overview of the software package mentioned in the introduction, which we recall is available at the following URL:
\begin{quote}
\DQcode
\end{quote}
This package has three main components:
\begin{itemize}
\item \Maple\ code, written by the second author, which implements the integration algorithm of \autoref{sec:cycles} for the exact symbolic calculation of volume integrals on $\C{P,Q}$ in terms of MZVs.  This code is also available as a separate module at
\begin{quote}
\Kontsevint
\end{quote}
It builds on the \HyperInt\ package~\cite{Panzer:HyperInt,Panzer:PhD} for calculations with hyperlogarithms.

\item A database listing the values of the coefficients $\KW{\Gamma}$ appearing in the star product for the harmonic, logarithmic and interpolating formality morphisms, up to order $\hbar^6$.  This database was produced by generating a complete list of isomorphism classes of such graphs using \nauty~\cite{McKayPiperno:II}, and directly computing the corresponding integrals using the \Maple\ code.

\item \Sage\ code, written by the first author as an undergraduate summer project, which manages the combinatorial part of the star product.  Using the database of weights, it allows for the exact symbolic calculation of the terms in the star product for an arbitrary Poisson structure on $\RR^n$.
\end{itemize}
We refer the reader to the online documentation for instructions on the use of this package, including a tutorial that illustrates its core functionality.

\subsection{Consistency checks}
\label{sec:consistent}

We checked that the output of our software is consistent with several known properties of the formality coefficients $\KW{\Gamma}$.  We summarize these tests as follows.

\paragraph{Associativity:} The associativity of star products imposes an infinite sequence of quadratic relations between the coefficients.  This fact was used in \cite{Buring2017} to produce software that reduces the calculation of all coefficients in the star product to a smaller list of unknown ones.  In particular, the calculation of all coefficients appearing the harmonic star product up to order $\hbar^4$ was reduced to linear combinations of 10 undetermined parameters in \cite[Table~8]{Buring2017}.  We checked that the coefficients in our database satisfy all of these constraints.  Furthermore, our results agree with the 1640 additional linear relations between harmonic coefficients at order $\hbar^5$ that were communicated to us by the authors of \cite{Buring2017}.

\paragraph{HKR graphs:} For each $m \ge 1$ there is a unique graph $\Tree{m}$ of type $(1,m)$.  These graphs have the following form:
	\begin{center}
		\HKRgraph{1}
		\HKRgraph{2}
		\HKRgraph{3}
		\HKRgraph{4}
		$\ldots$
	\end{center}
If we order the edges of $\Tree{m}$ from left to right, its coefficient is given by
\[
c_{\Tree{m}}^t=\frac{1}{m!}.
\]
As explained in \cite[Section~6.4.3]{Kontsevich2003}, this identity ensures that the linear part of the formality morphism gives the classical Hochschild--Kostant--Rosenberg (HKR) isomorphism between polyvector fields and Hochschild cohomology.  We checked that our software reproduces this result for all $m\leq 10$. 

\paragraph{Bernoulli graphs:} There is a sequence of graphs $\BerG{n}$ of type $(n,2)$ for $n \ge 1$ that have coefficients $c_{\BerG{n}}^t = (-1)^n B_n/n!$, where $B_n$ is the $n$th Bernoulli number; see \cite{Arnal:LieNilpotente} and \cite[Proposition~4.1]{Kathotia2000}. (While these papers state and prove the result only for $t=1/2$, it is not difficult to show that it persists for all $t$.) We checked that our software reproduces this result for all $n\leq 12$.

\paragraph{The Felder--Willwacher graph:} The paper \cite{Felder2010} exhibits a graph $\Gamma$ of type $(7,2)$ such that $\KW{\Gamma}^{1/2} = a + b\tfrac{\zeta(3)^2}{\pi^6}$ for some undetermined constants $a,b\in\QQ$ with $b \ne 0$.  Our software reproduces this result, and gives the exact values
\begin{equation*}
a = \frac{13}{2^{10}\cdot 3^4\cdot 5\cdot 7}  
\qquad\text{and}\qquad
b = \frac{1}{2^8}.
\end{equation*}

\paragraph{Wheels:}
For $n \ge 2$, consider the wheels $\WheelIn{n}$ of type $(n+1,0)$ with $n \ge 2$ inward pointing spokes:
	\begin{center}
		\inwheelgraph{2}
		\inwheelgraph{3}
		\inwheelgraph{4}
		\inwheelgraph{5} $\ldots$
	\end{center}
the wheels $\WheelOut{n}$ of type $(n+1,0)$ with $n \ge 2$ outward pointing spokes:
	\begin{center}
		\outwheelgraph{2}
		\outwheelgraph{3}
		\outwheelgraph{4}
		\outwheelgraph{5} $\ldots$
	\end{center}
and the graphs $\Amar{n}{n}$ of type $(n,2)$ obtained by replacing the centre of the inward wheel with an external vertex, and splitting off one of the spokes:
	\begin{center}
		\wheeldiffgraph{2}
		\wheeldiffgraph{3}
		\wheeldiffgraph{4}
		\wheeldiffgraph{5} $\ldots$
	\end{center}
To order the edges of the wheels, go around the rims (starting anywhere), and at each vertex, take first the outgoing edge along the rim, then the spoke. To order the edges for the graphs $\Amar{n}{n}$, take first the edge going to the boundary, then the outgoing edge along the rim (see \autoref{fig:Amar}).
The following facts are known:
\begin{enumerate}
\item We have the identity
\begin{equation}
 c^t_{\WheelIn{n}} + c_{\Amar{n}{n}^{}}^t =  c^t_{\WheelOut{n}}
\label{eq:Duflo-Curve}%
\end{equation}
as follows from the equivalence of the characteristic functions $f^{\textrm{Duflo}}$ and $f^{\textrm{curv}}$ established in \cite[Theorem 1]{Willwacher:CharacteristicClasses}.
\item The coefficients of the outward pointing wheels for the harmonic~\cite{VandenBergh2009} and logarithmic propagators~\cite[Appendix A]{Merkulov:ExoticSchouten} are given by:
\begin{align*}
c_{\WheelOut{n}}^{1/2} &= - \frac{B_n}{2 \cdot n!}
& c_{\WheelOut{n}}^0 &= \frac{\mzv{n}}{(2 \ipi)^n}
\end{align*}
where $B_n$ denotes the $n$th Bernoulli number.
Moreover, when $n$ is even, the coefficient $c_{\WheelOut{n}}^t$ is independent of $t$, and is in fact the same for all stable formality morphisms~\cite[Remark 3]{Willwacher:CharacteristicClasses}.
\item The coefficients of the inward pointing wheels $\WheelIn{n}$ are known to vanish when $t = 0$ \cite[Lemma~1.3]{Rossi:StdLogII} or when $t=1/2$ and $n$ is even~\cite{Shoiket:VanishingWheels}.  This ensures that the star product obtained by quantizing the linear Poisson bracket on the dual of a Lie algebra restricts to the usual product on the subalgebra of invariant polynomials.
\item The harmonic coefficients of the graphs $\Amar{n}{n}$ were calculated in \cite[Corollary~6.3]{BenAmar:RieffelKontsevich} and found to coincide with the outward pointing wheels:
\begin{equation*}
	c_{\Amar{n}{n}}^{1/2} = -\frac{B_n}{2\cdot n!}.
\end{equation*}
\end{enumerate}

We computed the coefficients of the graphs $\WheelIn{n}$, $\WheelOut{n}$ and $\Amar{n}{n}$ with our software for $n \le 7$. The results are consistent with the known facts above, and suggest the following formulas for the full $t$-dependence of the wheel coefficients:

\begin{conjecture}\label{con:wheels-t}
For Rossi--Willwacher's $t$-family of formality morphisms, the coefficients of the even inward pointing wheels vanish, that is
\[
c_{\WheelIn{2k}}^t = 0  
\]
for all $k \ge 1$. The odd wheels with $2k+1\geq 3$ spokes have coefficients
	\begin{align*}
		c_{\WheelOut{2k+1}}^t
		&
		= \frac{\mzv{2k+1}}{(2\ipi)^{2k+1}} (1-2t)
		\sum_{j=0}^{2k} \binom{2j}{j} (t(1-t))^j
		\quad\text{and}
		\\
		c_{\WheelIn{2k+1}}^t
		&
		= \frac{\mzv{2k+1}}{(2\ipi)^{2k+1}} (1-2t)
		\sum_{j=k+1}^{2k} \binom{2j}{j} (t(1-t))^j.
	\end{align*}
\end{conjecture}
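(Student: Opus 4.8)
The plan is to evaluate the wheel integrals by iterated fibre integration, applying \autoref{thm:disk-moduli-pushforward} repeatedly, and to reduce the inward wheels to the outward ones via the identity \eqref{eq:Duflo-Curve}. The outward wheel $\WheelOut{n}$ of type $(n+1,0)$ lives on $\C{n+1,0}$, whose dimension equals the degree $2n$ of $\omega_\Gamma^t$; one first forgets the hub, then the rim vertices one at a time, ending at the point $\C{1,0}$. Each of the $n$ elementary pushforwards lowers the weight by one (by \autoref{thm:disk-moduli-pushforward:integral} of \autoref{thm:disk-moduli-pushforward}, since no boundary marked points are involved), so the answer lies in $\MZVipi[n]$, as already guaranteed by \autoref{cor:arnold-integral}. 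For odd $n=2k+1$, the parity argument used in the proof of \autoref{thm:formality-weights} forces the answer to be purely imaginary of the form $\iu(1-2t)\,b(\tmt)$ with $b$ a polynomial; since the conjectured value has weight exactly $2k+1$, this weight bound must in fact be tight, which is precisely the kind of statement the weight-drop analysis of \autoref{sec:weight-drop} is designed to certify.

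\textbf{Integrating out the hub.} The first step generalises \autoref{lem:wedge-integral}: forgetting the hub $h$ (a source joined to all rim vertices $v_1,\dots,v_n$ by spokes $\alpha^t_{h\to v_i}$) presents the integral as the pushforward $g_\ast$ of the vertical part of $\omega_\Gamma^t=\bigwedge_i\alpha^t_{h\to v_i}\wedge\bigwedge_i\alpha^t_{v_i\to v_{i+1}}$. Since only the spokes depend on the fibre coordinate, one builds the $(1,0)$-primitive of \autoref{lem:1-0-primitive} explicitly and evaluates it by the residue computation of \autoref{sec:inner-contribution} together with the boundary contributions of \autoref{sec:half-circle-contribution}. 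The result is a polylogarithmic $(2n-2)$-form on $\C{n,0}$ of weight at most $2n-1$, built from logarithms of the ratios $\tfrac{v_i-\bar v_j}{v_j-\bar v_i}$ and from the rim one-forms $\alpha^t_{v_i\to v_{i+1}}$; one expects its structure around the rim cycle to be captured by iterating a small (essentially $2\times2$) ``transfer'' operation, one slot recording whether a spoke logarithm is still ``live'' at the current vertex and the other recording the rim form.

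\textbf{The rim recursion.} One then forgets $v_1,v_2,\dots$ in turn. Because the rim is a cycle, forgetting $v_i$ involves only its neighbours $v_{i\pm1}$ and the spoke data attached to $v_i$, so each step is the elementary interior pushforward of \autoref{sec:forget-interior}; iterating it amounts to running the transfer operation once around the $n$-cycle. The binomial coefficients $\binom{2j}{j}$ and the prefactor $(1-2t)$ in \autoref{con:wheels-t} should emerge from the generating identity $\sum_{j\ge0}\binom{2j}{j}x^j=(1-4x)^{-1/2}$ together with $1-4\tmt=(1-2t)^2$, while the \emph{truncation} of the sums (to $0\le j\le 2k$, respectively $k+1\le j\le 2k$) reflects the weight filtration: the ``tail'' of the full series would require a weight larger than what survives the successive weight drops, and must therefore cancel against the lower-weight terms generated along the recursion. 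Making this cancellation precise --- showing that exactly the stated partial sum remains after one traversal of the cycle --- is the main obstacle, since it demands simultaneous control of the transfer combinatorics and of the filtration bookkeeping (the contributions of the interior circles, boundary intervals and half-circles each individually have higher weight, and only their total drops, as shown in \autoref{sec:weight-drop}). The constants of integration and the overall normalisation $\mzv{2k+1}/(2\ipi)^{2k+1}$ are pinned down by the known special values $c^0_{\WheelOut{n}}=\mzv{n}/(2\ipi)^n$ of \cite{Merkulov:ExoticSchouten} and $c^{1/2}_{\WheelOut{n}}=-B_n/(2\cdot n!)$ of \cite{VandenBergh2009}.

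\textbf{Inward wheels and the even case.} The inward wheels are obtained from \eqref{eq:Duflo-Curve}, i.e.\ $c^t_{\WheelIn{n}}=c^t_{\WheelOut{n}}-c^t_{\Amar{n}{n}}$, once the graph $\Amar{n}{n}$ of type $(n,2)$ is treated in the same manner --- here the role of the hub is played by the boundary sink, which is forgotten first (a boundary pushforward as in \autoref{sec:forget-bdry}, giving no weight drop), after which the rim is handled exactly as above; the harmonic value $c^{1/2}_{\Amar{n}{n}}=-B_n/(2\cdot n!)$ of \cite{BenAmar:RieffelKontsevich} supplies the needed normalisation. It then remains to prove the vanishing $c^t_{\WheelIn{2k}}=0$ of the even inward wheels for all $t$; I expect this to follow by combining a reflection symmetry of the wheel graph (which, as behind the vanishing results of Shoikhet \cite{Shoiket:VanishingWheels} and Rossi \cite{Rossi:StdLogII} at $t=\tfrac12$ and $t=0$, forces the coefficient to be invariant under an orientation-reversing symmetry, hence real, killing the imaginary part) with the $n$-even part of \eqref{eq:Duflo-Curve} and Willwacher's characteristic-class relations \cite{Willwacher:CharacteristicClasses}; pinning this down for every value of $t$ is the second delicate point. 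A final consistency check is that the resulting formulas respect the parity and weight constraints of \autoref{thm:formality-weights} and reproduce \autoref{con:wheels-t} verbatim.
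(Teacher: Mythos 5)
You should first be aware that this statement is \autoref{con:wheels-t}: it is stated in the paper as a \emph{conjecture}, and the authors give no proof. Their evidence is a symbolic evaluation of the wheel integrals with their software for $n\le 7$, checked for consistency with the known special values $c^{1/2}_{\WheelOut{n}}=-B_n/(2\cdot n!)$ and $c^{0}_{\WheelOut{n}}=\mzv{n}/(2\ipi)^n$, the vanishing results at $t=0$ and $t=\tfrac12$, and the identity \eqref{eq:Duflo-Curve}. So there is no proof in the paper to compare against; the only question is whether your argument actually closes the conjecture, and it does not.

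The proposal is a sensible plan of attack, but it has genuine gaps at exactly the decisive points. First, the core step --- the ``transfer'' recursion around the rim, whose traversal is supposed to produce $(1-2t)\sum_j\binom{2j}{j}\tmt^{\,j}$ and whose truncation to the ranges $0\le j\le 2k$ and $k+1\le j\le 2k$ is supposed to come out of the filtration bookkeeping --- is precisely what you concede is unproven (``the main obstacle''). Nothing in the paper substitutes for it: \autoref{thm:disk-moduli-pushforward} and the analysis of \autoref{sec:weight-drop} only \emph{bound} the weight of a pushforward, they never compute it, so the generating-function heuristic $\sum_j\binom{2j}{j}x^j=(1-4x)^{-1/2}$, $1-4\tmt=(1-2t)^2$ remains a guess. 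Second, your parity reduction is incorrect as stated: \autoref{thm:formality-weights} gives $c^t_\Gamma=a(\tmt)+\iu(1-2t)b(\tmt)$ but does \emph{not} force $a\equiv 0$ for odd wheels, since $a$ takes values in $\Re\MZVipi[2k+1]$, which contains nonzero rationals; the vanishing of the real part is part of what must be proved. Likewise, the two known values at $\tmt=0$ and $\tmt=\tfrac14$ cannot pin down a polynomial in $\tmt$ of degree up to $2k$, so they do not supply the claimed normalisation. Third, the even case $c^t_{\WheelIn{2k}}=0$ for all $t$ is deferred to an unspecified symmetry argument: the cited vanishing theorems cover only $t=0$ and $t=\tfrac12$, and \eqref{eq:Duflo-Curve} merely rewrites the quantity as $c^t_{\WheelOut{2k}}-c^t_{\Amar{2k}{2k}}$, both unknown as functions of $t$ (the paper's companion statement \autoref{con:amar-t} about $\Amar{n}{k}$ is itself conjectural and even contradicts an earlier conjecture in the literature). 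In short, what you have is a strategy sketch consistent with the paper's machinery, not a proof, and one of its claimed shortcuts (the parity argument) fails.
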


\begin{figure}
	\begin{equation*}
		\Amar{4}{4} = \vcenter{\hbox{\includegraphics[scale=0.7]{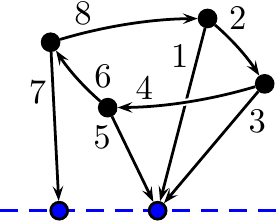}}}
		\qquad
		\Amar{7}{4} = \vcenter{\hbox{\includegraphics[scale=0.7]{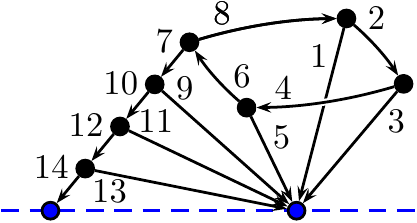}}}
	\end{equation*}%
	\caption{Two examples of the family of graphs $\Amar{n}{k}$ defined in \cite{BenAmar:RieffelKontsevich}, with an explicit order of the edges that represents our sign convention.}%
	\label{fig:Amar}%
\end{figure}
\paragraph{Amar family:}
By adding $n-k$ wedges to the graphs $\Amar{k}{k}$ discussed above, one obtains a family of graphs $\Amar{n}{k}$ of type $(n,2)$ with $n\geq k \geq 2$, as illustrated in \autoref{fig:Amar}. This family was studied in \cite{BenAmar:RieffelKontsevich} and we computed the $t$-dependent coefficients for all such graphs with $n\leq 8$. The results suggest the following
\begin{conjecture}\label{con:amar-t}
	For $n\geq k\geq 2$, the coefficient of $\Amar{n}{k}$ is
	\begin{equation*}
		c_{\Gamma_{n,k}}^{t}
		= \begin{cases}
			\frac{\mzv{n}}{(2\ipi)^n} 
			= - \frac{B_n}{2\cdot n!}

			& \text{if $n$ is even, and}
		\\
			\frac{\mzv{n}}{(2\ipi)^{n}} 
			(1-2t)
			\sum_{j=0}^{\lfloor\frac{k-1}{2}\rfloor} \binom{2j}{j} (t(1-t))^j 
			& \text{if $n$ is odd.}\\
		\end{cases}
	\end{equation*}
\end{conjecture}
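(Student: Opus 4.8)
A proof of \autoref{con:amar-t} would presumably proceed by induction on the number of wedges $n-k$, using the wedge--pushforward formula of \autoref{lem:wedge-integral} for the inductive step and reducing the base case $n=k$ to known facts about wheels. For the inductive step one realizes $\Amar{n}{k}$ as $\Amar{n-1}{k}$ with one extra wedge, i.e.\ an internal vertex $z$ with two outgoing legs to vertices $x,y$ of $\Amar{n-1}{k}$; along the fibration $g\colon\C{n,2}\to\C{n-1,2}$ that forgets $z$ the form factors (up to edge ordering) as $\omega_{\Amar{n}{k}}^t = g^*\omega_{\Amar{n-1}{k}}^t\wedge\alpha_{z\to x}^t\wedge\alpha_{z\to y}^t$, with the last two factors of degree equal to the fibre dimension, so that the $\forms{}$-linearity of the pushforward and \autoref{lem:wedge-integral} give
\[
c_{\Amar{n}{k}}^t=\int_{\C{n-1,2}}\omega_{\Amar{n-1}{k}}^t\cdot\frac{1}{2\ipi}\log\frac{x-\yb}{y-\xb},
\]
a polylogarithmic top-degree form of weight $2n-1$ whose new coefficient is $t$-independent and single-valued. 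By \autoref{thm:disk-moduli-pushforward} this integral lies in $\MZVipi[n]$ (projecting $\C{n-1,2}$ to a point drops the weight by $n-1$), consistent with the claimed weight; to extract the exact value one would run the integration algorithm of \autoref{sec:cycles}, forgetting the remaining interior points one at a time exactly as in \autoref{sec:example}, and bookkeep the residue contributions of \autoref{sec:inner-contribution} through \autoref{sec:half-circle-contribution}.

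For the base case $n=k$, the Duflo--curvature identity \eqref{eq:Duflo-Curve}, namely $c_{\WheelIn{k}}^t+c_{\Amar{k}{k}}^t=c_{\WheelOut{k}}^t$, reduces the statement to the outward and inward wheels. One would feed in the known values $c_{\WheelOut{k}}^{1/2}=-B_k/(2\cdot k!)$ \cite{VandenBergh2009}, $c_{\WheelOut{k}}^0=\mzv{k}/(2\ipi)^k$ \cite{Merkulov:ExoticSchouten}, the $t$-independence of even outward wheels \cite{Willwacher:CharacteristicClasses}, and the vanishing of inward wheels at $t=0$ \cite{Rossi:StdLogII} and at $t=\tfrac12$ for even $k$ \cite{Shoiket:VanishingWheels}, together with the $t$-polynomial structure of \autoref{thm:formality-weights}: the $\tmt$-symmetry, the degree bounds, and the parity constraint forcing the real (resp.\ imaginary) part to vanish in odd (resp.\ even) weight. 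This pins down $c_{\Amar{k}{k}}^t$ up to the precise coefficient pattern, which is exactly the content of \autoref{con:wheels-t}; in effect the base case is equivalent to the wheel conjecture, so the two would have to be established together, e.g.\ by running the same pushforward algorithm on the wheels themselves.

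The hard part is producing the exact generating function --- the central binomial coefficients $\binom{2j}{j}$ and the cutoff $\lfloor(k-1)/2\rfloor$ of the sum. The results of this paper supply the correct \emph{shape} of the answer: by \autoref{cor:arnold-integral} and \autoref{thm:disk-moduli-pushforward}, $c_{\Amar{n}{k}}^t$ is a polynomial in $t$ of degree at most $2n-2$ whose coefficients are normalized MZVs of weight at most $n$, with the $\tmt$-symmetry of \autoref{thm:formality-weights}; but these structural constraints do not by themselves force the answer into the rank-one submodule spanned by $\mzv{n}/(2\ipi)^n$, nor do they determine the rational multipliers. Bridging this gap appears to require either (i) a complete closed-form evaluation of the nested-logarithm integral above, which amounts to a careful but essentially combinatorial analysis of how the boundary residues of \autoref{sec:inner-contribution} through \autoref{sec:half-circle-contribution} accumulate across the iterated pushforward, or (ii) an input from outside the present framework, such as the identification of the Rossi--Willwacher $t$-family with a path in the space of Drinfeld associators and the resulting expression of the wheel coefficients through known components of the Knizhnik--Zamolodchikov and Alekseev--Torossian associators, or a motivic refinement of the weight drop of \autoref{thm:disk-moduli-pushforward} of the kind anticipated in the introduction. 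The software verification for $n\le 8$ recorded above is strong evidence but, of course, not a proof.
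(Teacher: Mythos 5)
There is nothing to compare your proposal against: the statement you were asked about is \autoref{con:amar-t}, which the paper itself presents only as a \emph{conjecture}. The paper gives no proof --- its evidence is the symbolic computation of $c_{\Amar{n}{k}}^t$ for all $n\leq 8$ with the authors' software, together with consistency with the known special cases (the harmonic values $c_{\Amar{n}{n}}^{1/2}=-B_n/(2\cdot n!)$ from \cite{BenAmar:RieffelKontsevich}, the wheel coefficients at $t=0$ and $t=\tfrac12$, and the identity \eqref{eq:Duflo-Curve}). Your write-up correctly recognizes this status and honestly refrains from claiming a proof, so relative to the paper there is no gap to report: neither you nor the authors establish the formula.

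Your strategic outline is moreover sound and consistent with the paper's framework. The inductive step via \autoref{lem:wedge-integral} and the functoriality of the pushforward is exactly how the paper's algorithm would treat the added wedges, and your observation that the base case $n=k$ is, through \eqref{eq:Duflo-Curve}, equivalent to the wheel conjecture \autoref{con:wheels-t} checks out: subtracting the two conjectured odd-wheel sums gives $(1-2t)\sum_{j=0}^{\lfloor (k-1)/2\rfloor}\binom{2j}{j}\bigl(t(1-t)\bigr)^j$ times $\mzv{k}/(2\ipi)^k$, matching \autoref{con:amar-t} at $n=k$, while the even case reduces to the conjectured vanishing of $c_{\WheelIn{2k}}^t$ together with the $t$-independence of even outward wheels. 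You are also right that the structural constraints the paper does prove (\autoref{cor:arnold-integral}, \autoref{thm:formality-weights}) only fix the \emph{shape} of the answer --- weight bound, $\tau$-symmetry, degree bounds --- and cannot by themselves single out the rank-one submodule spanned by $\mzv{n}/(2\ipi)^n$ or the central-binomial pattern; closing that gap would require either a full closed-form evaluation of the iterated pushforward or external input (e.g.\ associator-theoretic identification of the $t$-family), which is precisely why the statement remains a conjecture in the paper.
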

Note that this conjecture is compatible with the known facts listed above. However, it would imply that
\begin{equation*}
	c_{\Amar{n}{k}}^0 = \frac{\mzv{n}}{(2\ipi)^n}
	\quad\text{and}\quad
	c_{\Amar{n}{k}}^{1/2} = - \frac{B_n}{2\cdot n!}
\end{equation*}
are independent of $k$, which contradicts \cite[Conjecture~6.4]{BenAmar:RieffelKontsevich}.


\appendix
\section{An arithmetic lemma about MZVs}
\label{sec:arithmetic-appendix}

In this appendix, we provide a proof of the following arithmetic fact:
\begin{lemma}\label{lem:(n+1)!}
For $n \ge 1$, we have $\tfrac{1}{n!} \in \MZVipi[n-1]$.
\end{lemma}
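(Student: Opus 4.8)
Recall the definition $\MZVipi[n] = (\MZV[n] + \ipi\MZV[n-1])/(2\ipi)^n$. The claim $\tfrac{1}{n!}\in\MZVipi[n-1]$ splits according to the parity of $n-1$: since $\Re\MZVipi$ only involves even-weight MZVs and $\Im\MZVipi$ only odd-weight ones, and $1/n!$ is real, we must exhibit $1/n!$ as $\tfrac{1}{(2\ipi)^{n-1}}$ times an element of $\MZV[n-1]$ when $n-1$ is even, and times an element of $\ipi\MZV[n-2]$ when $n-1$ is odd. In either case the engine is Hoffman's identity $\pi^{2m}/(2m+1)! = \mzv{2,\ldots,2}$ ($m$ twos), already quoted in the excerpt as \eqref{eq:Hoffman-formula}, which shows $\pi^{2m}/(2m+1)! \in \MZV[2m]$.

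\textbf{Even case ($n = 2m+1$, so $n-1 = 2m$).} Here I want $\tfrac{1}{(2m+1)!}\in\MZVipi[2m]$, i.e. $(2\ipi)^{2m}/(2m+1)! \in \MZV[2m]$. But $(2\ipi)^{2m} = (2\iu)^{2m}\pi^{2m} = (-4)^m \pi^{2m}$, so $(2\ipi)^{2m}/(2m+1)! = (-4)^m\cdot \pi^{2m}/(2m+1)! = (-4)^m\,\mzv{2,\ldots,2} \in \MZV[2m]$ by Hoffman. Thus $\tfrac{1}{n!}\in\Re\MZVipi[n-1]\subset\MZVipi[n-1]$, handling odd $n$.

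\textbf{Odd case ($n = 2m$, so $n-1 = 2m-1$ is odd).} Now I want $\tfrac{1}{(2m)!}\in\MZVipi[2m-1]$, i.e. $(2\ipi)^{2m-1}/(2m)! \in \MZV[2m-1] + \ipi\MZV[2m-2]$. Write $(2\ipi)^{2m-1} = (2\ipi)^{2m}/(2\ipi) = (-4)^m\pi^{2m}/(2\ipi)$. Then
\[
\frac{(2\ipi)^{2m-1}}{(2m)!} = \frac{(-4)^m}{2m}\cdot\frac{\pi^{2m}}{(2m+1)!}\cdot\frac{2m+1}{2\ipi} = \frac{(-4)^m(2m+1)}{2m}\cdot\frac{1}{2\ipi}\,\mzv{2,\ldots,2}.
\]
Since $1/(2\ipi) = \iu\pi/(2\ipi^2) = -\iu/(12\mzv{2})\cdot\ldots$ — more directly, $\pi = \ipi/\iu$ and $\ipi = 2\ipi\cdot\tfrac12$ give nothing free, so instead observe $\pi^{2m}/(2\ipi) = \ipi\cdot\pi^{2m-2}/(-2\cdot 1)\cdot(\text{const})$: concretely $\pi^{2m}/(2\ipi) = (\ipi/2)\cdot\pi^{2m-2}/\iu^2 = -(\ipi/2)\pi^{2m-2}$. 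Hence $(2\ipi)^{2m-1}/(2m)! = -\tfrac{(-4)^m}{2}\cdot\ipi\cdot\pi^{2m-2}/(2m)!$, and $\pi^{2m-2}/(2m)!$ is a rational multiple of $\pi^{2(m-1)}/(2m-1)! = \mzv{2,\ldots,2}$ ($m-1$ twos) $\in\MZV[2m-2]$ by Hoffman (valid for $m\ge 2$; the case $m=1$, i.e. $n=2$, asks $\tfrac12\in\MZVipi[1] = \tfrac12\ZZ$, which is immediate). Therefore $(2\ipi)^{2m-1}/(2m)! \in \ipi\MZV[2m-2]$, so $\tfrac{1}{n!}\in\Im\MZVipi[n-1]\subset\MZVipi[n-1]$.

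\textbf{Main obstacle.} There is no deep obstacle here; the only care needed is bookkeeping of the powers of $\iu$ and $\pi$ when rewriting $(2\ipi)^{n-1}$ in terms of $\pi^{2m}$ and comparing with Hoffman's formula, together with checking the low-weight edge cases ($n=1$: $1\in\ZZ = \MZVipi[0]$ trivially; $n=2$: $\tfrac12\in\MZVipi[1]$ by the explicit description $\MZVipi[1] = \tfrac12\ZZ$). I would present the two parity cases cleanly, cite \eqref{eq:Hoffman-formula}, and note that the resulting expression collapses exactly into $\MZV[n-1]$ or $\ipi\MZV[n-2]$ as the parity demands.
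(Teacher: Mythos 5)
Your even-weight case ($n$ odd) is fine and coincides with the paper's: $(2\ipi)^{2m}/(2m+1)! = (-4)^m\,\mzv{2,\ldots,2}$ has an \emph{integer} coefficient, so it lands in $\MZV[2m]$. The gap is in your odd-weight case ($n=2m$ even), at the step ``$\pi^{2m-2}/(2m)!$ is a rational multiple of $\pi^{2m-2}/(2m-1)! = \mzv{2,\ldots,2}$''. The modules $\MZV[k]$ and $\MZVipi[k]$ are $\ZZ$-modules, not $\QQ$-vector spaces, so a rational multiple of an element need not lie in them; this integrality is the entire content of the lemma (with $\QQ$-coefficients the statement is trivial from Euler's formula). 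Tracking your constants, you get
\[
\frac{(2\ipi)^{2m-1}}{(2m)!} \;=\; (-1)^{m+1}\,\frac{2^{2m-2}}{m}\;\ipi\,\mzv{\underbrace{2,\ldots,2}_{m-1}},
\]
and $2^{2m-2}/m$ is not an integer whenever $m$ has an odd prime factor (already for $n=6$, $m=3$ the coefficient is $16/3$), so this does not exhibit $1/n!$ as an element of $\MZVipi[n-1]$; you cannot absorb the $1/m$ into $\MZVipi[1]=\tfrac12\ZZ$ either. Your remark that ``there is no deep obstacle here'' is exactly where the proposal goes wrong: the even case is the hard one.

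The paper's proof is structured around precisely this difficulty. For $n\equiv 2 \pmod 4$ it invokes a second nontrivial identity, $\zeta(1,3,\ldots,1,3) = 2\pi^{4k}/(4k+2)!$ (Borwein--Bradley--Broadhurst--Lison\v{e}k), whose coefficient $2$ is an integer, so that $1/(4k+2)! \in 2^{4k}\,\MZVipi[1]\cdot\MZVipi[4k]$. For $n$ divisible by $4$ no such clean formula is known to the authors, and they instead write $1/(4k+5)!\binom{4k+5}{4j+2}$ as a product of the two previously settled cases, show via Lucas's theorem that $\gcd_j \binom{4k+5}{4j+2}$ divides $(4k+5)\cdot 2^{4k+1}$, and use B\'ezout to recover $1/(4k+4)!$ integrally. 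To repair your proof you would need to supply these (or equivalent) arithmetic inputs for even $n$; the parity bookkeeping alone does not suffice.
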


\begin{proof}
For $n =1,2$, this is immediate from the definition, so assume $n \ge 3$.

If $n=2k+1$ is odd, we can invoke Hoffman's identity \eqref{eq:Hoffman-formula} to conclude that
\begin{equation}
\frac{1}{n!} 
= \frac{1}{(2k+1)!}
= (-4)^k \frac{\mzv{2,\ldots,2}}{(2\ipi)^{2k}}
\in 2^{2k}\MZVipi[2k] 
\subset \MZVipi[n-1]. \label{eq:oddfactorial}
\end{equation}

Similarly, if $n = 4k+2$ is congruent to two modulo four, the identity
\begin{equation}
	\zeta(\underbrace{1,3,1,3,\ldots,1,3}_{\text{$k$ repetitions of $1,3$}}) = \frac{2\cdot \pi^{4k}}{(4k+2)!}
	\label{eq:Broadhurst-Zagier}%
\end{equation}
from \cite[Corollary~2]{BorweinBradleyBroadhurstLisonek:SpecialValues}, valid for all $k\ge 1$, allows us to write
\begin{equation}
\frac{1}{(4k+2)!} 
= 2^{4k}  \cdot  \frac{1}{2} \cdot \frac{\zeta(1,3,\ldots,1,3)}{(2\ipi)^{4k}} 
\in 2^{4k} \cdot \MZVipi[1]  \cdot \MZVipi[4k] 
\subset 2^{4k} \MZVipi[n-1]. \label{eq:2mod4}
\end{equation}

We are unaware of a similarly elegant formula when $n\ge 4$ is divisible by four; say $n = 4k+4$.  But from the cases already considered \eqref{eq:oddfactorial} and \eqref{eq:2mod4}, we know that
\begin{align}
\frac{1}{(4k+5)!} \cdot { 4k+5 \choose 4j+2} = \frac{1}{(4(k-j)+3)!}\frac{1}{(4j+2)!} \in 2^{4k+2}\MZVipi[n-1] \label{eq:binom}
\end{align}
for all $0 \le j \le k$.  In light of the lemma below and B\'ezout's identity, we can write the number $(4k+5)2^{4k+2}$ as a $\ZZ$-linear combination of the binomial coefficients appearing above.  Hence by taking linear combinations of the expressions \eqref{eq:binom} and dividing by $2^{4k+2}$, we obtain $\frac{1}{n!} = \frac{1}{(4k+5)!} \cdot (4k+5) \in \MZVipi[n-1]$, as desired.
\end{proof}

\begin{lemma}\label{lem:binomial-gcds}
For $k \ge 1$, the number 
\[
d \defas \gcd \set{
			\binom{4k+5}{4j+2}
		}{
			0 \leq j \leq k
		},
\]
divides  $(4k+5)\cdot 2^{4k+1}$.
\end{lemma}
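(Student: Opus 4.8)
The plan is to use the fact that $d$ divides every $\ZZ$-linear combination of the binomial coefficients $\binom{4k+5}{4j+2}$, $0\le j\le k$, and to extract two such combinations — via the standard roots-of-unity filter — that together pin down the $2$-part and the odd part of $d$. Write $N=4k+5$. Note first that since $4k+6>N$, the integers $4j+2$ for $0\le j\le k$ are precisely those $m$ with $0\le m\le N$ and $m\equiv 2\pmod 4$; note also $N\equiv 1$ and $N-1\equiv 0\pmod 4$, which is what makes the relevant evaluations collapse.

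First I would compute the plain sum $S_0:=\sum_{j=0}^{k}\binom{N}{4j+2}=\sum_{m\equiv 2\,(4)}\binom{N}{m}$. Applying the fourth-roots-of-unity filter to $(1+x)^N$ gives $S_0=\tfrac14\bigl(2^N-(1+i)^N-(1-i)^N\bigr)$, and an elementary evaluation of $(1\pm i)^N$ for $N=4k+5$ yields $(1+i)^N+(1-i)^N=-(-1)^k\,2^{2k+3}$, hence $S_0=2^{2k+1}\bigl(2^{2k+2}+(-1)^k\bigr)$. Since the second factor is odd, the exact power of $2$ dividing $S_0$ is $2^{2k+1}$; as $d\mid S_0$, this forces $2^{2k+2}\nmid d$, i.e.\ the $2$-part of $d$ is at most $2^{2k+1}$.

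Next I would compute the first-moment sum $S_1:=\sum_{j=0}^{k}(4j+2)\binom{N}{4j+2}=\sum_{m\equiv 2\,(4)}m\binom{N}{m}$. Starting from the generating identity $\sum_m m\binom{N}{m}x^m=Nx(1+x)^{N-1}$ and applying the same filter gives $S_1=\tfrac{N}{4}\bigl(2^{N-1}-i\bigl((1+i)^{N-1}-(1-i)^{N-1}\bigr)\bigr)$; but $(1+i)^{N-1}-(1-i)^{N-1}$ is a real multiple of $\sin\!\bigl((N-1)\pi/4\bigr)$, which vanishes since $N-1=4k+4$ is divisible by $4$. Therefore $S_1=N\,2^{N-3}=(4k+5)\,2^{4k+2}$, and since $4k+5$ is odd and $d\mid S_1$, the odd part of $d$ divides $4k+5$.

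Combining the two constraints, $d$ divides $2^{2k+1}\cdot(4k+5)$; as $2k+1\le 4k+1$ for every $k\ge 1$, this gives $d\mid (4k+5)\cdot 2^{4k+1}$, which is the claim. I do not expect a genuine obstacle: the argument is entirely elementary once one hits on using exactly these two combinations (the plain sum and the first-moment sum), and the only points requiring care are the bookkeeping of the powers of $i$ and the trigonometric values at $N=4k+5$ and $N-1=4k+4$, together with the remark that $k\ge 1$ (so $N\ge 9$) is used to discard the $t=2$ term $0^{N-1}$ in the filter computing $S_1$.
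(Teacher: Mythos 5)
Your argument is correct, and it establishes an even stronger statement than the lemma: combining your two filtered sums gives $d \mid (4k+5)\cdot 2^{2k+1}$, which of course implies $d \mid (4k+5)\cdot 2^{4k+1}$. I verified the key evaluations: with $N=4k+5$ the set $\{4j+2 : 0\le j\le k\}$ is exactly the residue class $2 \bmod 4$ in $[0,N]$, the plain sum is $S_0 = 2^{2k+1}\bigl(2^{2k+2}+(-1)^k\bigr)$ with odd second factor (so the $2$-part of $d$ is at most $2^{2k+1}$), and the first-moment sum is $S_1 = (4k+5)\,2^{4k+2}$ (so the odd part of $d$ divides $4k+5$); e.g.\ for $k=1$ one has $S_0=36+84=120$ and $S_1=2\cdot 36+6\cdot 84=576=9\cdot 2^6$, as predicted.

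Your route is genuinely different from the paper's. The paper starts from the single coefficient at $j=0$, namely $\binom{4k+5}{2}=(4k+5)(2k+2)$, to write $d=uv$ with $u\mid 4k+5$ and $v\mid 2k+2$ coprime; it then bounds the $2$-part of $v$ crudely by $2k+2\le 2^{4k+1}$ and rules out odd prime divisors $p$ of $v$ by Lucas's theorem, constructing for each such $p$ a digit pattern $b_r\le a_r$ in the $p$-adic expansion of $4k+5$ that produces an exponent $4j+2$ with $\binom{4k+5}{4j+2}\not\equiv 0 \pmod p$. Your proof replaces that prime-by-prime Lucas/digit analysis with two explicit $\ZZ$-linear combinations evaluated by the fourth-roots-of-unity filter, which controls the $2$-part and the odd part of $d$ simultaneously and yields the sharper exponent $2^{2k+1}$; what the paper's approach buys is that it never needs exact evaluation of any sum, only nonvanishing modulo $p$, but yours is arguably more uniform and self-contained. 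Two cosmetic remarks: $(1+i)^{N-1}-(1-i)^{N-1}$ is a purely \emph{imaginary} multiple of $\sin\bigl((N-1)\pi/4\bigr)$ (it becomes real after the factor $-i$ in the filter), though since this sine vanishes for $N-1=4k+4$ the conclusion $S_1=N\,2^{N-3}$ is unaffected; and discarding the $0^{N-1}$ term only needs $N\ge 2$, so the hypothesis $k\ge 1$ is not actually required for that step (indeed your bound also holds at $k=0$, where $d=10=5\cdot 2$).
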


\begin{proof}
First note from $j=0$ that $d=uv$ for a divisor $u$ of $4k+5$ and a divisor $v$ of $2k+2$. Since $4k+5$ and $2k+2$ are relatively prime, so are $u$ and $v$. The largest power of two that divides $v$ is constrained by $2^r \leq 2k+2 \leq 2^{4k+1}$, and below we will show that $v$ has no odd prime divisors. It follows that $v$ is a divisor of $2^{4k+1}$, which shows the claim.
	
	Pick any odd prime $p$ that divides $k+1$, then $a_0=1$ in the $p$-adic expansion
	\begin{align*}
		4k+5 = \sum_{r\geq 0} a_r p^r
		\qquad\text{where}\quad
		0 \leq a_r < p.
	\end{align*}
	Suppose that we can find a family of digits $b_r$ such that
	\begin{align}
		0 \leq b_r \leq a_r \qquad\text{for all $r$ and}\qquad
		\sum_{r\geq 0} (a_r-b_r) p^r \equiv -1 \mod 4.
		\label{eq:p-adic}%
	\end{align}
	Then $\sum_{r\geq 0} b_r p^r = 4j+2$ for some $0\leq j \leq k$ and Lucas's theorem
	\begin{equation*}
		\binom{4k+5}{4j+2}
		\equiv \prod_{r \geq 0} \binom{a_r}{b_r}
		\not\equiv 0
		\mod p
	\end{equation*}
	shows that $p$ does not divide $d$. To finish the proof, we argue that \eqref{eq:p-adic} always admits a solution.
	If there is any $r$ such that $a_r>0$ and $p^r \equiv -1 \mod 4$, we have a solution by setting $b_r = a_r-1$ and $b_s=a_s$ for all other $s\neq r$.
	Now assume that $p^r \equiv 1 \mod 4$ whenever $a_r>0$, hence $4k+5 \equiv 1 \equiv a \mod 4$ with $a = \sum_{r\geq 0} a_r$. Due to $a_0=1$, the case $a=1$ would imply $4k+5=1$, incompatible with $k\geq 0$. So we must have $a \geq 5$, and we can find $0 \leq b_r \leq a_r$ such that $\sum_{r\geq 0} b_r = a-3$, solving \eqref{eq:p-adic}.
\end{proof}


\section{Regularized limits of $\ZZ$-linear polylogarithms}
\label{sec:appendix-mpl}

In this section, we sketch a proof of statement \eqref{prop:moduli-log-sing} concerning the regularized restriction of polylogarithms to boundary strata in the moduli space.

By induction on the codimension, we can reduce the statement to the case in which the codimension $k$ of the stratum is equal to one.  Moreover, by taking derivatives of the expansion \eqref{eq:td-hlog} and using an induction on the weight, we further reduce the problem to dealing with the leading term in the expansion, i.e.~it is enough to prove that as $t\to 0$, the regularized limit of a polylogarithm is a polylogarithm on the corresponding boundary hypersurface.

The codimension-one strata are in bijection with proper subsets $A \subset S$ with $|A|\ge 2$.  Such a stratum is isomorphic to products of the form $\M{A} \times \M{B}$, where $B = S\setminus A \cup \sset{*}$ is the complement, together with an extra symbol $*$.

Choosing pairs $a_0 \ne a_1 \in A$, and $b_0=* \ne b_1 \in B$, we obtain trivializations of the universal curves $\uX{A}$ and $\uX{B}$, realizing them concretely as a family of marked curves whose underling curves are $\PP[1]$. The location of the marked points in $A$ and $B$ are indicated by maps $a \colon \U \to \PP[1]$ and $b \colon \U \to \PP[1]$ for $a \in A$ and $b \in B$. Note that in our trivialization, $a_0=b_0=0\in \PP[1]$ and $a_1=b_1=1 \in \PP[1]$ are constant.  For $s \in S$, define a map $s \colon \U \times \Do \to \PP[1]$ by the formula
\[
s(u,t) = \begin{cases}
	ta(u) & \text{if $s = a \in A$ and} \\
	b(u)  & \text{if $s = b \in B$.}
\end{cases}
\]
These maps define a family of  $S$-marked curves for which the points in $A$ collide with constant velocity as $t \to 0$, giving a map $\U \times \Do \to \M{S}$ that extends to an embedding of a tubular neighbourhood $\U \times \D \to \bM{S}$ for which the cross ratio $t = \crossrat{a_0}{b_1}{a_1}{\infty}$ is a transverse coordinate.

We now consider the behaviour of polylogarithms on the contractible set $\U \times \W$.  Consider first the polylogarithms of weight one.  Any such function is a linear combination of ordinary logarithms $\tfrac{1}{2\ipi}\log(s-s')$ where $s,s' \in S$ and the analysis breaks into three different cases: both $s,s' \in A$, both $s,s' \in B$ or $s \in A$ and $s' \in B$.  In the first case we have the expansion
\[
\frac{\log(s-s')}{2\ipi} = \frac{\log(ta(u)-ta'(u))}{2\ipi} = \frac{\log(a(u)-a'(u))}{2\ipi} + \frac{\log t}{2\ipi}
\]
showing that the regularized limit as $t \to 0$ is $\tfrac{1}{2\ipi}\log(a-a') \in \Vr[1]{A}$.  The other two cases are analyzed similarly; they are in fact holomorphic as $t\to 0$, giving limits in $\Vr[1]{B}$.

Hence using shuffle regularization, we can reduce the problem to understanding the behaviour of convergent iterated integrals built from the forms $\dlog{b} = \frac{\td z}{2\ipi(z - b(u))}$ for $b\in B$ and $\dlog{a} = \frac{\td z}{2\ipi(z-ta(u))}$ for $a \in A$, in the limit $t \to 0$.   By path concatenation, we can further reduce the problem to considering the following three types of paths, which generate the trivial local system of groupoids $\FGpd{\uX{S}}|_{\U\times \W}$:
\begin{enumerate}
\item Paths between marked points in $\pInf{S} \setminus A $ that do not collide with the marked points in $A$ as $t \to 0$.  Along such paths the forms $\dlog{a}$ are nonsingular and converge uniformly to the form $\frac{\td z}{2\ipi z} = \dlog{*}$ associated to the extra marked point $* \in B$.  Hence the limit lies in $\Vr{B}$ by the dominated convergence theorem.
\item Paths between the marked points in $A$ which contract to a point as $t \to 0$.  After making the conformal transformation $z \mapsto t/z$, this problem is equivalent to the previous one but with the roles of $A$ and $B$ reversed.

\item A path from $a_0 = * = 0$ to $\infty$.  Here we encounter a difficulty because the limiting forms $\lim_{t \to 0}\dlog{a} = \frac{\td z}{2\ipi z}$  have a singularity at the startpoint of integration.  But by shuffle regularization, one can reduce such integrals to convergent integrals built from elements in $\T{A}$ and $\T{S}\cdot (S\setminus A)$, which can again by analyzed using the dominated convergence theorem, giving a mixture of polylogarithms on $\M{A}$ and $\M{B}$.
\end{enumerate}
For more details on the computation of regularized limits of periods, see \cite[Section~3.3.3]{Panzer:PhD}.

\bibliographystyle{hyperamsplain}
\bibliography{dq-weights}

\providecommand{\bysame}{\leavevmode\hbox to3em{\hrulefill}\thinspace}
\providecommand{\MR}{\relax\ifhmode\unskip\space\fi MR }
\providecommand{\MRhref}[2]{%
  \href{http://www.ams.org/mathscinet-getitem?mr=#1}{#2}
}
\providecommand{\href}[2]{#2}
\begin{thebibliography}{10}

\bibitem{Alekseev2016}
A.~Alekseev, C.~A. Rossi, C.~Torossian, and T.~Willwacher, \emph{Logarithms and
  deformation quantization},
  \href{http://dx.doi.org/10.1007/s00222-016-0647-7}{Invent. Math. \textbf{206}
  (2016)}, no.~1, 1--28.

\bibitem{Alekseev2010a}
A.~Alekseev and C.~Torossian, \emph{Kontsevich deformation quantization and
  flat connections}, \href{http://dx.doi.org/10.1007/s00220-010-1106-8}{Comm.
  Math. Phys. \textbf{300} (2010)}, no.~1, 47--64.

\bibitem{Apery:3}
R.~Ap{\'{e}}ry, \emph{Irrationalit{\'{e}} de {$\zeta(2)$} et {$\zeta(3)$}},
  Journ{\'{e}}es arithm{\'{e}}tiques (Luminy, 1978) \textbf{61} (1979), 11--13.

\bibitem{Arnal:LieNilpotente}
D.~Arnal, \emph{Le produit star de {K}ontsevich sur le dual d'une alg\`{e}bre
  de {L}ie nilpotente},
  \href{http://dx.doi.org/10.1016/S0764-4442(99)80112-8}{C. R. Acad. Sci. Paris
  S\'er. I Math. \textbf{327} (1998)}, no.~9, 823--826.

\bibitem{Arnold:ColoredBraid}
V.~I. Arnol'd, \emph{The cohomology ring of the colored braid group},
  \href{http://dx.doi.org/10.1007/BF01098313}{Mathematical notes of the Academy
  of Sciences of the USSR \textbf{5} (1969)}, no.~2, 138--140.

\bibitem{BallRivoal:Infinite}
K.~Ball and T.~Rivoal, \emph{Irrationalit{\'e} d'une infinit{\'e} de valeurs de
  la fonction z{\^e}ta aux entiers impairs},
  \href{http://dx.doi.org/10.1007/s002220100168}{Inventiones mathematicae
  \textbf{146} (2001)}, no.~1, 193--207.

\bibitem{BenAmar:RieffelKontsevich}
N.~{Ben Amar}, \emph{A comparison between {R}ieffel's and {K}ontsevich's
  deformation quantizations for linear {P}oisson tensors},
  \href{http://dx.doi.org/10.2140/pjm.2007.229.1}{Pac. J. Math. \textbf{229}
  (2007)}, no.~1, 1--24 (English).

\bibitem{Datamine}
J.~Bl{\"u}mlein, D.~J. Broadhurst, and J.~A.~M. Vermaseren, \emph{The
  {Multiple} {Zeta} {Value} data mine},
  \href{http://dx.doi.org/10.1016/j.cpc.2009.11.007}{Comput. Phys. Commun.
  \textbf{181} (2010)}, no.~3, 582--625,
  \href{http://arxiv.org/abs/0907.2557}{{\tt arXiv:0907.2557 [math-ph]}}.

\bibitem{Bogner2015}
C.~Bogner and F.~Brown, \emph{Feynman integrals and iterated integrals on
  moduli spaces of curves of genus zero},
  \href{http://dx.doi.org/10.4310/CNTP.2015.v9.n1.a3}{Commun. Number Theory
  Phys. \textbf{9} (2015)}, no.~1, 189--238.

\bibitem{BorweinBradleyBroadhurstLisonek:SpecialValues}
J.~M. {Borwein}, D.~M. {Bradley}, D.~J. {Broadhurst}, and P.~{Lison\v{e}k},
  \emph{Special values of multiple polylogarithms},
  \href{http://dx.doi.org/10.1090/S0002-9947-00-02616-7}{Trans. Amer. Math.
  Soc. \textbf{353} (2001)}, no.~3, 907--941,
  \href{http://arxiv.org/abs/math/9910045}{{\tt arXiv:math/9910045}}.

\bibitem{Brown:ModuliSpacesFeynmanIntegrals}
F.~C.~S. Brown, \href{http://dx.doi.org/10.1090/conm/539}{\emph{Multiple zeta
  values and periods: {From} moduli spaces to {Feynman} integrals}},
  Combinatorics and Physics (K.~{Ebrahimi-Fard}, M.~Marcolli, and W.~D. {van
  Suijlekom}, eds.), Contemporary Mathematics, vol. 539, American Mathematical
  Society, May 2011, pp.~27--52. Proceedings of the mini-workshop on
  Renormalization (December 15--16, 2006) and the conference on Combinatorics
  and Physics (March 19--23, 2007), both at Max-Planck-Institut f{\"u}r
  Mathematik, Bonn, Germany.

\bibitem{Brown:SingleValuedMZV}
\bysame, \emph{Single-valued periods and multiple zeta values},
  \href{http://dx.doi.org/10.1017/fms.2014.18}{Forum of Mathematics, Sigma
  \textbf{2} (2014)}, no.~e25, 37, \href{http://arxiv.org/abs/1309.5309}{{\tt
  arXiv:1309.5309 [math.NT]}}.

\bibitem{Brown2018}
F.~Brown and C.~Dupont, \emph{Single-valued integration and superstring
  amplitudes in genus zero}, \href{http://arxiv.org/abs/1810.07682}{{\tt
  1810.07682}}.

\bibitem{Brown2004}
F.~C.~S. Brown, \emph{Polylogarithmes multiples uniformes en une variable},
  \href{http://dx.doi.org/10.1016/j.crma.2004.02.001}{C. R. Math. Acad. Sci.
  Paris \textbf{338} (2004)}, no.~7, 527--532.

\bibitem{Brown2004a}
\bysame, \emph{Single-valued hyperlogarithms and unipotent differential
  equations}, Unpublished notes, available at
  \url{http://www.ihes.fr/~brown/RHpaper5.pdf}, 2004.

\bibitem{Brown2009}
\bysame, \emph{Multiple zeta values and periods of moduli spaces
  {$\overline{\germ M}_{0,n}$}}, Ann. Sci. \'Ec. Norm. Sup\'er. (4) \textbf{42}
  (2009), no.~3, 371--489, \href{http://arxiv.org/abs/math/0606419}{{\tt
  arXiv:math/0606419}}.

\bibitem{Brown2013}
\bysame, \emph{Iterated integrals in quantum field theory}, Geometric and
  topological methods for quantum field theory, Cambridge Univ. Press,
  Cambridge, 2013, pp.~188--240.

\bibitem{Brown1965}
R.~Brown, \emph{The twisted {E}ilenberg-{Z}ilber theorem}, Simposio di
  {T}opologia ({M}essina, 1964), Edizioni Oderisi, Gubbio, 1965, pp.~33--37.

\bibitem{Buring2017}
R.~Buring and A.~Kiselev, \emph{Software modules and computer-assisted proof
  schemes in the {K}ontsevich deformation quantization},
  \href{http://arxiv.org/abs/1702.00681}{{\tt 1702.00681}}.

\bibitem{Cattaneo2000}
A.~S. Cattaneo and G.~Felder, \emph{A path integral approach to the
  {K}ontsevich quantization formula},
  \href{http://dx.doi.org/10.1007/s002200000229}{Comm. Math. Phys. \textbf{212}
  (2000)}, no.~3, 591--611.

\bibitem{Ceyhan2007a}
O.~Ceyhan, \emph{Graph homology of the moduli space of pointed real curves of
  genus zero}, \href{http://dx.doi.org/10.1007/s00029-007-0042-8}{Selecta Math.
  (N.S.) \textbf{13} (2007)}, no.~2, 203--237.

\bibitem{Ceyhan2007}
\bysame, \emph{On moduli of pointed real curves of genus zero}, Proceedings of
  {G}\"okova {G}eometry-{T}opology {C}onference 2006, G\"okova
  Geometry/Topology Conference (GGT), G\"okova, 2007, pp.~1--38.

\bibitem{ChavezDuhr:Triangles}
F.~Chavez and C.~Duhr, \emph{Three-mass triangle integrals and single-valued
  polylogarithms}, \href{http://dx.doi.org/10.1007/JHEP11(2012)114}{JHEP
  \textbf{11} (2012)}, 114, \href{http://arxiv.org/abs/1209.2722}{{\tt
  arXiv:1209.2722 [hep-ph]}}.

\bibitem{Chen:IteratedPathIntegrals}
K.~T. Chen, \emph{Iterated path integrals},
  \href{http://dx.doi.org/10.1090/S0002-9904-1977-14320-6}{Bull. Amer. Math.
  Soc. \textbf{83} (1977)}, no.~5, 831--879.

\bibitem{Crainic2004a}
M.~Crainic, \emph{On the perturbation lemma, and deformations},
  \href{http://arxiv.org/abs/math/0403266}{{\tt arXiv:math/0403266}}.

\bibitem{DDDDDDMPV:MR}
V.~{Del Duca}, S.~{Druc}, J.~{Drummond}, C.~{Duhr}, F.~{Dulat}, R.~{Marzucca},
  G.~{Papathanasiou}, and B.~{Verbeek}, \emph{Multi-{R}egge kinematics and the
  moduli space of {R}iemann spheres with marked points},
  \href{http://dx.doi.org/10.1007/JHEP08(2016)152}{JHEP \textbf{2016} (2016)},
  no.~8, 152,
\href{http://arxiv.org/abs/1606.08807}{{\tt arXiv:1606.08807 [hep-th]}}.

\bibitem{Dolgushev2011}
V.~Dolgushev, \emph{Stable Formality Quasi-isomorphisms for {H}ochschild
  Cochains {I}}, \href{http://arxiv.org/abs/1109.6031}{{\tt 1109.6031}}.

\bibitem{Felder2010}
G.~Felder and T.~Willwacher, \emph{On the (ir)rationality of {K}ontsevich
  weights}, \href{http://dx.doi.org/10.1093/imrn/rnp155}{Int. Math. Res. Not.
  IMRN (2010)}, no.~4, 701--716.

\bibitem{Furusho:AT}
H.~{Furusho}, \emph{On the coefficients of the {A}lekseev-{T}orossian
  associator}, J. Algebra \textbf{506} (2018), 364--378,
  \href{http://arxiv.org/abs/1708.03231}{{\tt arXiv:1708.03231 [math.QA]}}.

\bibitem{Getzler1995}
E.~Getzler, \emph{Operads and moduli spaces of genus {$0$} {R}iemann surfaces},
  The moduli space of curves ({T}exel {I}sland, 1994), Progr. Math., vol. 129,
  Birkh\"auser Boston, Boston, MA, 1995, pp.~199--230.

\bibitem{Goncharov:MplMixedTateMotives}
A.~B. Goncharov, \emph{Multiple polylogarithms and mixed {Tate} motives},
  preprint, March 2001, \href{http://arxiv.org/abs/math/0103059}{{\tt
  arXiv:math/0103059}}.

\bibitem{Goncharov2004}
A.~B. Goncharov and Y.~I. Manin, \emph{Multiple {$\zeta$}-motives and moduli
  spaces {$\overline{\scr M}_{0,n}$}},
  \href{http://dx.doi.org/10.1112/S0010437X03000125}{Compos. Math. \textbf{140}
  (2004)}, no.~1, 1--14, \href{http://arxiv.org/abs/math/0204102}{{\tt
  arXiv:math/0204102 [math]}}.

\bibitem{Gugenheim1972}
V.~K. A.~M. Gugenheim, \emph{On the chain-complex of a fibration}, Illinois J.
  Math. \textbf{16} (1972), 398--414.

\bibitem{Hain:LecturesHodgeDeRham}
R.~Hain, \emph{Lectures on the {H}odge-de {R}ham theory of the fundamental
  group of {$\mathbb{P}^1-\{0,1,\infty\}$}}, March 2005,
  \url{https://services.math.duke.edu/~hain/aws/lectures_provisional.ps}.

\bibitem{Hoffman:MultipleHarmonicSeries}
M.~E. Hoffman, \emph{Multiple harmonic series},
  \href{http://dx.doi.org/10.2140/pjm.1992.152.275}{Pacific J. Math.
  \textbf{152} (1992)}, no.~2, 275--290 (English).

\bibitem{Kathotia2000}
V.~Kathotia, \emph{Kontsevich's universal formula for deformation quantization
  and the {C}ampbell-{B}aker-{H}ausdorff formula},
  \href{http://dx.doi.org/10.1142/S0129167X0000026X}{Internat. J. Math.
  \textbf{11} (2000)}, no.~4, 523--551,
  \href{http://arxiv.org/abs/math/9811174}{{\tt arXiv:math/9811174}}.

\bibitem{Kontsevich1999}
M.~Kontsevich, \emph{Operads and motives in deformation quantization},
  \href{http://dx.doi.org/10.1023/A:1007555725247}{Lett. Math. Phys.
  \textbf{48} (1999)}, no.~1, 35--72,
  \href{http://arxiv.org/abs/math/9904055}{{\tt arXiv:math/9904055 [math]}}.
  Mosh{\'e} Flato (1937--1998).

\bibitem{Kontsevich2003}
\bysame, \emph{Deformation quantization of {P}oisson manifolds},
  \href{http://dx.doi.org/10.1023/B:MATH.0000027508.00421.bf}{Lett. Math. Phys.
  \textbf{66} (2003)}, no.~3, 157--216,
  \href{http://arxiv.org/abs/q-alg/9709040}{{\tt arXiv:q-alg/9709040 [q-alg]}}.

\bibitem{LappoDanilevsky:algorithmique}
J.~A. {Lappo-Danilevsky}, \emph{Th{\'e}orie algorithmique des corps de
  {Riemann}.}, Rec. Math. Moscou \textbf{34} (1927), no.~6, 113--146 (French).

\bibitem{LappoDanilevsky}
\bysame, \emph{M{\'e}moires sur la th{\'e}orie des syst{\`e}mes des
  {\'e}quations diff{\'e}rentielles lin{\'e}aires}, vol. I--III, Chelsea, 1953
  (French). Individual volumes first published in
  \href{http://mi.mathnet.ru/tm885}{Trav. Inst. Stekloff 6--8}, (1934--1936).

\bibitem{LeMurakami:KontsevichKauffman}
T.~T.~Q. Le and J.~Murakami, \emph{Kontsevich's integral for the {K}auffman
  polynomial}, \href{http://dx.doi.org/10.1017/S0027763000005638}{Nagoya Math.
  J. \textbf{142} (1996)}, 39--65 (English).

\bibitem{McKayPiperno:II}
B.~D. {McKay} and A.~{Piperno}, \emph{Practical graph isomorphism, {II}},
  \href{http://dx.doi.org/10.1016/j.jsc.2013.09.003}{J. Symb. Comput.
  \textbf{60} (2014)}, 94--112, \href{http://arxiv.org/abs/1301.1493}{{\tt
  arXiv:1301.1493 [cs.DM]}}. Program website:
  \url{http://pallini.di.uniroma1.it/}.

\bibitem{Merkulov:ExoticSchouten}
S.~A. {Merkulov}, \emph{Exotic automorphisms of the {S}chouten algebra of
  polyvector fields}, preprint, September 2008,
  \href{http://arxiv.org/abs/0809.2385}{{\tt arXiv:0809.2385 [math.QA]}}.

\bibitem{Panzer:HyperInt}
E.~Panzer, \emph{Algorithms for the symbolic integration of hyperlogarithms
  with applications to {Feynman} integrals},
  \href{http://dx.doi.org/10.1016/j.cpc.2014.10.019}{Computer Physics
  Communications \textbf{188} (2015)}, 148--166,
  \href{http://arxiv.org/abs/1403.3385}{{\tt arXiv:1403.3385 [hep-th]}}.
maintained and available at \url{https://bitbucket.org/PanzerErik/hyperint}.

\bibitem{Panzer:PhD}
E.~Panzer, \href{http://dx.doi.org/10.18452/17157}{\emph{Feynman integrals and
  hyperlogarithms}}, Ph.D. thesis, Humboldt-Universit\"at zu Berlin, 2014.
  \href{http://arxiv.org/abs/1506.07243}{{\tt arXiv:1506.07243 [math-ph]}}.

\bibitem{Poincare1884}
H.~Poincar{\'e}, \emph{Sur les groupes des {\'e}quations lin{\'e}aires},
  \href{http://dx.doi.org/10.1007/BF02418420}{Acta Mathematica \textbf{4}
  (1884)}, no.~1, 201--312 (French).

\bibitem{Rossi:StdLogII}
C.~A. {Rossi}, \emph{The explicit equivalence between the standard and the
  logarithmic star product for {L}ie algebras, {II}},
  \href{http://dx.doi.org/10.1016/j.crma.2012.09.003}{Comptes Rendus
  Mathematique \textbf{350} (2012)}, no.~15--16, 745--748 (English).

\bibitem{Rossi2014}
C.~A. Rossi and T.~Willwacher, \emph{{P.} {E}tingof's conjecture about
  {D}rinfeld associators}, \href{http://arxiv.org/abs/1404.2047}{{\tt
  1404.2047}}.

\bibitem{SchlottererSchnetz:ClosedStringGenus0}
O.~{Schlotterer} and O.~{Schnetz}, \emph{Closed strings as single-valued open
  strings: A genus-zero derivation}, preprint, August 2018,
\href{http://arxiv.org/abs/1808.00713}{{\tt arXiv:1808.00713 [hep-th]}}.

\bibitem{Schnetz:NumbersAndFunctions}
O.~Schnetz, \emph{Numbers and functions in quantum field theory},
  \href{http://dx.doi.org/10.1103/PhysRevD.97.085018}{Phys. Rev. D \textbf{97}
  (2018)}, 085018,
\href{http://arxiv.org/abs/1606.08598}{{\tt arXiv:1606.08598 [hep-th]}}.

\bibitem{Schnetz2014}
O.~Schnetz, \emph{Graphical functions and single-valued multiple
  polylogarithms}, \href{http://dx.doi.org/10.4310/CNTP.2014.v8.n4.a1}{Commun.
  Number Theory Phys. \textbf{8} (2014)}, no.~4, 589--675,
  \href{http://arxiv.org/abs/1302.6445}{{\tt arXiv:1302.6445 [math.NT]}}.

\bibitem{Shoiket:VanishingWheels}
B.~Shoikhet, \emph{Vanishing of the {K}ontsevich integrals of the wheels},
  \href{http://dx.doi.org/10.1023/A:1010842705836}{Lett. Math. Phys.
  \textbf{56} (2001)}, no.~2, 141--149,
  \href{http://arxiv.org/abs/math/0007080}{{\tt arXiv:math/0007080}} (English).

\bibitem{Shoikhet2003}
B.~Shoikhet, \emph{A proof of the {T}sygan formality conjecture for chains},
  \href{http://dx.doi.org/10.1016/S0001-8708(02)00023-3}{Adv. Math.
  \textbf{179} (2003)}, no.~1, 7--37.

\bibitem{StiebergerTaylor:ClosedSV}
S.~{Stieberger} and T.~R. {Taylor}, \emph{Closed string amplitudes as
  single-valued open string amplitudes},
  \href{http://dx.doi.org/10.1016/j.nuclphysb.2014.02.005}{Nuclear Physics B
  \textbf{881} (2014)}, 269--287,
\href{http://arxiv.org/abs/1401.1218}{{\tt arXiv:1401.1218 [hep-th]}}.

\bibitem{Tamarkin1999}
D.~E. Tamarkin, \emph{Operadic proof of {M}. {K}ontsevich's formality theorem},
  ProQuest LLC, Ann Arbor, MI, 1999. Thesis (Ph.D.)--The Pennsylvania State
  University.

\bibitem{Terasoma:MixedTateMotivesAndMZV}
T.~Terasoma, \emph{Mixed {Tate} motives and multiple zeta values},
  \href{http://dx.doi.org/10.1007/s002220200218}{Inventiones Mathematicae
  \textbf{149} (2002)}, no.~2, 339--369,
  \href{http://arxiv.org/abs/math/0104231}{{\tt arXiv:math/0104231}}.

\bibitem{VandenBergh2009}
M.~Van~den Bergh, \emph{The {K}ontsevich weight of a wheel with spokes pointing
  outward}, \href{http://dx.doi.org/10.1007/s10468-009-9161-6}{Algebr.
  Represent. Theory \textbf{12} (2009)}, no.~2-5, 443--479,
  \href{http://arxiv.org/abs/0710.2411}{{\tt arXiv:0710.2411 [math.CO]}}.

\bibitem{Waldschmidt:LecturesMZV}
M.~Waldschmidt, \emph{Lectures on {Multiple} {Zeta} {Values}}, April 2011.
  updated March 2017, available from
  \url{http://www.math.jussieu.fr/~miw/articles/pdf/MZV2011IMSc.pdf}.

\bibitem{Willwacher2014}
T.~Willwacher, \emph{Stable cohomology of polyvector fields},
  \href{http://dx.doi.org/10.4310/MRL.2014.v21.n6.a16}{Math. Res. Lett.
  \textbf{21} (2014)}, no.~6, 1501--1530.

\bibitem{Willwacher:CharacteristicClasses}
\bysame, \emph{Characteristic classes in deformation quantization},
  \href{http://dx.doi.org/10.1093/imrn/rnu136}{Int. Math. Res. Not.
  \textbf{2015} (2015)}, no.~15, 6538--6557,
  \href{http://arxiv.org/abs/1208.4249}{{\tt arXiv:1208.4249 [math.QA]}}
  (English).

\bibitem{Willwacher2015}
\bysame, \emph{M. {K}ontsevich's graph complex and the
  {G}rothendieck-{T}eichm\"uller {L}ie algebra},
  \href{http://dx.doi.org/10.1007/s00222-014-0528-x}{Invent. Math. \textbf{200}
  (2015)}, no.~3, 671--760.

\bibitem{Willwacher2016}
\bysame, \emph{The homotopy braces formality morphism},
  \href{http://dx.doi.org/10.1215/00127094-3450644}{Duke Math. J. \textbf{165}
  (2016)}, no.~10, 1815--1964.

\bibitem{Zudilin:AlgebraicRelationsMZV}
W.~Zudilin, \emph{Algebraic relations for multiple zeta values},
  \href{http://dx.doi.org/10.1070/RM2003v058n01ABEH000592}{Russian Mathematical
  Surveys \textbf{58} (2003)}, no.~1, 1--29.

\end{thebibliography}

\end{document}